\DeclareMathOperator{\sgn}{sgn}
\newcommand{\E}{\mathbb{E}}
\newcommand{\R}{\mathbb{R}}
\newcommand*{\barfix}[2][.175ex]{%
  \mathpalette{\@barfix{#1}}{#2}%
}
\newcommand*{\@barfix}[3]{%
  % #1: space
  % #2: math style
  % #3: symbol
  \vbox{%
    \kern#1\relax
    \hbox{$#2#3\m@th$}%
  }%
}
\newcommand{\blue}{\color{black}}
\newcommand{\modar}{\color{black}}
\newcommand{\modarn}{\color{black}}
\newcommand{\abs}[1]{\left|#1\right|}
\newcommand{\norm}[1]{\left\lVert#1\right\rVert}
\newtheorem{remark}{Remark}
\newtheorem{theorem}{Theorem}
\newtheorem{lemma}{Lemma}
\newtheorem{corollary}{Corollary}
\newtheorem{proposition}{Proposition}
\newtheorem{definition}{Definition}
\newcommand{\keywords}[1]{\par\addvspace\baselineskip
\noindent\enspace\ignorespaces#1}
\newcommand{\modch}{\color{black}}
\newcommand{\modchi}{\color{black}}
\newcommand{\change}{\color{black}}
\newcommand{\revar}{\color{black}}
\newcommand{\revarn}{\color{black}}
\newcommand{\rev}{\color{black}}
\begin{document}

\title{Minimax rate of estimation for invariant densities associated to continuous stochastic differential equations over anisotropic {\rev H\"older} classes.}
\author{Chiara Amorino\thanks{ Universit\'e du Luxembourg, L-4364 Esch-Sur-Alzette, Luxembourg. The author gratefully acknowledges financial support of ERC Consolidator Grant 815703 “STAMFORD: Statistical Methods for High Dimensional Diffusions”.} \qquad Arnaud Gloter \thanks{Laboratoire de Math\'ematiques et Mod\'elisation d'Evry, CNRS, Univ Evry, Universit\'e Paris-Saclay, 91037, Evry, France.}} 

\maketitle

\begin{abstract}
We study the problem of the nonparametric estimation for the density $\pi$ of the stationary distribution of a $d$-dimensional stochastic differential equation $(X_t)_{t \in [0, T]}$. From the continuous observation of the sampling path on $[0, T]$, we study {\change the estimation} of $\pi(x)$ as $T$ goes to infinity. For $d\ge2$, we characterize the minimax rate for the $\mathbf{L}^2$-risk in pointwise estimation over a class of  anisotropic H\"older functions $\pi$ with regularity $\beta = (\beta_1, ... , \beta_d)$. 
For $d \ge 3$, our finding is that, having ordered the smoothness such that $\beta_1 \le ... \le \beta_d$, the minimax rate  depends on whether $\beta_2 < \beta_3$ or $\beta_2 = \beta_3$. In the first case, this rate is $(\frac{\log T}{T})^\gamma$, and in the second case, it is $(\frac{1}{T})^\gamma$, where $\gamma$ is an explicit exponent dependent on the dimension and $\bar{\beta}_3$, the harmonic mean of smoothness over the $d$ directions after excluding $\beta_1$ and $\beta_2$, the smallest ones. We also demonstrate that kernel-based estimators achieve the optimal minimax rate. Furthermore, we propose an adaptive procedure {\blue for both $L^2$ integrated and pointwise risk.} In the two-dimensional case, we show that kernel density estimators achieve the rate $\frac{\log T}{T}$, which is optimal in the minimax sense. {\blue Finally we illustrate the validity of our theoretical findings by proposing numerical results.}

\keywords{Non-parametric estimation, stationary measure, minimax rate, convergence rate, ergodic diffusion, anisotropic density estimation.}

\end{abstract}

\section{Introduction}

In this work, we propose kernel density estimators for estimating the invariant density associated with a stochastic differential equation. Specifically, we investigate inference for the following d-dimensional process
\begin{equation}
X_t= X_0 + \int_0^t b( X_s)ds + \int_0^t a(X_s)dW_s, \quad t \in [0,T], 
\label{eq: model intro}
\end{equation}
with $b : \mathbb{R}^d \rightarrow \mathbb{R}^d$, $a : \mathbb{R}^d \rightarrow \mathbb{R}^d \otimes \mathbb{R}^d$ and $W = (W_t, t \ge 0)$ a d-dimensional Brownian motion.

Stochastic differential equations are very attractive for statisticians nowadays, as they model stochastic evolution as time evolves. These models have a variety of applications in many disciplines and emerge in a natural way in the study of many phenomena. {\rev For example, there are} applications in astronomy \cite{202 Iacus}, mechanics \cite{147 Iacus}, physics \cite{176 Iacus}, geology \cite{69 Iacus}, ecology \cite{111 Iacus}, biology \cite{194 Iacus} and epidemiology \cite{17 Iacus}. Some other examples are economics \cite{26 Iacus}, mathematical finance \cite{115 Iacus}, political analysis and social processes \cite{55 Iacus} as well as neurology \cite{109 Iacus}, genetic analysis \cite{155 Iacus}, cognitive psychology \cite{221 Iacus} and biomedical sciences \cite{20 Iacus}.

Given the significance of stochastic differential equations, there has been extensive research into inference for such models. Authors have explored both parametric and non-parametric inference, beginning with continuous or discrete observations, and within various asymptotic frameworks. These frameworks include small diffusions asymptotics over a fixed time interval and long time intervals for ergodic models. Some landmarks in the area are the books 
of Kutoyants \cite{Kut04}, Iacus \cite{Iac10}, Kessler et al \cite{Kes12} and H{\"o}pfner \cite{Hop14}. In the meantime, a significant number of papers have been published on this topic. Among them, we mention Comte et al \cite{Com07}, Dalalyan and Reiss \cite{RD}, Genon-Catalot \cite{Gen90}, Gobet et al \cite{Gob04}, Hoffmann \cite{Hof99}, Lar\'edo \cite{Lar90} and Yoshida \cite{Yos92}.  \\
The in-depth study on stochastic differential equations lead the way for statistical inference for more complicated models such as SDEs driven by L\'evy processes (see for example \cite{Mas19}), diffusions with jumps (\cite{Mas07}, \cite{Sch19}), stochastic partial differential equations (\cite{Alt20}, \cite{Cia17}), diffusions with mixed effects (\cite{Pic10}, \cite{Del18}) and Hawkes processes (\cite{Bac13}, \cite{Dio19}).

In this paper, we propose a kernel density estimator and aim to determine the convergence rates for pointwise $L^2$ estimation of the stationary measure $\pi$ associated with the process $X$ defined by \eqref{eq: model intro}. Additionally, we will assess the optimality of these rates.

 The estimation of the invariant measure is a problem already widely faced in many different contexts by many authors, we quote \cite{Ngu79}, \cite{Del80}, \cite{Bos98}, \cite{Zan01}, \cite{Strauch new}, \cite{Kut04} and \cite{Banon} among them.
Nowadays, this problem attracts the attention of many statisticians due to the multitude of numerical methods associated with it, with the Markov Chain Monte Carlo method being the most prominent. For instance, in \cite{LamPag02} and \cite{Pan08}, one can find approximation algorithms for computing the invariant density, while the analysis of invariant distributions is employed to assess the stability of stochastic differential systems in \cite{Has80} and \cite{Banon}. It can also be used in order to estimate the drift coefficient in a non-parametric way (see \cite{Nic} and \cite{Sch}). {\modchi However, there are some problems involving the estimation of the invariant density for SDEs {\rev in the anisotropic context} which are still open, because of their challenging nature. One above all, the research of minimax convergence rate for the estimation of the invariant density associated to a multidimensional diffusion. The purpose of this work is to fill such a gap. }\\ 
As it is easy to see, the estimation of the invariant measure presents itself as both a long-standing problem and an actively researched area. Kernel estimators, for various reasons, serve as powerful tools in this context. For instance, in \cite{Banon} and \cite{Bosq9}, kernel estimators are employed to estimate the marginal density of a continuous-time process. In \cite{Optimal}, \cite{Chapitre 4}, and \cite{Man10}, they find utility within a jump-diffusion framework, with the latter specifically applied for estimating volatility.

In a context closely related to ours, kernel density estimators have been employed in \cite{RD} and \cite{Strauch} to investigate the convergence rate for estimating the invariant density associated with a reversible diffusion process with unit diffusion. Notably, in \cite{RD}, the authors establish convergence rates for pointwise estimation of the invariant density, as a by-product of their analysis, under isotropic H\"older smoothness constraints. The convergence rates they found are $\frac{\log T}{T}$ when the dimension $d$ is equal to $2$ and $(\frac{1}{T})^{\frac{2 \beta}{2 \beta + d - 2}}$ for $d \ge 3$, where $\beta$ is the common smoothness over the $d$ different directions. Strauch extended the work presented in \cite{RD} by developing adaptive kernel estimators for estimating the invariant density within anisotropic H\"older spaces. These estimators achieve identical convergence rates as those in \cite{RD}, with the replacement of $\beta$ by $\bar{\beta}$—the harmonic mean across the $d$ different directions. Given that the smoothness characteristics of function space elements can vary depending on the chosen direction in $\mathbb{R}^d$, the concept of anisotropy assumes a significant role. 

In this work we aim at estimating the invariant density $\pi$ by means of the kernel estimator $\hat{\pi}_{h,T}$ assuming to have the continuous record of the process $X$ solution to \eqref{eq: model intro} up to time $T$. We first of all prove the following upper bound for the mean squared error, for $d \ge 3$: {\rev there exist $T_0$ and $c > 0$ such that, for any $T \ge T_0$, }
\begin{equation}
\sup_{(a,b)\in \Sigma} \mathbb{E} [|\hat{\pi}_{h,T}(x) - \pi (x)|^2] \le 
\begin{cases}
c(\frac{\log T}{T})^{\frac{2\bar{\beta}_3}{2\bar{\beta}_3 + d - 2}} \qquad \mbox{for } \beta_2 < \beta_3, \\
c(\frac{1}{T})^{ \frac{2\bar{\beta}_3}{2\bar{\beta}_3+ d - 2}} \qquad \mbox{for } \beta_2 = \beta_3,
\end{cases}
\label{eq: conv rate intro}
\end{equation}
where $\Sigma$ is a class of coefficients for which the {\rev stationary} density has some prescribed regularity, $\beta_1 \le \beta_2 \le ... \le \beta_d$ and $\bar{\beta}_3$ is the harmonic mean over the smoothness after having removed the two smallest. In particular, we have $\frac{1}{ \barfix{\bar{\beta}_3}} := \frac{1}{d -2} \sum_{l \ge 3} \frac{1}{\beta_l}$ and so it clearly follows that $\bar{\beta}_3 \ge \bar{\beta}$. {\change It is surprising that the presence of the logarithm in the convergence rate depends on whether $\beta_2=\beta_3$ or not. This variation arises from different methods of bounding the variance, resulting in varying degrees of freedom for the optimal bandwidth $h = (h_1, \ldots, h_d)$. In some specific cases, as detailed in Remark 1, we can eliminate the logarithmic term.} \\
{\blue We also introduce an adaptive procedure based on the Goldenshluger-Lepski method, which considers both the integrated and pointwise $L^2$ risk. This procedure selects a bandwidth that strikes an optimal balance between bias and variance based on the available data. The motivation behind studying the data-selection procedure with respect to the integrated $L^2$ risk is to prevent the potential loss of a logarithm in our estimation, as discussed in \cite{BroLow} for example. This data-driven procedure is capable of automatically detecting the presence of a logarithmic term in the convergence rate.} \\
{\modchi It is important to note that while the convergence rate determined earlier, dependent on $\bar{\beta}_3$, may not appear immediately intuitive, it aligns with the findings in \cite{Lower bound} for estimating the invariant density associated with a L\'evy-driven SDE. In the mentioned paper, numerical results underscore the distinct role played by the two smallest levels of smoothness compared to the others. In our work, we delve into the specifics of this role, and with the result presented in \eqref{eq: conv rate intro}, we can elucidate their impact on the convergence rate. Furthermore, we remove the boundedness condition on the drift coefficient.}
We also prove the following upper bound, in the bidimensional context: {\rev there exist $T_0$ and $c > 0$ such that, for any $T \ge T_0$, }
$$\sup_{(a,b)\in \Sigma} \mathbb{E} [|\hat{\pi}_{h,T}(x) - \pi (x)|^2] \le c \frac{\log T}{T}.$$
{\change The convergence rate described above is consistent with that in \cite{RD} and \cite{Strauch}. However, the question of whether this rate is optimal had remained unanswered until now. To our knowledge, the only existing results in this context are some lower bounds for the sup-norm risk, as demonstrated in \cite{Strauch} and analogous findings in the jump-framework discussed in \cite{Optimal}.\\
In this paper we obtain a minimax lower bound on the $L^2$-risk for the pointwise estimation with the same rate $\frac{\log T}{T}$. }
 To do that, we introduce the minimax risk 
$$ 	\mathcal{R}_T:= \inf_{\tilde{\pi}_T} \sup_{(a,b) \in \Sigma} \mathbb{E}[(\tilde{\pi}_T (x_0) - \pi_{(a,b)} (x_0))^2],$$
where the infimum is taken over all the possible estimators of the invariant density. Then, we show that {\rev there exist $c> 0$ and $ T_0 > 0$ such that, for $T \ge T_0$,}
$$\mathcal{R}_T \ge {\rev c} \frac{\log T}{T}.$$
{\change For $d=2$, it is not possible to propose an estimator with a better convergence rate than the one we have found for the kernel density estimator we introduced.} \\
Regarding the case $d \ge 3$ we prove that, in general, the following lower bound holds true: {\rev there exists $c > 0$ and $T_0 > 0$ such that, for any $T \ge T_0$,}
$$\mathcal{R}_T \ge c(\frac{1}{T})^{ \frac{2\bar{\beta}_3}{2\bar{\beta}_3+ d - 2}}.$$
{\modchi This result is not very surprising, as an analogous lower bound has been showed in \cite{Lower bound} for the estimation of the invariant density associated to a L\'evy driven SDE. What is instead quite surprising is that,} when $\beta_2 < \beta_3$, it is possible to improve the result here above by showing {\rev there exist $T_0$ and $c > 0$ such that, for any $T \ge T_0$, }
$$\mathcal{R}_T \ge c(\frac{\log T}{T})^{ \frac{2\bar{\beta}_3}{2\bar{\beta}_3+ d - 2}}.$$
These results highlight how crucial the condition $\beta_2 < \beta_3$ is. They imply that, on a class of diffusions $X$ whose invariant density has the prescribed regularity, it is not possible to find an estimator with rates of estimation faster than in \eqref{eq: conv rate intro}. It follows that the kernel density estimator achieves the best possible convergence rates.

It is worth remarking that the upper bound for the mean squared error relies on an upper bound for the transition density and the mixing properties of the process. Due to the latter, the constants involved are dependent on the coefficients, and it can be quite challenging, in general, to establish uniform control over these constants. However, relying on the theory of Lyapounov-Poincar\'e as introduced in \cite{Bak_et_al08}, we prove a mixing inequality uniform on the class of coefficient $(a,b) \in \Sigma$ (see Proposition \ref{P:mixing uniform}).
{\revar {\change We remark that the Lyapounov-Poincar\'e method can be extended to get uniform mixing inequality also in more general frameworks, such as the jump context.}} \\
We therefore get a minimax upper bound on the risk of the estimator $\hat{\pi}_{h,T}$ which is uniform over the class of coefficient $\Sigma$ and we complement it by obtaining a minimax lower bound on the risk of the estimator $\hat{\pi}_{h,T}$.
{\blue Finally, we present numerical results to demonstrate the successful validation of our theoretical findings.}

The outline of the paper is the following. In Section \ref{s: model}, we introduce the model and its assumptions. In Section \ref{S: Estimator}, we present the kernel estimator and provide upper bounds for the mean squared error. {\revarn The Section \ref{Ss:Adaptive} is devoted to the adaptive version of the estimator and associated upper bounds.}
Section \ref{S: Lower bounds} is dedicated to stating complementary lower bounds, complementing the results in the previous section. {\revarn In Section \ref{S:Numerical}, we present numerical results on the adaptive estimators.} In Section \ref{s: proofs upper} we prove the upper bounds stated in Section \ref{S: Estimator} {\revarn while Section \ref{S: proof adaptive} contains proofs related to the adaptive version of the estimators. The Section \ref{s: proof lower} is devoted to the proof of the lower bounds. Some technical results are moreover showed in the appendix.} 
%while  \textcolor{red}{To modify once we add simulation.}

\section{Model Assumptions}{\label{s: model}}
We want to estimate in a non parametric way the invariant density associated to a d-dimensional diffusion process $X$. In the sequel we assume that a continuous record of the process $X^T = \left \{  X_t, 0 \le t \le T \right \}$ up to time $T$ is available. The diffusion is a solution of the following stochastic differential equation:
\begin{equation}
X_t= X_0 + \int_0^t b( X_s)ds + \int_0^t a(X_s)dW_s, \quad t \in [0,T], 
\label{eq: model}
\end{equation}
where $b : \mathbb{R}^d \rightarrow \mathbb{R}^d$, $a : \mathbb{R}^d \rightarrow \mathbb{R}^d \times \mathbb{R}^d$ and $W = (W_t, t \ge 0)$ is a d-dimensional Brownian motion. The initial condition $X_0$ and $W$ are independent. We denote $\tilde{a}:= a \cdot a^T$. \\
 We denote with $|.|$ and $<., . >$ respectively the Euclidian norm and the scalar product in $\mathbb{R}^d$, and for a matrix in $\mathbb{R}^d \otimes \mathbb{R}^d$ we denote its operator norm by  $|\cdot|$. \\
 \\
\textbf{A1}: \textit{The functions $b(x)$ and $a(x)$ are globally Lipschitz functions of class $\mathcal{C}^1$, such that for all $x\in\mathbb{R}^d$,
	$$|b(0)|\le b_0, ~ |a(x)|\le a_0,~|\frac{\partial}{\partial x_i} b(x)|\le b_1, ~ |\frac{\partial}{\partial x_i} a(x)|\le a_1 ,\text{ for $i \in \{1,\dots,d\}$},$$
	where $b_0> 0,~a_0>0,~a_1>0,~b_1>0$ are some constants.
	% and  $|.|$ is the Euclidian norm.
	Moreover, for some $a_{\text{min}} > 0$,
	$$a_{\text{min}}^2 \mathbb{I}_{d \times d} \le \tilde{a}(x) $$%\le c \mathbb{I}_{d \times d}, $$
	where $\mathbb{I}_{d \times d}$ denotes the $d \times d$ identity matrix.} \\
	\\
	\textbf{A2 (Drift condition) }: \textit{ \\
 There exist $\tilde{C} > 0$ and $\tilde{\rho} > 0$ such that $<x, b(x)>\, \le -\tilde{C} |x|$, $\forall x : |x| \ge \tilde{\rho}$.
 } \\
\\
The assumptions here above involve the ergodicity of the process and they are needed in order to show the existence of a Lyapounov function {\modchi (see for example \cite{Bak_et_al08})}. Hence, the process $X$ admits a unique invariant distribution $\mu$ and the ergodic theorem holds (see Lemma \ref{lemma: ergodicity} below). We suppose that the invariant probability measure $\mu$ of $X$ is absolutely continuous with respect to the Lebesgue measure and from now on we will denote its density as $\pi$: $ d\mu = \pi dx$. \\
\\
As in several cases the regularity of some function $g: \mathbb{R}^d \rightarrow \mathbb{R}$ depends on the direction in $\mathbb{R}^d$ chosen, we decide to work under anisotropic smoothness constraints. In particular, we assume that the density $\pi$ we want to estimate belongs to the anisotropic H{\"o}lder class $\mathcal{H}_d (\beta, \mathcal{L})$ defined below.
\begin{definition}
Let $\beta = (\beta_1, ... , \beta_d)$, $\beta_i \ge 0$, $\mathcal{L} =(\mathcal{L}_1, ... , \mathcal{L}_d)$, $\mathcal{L}_i > 0$. A function $g : \mathbb{R}^d \rightarrow \mathbb{R}$ is said to belong to the anisotropic H{\"o}lder class $\mathcal{H}_d (\beta, \mathcal{L})$ of functions if, for all $i \in \left \{ 1, ... , d \right \}$,
$$\left \| D_i^k g \right \|_\infty \le \mathcal{L}_i \qquad \forall k = 0,1, ... , \lfloor \beta_i \rfloor, $$
$$\left \| D_i^{\lfloor \beta_i \rfloor} g(. + t e_i) - D_i^{\lfloor \beta_i \rfloor} g(.) \right \|_\infty \le \mathcal{L}_i |t|^{\beta_i - \lfloor \beta_i \rfloor} \qquad \forall t \in \mathbb{R},$$
for $D_i^k g$ denoting the $k$-th order partial derivative of $g$ with respect to the $i$-th component, $\lfloor \beta_i \rfloor$ denoting the largest integer strictly smaller than $\beta_i$ and $e_1, ... , e_d$ denoting the canonical basis in $\mathbb{R}^d$.
\end{definition}

	This leads us to consider a class of coefficients $(a,b)$ for which the stationary density $\pi=\pi_{(a,b)}$ has some prescribed H\"older regularity.
 	\begin{definition}
		Let $\beta = (\beta_1, ... , \beta_d)$, $\beta_i \ge 0$ and $\mathcal{L} =(\mathcal{L}_1, ... , \mathcal{L}_d)$, $\mathcal{L}_i > 0$, $0<a_\text{min}\le a_0$ and $b_0>0, ~a_1>0, ~b_1>0, ~\tilde{C}>0, ~\tilde{\rho}>0$.
		
		We define $\Sigma (\beta, \mathcal{L},a_{\text{min}},b_0,a_0,a_1,b_1,\tilde{C},\tilde{\rho})$ the set of couple of functions $(a,b)$ where  $a: \mathbb{R}^d \rightarrow \mathbb{R}^d\otimes\mathbb{R}^d$ and $b: \mathbb{R}^d \rightarrow \mathbb{R}^d$ are such that 
		\begin{itemize}
			\item 		$a$ and $b$ satisfy Assumption A1 with the constants $(a_\text{min},b_0,a_0,a_1,b_1)$,
			\item 		$b$ satisfies Assumption A2 with the constants $(\tilde{C},\tilde{\rho})$,
			\item  the density $\pi_{(a,b)}$ of the invariant measure associated to the stochastic differential equation \eqref{eq: model} belongs to $\mathcal{H}_d (\beta, 2 \mathcal{L})$.
		\end{itemize}
		\label{def: insieme sigma v2}
	\end{definition}

In the sequel we will use repeatedly some known results about the transition density and the ergodicity of diffusion processes.
{\modchi The bound on the transition density in case of unbounded drift follows from \cite{MenPes}. As we will see, comparing it with the results known for the bound of the transition density in case of bounded drift, the main difference is that the departed point is now replaced by the flow of the departed point. For fixed $(s, x) \in \mathbb{R}^+ \times \mathbb{R}^d$, we denote by $\theta_{t,s}(x)$ the deterministic flow solving
$$\dot{\theta}_{t,s}(x) = b(\theta_{t,s}(x)), \quad t \ge 0, \qquad \theta_{s,s}(x) = x.$$
%Note that $(\theta_{t,s}(x))_{t \ge s}$ stands for a forward flow while  $(\theta_{t,s}(x))_{t \le s}$ stands for a backward flow.
Then, the first point of Theorem 1.2 of \cite{MenPes} ensures the following bounds: 
\begin{lemma}{[Theorem 1.2 of \cite{MenPes}]}
Under A1, for any $T > 0$ $(s, t) \in [0, \infty)^2$, $0 < t-s < T$ and $x \in \mathbb{R}^d$, the unique weak solution of \eqref{eq: model} admits a density $p_{t - s}(x,y)$ which is continuous in $x, y \in \mathbb{R}^d$. Moreover, there exist $\lambda_0 \in (0, 1]$, $C_0 \ge 1$ depending on $T, a_0, a_{min}, d,$ and the Lipschitz constants in A1 such that, for any $(s,t)$ such that $0< t-s < T$ and $x,y \in \mathbb{R}^d$ it is 
\begin{align}{\label{eq: bound Pesce}}
C_0^{-1} (t-s)^{- \frac{d}{2}} \exp{\big( - \lambda_0^{-1} \frac{|\theta_{t,s}(x) - y|^2}{t-s} \big)} \le p_{t-s}(x,y) \le C_0 (t-s)^{- \frac{d}{2}} \exp{\big( - \lambda_0 \frac{|\theta_{t,s}(x) - y|^2}{t-s}\big)}{\rev.}
\end{align}
\label{l: bound with flow}
\end{lemma}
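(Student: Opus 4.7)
The plan is to prove these bounds by the parametrix method of McKean--Singer, adapted to handle the unbounded drift by centering the Gaussian proxy on the deterministic flow $\theta_{t,s}(x)$ instead of on the starting point $x$. The first step is to introduce a linearized frozen process $\tilde{X}_u := \theta_{u,s}(x) + \int_s^u a(\theta_{r,s}(x))\,dW_r$, whose density $\tilde{p}_{t-s}(x,\cdot)$ is an explicit Gaussian with mean $\theta_{t,s}(x)$ and covariance $\int_s^t \tilde{a}(\theta_{r,s}(x))\,dr$. Under A1 this covariance is sandwiched between $a_{\min}^2(t-s)I_d$ and $a_0^2(t-s)I_d$, so $\tilde{p}$ itself already satisfies the two-sided Gaussian estimate of the lemma, with constants depending only on $a_0,a_{\min},d$.

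Next, I would write the formal parametrix expansion
$$p_{t-s}(x,y) = \tilde{p}_{t-s}(x,y) + \sum_{k\ge 1} (\tilde{p} \otimes H^{(k)})_{t-s}(x,y),$$
where $H(u,z):=(L-\tilde{L})\tilde{p}_{t-u}(z,y)$ measures the discrepancy between the true generator $L$ and the frozen generator $\tilde{L}$. The key estimate is that the action of $L-\tilde{L}$ on $\tilde{p}$ produces factors of the form $|z-\theta_{u,s}(x)|$ and $|\tilde{a}(z)-\tilde{a}(\theta_{u,s}(x))|$, which by A1 are bounded by Lipschitz constants times $|z-\theta_{u,s}(x)|$; combined with the Gaussian concentration of $\tilde{p}_{u-s}$, each convolution gains a small power of $(u-s)^{1/2}$. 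A standard iteration of Gaussian convolutions centered along the flow then shows that the series converges and inherits the Gaussian upper bound centered at $\theta_{t,s}(x)$.

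For the lower bound I would use an Aronson-type chaining argument. On a short horizon $t-s\le\tau_0$ the parametrix expansion directly gives $p_{t-s}(x,y) \ge \tfrac{1}{2}\tilde{p}_{t-s}(x,y)$, which is already the correct Gaussian lower bound. For general $t-s\le T$, I would split $[s,t]$ into $N\asymp (t-s)/\tau_0$ subintervals, apply Chapman--Kolmogorov with a sequence of intermediate points interpolating between $\theta_{t,s}(x)$ and $y$, and iterate the short-time lower bound along the chain. This produces the global lower bound at the price of degrading the constant $\lambda_0$ by a factor depending on $N$, hence on $T$.

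The main obstacle I anticipate is the uniform tracking of constants under unbounded drift. The flow $\theta_{u,s}(x)$ can grow like $|x|$ times $e^{b_1(u-s)}$ by Gronwall, and one must check that the Lipschitz gains in $(L-\tilde{L})\tilde{p}$ dominate the linear-in-$x$ growth of $b$ uniformly, so that the final constants depend only on $a_0,a_{\min},a_1,b_0,b_1,d,T$. This uniformity is precisely what makes the bound usable for the minimax analysis over the class $\Sigma$, and verifying it amounts to keeping explicit track of the Lipschitz constants through the parametrix iteration and the chaining, rather than using compactness or localization arguments that would hide dependence on unspecified features of $(a,b)$.
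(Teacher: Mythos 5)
The paper does not actually prove Lemma \ref{l: bound with flow}: it is imported verbatim from Theorem 1.2 of \cite{MenPes}, so there is no internal proof to compare against. That said, your strategy — a parametrix expansion whose Gaussian proxy is recentred on the deterministic flow $\theta_{t,s}(x)$, so that the unbounded drift only ever enters through the Lipschitz-controlled difference $b(z)-b(\theta_{u,s}(x))$, followed by an Aronson-type chaining for the lower bound — is precisely the mechanism of that reference, and your insistence on tracking the constants explicitly through $a_0,a_{\min},a_1,b_0,b_1,d,T$ is exactly what makes the bound usable uniformly over $\Sigma$ in the rest of the paper.

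One step of your sketch is not correct as written: the claim that for $t-s\le\tau_0$ the expansion \emph{directly} yields $p_{t-s}(x,y)\ge\tfrac12\,\tilde p_{t-s}(x,y)$ for all $y$. The parametrix remainder is bounded by $C\sqrt{t-s}$ times a Gaussian with a \emph{strictly smaller} constant in the exponent than that of $\tilde p$, so in the off-diagonal regime $|y-\theta_{t,s}(x)|^2\gg t-s$ the remainder is not dominated by $\tilde p$ and the direct comparison fails. The short-time lower bound therefore only comes for free on the diagonal $|y-\theta_{t,s}(x)|^2\le K(t-s)$, and the Chapman--Kolmogorov chaining along intermediate points is already needed at fixed short time to reach off-diagonal $y$ — not only to extend the time horizon up to $T$. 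This is a standard and fixable point, but as stated it is a gap.
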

The lemma here above implies the following corollary, whose proof can be found in the Appendix. 
\begin{corollary}
Under A1-A2, there exist $\tilde{\lambda_0} \in (0, 1]$, $\tilde{C_0}, \Tilde{C_1} \ge 1$ and $c \ge 1$ such that, for any $(s,t)$ such that $0< t-s < T$ and $x,y \in \mathbb{R}^d$ it is 
\begin{align}
	\label{E:equation borne sup inf gaussienne}
\tilde{C_0} (t-s)^{- \frac{d}{2}} e^{ - \tilde{\lambda_0}^{-1} \frac{|x - y|^2}{t-s} } e^{- c (|x|^2 + 1) |t -s|} \le p_{t-s}(x,y)\le \tilde{C_1} (t-s)^{- \frac{d}{2}} e^{ - \tilde{\lambda_0} \frac{|x - y|^2}{t-s} } e^{ c (|x|^2 + 1) |t -s|}.
\end{align}
Moreover, the constants depend only on $T ,~a_\text{min},~b_0,~a_0,~ a_1$, $b_1$ and $d$.
 %, is uniform over the set of coefficients $(a,b) \in \Sigma$.}
\label{lemma: bound transition density}
\end{corollary}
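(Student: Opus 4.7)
The plan is to deduce the corollary directly from Lemma \ref{l: bound with flow} by comparing the flow $\theta_{t,s}(x)$ with $x$ itself and absorbing the discrepancy into the correction factor $e^{\pm c(|x|^2+1)|t-s|}$. Note that A2 is not actually used for this step; only A1 enters, since the claim is a pointwise estimate for $p_{t-s}$ on a bounded time window. Observe also that the announced dependence of constants is consistent with this: they depend on $T,a_{\min},b_0,a_0,a_1,b_1,d$, i.e.\ precisely the constants governing A1 and Lemma \ref{l: bound with flow}.

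First I would control the drift of the deterministic flow. Using A1, one has $|b(z)|\le b_0+b_1\sqrt{d}\,|z|$, hence
\begin{equation*}
|\theta_{t,s}(x)|\le |x|+\int_s^t(b_0+b_1\sqrt{d}\,|\theta_{u,s}(x)|)\,du.
\end{equation*}
A Gronwall argument on $[s,t]\subset[0,T]$ then yields $|\theta_{t,s}(x)|\le K(|x|+1)$ for some $K=K(T,b_0,b_1,d)$, and integrating once more
\begin{equation*}
|\theta_{t,s}(x)-x|\le \int_s^t|b(\theta_{u,s}(x))|\,du\le K'(|x|+1)(t-s),
\end{equation*}
for a constant $K'$ of the same nature. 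This is the only ingredient from A1 I really need besides Lemma \ref{l: bound with flow}.

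Next I would use the two elementary quadratic inequalities
\begin{equation*}
|\theta_{t,s}(x)-y|^2\ge \tfrac{1}{2}|x-y|^2-|\theta_{t,s}(x)-x|^2,\qquad |\theta_{t,s}(x)-y|^2\le 2|x-y|^2+2|\theta_{t,s}(x)-x|^2,
\end{equation*}
and combine them with the flow estimate above, which gives $|\theta_{t,s}(x)-x|^2/(t-s)\le (K')^2(|x|+1)^2(t-s)\le 2(K')^2(|x|^2+1)(t-s)$. Plugging these into the Gaussian exponents of \eqref{eq: bound Pesce} produces
\begin{equation*}
-\lambda_0\frac{|\theta_{t,s}(x)-y|^2}{t-s}\le -\frac{\lambda_0}{2}\frac{|x-y|^2}{t-s}+2\lambda_0(K')^2(|x|^2+1)(t-s),
\end{equation*}
and symmetrically for the lower bound with $\lambda_0$ replaced by $\lambda_0^{-1}$ and the factor $2$ in front of $|x-y|^2/(t-s)$. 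Setting $\tilde\lambda_0:=\lambda_0/2$ and choosing $c$ to absorb the worst of the two constants $2\lambda_0(K')^2$ and $4\lambda_0^{-1}(K')^2$ yields the two-sided bound \eqref{E:equation borne sup inf gaussienne} with $\tilde C_0=C_0^{-1}$ and $\tilde C_1=C_0$.

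There is no real obstacle here, only careful bookkeeping: the one thing to watch is that the Gronwall constant $K'$, and therefore $c$, depends on $T$ through $e^{b_1\sqrt{d}\,T}$, which is fine since $T$ is one of the declared parameters. The quadratic growth $(|x|^2+1)|t-s|$ in the exponent is the best one can hope for when replacing $\theta_{t,s}(x)$ by $x$ in a Lipschitz drift setting, and it is exactly what is claimed.
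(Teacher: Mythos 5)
Your proof is correct and follows the same overall route as the paper's: both deduce the corollary from Lemma \ref{l: bound with flow} via the elementary quadratic inequalities relating $|\theta_{t,s}(x)-y|^2$ to $|x-y|^2$ and $|\theta_{t,s}(x)-x|^2$, and both reduce the matter to the flow estimate $|\theta_{t,s}(x)-x|\le C(|x|+1)(t-s)$. The one point of genuine divergence is how that flow estimate is obtained. The paper does invoke A2: the dissipativity $\langle z,b(z)\rangle<0$ for $|z|$ large makes $r\mapsto|\theta_{r,s}(x)|$ non-increasing once it exceeds a fixed threshold, whence $|\theta_{r,s}(x)|\le \max(|x|,1)$ for all $r$ and then $|\theta_{t,s}(x)-x|\le\int_s^t|b(\theta_{r,s}(x))|\,dr\le \big(b_0+b_1(|x|+1)\big)(t-s)$, with a constant that does not degrade in $T$. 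You instead run Gronwall under A1 alone, which costs you a factor of order $e^{cT}$ in $K'$ (harmless here, since $T$ is a declared parameter) but buys the observation — correct, and consistent with the fact that the stated constants do not involve $\tilde C$ or $\tilde\rho$ — that A2 is not actually needed for this pointwise bound on a bounded time window. Your bookkeeping of the exponents ($\tilde\lambda_0=\lambda_0/2$, $c$ absorbing $4\lambda_0^{-1}(K')^2$ and taken $\ge 1$) is consistent with the claim.
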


}

\begin{lemma}
Suppose that A1-A2 hold true. Then, {\rev the} process $X$ admits a unique invariant measure $\pi$ and it is exponentially $\beta$-mixing.
%{\modar There exist  constants $\rho>0$ and $C$ such that for any bounded measurable function
%	$\varphi : \mathbb{R^d} \to \mathbb{R}$ with 
%	$\int_{\mathbb{R}^d} \varphi(x)\pi(dx)=0$, we have for all $t>0$,
%	\begin{equation}\label{E:mix_unif}
%	\abs{\E_\pi \left[\varphi(X_t)\varphi(X_0)\right] }\le C e^{-\rho t} \norm{\varphi}_\infty^2.
%	\end{equation}
%Moreover, the constants $C$ and $\rho$ are uniform over the set of coefficients $\Sigma$.}
\label{lemma: ergodicity}
\end{lemma}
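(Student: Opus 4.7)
The plan is to combine a Foster--Lyapunov drift inequality derived from A2 with the irreducibility and smallness of compact sets provided by the Gaussian transition bounds of Corollary~\ref{lemma: bound transition density}, in order to invoke the continuous-time Meyn--Tweedie / Down--Meyn--Tweedie criterion for $V$-geometric ergodicity. From $V$-geometric ergodicity in total variation, exponential $\beta$-mixing then follows by a standard stationarity argument.

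The first and most delicate step is the choice of an \emph{exponentially growing} Lyapunov function. Set $V(x)=\exp\!\bigl(c\sqrt{1+|x|^2}\bigr)$ for a small constant $c>0$ to be tuned. Writing $\phi(x):=\sqrt{1+|x|^2}$ and computing the infinitesimal generator $\mathcal{L}=\tfrac12\mathrm{tr}(\tilde a(x)D^2\cdot)+\langle b(x),\nabla\cdot\rangle$, one obtains
\begin{equation*}
\frac{\mathcal{L}V(x)}{V(x)}=c\,\frac{\langle x,b(x)\rangle}{\phi(x)}+\frac{c^2}{2}\frac{\langle \tilde a(x)x,x\rangle}{\phi(x)^2}+O\!\bigl(\tfrac{1}{|x|}\bigr).
\end{equation*}
Assumption A2 forces the first term to be at most $-c\tilde C$ as soon as $|x|\ge \tilde\rho$, while A1 bounds the second by $\tfrac{c^2}{2}a_0^2$. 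Choosing $c<2\tilde C/a_0^2$ makes the leading constant strictly negative, so that for some $\lambda>0$, $K>0$, $R>0$,
\begin{equation*}
\mathcal{L}V(x)\le -\lambda\,V(x)+K\,\mathbf{1}_{B(0,R)}(x),\qquad x\in\mathbb{R}^d.
\end{equation*}

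The second step is structural. Corollary~\ref{lemma: bound transition density} ensures that for every $t>0$ the transition kernel $P_t(x,dy)$ has a jointly continuous, strictly positive density, with a Gaussian lower bound uniform on compacts. Consequently, $X$ is $\mathrm{Leb}$-irreducible and aperiodic, the semigroup is strong Feller, and every compact set is \emph{small} in the Meyn--Tweedie sense: one can minorize $P_t(x,\cdot)$ on any ball by a fixed positive multiple of Lebesgue measure restricted to a larger ball. Plugging the drift inequality of the previous paragraph and this smallness property into the Down--Meyn--Tweedie theorem yields the existence of a unique invariant probability measure $\mu=\pi\,dx$, with $\mu(V)<\infty$, together with constants $\rho>0$, $C>0$ such that
\begin{equation*}
\|P_t(x,\cdot)-\mu\|_{TV}\le C\,V(x)\,e^{-\rho t},\qquad t\ge 0,\ x\in\mathbb{R}^d.
\end{equation*}
Integrating against $\mu$ and using $\mu(V)<\infty$ gives $\beta_X(t)\le \int\|P_t(x,\cdot)-\mu\|_{TV}\,d\mu(x)\le C\,\mu(V)\,e^{-\rho t}$, which is the claimed exponential $\beta$-mixing.

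The main obstacle lies entirely in the first step. Condition A2 is only linear in $|x|$, so the naive polynomial choice $V(x)=1+|x|^2$ yields $\mathcal{L}V(x)\lesssim -|x|$, which is of the form $-C\sqrt{V}$ and produces only \emph{subgeometric} rates. The exponential envelope $V(x)=\exp(c\phi(x))$ is precisely what converts the linear inward drift into a genuine multiplicative term $-\lambda V$, at the price of having to balance $c$ against the ellipticity upper bound $a_0$ in A1 so that $c^2 a_0^2/2 < c\tilde C$. Once this tuning is performed, the rest of the argument is routine Meyn--Tweedie machinery.
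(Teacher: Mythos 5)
Your argument is correct, but it is worth pointing out that the paper does not actually prove Lemma~\ref{lemma: ergodicity}: it simply refers to Pardoux and Veretennikov \cite{Par_Ver} for the ergodicity and exponential $\beta$-mixing. What you have written is therefore a self-contained substitute for that citation, and it is the classical one: an exponential Foster--Lyapunov function tuned so that the quadratic diffusion contribution $\tfrac{c^2}{2}\langle \tilde a(x)x,x\rangle/\phi(x)^2\le \tfrac{c^2 a_0^2}{2}$ is dominated by the linear inward drift $-c\tilde C$ from A2, followed by the Down--Meyn--Tweedie theorem (smallness of compacts coming from the two-sided Gaussian bounds of Corollary~\ref{lemma: bound transition density}) and the identity $\beta_X(t)=\int\|P_t(x,\cdot)-\mu\|_{TV}\,d\mu(x)$ for stationary Markov processes. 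Interestingly, the Lyapunov function you build is essentially the same as the one the authors construct in the appendix for the \emph{uniform} mixing statement of Lemma~\ref{L:hyp sigma stronger}, namely $V(x)=e^{\varepsilon_0\chi(x)}$ with $\varepsilon_0$ small compared to $\tilde C/a_0^2$; but there they feed the drift inequality into a $W$-Lyapunov--Poincar\'e inequality \`a la Bakry--Cattiaux--Guillin rather than into total-variation geometric ergodicity. The Lyapunov--Poincar\'e route buys explicit, quantifiable control of the mixing constant uniformly over the class $\Sigma$ (which is what the minimax upper bounds require), whereas your Meyn--Tweedie route is shorter and suffices for the purely qualitative statement of Lemma~\ref{lemma: ergodicity}, at the cost of constants that are not easily tracked uniformly in $(a,b)$. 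Within its intended scope your proof is sound; the only steps left implicit — non-explosion, $\mu(V)<\infty$ via $0=\mu(\mathcal{L}V)\le-\lambda\mu(V)+K$, and the reduction of the $\beta$-coefficient of past and future $\sigma$-fields to that of $(X_0,X_t)$ by the Markov property — are all standard.
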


{\modchi We will provide the proof of Corollary \ref{lemma: bound transition density} in the appendix,} while the ergodic property stated in Lemma \ref{lemma: ergodicity} can be found in \cite{Par_Ver}.

\section{Estimator and upper bounds}\label{S: Estimator}

Given the observation $X^T$ of a diffusion $X$, solution of \eqref{eq: model}, we propose to estimate the invariant density $\pi \in \mathcal{H}_d (\beta, \mathcal{L})$ by means of a kernel estimator.
We therefore introduce some kernel function $K: \mathbb{R} \rightarrow \mathbb{R}$ satisfying 
\begin{equation}
\int_\mathbb{R} K(x) dx = 1, \quad \left \| K \right \|_\infty < \infty, \quad \mbox{supp}(K) \subset [-1, 1], \quad \int_\mathbb{R} K(x) x^l dx = 0,
\label{eq: properties K}
\end{equation}
for all $l \in \left \{ 0, ... , M \right \}$ with $M \ge \max_i \beta_i$. \\
For $j \in \left \{ 1, ... , d \right \}$, we denote by $X_t^j$ the $j$-th component of $X_t$. A natural estimator of $\pi$ at $x= (x_1, ... , x_d)^T \in \mathbb{R}^d$ in the anisotropic context is given by 
\begin{equation}
\hat{\pi}_{h,T}(x) = \frac{1}{T \prod_{l = 1}^d h_l} \int_0^T \prod_{m = 1}^d K(\frac{x_m - X_u^m}{h_m}) du = : \frac{1}{T} \int_0^T \mathbb{K}_h(x - X_u) du.
\label{eq: def estimator}
\end{equation}
The multi-index $h=(h_1, ... , h_d) $ is small. In particular, we assume $h_i < 1$ for any $i \in \{1, ... , d \} $. \\
{\modchi A simple reason why this estimator results appropriate for the estimation of the invariant density is that, thanks to Birkhoff's ergodic theorem, 
$\hat{\pi}_{h,T}(x) = \frac{1}{T} \int_0^T \mathbb{K}_h(x - X_u) du$ converges almost surely, for $T \rightarrow \infty$, to $\E[\mathbb{K}_h(x - X_0)] = (\mathbb{K}_h \ast \pi) (x)$, which converges to $\pi(x)$ for $h \rightarrow 0$.} \\
%We now want to discuss the convergence rates achieved from the estimator defined in \eqref{eq: def estimator} when a continuous record of the process $X$ is available. 

	In order to get minimax upper bound on the risk of the estimator $\hat{\pi}_{h,T}$ we will need to apply a mixing
inequality uniform on the class of coefficients $(a,b) \in \Sigma (\beta, \mathcal{L},a_{\text{min}},b_0,a_0,a_1,b_1,\tilde{C},\tilde{\rho})$. The following lemma will be useful in this sense. {\modch In the sequel, we will denote as $\mathbb{E}$ the expectation 
	with respect to {\modar the stationary process $(X_t)_{t\ge0}$, whose invariant probability {\rev density} is $\pi = \pi_{(a,b)}$.}} \\
{\rev Let us introduce the notation $\pi(f)$ for the expectation of the function $f$ with respect to the invariant density $\pi$, i.e.
$\pi(f):= \int_{\R^d} f(x) \pi(x) dx$. Then, the following bounds hold true:}

\begin{lemma} \label{L:hyp sigma stronger}
{ \modch Let $\beta=(\beta_1,\dots,\beta_d)$, $\beta_i\ge 0$, $\mathcal{L}=(\mathcal{L}_1,\dots,\mathcal{L}_d)$, $\mathcal{L}_i>0$, $0<a_\text{min}\le a_0$, $b_0>0, ~a_1>0, ~ b_1>0$, $\tilde{C}>$, $\tilde{\rho}>0$.
 There exists a constant $C>0$ independent of $(a,b) \in \Sigma := \Sigma (\beta, \mathcal{L},a_{\text{min}},b_0,a_0,a_1,b_1,\tilde{C},\tilde{\rho})$ such that}, for all measurable function $\varphi : \mathbb{R}^d \to \mathbb{R}$ bounded, we have
	\begin{equation}\label{E: resu mixing reg non reg}
		\abs{	{\modch \mathbb{E}}[\varphi(X_t)\varphi(X_0)] - \pi(\varphi)^2} \le C e^{-t/C} \norm{\varphi}_\infty^2.
	\end{equation}
\end{lemma}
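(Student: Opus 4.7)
The plan is to reduce the covariance bound to a uniform Poincar\'e inequality for the invariant measure $\pi=\pi_{(a,b)}$, and conclude by a standard semigroup argument. Let $P_t$ denote the Markov semigroup of $X$, $L$ its infinitesimal generator, and set $\psi:=\varphi-\pi(\varphi)$. Since
$$\mathbb E[\varphi(X_t)\varphi(X_0)]-\pi(\varphi)^2=\int\psi(x)(P_t\psi)(x)\pi(x)dx,$$
Cauchy--Schwarz in $L^2(\pi)$ together with $\|\psi\|_{L^2(\pi)}^2=\mathrm{Var}_\pi(\varphi)\le\|\varphi\|_\infty^2$ reduces the task to exhibiting a constant $C_P>0$, uniform over $\Sigma$, such that $\mathrm{Var}_\pi(P_t\varphi)\le e^{-2t/C_P}\mathrm{Var}_\pi(\varphi)$. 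Such an exponential variance decay is equivalent to a Poincar\'e inequality $\mathrm{Var}_\pi(f)\le C_P\int|a^T\nabla f|^2d\pi$, which I plan to establish via the Lyapunov--Poincar\'e method of \cite{Bak_et_al08}.

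Two ingredients are needed, both with constants depending only on the parameters of $\Sigma$. First, I would construct a Lyapunov function of the form $V(x)=\exp(\alpha\sqrt{1+|x|^2})$ for a small $\alpha>0$. A direct computation using A1 gives $\tfrac12\mathrm{tr}(\tilde a\nabla^2 V)\le C_d\alpha^2a_0^2\,V$, while A2 yields $\langle b,\nabla V\rangle\le-\tfrac{\alpha\tilde C}{2}V$ for $|x|$ outside a compact set; taking $\alpha$ small depending on $(\tilde C,a_0,d)$ produces the drift inequality $LV\le-\theta V+b\,\mathbf 1_K$, with $\theta,b$ and the compact $K$ depending only on the class parameters. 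Second, I would establish a local Poincar\'e inequality for $\pi$ on a neighborhood of $K$, whose constant is controlled by $a_{\min}$ and by uniform pointwise upper and lower bounds for $\pi$ on compact sets. The latter would follow from Corollary~\ref{lemma: bound transition density}: writing $\pi(y)=\int p_t(x,y)\pi(x)dx$, the Lyapunov function provides uniform tail control on $\pi$ outside a large ball, while the Gaussian-type bounds on $p_t$ then yield the desired uniform bounds on $\pi$ on compact sets. Plugging both ingredients into the Bakry--Cattiaux--Guillin criterion produces the Poincar\'e constant $C_P$ uniformly in $(a,b)\in\Sigma$, and the variance decay follows by a standard Gronwall argument.

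The hard part will be the uniform control of the local Poincar\'e constant, since $\pi$ depends on the coefficients in a global, non-local way. The Lyapunov function is the essential tool here: it yields uniform tail control on $\pi$ which, combined with the class-uniform transition density bounds of Corollary~\ref{lemma: bound transition density}, delivers the uniform pointwise bounds on $\pi$ on compact sets that the local Poincar\'e step requires. Once this is in place, all remaining steps---Bakry--Cattiaux--Guillin, variance decay, Cauchy--Schwarz---produce constants depending only on $(a_{\min},a_0,a_1,b_0,b_1,\tilde C,\tilde\rho)$, which yields \eqref{E: resu mixing reg non reg} with $C$ uniform over $\Sigma$.
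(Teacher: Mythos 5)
Your proposal follows essentially the same route as the paper: a Lyapunov--Poincar\'e inequality in the spirit of Bakry--Cattiaux--Guillin, built from an exponential Lyapunov function (the paper takes $V=e^{\varepsilon_0\chi}$ with $\chi(x)=|x|$ away from the origin, and works with the \emph{weighted} $W$-Lyapunov--Poincar\'e inequality rather than the plain Poincar\'e inequality, but both yield the same $L^2(\pi)$ semigroup decay) together with a local Poincar\'e inequality whose constant is controlled by class-uniform two-sided bounds on $\pi$ over compacts, obtained exactly as you propose from the heat-kernel estimates and the Lyapunov tail bound, and the conclusion is the same Gronwall-plus-Cauchy--Schwarz argument. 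The one substantive step you omit is that A1 only guarantees $\mathcal{C}^1$ coefficients, so the paper first establishes the decay for mollified coefficients $(a\star\eta_n,b\star\eta_n)$ (to justify the differentiation of $t\mapsto\|P_t\varphi\|_{L^2(\pi)}^2$ and the integrations by parts underlying the Lyapunov--Poincar\'e machinery) and then passes to the limit $n\to\infty$ using stability of the S.D.E.\ and of the invariant measures; this approximation argument would need to be added for your plan to be complete.
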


As immediate consequence of the fact that $\mathbb{K}$ is compactly supported we get the proposition below, which ensures the uniformity of the constants involved in the mixing properties of the process.  

\begin{proposition} \label{P:mixing uniform}
{\modch Let us consider the same notation as in Lemma \ref{L:hyp sigma stronger}. Then, there exist constants $\rho_{\text{mix}}>0$ and $C_\text{mix}>0$} such that $\forall h=(h_1,\dots,h_d)$ with $h_i<1$,
 %for any bounded measurable function
%	$\varphi : \mathbb{R^d} \to \mathbb{R}$ with 
%	$\int_{\mathbb{R}^d} \varphi(x)\pi(dx)=0$, we have for all $t>0$,
	\begin{equation}\label{E:mix_unif_K}
	\abs{\E \left[\mathbb{K}_h(x - X_t)\mathbb{K}_h(x - X_0)\right]
	- \pi(\mathbb{K}_h)^2 }\le C_\text{mix} \, e^{-\rho_\text{mix} t} \norm{\mathbb{K}_h}_\infty^2.
	\end{equation}
Moreover, the constants $C_\text{mix}$ and $\rho_\text{mix}$ are uniform over the set of coefficients $(a,b)\in\Sigma$.
\end{proposition}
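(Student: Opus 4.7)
The plan is to obtain the proposition as a direct corollary of Lemma \ref{L:hyp sigma stronger} by choosing the test function $\varphi$ to be, up to a reflection, the rescaled product kernel itself. Concretely, for each fixed $x\in\R^d$ and each bandwidth vector $h=(h_1,\dots,h_d)$ with $h_i<1$, I would define
\[
\varphi(y) := \mathbb{K}_h(x - y) = \frac{1}{\prod_{l=1}^d h_l}\prod_{m=1}^d K\!\left(\frac{x_m - y_m}{h_m}\right), \qquad y \in \R^d.
\]
The first step is to check that this $\varphi$ meets the hypotheses of Lemma \ref{L:hyp sigma stronger}. It is Borel measurable as a continuous function of $y$, and it is bounded: by \eqref{eq: properties K}, $\|K\|_\infty<\infty$, so
\[
\|\varphi\|_\infty = \|\mathbb{K}_h(x-\cdot)\|_\infty = \|\mathbb{K}_h\|_\infty < \infty,
\]
the compact support of $K$ being responsible for the finiteness even without any integrability on $\pi$. (This is exactly the remark made in the text preceding the proposition.)

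With this choice, the left-hand side of \eqref{E: resu mixing reg non reg} becomes
\[
\bigl|\E[\mathbb{K}_h(x-X_t)\,\mathbb{K}_h(x-X_0)] - \pi(\mathbb{K}_h(x-\cdot))^2\bigr|,
\]
and since $\pi(\mathbb{K}_h(x-\cdot)) = \int \mathbb{K}_h(x-y)\pi(y)\,dy = (\mathbb{K}_h * \pi)(x)$ coincides with the quantity written $\pi(\mathbb{K}_h)^2$ in \eqref{E:mix_unif_K} (under the convolution convention used in the paper, since $\mathbb{K}_h$ is symmetric in each variable, the notations match). Applying Lemma \ref{L:hyp sigma stronger} then yields the bound $C\,e^{-t/C}\|\mathbb{K}_h\|_\infty^2$, and setting $C_\text{mix}:=C$ and $\rho_\text{mix}:=1/C$ gives exactly \eqref{E:mix_unif_K}.

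For the uniformity statement, the crucial point is that the constant $C$ in Lemma \ref{L:hyp sigma stronger} depends only on the structural parameters $(\beta,\mathcal{L},a_\text{min},b_0,a_0,a_1,b_1,\tilde C,\tilde\rho)$ defining $\Sigma$, and not on the particular choice of $(a,b)\in\Sigma$ nor on the test function. Hence $C_\text{mix}$ and $\rho_\text{mix}$ inherit this uniformity, and in particular they are independent of $h$ and of the point $x$. I do not foresee any real obstacle here: all the analytic work (Lyapunov--Poincaré estimates, uniform control across $\Sigma$) has been absorbed into Lemma \ref{L:hyp sigma stronger}, and the present proposition is essentially a substitution step. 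The only thing to be mildly careful about is the bookkeeping of the $\|\mathbb{K}_h\|_\infty^2$ factor, which blows up as $h\to 0$ but does not affect the exponential decay rate in $t$, so that $\rho_\text{mix}$ can be chosen independently of $h$.
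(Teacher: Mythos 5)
Your proposal is correct and is exactly the argument the paper intends: the proposition is stated there as an immediate consequence of Lemma \ref{L:hyp sigma stronger} applied to the bounded measurable function $\varphi=\mathbb{K}_h(x-\cdot)$, with $C_\text{mix}=C$ and $\rho_\text{mix}=1/C$ inherited uniformly over $\Sigma$. The only cosmetic remark is that boundedness of $\varphi$ already follows from $\|K\|_\infty<\infty$; the compact support is not needed for that step.
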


\subsection{Upper bounds}
%As in low dimension and in {\rev higher} dimension the proposed estimator performs differently, we start considering what happens for $d \ge 3$.

In the sequel, we will denote as $\bar{\beta}_3$ the harmonic mean over the $d-2$ largest components: 
\begin{equation}
\frac{1}{\bar{\beta_3}} := \frac{1}{d -2} \sum_{l \ge 3} \frac{1}{\beta_l}.
\label{eq: beta3}
\end{equation}
Next theorem provides the convergence rate for the pointwise estimation of the invariant density for $d \ge 3$. Its proof can be found in Section \ref{s: proofs upper}.
\begin{theorem} {\modch 
Suppose that $d \ge 3$. Consider $0<a_\text{min}\le a_0$ and $b_0 > 0, ~a_1>0, ~ b_1>0$, $\tilde{C} > 0$, $\tilde{\rho} > 0$ and denote $\Sigma := \Sigma (\beta, \mathcal{L},a_{\text{min}},b_0,a_0,a_1,b_1,\tilde{C},\tilde{\rho})$. If $\beta_1 \le \beta_2 \le ... \le \beta_d$, then there exist $c > 0$ and $T_0 > 0$} such that, for $T \ge T_0$, the optimal choice for the multidimensional bandwidth $h$ yields the following convergence rates.
\begin{itemize}
    \item[$\bullet$] If $\beta_2 < \beta_3$, then
$$  \sup_{(a, b) \in \Sigma} \mathbb{E}[|\hat{\pi}_{h,T}(x) - \pi (x)|^2] \le c (\frac{\log T}{ T})^ {\frac{2\bar{\beta}_3}{2\bar{\beta}_3 + d - 2}}.$$
\item[$\bullet$] If otherwise $\beta_2 = \beta_3$, then 
$$ \sup_{(a, b) \in \Sigma} \mathbb{E}[|\hat{\pi}_{h,T}(x) - \pi (x)|^2] \le c (\frac{1}{T})^ {\frac{2\bar{\beta}_3}{2\bar{\beta}_3 + d - 2}}.$$
\end{itemize}
\label{th: upper bound d ge 3}
\end{theorem}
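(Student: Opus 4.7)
The plan is a bias--variance decomposition in which the novel ingredient is a careful anisotropic analysis of the variance that distinguishes the cases $\beta_2<\beta_3$ and $\beta_2=\beta_3$. Since we work under the stationary law, $\E[\hat\pi_{h,T}(x)]=(\mathbb{K}_h\ast\pi)(x)$, so a Taylor expansion in each coordinate, together with $\pi\in\mathcal{H}_d(\beta,2\mathcal{L})$ and the vanishing moments of $K$ up to order $M\ge\max_i\beta_i$, yields the uniform bias bound
$$\big|\E[\hat\pi_{h,T}(x)]-\pi(x)\big|\le C\sum_{l=1}^d h_l^{\beta_l}.$$

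For the variance, stationarity gives $\operatorname{Var}(\hat\pi_{h,T}(x))\le\frac{2}{T}\int_0^T\bigl|\operatorname{Cov}\bigl(\mathbb{K}_h(x-X_0),\mathbb{K}_h(x-X_s)\bigr)\bigr|\,ds$, which I would split at a threshold $s^*$. On $[0,s^*]$ I would insert the Gaussian upper bound of Corollary~\ref{lemma: bound transition density} and factor the Gaussian over the $d$ coordinates; a one-dimensional convolution estimate showing that $h_l^{-1}K(\cdot/h_l)\ast s^{-1/2}e^{-\lambda(\cdot)^2/s}$ has sup norm at most $C/\max(h_l,\sqrt{s})$ leads, after integration in $y$ against $\pi$, to the key anisotropic bound
$$\bigl|\operatorname{Cov}\bigl(\mathbb{K}_h(x-X_0),\mathbb{K}_h(x-X_s)\bigr)\bigr|\le C\prod_{l=1}^d\frac{1}{\max(h_l,\sqrt{s})}+C.$$
On $[s^*,T]$ I would apply the uniform mixing inequality of Proposition~\ref{P:mixing uniform}, which gives $|\operatorname{Cov}|\le Ce^{-\rho_{\text{mix}}s}/\prod_l h_l^2$; choosing $s^*$ of order $\log(1/\prod_l h_l^2)$ makes the long-time contribution negligible with respect to the short-time one.

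The decisive step is then the computation of $I(h):=\int_0^{s^*}\prod_{l=1}^d 1/\max(h_l,\sqrt{s})\,ds$. Assuming (after balancing the bias) $h_1\le\dots\le h_d$, I would split $[0,s^*]$ at the thresholds $h_k^2$: on $[h_k^2,h_{k+1}^2]$ the integrand equals $s^{-k/2}/\prod_{l>k}h_l$. The term $k=2$ produces $\int_{h_2^2}^{h_3^2}s^{-1}\,ds=2\log(h_3/h_2)$, the \emph{only} source of a logarithm; every other contribution is controlled by $C/\prod_{l\ge 3}h_l$. Hence
$$\operatorname{Var}(\hat\pi_{h,T}(x))\le\frac{C\bigl(1+\log(h_3/h_2)\bigr)}{T\prod_{l\ge 3}h_l}.$$
When the bias balancing forces $h_2=h_3$ (which happens exactly in the case $\beta_2=\beta_3$) the logarithm disappears; otherwise it persists and is of order $\log T$ after optimisation.

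To finish, I would set $h_l=\eta^{1/\beta_l}$, so that $h_l^{\beta_l}\equiv\eta$ and $\prod_{l\ge 3}h_l^{-1}=\eta^{-(d-2)/\bar\beta_3}$. Minimising $\eta^2+\eta^{-(d-2)/\bar\beta_3}/T$ over $\eta$ yields the rate $T^{-2\bar\beta_3/(2\bar\beta_3+d-2)}$ in the case $\beta_2=\beta_3$, while in the case $\beta_2<\beta_3$ the extra $\log T$ factor in the variance leads to $(\log T/T)^{2\bar\beta_3/(2\bar\beta_3+d-2)}$. The main obstacle I anticipate is the anisotropic short-time covariance bound together with its uniformity in $(a,b)\in\Sigma$ --- in particular, controlling the growth factor $e^{c(|y|^2+1)s}$ and the flow $\theta_{t,s}(x)$ in Corollary~\ref{lemma: bound transition density} on a time window long enough to match smoothly the mixing regime of Proposition~\ref{P:mixing uniform}; once this estimate is established, the remainder reduces to the case-split computation of $I(h)$ and the standard bias--variance trade-off.
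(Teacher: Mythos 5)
Your proposal is correct and rests on the same three pillars as the paper's proof (the bias bound $c\sum_l h_l^{\beta_l}$, the Gaussian heat-kernel bound of Corollary \ref{lemma: bound transition density} for small times, and the uniform mixing of Proposition \ref{P:mixing uniform} for large times), but it organizes the variance estimate differently and, in my view, more transparently. The paper splits $[0,T]$ into four intervals $[0,\delta_1)\cup[\delta_1,\delta_2)\cup[\delta_2,D)\cup[D,T]$ and builds the case distinction ($k_0=1,2$ versus $k_0\ge 3$, and $\beta_2<\beta_3$ versus $\beta_2=\beta_3$) into the choice of $\delta_1,\delta_2$ and into how many Gaussian factors are left un-integrated; this produces the three separate bounds of Proposition \ref{prop: bound variance}, which are then balanced against the bias in Theorem \ref{th: upper bound d ge 3}. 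You instead prove a single pointwise covariance bound $C\prod_{l}\max(h_l,\sqrt{s})^{-1}+C$ and integrate it exactly by slicing $[0,s^*]$ at the thresholds $h_k^2$; the block $[h_2^2,h_3^2]$ is then visibly the unique source of the logarithm, and your resulting bound $C(1+\log(h_3/h_2))/(T\prod_{l\ge3}h_l)$ simultaneously recovers (and slightly sharpens) all three of the paper's variance bounds, so the subsequent optimization over $h_l=\eta^{1/\beta_l}$ goes through as you describe. The one point you should be careful about --- and which you correctly flag as the anticipated obstacle --- is that Corollary \ref{lemma: bound transition density} only holds on a bounded time horizon with constants depending on that horizon, while your $s^*$ grows like $\log T$; for $s\in[1,s^*]$ your claimed bound degenerates to $|k(s)|\le C$, which does not follow from a naive application of the corollary (the factor $e^{c(|y|^2+1)s}$ blows up) but is obtained in the paper by a Chapman--Kolmogorov argument giving $p_t(y,y')\le c_0t^{-d/2}e^{-\lambda_0|y-y'|^2/t}+ce^{c(|y|^2+1)}$ for all $t$; with that patch (and the compact support of $\mathbb{K}_h$ to control the $e^{c(|y|^2+1)}$ factor), your argument is complete.
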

%{ \modch We underline that, with the notation $\E$, we always mean the expectation with respect to the invariant measure $\pi$ linked to the coefficients $(a,b)$.} \\
In the isotropic context we have, in particular, $\bar{\beta}_3 =\beta$, where $\beta$ is the common smoothness over the $d$ directions. The convergence rate we derive is therefore the same as in \cite{RD}, being equal to $(\frac{1}{T})^ {\frac{2\beta}{2 \beta + d - 2}}$.\\
{\modchi One may argue that the fact that the result depends on the harmonic mean of the smoothness only on $d-2$ direction is not very intuitive. However, the exponent found in the convergence rate here above is not completely surprising, as it appears also in \cite{Lower bound} for the estimation of the invariant density of a diffusion with jumps and it is proven to be optimal in this context, up to a logarithmic gap. What is surprising in this result is the fact that the role played by the logarithm is now completely explained. As we will see in the proof, when $\beta_2 = \beta_3$ a particular choice of the bandwidth allows us to get rid of the logarithm.\\
Moreover, compared with \cite{Lower bound}, the bounds are now uniform over the class of coefficients $\Sigma$ and the drift $b$ is no longer assumed to be bounded. } \\
\\
The asymptotic behavior of the estimator and so the proof of Theorem \ref{th: upper bound d ge 3} is based on the standard bias -variance decomposition. Hence, we need an upper bound on the variance, as in next proposition.

\begin{proposition}
{ \modch Suppose that A1 - A2 hold for some $a_\text{min}$, $b_0$, $a_0$, $a_1$, $b_1$, $\tilde{C}$ and $\tilde{\rho}$; that $d \ge 3$ and that $\pi \in \mathcal{H}_d (\beta,{\rev 2} \mathcal{L})$. } {\modar Suppose moreover that 
$\beta_1 \le ... \le \beta_d$, 
 and let $k_0=\max\{i \in \{1,\dots,d\}\mid \beta_1=\dots=\beta_i\}$}.
%$\beta_1 = \beta_2 = \dots = \beta_{k_0} < \beta_{k_0 + 1} \le ... \le \beta_d$ for some $k_0  } 
If $\hat{\pi}_{h,T}$ is the estimator given in \eqref{eq: def estimator}, then there exist $c > 0$ and $T_0 > 0$ such that, for $T \ge T_0$, the following bounds hold true.
\begin{itemize}
 \item[$\bullet$] If $k_0 = 1$ and $\beta_2 < \beta_3$ or $k_0 = 2$, then
\begin{equation}
Var(\hat{\pi}_{h,T}(x)) \le \frac{c}{T} \frac{\sum_{j = 1}^d |\log(h_j)|}{\prod_{l \ge 3} h_l}.
\label{eq: estim variance with log}
\end{equation}
\item[$\bullet$] If $k_0 \ge 3$, then 
\begin{equation}
Var(\hat{\pi}_{h,T}(x)) \le \frac{c}{T} \frac{1}{(\prod_{l =1}^{k_0} h_l)^{1 - \frac{2}{k_0}}(\prod_{l \ge k_0 + 1} h_l)}.
\label{eq: estim variance without log}
\end{equation}
\item[$\bullet$] If otherwise $k_0 = 1$ and $\beta_2 = \beta_3$, then 
$$Var(\hat{\pi}_{h,T}(x)) \le \frac{c}{T} \frac{1}{\sqrt{h_2 h_3}\prod_{l \ge 4} h_l}.$$
\end{itemize}
Moreover, {\modch the constants $c$ and $T_0$ are} uniform over the set of coefficients $$(a,b)\in\Sigma (\beta, \mathcal{L},a_{\text{min}},b_0, a_0,a_1,b_1,\tilde{C},\tilde{\rho}).$$
\label{prop: bound variance}
\end{proposition}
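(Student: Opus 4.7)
The starting point is the stationary rewriting
\[
\text{Var}(\hat\pi_{h,T}(x)) = \frac{2}{T^2}\int_0^T (T-t)\,\text{Cov}(\mathbb{K}_h(x-X_0),\mathbb{K}_h(x-X_t))\,dt \le \frac{2}{T}\int_0^T |\text{Cov}(t)|\,dt,
\]
so the task reduces to bounding the integrated covariance. I would split the $t$-integral at a constant threshold $\tau=\tau(x)>0$: on $[0,\tau]$ use the Gaussian transition density bound from Corollary~\ref{lemma: bound transition density}, while on $[\tau,T]$ use the uniform mixing inequality of Proposition~\ref{P:mixing uniform}. Keeping $\tau$ bounded is important, because the upper bound in Corollary~\ref{lemma: bound transition density} carries a factor $e^{c(|y|^2+1)t}$, which remains $O(1)$ only for $t$ bounded and $y$ in the compact support of $\mathbb{K}_h(x-\cdot)$.

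For the short-time part, write $\E[\mathbb{K}_h(x-X_0)\mathbb{K}_h(x-X_t)]$ as a double integral against $\pi(y)p_t(y,z)$. Using $\|\pi\|_\infty \le 2\mathcal{L}_1$ (which follows from $\pi \in \mathcal{H}_d(\beta, 2\mathcal{L})$) and the Gaussian upper bound on $p_t$, the changes of variables $u_i=(x_i-y_i)/h_i$, $v_i=(x_i-z_i)/h_i$ factorise the integrand coordinate-wise and give
\[
|\text{Cov}(t)| \le C(x)\, t^{-d/2}\prod_{i=1}^d \int\!\!\int K(u_i)K(v_i)\,e^{-c h_i^2(u_i-v_i)^2/t}\,du_i\,dv_i \le C(x)\prod_{i=1}^d \min\bigl(h_i^{-1},t^{-1/2}\bigr),
\]
up to the bounded contribution from $\pi(\mathbb{K}_h)^2$. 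The combinatorial core is then to integrate $\prod_i \min(h_i^{-1},t^{-1/2})$ on $[0,\tau]$ by slicing along the ordered bandwidths $h_{(1)}\le \cdots \le h_{(d)}$: on the slice $[h_{(i)}^2,h_{(i+1)}^2]$ the integrand equals $t^{-i/2}\prod_{l>i}h_{(l)}^{-1}$, and its $t$-integral is algebraic for $i\ne 2$ and logarithmic precisely for $i=2$.

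The three cases of the proposition arise from exploiting the $\beta$-symmetries to collapse the expensive slices. When $k_0=1$ with $\beta_2<\beta_3$, or $k_0=2$, the slice $[h_{(2)}^2,h_{(3)}^2]$ is non-degenerate and produces the factor $\log(h_{(3)}/h_{(2)})\le \sum_j |\log h_j|$, yielding the first stated bound. When $k_0\ge 3$, the natural choice $h_1=\cdots=h_{k_0}$ empties all slices below index $k_0$, and the dominant slice $[h_{k_0}^2,h_{k_0+1}^2]$ contributes $h_{k_0}^{2-k_0}/\prod_{l>k_0}h_l = (\prod_{l=1}^{k_0}h_l)^{-(k_0-2)/k_0}/\prod_{l>k_0}h_l$, matching the second bound. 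When $k_0=1$ and $\beta_2=\beta_3$, choosing $h_2=h_3$ annihilates the logarithmic slice and the next algebraic slice gives $1/(h_3\prod_{l\ge 4}h_l)=1/(\sqrt{h_2 h_3}\prod_{l\ge 4}h_l)$, i.e.\ the third bound.

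For the long-time contribution, Proposition~\ref{P:mixing uniform} yields $\int_\tau^T |\text{Cov}(t)|\,dt \le C_\text{mix}\|\mathbb{K}_h\|_\infty^2 e^{-\rho_\text{mix}\tau}/\rho_\text{mix}$, of order $\prod_l h_l^{-2}$, which is absorbed into the short-time bound in the bandwidth regime of interest (and can be sharpened via an $L^2(\pi)$ version of the mixing inequality if needed). The main obstacles I expect are the case bookkeeping in the slicing argument --- tracking which slices are empty and which dominate --- and the uniformity of all constants over the class $\Sigma$; the latter rests on the uniformity delivered by Corollary~\ref{lemma: bound transition density} and Proposition~\ref{P:mixing uniform} via the Lyapounov--Poincar\'e approach.
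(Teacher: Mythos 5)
Your overall strategy is the same as the paper's: bound $\frac{1}{T}\int_0^T|\mathrm{Cov}(t)|\,dt$ using the Gaussian transition-density estimate of Corollary~\ref{lemma: bound transition density} for small times and the uniform mixing of Proposition~\ref{P:mixing uniform} for large times. Your short-time treatment is in fact a clean repackaging of what the paper does: the unified pointwise bound $|\mathrm{Cov}(t)|\le C\prod_i\min(h_i^{-1},t^{-1/2})+O(1)$, integrated by slicing at the ordered $h_{(i)}^2$, automatically performs the optimisation over the cut-offs $\delta_1,\delta_2$ that the paper carries out by hand, and it correctly identifies the slice $[h_{(2)}^2,h_{(3)}^2]$ as the unique source of the logarithm. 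Two minor caveats there: the proposition is stated for arbitrary $h$ with $h_i<1$, whereas you only verify the second and third bounds under the extra assumptions $h_1=\cdots=h_{k_0}$ and $h_2=h_3$; your slicing does deliver the general case (each slice $i\ge 3$ is dominated by the stated right-hand sides, e.g.\ via $\log r\le\sqrt r$ for the third bound), but you should say so rather than restrict $h$.

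The genuine gap is in the long-time part. With a constant threshold $\tau$, the mixing estimate gives $\int_\tau^T|\mathrm{Cov}(t)|\,dt\le C e^{-\rho\tau}\Norm{\mathbb{K}_h}_\infty^2\asymp\prod_l h_l^{-2}$, which is \emph{not} absorbed into any of the three stated bounds (already for $d=3$ and $h_1=h_2=h_3=\varepsilon$ you get $\varepsilon^{-6}$ against a claimed $\varepsilon^{-1}$), and an $L^2(\pi)$ version of the mixing inequality only improves this to $\prod_l h_l^{-1}$, still too large. One is forced to take the mixing threshold $D\asymp|\log\prod_l h_l|$ so that $e^{-\rho D}\le(\prod_l h_l)^2$; but then a new intermediate window $[\mathrm{const},D]$ appears on which the small-time Gaussian bound is unusable, because the factor $e^{c(|y|^2+1)t}$ in Corollary~\ref{lemma: bound transition density} is $O(1)$ only for bounded $t$. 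The paper fills this window with a separate Chapman--Kolmogorov argument showing $p_t(y,\cdot)\le c\,e^{c(|y|^2+1)}$ for $t\ge1$ (its inequality \eqref{eq: bound transition, t grande}), so that the window contributes an additive $O(D)=O(\sum_j|\log h_j|)$, which must then be checked against each of the three bounds (it is precisely one of the sources of the $\sum_j|\log h_j|$ numerator in the first case). Your proposal is missing both this intermediate estimate and the verification that the resulting $O(\sum_j|\log h_j|)$ term is harmless, so as written the argument does not close.
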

{\modchi We remark that, in the right hand side of \eqref{eq: estim variance with log}, it is possible to remove the contribution of two arbitrary bandwidths. We choose to remove the contribution of the first two as $h_1$ and $h_2$ are associated, in the bias term, to $\beta_1$ and $\beta_2$. Indeed,
the optimal bandwidth is chosen in order to get the balance between the bias and the variance term and,
as we order the smoothness, $\beta_1$ and $\beta_2$ provide the strongest constraints (see the proof of Theorem \ref{th: upper bound d ge 3}). \\}
\\
One can remark that the bound on the variance for $\beta_2 < \beta_3$ in the proposition here above is the same as the one for jump diffusion processes (see Proposition 1 in \cite{Lower bound}). The reason why it happens is that both propositions rely on the exponential $\beta$-mixing of the considered process and on a bound on the transition density. Comparing Lemma \ref{lemma: bound transition density} with Lemma 1 of \cite{Chapitre 4} it is possible to see that the upper bound for the transition density associated to a jump-diffusion consists in two terms: one derives from the gaussian component while the other is due to the presence of jumps. It is worth noting that in our computations the contribution of the second is always negligible compared to the one coming from the first. \\
\\
We now study the behaviour of our estimator for $d=2$. 
\begin{theorem}
{ \modch Suppose that $d=2$. Consider $0<a_\text{min}\le a_0$, $b_0>0, ~ a_1>0, ~ b_1>0$, $\tilde{C}$, $\tilde{\rho}$ and $\Sigma := \Sigma (\beta, \mathcal{L},a_{\text{min}},b_0, a_0,a_1,b_1,\tilde{C},\tilde{\rho})$. Then, there exist $c > 0$ and $T_0 > 0$ such that, for $T \ge T_0$,} the optimal choice for the multidimensional bandwidth $h$ yields the following convergence rates.
$$ \sup_{(a, b) \in \Sigma} \mathbb{E}[|\hat{\pi}_{h,T}(x) - \pi (x)|^2] { \rev \le c} \frac{\log T}{ T}.$$
\label{th: upper bound d=2}
\end{theorem}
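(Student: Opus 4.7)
The plan is to perform a standard bias--variance decomposition and specialize the variance analysis underlying Proposition \ref{prop: bound variance} to $d=2$.

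For the bias, I would use that $\mathbb{E}[\hat{\pi}_{h,T}(x)] = (\mathbb{K}_h \ast \pi)(x)$ by stationarity, then combine a Taylor expansion of $\pi \in \mathcal{H}_2(\beta, 2\mathcal{L})$ with the vanishing-moment conditions on $K$ from \eqref{eq: properties K} to obtain $|(\mathbb{K}_h\ast\pi)(x) - \pi(x)| \le c(h_1^{\beta_1} + h_2^{\beta_2})$. The constant $c$ can be made to depend only on the prescribed anisotropic H\"older radius $2\mathcal{L}$ and on the kernel, hence is uniform over $(a,b)\in\Sigma$.

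The core step is the variance bound: the goal is $\mathrm{Var}(\hat{\pi}_{h,T}(x)) \le c(|\log h_1| + |\log h_2|)/T$ uniformly in $\Sigma$. Starting from
$$\mathrm{Var}(\hat{\pi}_{h,T}(x)) \le \frac{2}{T}\int_0^T |\mathrm{Cov}(\mathbb{K}_h(x-X_0), \mathbb{K}_h(x-X_s))|\,ds,$$
I would split the time integral at two thresholds $s_0:=h_1h_2$ and $s_1$ of order $|\log(h_1h_2)|$. On $[0,s_0]$, Cauchy--Schwarz together with the uniform bound $\|\pi\|_\infty \le 2\min_i\mathcal{L}_i$ yields $|\mathrm{Cov}|\le c\,\|\mathbb{K}_h\|_2^2\le c/(h_1h_2)$, contributing an $O(1)$ term. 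On $[s_0,s_1]$, I would invoke the upper Gaussian bound of Corollary \ref{lemma: bound transition density}: on the support of $\mathbb{K}_h(x-\cdot)$ the coordinates $|y|,|z|$ remain bounded (since $h_i\le 1$), so the factor $e^{c(|y|^2+1)s}$ is harmless, and in dimension two $p_s(y,z) \le c\,s^{-1}$. Integrating against $\mathbb{K}_h\otimes\mathbb{K}_h\,\pi$ and using $\|\mathbb{K}_h\|_1=\|K\|_1^2$ gives $|\mathrm{Cov}|\le c/s$, whose integral contributes $O(|\log(h_1h_2)|)$. On $[s_1,T]$, the uniform mixing estimate of Proposition \ref{P:mixing uniform}, together with $\|\mathbb{K}_h\|_\infty^2 \le c/(h_1h_2)^2$, produces an $O(e^{-\rho_{\mathrm{mix}}s_1}/(h_1h_2)^2)$ term, which is $O(1)$ for $s_1 := 2\rho_{\mathrm{mix}}^{-1}|\log(h_1h_2)|$. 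Summing the three contributions yields the claimed variance bound.

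I would then balance bias and variance by choosing $h_i := (\log T/T)^{1/(2\beta_i)}$: each bias term satisfies $h_i^{2\beta_i} = \log T/T$, and $|\log h_i| \le c\log T$ makes the variance contribution also of order $\log T/T$, yielding the announced uniform upper bound. The main obstacle lies in the intermediate range $[s_0,s_1]$: one must pick $s_1$ large enough for the mixing tail to be absorbed without inflating the transition-density constants of Corollary \ref{lemma: bound transition density} beyond what is acceptable, and verify that all constants ($\rho_{\mathrm{mix}}$, $C_{\mathrm{mix}}$, $\tilde{C}_1$, $\tilde{\lambda}_0$, etc.) behave uniformly on $\Sigma$, which is precisely what Proposition \ref{P:mixing uniform} and Corollary \ref{lemma: bound transition density} already guarantee.
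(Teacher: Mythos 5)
Your overall architecture coincides with the paper's: bias--variance decomposition, the bias bound $c(h_1^{2\beta_1}+h_2^{2\beta_2})$, a three-way split of the covariance integral (Cauchy--Schwarz near $0$, heat-kernel bound on an intermediate range, uniform mixing for large times), and the bandwidth $h_i=(\log T/T)^{1/(2\beta_i)}$. The final rate and the roles of Corollary \ref{lemma: bound transition density} and Proposition \ref{P:mixing uniform} are exactly as in the paper's proof of Theorem \ref{th: upper bound d=2}.

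There is, however, one step that fails as written: the treatment of the intermediate range $[s_0,s_1]$ with $s_1\asymp\rho_{\mathrm{mix}}^{-1}|\log(h_1h_2)|\asymp\log T$. The upper bound of Corollary \ref{lemma: bound transition density} carries the factor $e^{c(|y|^2+1)s}$, and its constants are moreover only guaranteed for time lags smaller than a \emph{fixed} horizon (they depend on that horizon). Even granting that $|y|$ is bounded on the support of $\mathbb{K}_h(x-\cdot)$, the factor $e^{c(|y|^2+1)s}$ is of order $e^{c's}$, which for $s$ up to $s_1\asymp\log T$ is polynomial in $T$ and is therefore \emph{not} harmless: your claimed bound $|\mathrm{Cov}|\le c/s$ on all of $[s_0,s_1]$ does not follow. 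The Gaussian bound can only be used for $s$ below a fixed constant (say $s<1$). For $s\ge 1$ the paper first derives, via Chapman--Kolmogorov ($p_s(y,y')=\int p_{1/2}(y,z)p_{s-1/2}(z,y')dz$), the $s$-uniform bound $p_s(y,y')\le c\,e^{c(|y|^2+1)}$, leading to $|k(s)|\le c(s^{-1}+1)$ for all $s$ and hence a contribution $O(|\log\delta|+D)=O(\log T)$ from the intermediate range (see \eqref{eq: bound transition, t grande} and \eqref{eq: bound k delta1 delta2}). Your argument needs this extra step (or an equivalent one) to bridge $[1,s_1]$; once inserted, the rest of your proof goes through and yields the stated $\log T/T$ bound.
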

To conclude the part regarding the upper bounds on the mean squared error associated to our estimator \eqref{eq: def estimator} when a continuous record of the process is available, we are left to discuss the mono-dimensional case. It is however known that, under our hypothesis, the proposed kernel estimator achieves the parametric rate $\frac{1}{T}$ and such a rate is optimal (see for example \cite{Kut_2004} or Theorem 1 in \cite{Kut_1998}).

{\rev \subsection{Adaptive procedure} \label{Ss:Adaptive}
 We observe that, in practice, the bandwidths $h_1$, ... , $h_d$ need to be selected from data. For this reason, it can be interesting to propose an adaptive procedure in the same spirit as the one firstly introduced by Goldenslugher and Lepski, in \cite{GL}. \\
As explained in previous subsection, for $d=1$ and $d=2$ the bound on the variance does not depend on the unknown smoothness, and the same holds for the optimal choice for the bandwidth. Hence, there is no gain in implementing a data-driven adaptive procedure for $d < 3$.
For $d \ge 3$, instead, the bandwidth choice depends on the smoothness $\beta$. We emphasize that, as we are in the anisotropic context, the smoothnesses over the different directions are different and, as a consequence, the bandwidth selection procedure has to be able to provide different choices for the bandwidth $h_1$, $h_2$, ... , $h_d$. To do that, the idea consists in providing a set of candidate bandwidths, associate to a set of potential estimators, and then in choosing the bandwidths such that the $L^2$ error is as small as possible. With this purpose in mind, we introduce a quantity that heuristically stands for the bias term and a penalty term whose size is the one found in the bound of the variance in previous subsection. The selected bandwidths will be the ones for which the sum of these two quantities achieves the minimum. \\
\\
{\blue The adaptive procedure we propose encompasses both integrated and pointwise $L^2$ risk.}\\
In the sequel, for $A \subset \mathbb{R}^d$ compact and for $g \in L^2(A)$, $\left \| g \right \|^2_A := \int_A |g(x)|^2 dx$ denotes the $L^2$ norm with respect to Lebesgue on $A$.
{\blue  One might question why, in the adaptive procedure, we also analyze the integrated $L^2(A)$ norm, whereas in the previous section, we focused on the pointwise norm. It becomes evident from Theorem \ref{th: upper bound d ge 3} and the constant $c$ being independent of $x$ that we can extend the upper bounds from pointwise estimation to estimation on $L^2(A)$, resulting in 
\begin{equation*}
\mathbb{E}[\left \| \hat{\pi}_{h,T} - \pi \right \|^2_A] \le
\begin{cases}
c(\frac{\log T}{ T})^ {\frac{2\bar{\beta}_3}{2\bar{\beta}_3 + d - 2}}\qquad \mbox{if } \beta_2 < \beta_3 \\
c(\frac{1}{ T})^ {\frac{2\bar{\beta}_3}{2\bar{\beta}_3 + d - 2}}  \qquad \mbox{if } \beta_2 = \beta_3.
\end{cases}
\end{equation*}
However, when transitioning from the data-driven procedure in $L^2(A)$ to pointwise estimation, a logarithmic factor is typically lost, as observed in prior works such as \cite{BroLow} for isotropic adaptive studies or \cite{Klu} for anisotropic ones. This phenomenon also holds true in our context, as demonstrated in Theorems \ref{th: adaptive} and \ref{th: adaptive point} below.

Furthermore, the analysis becomes more challenging when considering pointwise estimation. Indeed in this context, to claim that the rate-optimal choice for the bandwidth belongs to the set of candidate bandwidth, an additional condition involving $\beta$ emerges.

One of the main objectives of this work is to investigate the optimality of the logarithmic term in the convergence rate. To achieve this, we needed a procedure capable of automatically detecting the presence or absence of the logarithmic term. Consequently, the introduction of an additional logarithm due to the choice of a pointwise norm becomes a significant concern in this context. This is why we have opted to include the integrated $L^2$ risk in this section.} \\
\\

In order to introduce the quantities heuristically discussed at the beginning of this section we start by defining some auxiliary indexes: 
$$k_1:= arg \min_{l=1, ... , d} h_l, \qquad k_2:= arg\min_{l \neq k_1} h_l$$
and, in an iterative way, we can introduce for any $j$, 
$$k_j := arg\min_{l \neq k_1, ... , k_{j - 1}} h_l.$$
Then, we define the set of potential bandwidths {\blue for the $L^2(A)$ estimation} $\mathcal{H}_T$ as 
{\revar 
%\begin{align}\label{eq: def mathcal H}
%\mathcal{H}_T \subset & \{ h \in (0,1]^d : \, \exists a > 0 \mbox{ arb. small  and }  b > 0 \mbox{ arb. big  s.t. } \forall l=1, ..., d \,    \\
%&(\frac{1}{T})^b \le h_l \le (\frac{1}{\log T})^{\frac{1}{d - 2} + a}; \, \min((\sum_{j=1}^d |\log h_j| \, h_{k_1} h_{k_2})^{\frac{1}{2}}, (h_{k_1})^{\frac{1}{2}} (h_{k_2} h_{k_3})^\frac{1}{4}) \ge \frac{c (\log T)^{2 + a}}{\sqrt{T}} \},  \nonumber
%\end{align}
\begin{multline}
	\label{eq: def mathcal H}
\mathcal{H}_T \subset  \{ h \in (0,1]^d : \, \forall l=1, ..., d \quad    
(\frac{1}{T})^b \le h_l \le (\frac{1}{\log T})^{\frac{1}{d - 2} + a}; \, 
\\
\min((\sum_{j=1}^d |\log h_j| \, h_{k_1} h_{k_2})^{\frac{1}{2}}, (h_{k_1})^{\frac{1}{2}} (h_{k_2} h_{k_3})^\frac{1}{4}) \ge \frac{c (\log T)^{2 + a}}{\sqrt{T}} \},  
\end{multline}
where $a>1$, $b>0$. {\blue For the pointwise estimation, instead, we introduce the analogous set $\mathcal{H}_T^p$ which is such that
\begin{multline}
	\label{eq: def mathcal Hp}
\mathcal{H}_T^p \subset  \{ h \in (0,1]^d : \, \forall l=1, ..., d \quad    
(\frac{1}{T})^b \le h_l \le (\frac{1}{\log T})^{\frac{1}{d - 2} + a}; \, 
\\
(\prod_{l = 1}^d h_l)^{\frac{1}{2}} \min((\sum_{j=1}^d |\log h_j| \, h_{k_1} h_{k_2})^{\frac{1}{2}}, (h_{k_1})^{\frac{1}{2}} (h_{k_2} h_{k_3})^\frac{1}{4}) \ge \frac{c (\log T)^{2 + a}}{\sqrt{T}} \},  
\end{multline}
where again $a > 1$ and $b > 0$. It is easy to check that the right hand side of \eqref{eq: def mathcal Hp} is included in the right hand side of \eqref{eq: def mathcal H}.
}

Let us stress that it is possible to choose $a$ arbitrarily close to $1$, and $b$ arbitrarily large.
Moreover, we assume that there exists $c >0$ for which {\blue $|\mathcal{H}_T | + |\mathcal{H}_T^p | \le c T^c$, i.e. the growth of the sets $\mathcal{H}_T$ and $\mathcal{H}_T^p$ is at most polynomial.} \\
}
According to the set of candidate bandwidths, we can introduce the set of candidate estimators:
$$\mathcal{F}(\mathcal{H}_T) := \left \{ \hat{\pi}_{h, T} (x) = \frac{1}{T} \int_0^T \mathbb{K}_h (X_u - x) du: \quad x \in \mathbb{R}^d, \quad h \in \mathcal{H}_T \right \},$$
{\blue 
\begin{equation}
    \label{eq: def F H p}
    \mathcal{F}(\mathcal{H}_T^p) := \left \{ \hat{\pi}_{h, T} (x) = \frac{1}{T} \int_0^T \mathbb{K}_h (X_u - x) du: \quad x \in \mathbb{R}^d, \quad h \in \mathcal{H}_T^p \right \}.
\end{equation}
The goal of this section is to select two estimators from the families $\mathcal{F}(\mathcal{H}_T)$ and $\mathcal{F}(\mathcal{H}_T^p)$, respectively, in a completely data-driven way, based only on the observation of the continuous trajectory of the process X.} \\
Following the idea in \cite{GL}, our selection procedure relies on the introduction of auxiliary convolution estimators. According to our records, it was introduced in \cite{Lep99} as a device to circumvent the lack of ordering among a set of estimators in anisotropic case, where the increase of the variance of an estimator does not imply a decrease of its bias. \\
For any bandwidths $h = (h_1, ... , h_d)^T$, $\eta = (\eta_1, ... , \eta_d)^T$ $\in \mathcal{H}_T{\blue \cup \mathcal{H}_T^p}$ and $x \in \mathbb{R}^d$, we define
$$\mathbb{K}_h * \mathbb{K}_\eta (x) := \prod_{j = 1}^d (K_{h_j} * K_{\eta_j}) (x_j) = \prod_{j = 1}^d \int_{\mathbb{R}} K_{h_j} (u - x_j) K_{\eta_j } (u) du.$$
We moreover define the kernel estimators 
$$\hat{\pi}_{{\revar (h, \eta),T}} (x) := \frac{1}{T} \int_0^T (\mathbb{K}_h * \mathbb{K}_{\eta}) (X_u - x) du, \quad x \in \mathbb{R}^d.$$
As the convolution is commutative, we clearly have $\hat{\pi}_{{\revar (h, \eta),T}} = \hat{\pi}_{ {\revar (\eta,h),T}}$. Then, the selection procedure we propose is based on a comparison of the differences $\hat{\pi}_{{\revar (h, \eta),T}} - \hat{\pi}_{\eta, T}$.
We remark that the bound on the variance in Proposition \ref{prop: bound variance} when one does not know the order the regularities $\beta$ consists in
\begin{align*}
& \frac{k}{T} \, \min \Big(\frac{\sum_{j = 1}^d |\log h_j|}{\prod_{l \neq k_1, k_2} h_l}, \, \frac{1}{\sqrt{h_{k_2} h_{k_3}}\prod_{l \neq k_1, k_2, k_3} h_l}, \, \min_{j \ge 3} \frac{1}{(\prod_{l=1}^j  h_{k_l})^{1 - \frac{2}{j}}} \frac{1}{\prod_{l \neq k_1, ..., k_j} h_l} \Big) \\
& = \frac{k}{T} \, \min \Big(\frac{\sum_{j = 1}^d |\log h_j|}{\prod_{l \neq k_1, k_2} h_l}, \, \frac{1}{\sqrt{h_{k_2} h_{k_3}}\prod_{l \neq k_1, k_2, k_3} h_l} \Big),
\end{align*}
the equivalence is a consequence of the definition of $h_{k_j}$ for $j=1, ... , d$. With this purpose in mind we introduce the penalty {\blue functions
\begin{align}{\label{eq: penalty}}
V(h) & := \frac{k}{T} \, \min \Big(\frac{\sum_{j = 1}^d |\log h_j|}{\prod_{l \neq k_1, k_2} h_l}, \, \frac{1}{\sqrt{h_{k_2} h_{k_3}}\prod_{l \neq k_1, k_2, k_3} h_l} \Big) =: \frac{k}{T} \tilde{H}^2(h)
\end{align}
and 
\begin{align}{\label{eq: penalty point}}
V^p(h) & := (\log T) V(h) =\frac{k_T}{T} \tilde{H}^2(h),
\end{align}
where $k_T:= k \log T$ and $k$ is a numerical constant which has to be taken large.} Even though it is not explicit, it can be calibrated by simulations as done for example in Section 5 of \cite{Main adapt} through the implementation of a method inspired by Lacour, Massart and Rivoirard in \cite{Lacour et al}. {\blue We present in Section \ref{S:Numerical} some numerical simulations which shows the impact of the choice of $k$ on the quality of estimation. Upon comparing the two penalty functions defined above, it becomes evident that opting for pointwise estimation instead of $L^2$ estimation results in the loss of a logarithmic term.}\\
Then, we compare the differences $\hat{\pi}_{h, \eta} - \hat{\pi}_{\eta, T}$ in $A(h)$ {\blue and $A^p(h,x)$}, defined as below:
\begin{equation}
A(h) := \sup_{\eta \in \mathcal{H}_T} (\left \| \hat{\pi}_{{\revar (h, \eta),T}} - \hat{\pi}_{\eta, T} \right \|^2_A - V(\eta))_+,
\label{eq: def A(h)}
\end{equation}
{\blue \begin{equation}
A^p(h,x) := \sup_{\eta \in \mathcal{H}_T^p} (| \hat{\pi}_{{\revar (h, \eta),T}}(x) - \hat{\pi}_{\eta, T}(x) |^2 - V^p(\eta))_+.
\label{eq: def A(h) point}
\end{equation}}
Heuristically, $A(h)$ {\blue and $A^p(h,x)$ are} estimates of the squared bias and $V(h)$ {\blue and $V^p(h)$} of the variance bound. It is worth underlining that the penalty term which is used here comes from the three bounds obtained in Proposition \ref{prop: bound variance}, remarking that in this case the smoothness is unknown. \\
The selection is done by setting 
\begin{equation}
\tilde{h} := \mbox{arg}\min_{ h \in \mathcal{H}_T} (A (h) + V(h)).
\label{eq: def h tilde}
\end{equation}
{\blue for the $L^2(A)$ estimation and 
\begin{equation}
\tilde{h}^p(x) := \mbox{arg}\min_{ h \in \mathcal{H}^p_T} (A^p (h,x) + V^p(h)).
\label{eq: def h tilde point}
\end{equation}
for the pointwise one.}
Before proceeding with the main results of this subsection, let us introduce some notation. In the sequel, it will be useful to consider $\pi_h := \mathbb{K}_h * \pi$, which is the function that is estimated without bias by $\hat{\pi}_{h, T}$. It is indeed $\mathbb{E}[\hat{\pi}_{h, T}(x)] = \pi_h(x)$. Moreover, we define $\pi_{{\revar (h, \eta)}} := \mathbb{K}_h * \mathbb{K}_\eta * \pi $. To conclude the notation paragraph, we introduce the bias {\blue for the pointwise estimation $B^p(h)(x)$ as $|\pi_h(x) - \pi(x)|$, while for the procedure in the $L^2(A)$ we will employ the bias}
$B(h) := \left \| \pi_h - \pi \right \|^2_{\tilde{A}}$, where we have denoted as $\left \| . \right \|_{\tilde{A}}$ the $L^2$ - norm on $\tilde{A}$, a compact set in $\mathbb{R}^d$ which contains $A$. It is $\tilde{A} := \left \{  \zeta \in \mathbb{R}^d : d(\zeta, A) \le 2 \sqrt{d} \right \}$, where $d(\zeta, A) := \min_{x \in A} |\zeta -x| $. \\
The following result holds {\blue for the adaptive procedure in integrated $L^2$ risk.}
\begin{theorem}
Suppose that assumptions A1 - A2 hold and that $d \ge 3$. Then, we have that there exists $T_0 > 0$ such that, for any $T \ge T_0$, 
$$\mathbb{E}[\left \| \hat{\pi}_{\tilde{h}, T} -  \pi \right \|^2_A] \le c_1 \inf_{h \in \mathcal{H}_T} (B(h) + V(h)) + c_1 e^{ - c_2 (\log T)^{c_3}},$$
for {\revar $c_1$, $c_2$ positive constants and $c_3>1$.}
\label{th: adaptive}
\end{theorem}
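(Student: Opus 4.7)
The proof follows the classical Goldenshluger-Lepski template, so the plan is to use the defining properties of $\tilde{h}$, $A(h)$ and $V(h)$ to reduce $\|\hat{\pi}_{\tilde{h},T} - \pi\|_A^2$ to the bias term $B(h)$ plus centered deviations whose expectation is exponentially small thanks to the uniform mixing inequality of Proposition \ref{P:mixing uniform}. For any fixed $h \in \mathcal{H}_T$, the triangle inequality and the symmetry $\hat{\pi}_{(h,\eta),T} = \hat{\pi}_{(\eta,h),T}$ give
\begin{align*}
\|\hat{\pi}_{\tilde{h},T} - \pi\|_A \le \|\hat{\pi}_{\tilde{h},T} - \hat{\pi}_{(h,\tilde{h}),T}\|_A + \|\hat{\pi}_{(\tilde{h},h),T} - \hat{\pi}_{h,T}\|_A + \|\hat{\pi}_{h,T} - \pi\|_A.
\end{align*}
Thanks to \eqref{eq: def A(h)}, the square of the first summand is at most $A(h) + V(\tilde{h})$ and that of the second is at most $A(\tilde{h}) + V(h)$. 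Since $\tilde{h}$ minimizes $A(\cdot) + V(\cdot)$ by \eqref{eq: def h tilde}, one obtains the pointwise bound
\begin{align*}
\|\hat{\pi}_{\tilde{h},T} - \pi\|_A^2 \le c \bigl(A(h) + V(h) + \|\hat{\pi}_{h,T} - \pi_h\|_A^2 + B(h)\bigr).
\end{align*}
Taking expectation, it remains to control $\mathbb{E}[A(h)]$ and $\mathbb{E}[\|\hat{\pi}_{h,T} - \pi_h\|_A^2]$, the latter being bounded by a constant times $V(h)$ by integrating over $x \in A$ the pointwise variance bound of Proposition \ref{prop: bound variance}.

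To bound $\mathbb{E}[A(h)]$, I split, for each $\eta \in \mathcal{H}_T$,
\begin{align*}
\hat{\pi}_{(h,\eta),T} - \hat{\pi}_{\eta,T} = \mathbb{K}_\eta * (\pi_h - \pi) + \bigl[\hat{\pi}_{(h,\eta),T} - \pi_{(h,\eta)}\bigr] - \bigl[\hat{\pi}_{\eta,T} - \pi_\eta\bigr].
\end{align*}
The first summand is deterministic and Young's convolution inequality with $\int K = 1$ bounds its $L^2(A)$-norm by $\|\pi_h - \pi\|_{\tilde{A}} = B(h)^{1/2}$. The two remaining centered terms are absorbed into the noise quantities
\begin{align*}
N_1 := \sup_{\eta \in \mathcal{H}_T} \bigl(\|\hat{\pi}_{\eta,T} - \pi_\eta\|_A^2 - \tfrac{V(\eta)}{\kappa}\bigr)_+, \quad N_2 := \sup_{\eta \in \mathcal{H}_T} \bigl(\|\hat{\pi}_{(h,\eta),T} - \pi_{(h,\eta)}\|_A^2 - \tfrac{V(\eta)}{\kappa}\bigr)_+,
\end{align*}
where $\kappa$ is a tuning constant chosen large enough. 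This yields $\mathbb{E}[A(h)] \le c\, B(h) + c\, \mathbb{E}[N_1 + N_2]$.

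The core of the argument is therefore a Bernstein-type deviation inequality of the form
\begin{align*}
\mathbb{P}\bigl(\|\hat{\pi}_{\eta,T} - \pi_\eta\|_A^2 \ge c\, V(\eta) + t\bigr) \le c_1 \exp\bigl(- c_2\, T t / \sigma_\eta^2\bigr),
\end{align*}
where $\sigma_\eta^2$ scales like the variance bound of Proposition \ref{prop: bound variance}. Such a deviation is obtained by (i) cutting $[0,T]$ into blocks of length of order $\log T$, on which the exponential $\beta$-mixing of Proposition \ref{P:mixing uniform} couples the trajectory with an independent copy up to a negligible remainder, (ii) applying a blockwise Bernstein inequality, using the uniform transition density bound of Corollary \ref{lemma: bound transition density} to estimate the blockwise variance, and (iii) integrating over $x \in A$ by Fubini. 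The constraints \eqref{eq: def mathcal H}, in particular the lower bound $\min(\ldots) \ge c(\log T)^{2+a}/\sqrt{T}$, together with the polynomial cardinality $|\mathcal{H}_T| \le c T^c$, then allow a union bound over $\eta \in \mathcal{H}_T$ to yield $\mathbb{E}[N_1 + N_2] \le c_1 e^{-c_2 (\log T)^{c_3}}$ with some $c_3 > 1$.

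The main technical obstacle is precisely this uniform concentration with the correct centering by $V(\eta)$. Because $V(\eta)$ in \eqref{eq: penalty} is a \emph{minimum} of two expressions reflecting the two regimes of Proposition \ref{prop: bound variance}, the Bernstein step has to be carried out in each regime separately and then combined, taking care that the tuning constant $\kappa$ in $N_1$ and $N_2$ matches the constants appearing in both variance bounds. Uniformity of all constants over $(a,b) \in \Sigma$ is inherited from the uniformity already established in Proposition \ref{P:mixing uniform} and Corollary \ref{lemma: bound transition density}, while the exponent $c_3 > 1$ comes from the extra $(\log T)^{2+a}$ factor built into \eqref{eq: def mathcal H}.
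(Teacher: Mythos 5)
Your reduction of Theorem \ref{th: adaptive} to a bound on $\mathbb{E}[A(h)]$ --- triangle inequality through $\hat{\pi}_{(h,\tilde h),T}$, the definition \eqref{eq: def A(h)} and the minimality \eqref{eq: def h tilde} of $\tilde h$, then the decomposition of $\hat{\pi}_{(h,\eta),T}-\hat{\pi}_{\eta,T}$ into $\mathbb{K}_\eta*(\pi_h-\pi)$ plus two centred terms --- is exactly the classical Goldenshluger--Lepski scheme the paper follows (it isolates this as Proposition \ref{prop: A(h)} and omits the classical part of the reduction). The Berbee/Viennet blocking on intervals of length $q_T=(\log T)^2$ to restore independence also matches the paper.

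The gap is in what you yourself call the core of the argument. You claim a deviation inequality for $\|\hat{\pi}_{\eta,T}-\pi_\eta\|_A^2$ obtained from a blockwise Bernstein inequality at each point $x$ followed by ``integrating over $x\in A$ by Fubini''. Fubini only lets you interchange expectation and integration, i.e.\ it controls $\mathbb{E}[\|\hat{\pi}_{\eta,T}-\pi_\eta\|_A^2]$; it does not convert pointwise tail bounds into a tail bound for the integrated squared error, and without such a tail bound you can control neither $\mathbb{E}[(\|\hat{\pi}_{\eta,T}-\pi_\eta\|_A^2-V(\eta)/\kappa)_+]$ nor the union bound over $\eta\in\mathcal{H}_T$. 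The quantity $\|\hat{\pi}_{\eta,T}-\pi_\eta\|_A^2$ is the square of a supremum of a centred empirical process over (a countable dense subset of) the unit ball of $L^2(A)$, and the right tool is a concentration inequality for such suprema: the paper writes $\|\pi_\eta-\hat{\pi}^{*(1)}_{\eta,T}\|_A^2=\sup_{r}\langle \pi_\eta-\hat{\pi}^{*(1)}_{\eta,T},r\rangle^2$ and applies the Klein--Rio/Talagrand inequality (Lemma \ref{lemma: Talagrand in Klein Rio}) to the class $\{r_\eta\}$, with explicit constants $M$, $H$, $v$ --- $H^2$ being precisely the variance bound of Proposition \ref{prop: bound variance} written as the minimum of the two regimes, which disposes of your worry about matching $\kappa$ to the two bounds separately --- and then uses the constraint $\min(\dots)\ge c(\log T)^{2+a}/\sqrt T$ built into \eqref{eq: def mathcal H} together with $|\mathcal{H}_T|\le cT^c$ to make the two Talagrand remainder terms of order $e^{-c(\log T)^{a}}$ and $e^{-c(\log T)^{a(d-2)}}$. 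Replacing Bernstein-plus-Fubini by this supremum/Talagrand step is not cosmetic: it is the step that actually produces the exponent $c_3>1$ in the statement.
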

As the last term above is negligible, the bound stated above shows that the estimator leads to an automatic trade-off between the bias $\left \| \pi_h - \pi \right \|^2_{\tilde{A}}$ and the variance $V(h)$.
The proof of Theorem \ref{th: adaptive} is postponed to Section \ref{S: proof adaptive}. \\
\\
{\blue An analogous result holds true when one considers the pointwise estimation, as stated in Theorem \ref{th: adaptive point} below.
\begin{theorem}
Suppose that assumptions A1 - A2 hold and that $d \ge 3$. Then, we have that there exists $T_0 > 0$ such that, for any $T \ge T_0$, 
$$\mathbb{E}[| \hat{\pi}_{\tilde{h}^p(x), T}(x) -  \pi(x)|^2] \le c_1 \inf_{h \in \mathcal{H}_T^p} (B^p(h,x) + V^p(h)) + c_1 T^{- c_2},$$
for a positive constant $c_1$ and $c_2\ge 1$.
\label{th: adaptive point}
\end{theorem}
\noindent The proof of Theorem \ref{th: adaptive point} is postponed to Section \ref{S: proof adaptive}.} \\
\\

\noindent 
We want now to replace the rate optimal choice for $h(T)$. With this purpose in mind we introduce the following sets of candidate bandwidths:
\begin{equation}
\mathcal{H}_T := \left \{ h = (h_1, ... , h_d)^T \in (0,1]^d : \,h_l = \frac{1}{z_l} \, { \mbox{ with } z_l \in \{1, ...,  \lfloor T \rfloor \}  \,  \mbox{ satisfying conditions in \eqref{eq: def mathcal H}}} \right \},
\label{eq: example HT}    
\end{equation}
{\blue \begin{equation}
\mathcal{H}_T^p := \left \{ h = (h_1, ... , h_d)^T \in (0,1]^d : \,h_l = \frac{1}{z_l} \, { \mbox{ with } z_l \in \{1, ...,  \lfloor T \rfloor \}  \,  \mbox{ satisfying conditions in \eqref{eq: def mathcal Hp}}} \right \}.
\label{eq: example HT point}    
\end{equation}}
{\revar As the ordering of the $\beta$'s is unknown, we adapt the formal definition of $\bar{\beta}_3$ which now is the harmonic mean of the $\beta$'s after having removed the smallest two:
}
%We moreover introduce $\bar{\beta}_k$ {\tt \revar Pourquoi ne pas l'appeler $\bar{\beta}_3$?}, which is the harmonic mean of the $\beta$'s after having removed the smallest two:
$$ \frac{1}{{\revar{\bar{\beta}_3}}} := \frac{1}{d-2} \sum_{l \neq k_1, k_2} \frac{1}{\beta_l}. $$
%It coincides with $\bar{\beta}_3$ where one considers ordered $\beta$, which are such that $\beta_1 \le ... \le \beta_d$. \\
\\
Then, we obtain the following theorems:
\begin{theorem}
Suppose that assumptions A1 - A2 hold with $\beta_{k_1} \ge 1$ and let $\mathcal{H}_T$ be defined by \eqref{eq: example HT}. Then, we have
\begin{equation*}
\mathbb{E}[\left \| \hat{\pi}_{\tilde{h},T} - \pi \right \|^2_A] \underset{\sim}{<}
\begin{cases}
(\frac{\log T}{ T})^ {\frac{2{\revar\bar{\beta}_3}}{2{\revar\bar{\beta}_3} + d - 2}} + e^{ - c_1 (\log T)^{c_2}} \qquad \mbox{if } \beta_{k_2} < \beta_{k_3} \\
(\frac{1}{ T})^ {\frac{2{\revar\bar{\beta}_3}}{2{\revar\bar{\beta}_3} + d - 2}} + e^{ - c_1 (\log T)^{c_2}} \qquad \mbox{if } \beta_{k_2} = \beta_{k_3},
\end{cases}
\end{equation*}
{\revar for $c_1$ a positive constant and $c_2>1$.
}\label{th: adaptive optimal}
\end{theorem}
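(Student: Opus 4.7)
The plan is to plug an explicit quasi-optimal bandwidth $h^\ast \in \mathcal{H}_T$ into the oracle inequality of Theorem \ref{th: adaptive}, so that
$$\mathbb{E}[\left \| \hat{\pi}_{\tilde{h},T} - \pi \right \|^2_A] \le c_1 \left(B(h^\ast) + V(h^\ast)\right) + c_1 e^{-c_2(\log T)^{c_3}},$$
and then show that the choice of $h^\ast$ makes $B(h^\ast)+V(h^\ast)$ match the announced rate. Since $\pi \in \mathcal{H}_d(\beta, 2\mathcal{L})$ and the kernel $K$ has order $M \ge \max_i \beta_i$, a standard Taylor expansion (already used when proving Theorem \ref{th: upper bound d ge 3}) shows that the bias satisfies $B(h) = \Norm{\pi_h - \pi}^2_{\tilde A} \lesssim \sum_{i=1}^d h_i^{2\beta_i}$. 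The difficulty is that the penalty $V(h)$ in \eqref{eq: penalty} depends on the $h$-dependent indexes $k_1(h),k_2(h),k_3(h)$ (the bandwidths ordered increasingly), while the statement is phrased in terms of the \emph{unknown} ordering of the $\beta_j$'s through $\bar{\beta}_3$. The key idea is to choose $h^\ast$ so that the ordering of $h^\ast_1,\ldots,h^\ast_d$ mirrors the ordering of $\beta_1,\ldots,\beta_d$.

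First I would treat the case $\beta_{k_2}<\beta_{k_3}$, i.e.\ the two smallest regularities are strictly smaller than all the others. Relabelling indices so that $\beta_1\le\beta_2<\beta_3\le\dots\le\beta_d$, I take
$$h^\ast_1 = h^\ast_2 := (1/T)^{b'}, \qquad h^\ast_l := \Big(\tfrac{\log T}{T}\Big)^{\frac{1}{2\beta_l}\cdot \frac{2\bar{\beta}_3}{2\bar{\beta}_3 + d-2}} \quad \text{for } l\ge 3,$$
for some fixed $b'>0$ large enough that $(h^\ast_1)^{2\beta_1}+(h^\ast_2)^{2\beta_2}$ is negligible compared with the target rate and $h^\ast_1,h^\ast_2<h^\ast_l$ for all $l\ge 3$, so that $(k_1(h^\ast),k_2(h^\ast))=(1,2)$. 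This choice makes $\sum_{l\ge 3}(h^\ast_l)^{2\beta_l}$ of order $(\log T/T)^{2\bar{\beta}_3/(2\bar{\beta}_3+d-2)}$, and, plugging in the first branch of \eqref{eq: penalty},
$$V(h^\ast)\asymp \frac{\log T}{T}\cdot \frac{1}{\prod_{l\ge 3} h^\ast_l} \asymp \Big(\frac{\log T}{T}\Big)^{\frac{2\bar{\beta}_3}{2\bar{\beta}_3+d-2}},$$
by construction of the exponents. A routine check (using that $a$ in \eqref{eq: def mathcal H} can be taken arbitrarily close to $1$ and $b$ arbitrarily large) shows that $h^\ast$ belongs to the grid $\mathcal{H}_T$ up to a negligible rounding error, which is enough since the bias and variance are continuous in $h$.

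For the case $\beta_{k_2}=\beta_{k_3}$, one only needs $(k_1(h^\ast),k_2(h^\ast),k_3(h^\ast))$ to coincide with the three indices of the smallest regularities. I would take $h^\ast_1$ very small as before and now set $h^\ast_2 = h^\ast_3$ together with $h^\ast_l$ for $l\ge 4$ so that the second branch of \eqref{eq: penalty} dominates, namely
$$V(h^\ast)\asymp \frac{1}{T\,\sqrt{h^\ast_2 h^\ast_3}\,\prod_{l\ge 4}h^\ast_l};$$
balancing with the bias contribution $\sum_{l\ge 2}(h^\ast_l)^{2\beta_l}$ yields exactly $(1/T)^{2\bar{\beta}_3/(2\bar{\beta}_3+d-2)}$, this time without logarithmic loss because the selection procedure is allowed to detect the favourable case through the minimum appearing in \eqref{eq: penalty}. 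The main obstacle is of a bookkeeping nature: one must verify that the above $h^\ast$ always lies in the discretized set $\mathcal{H}_T$ of \eqref{eq: example HT} (or can be replaced by its closest grid point at negligible cost) and that the chosen ordering of the $h^\ast_l$'s is compatible with the definition of the $k_j(h^\ast)$'s, so that the branch of $V$ we plug in is indeed the one realizing the minimum. Once this is taken care of, the announced bound follows directly from Theorem \ref{th: adaptive}.
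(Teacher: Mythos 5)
Your overall strategy is exactly the paper's: plug a rate-optimal candidate $h^\ast$ into the oracle inequality of Theorem \ref{th: adaptive} and check that $B(h^\ast)+V(h^\ast)$ has the announced order. However, there is a genuine gap in the step where you claim that ``a routine check shows that $h^\ast$ belongs to $\mathcal{H}_T$''. The set \eqref{eq: def mathcal H} contains, besides the bracket $(1/T)^b\le h_l\le(1/\log T)^{1/(d-2)+a}$, the additional constraint
$\min\bigl((\sum_{j}|\log h_j|\,h_{k_1}h_{k_2})^{1/2},\,(h_{k_1})^{1/2}(h_{k_2}h_{k_3})^{1/4}\bigr)\ge c(\log T)^{2+a}/\sqrt{T}$,
and the parameter $b$ being ``arbitrarily large'' does not relax it. Your choice $h^\ast_1=h^\ast_2=(1/T)^{b'}$ with $b'$ ``large enough'' gives $(\sum_j|\log h^\ast_j|\,h^\ast_1 h^\ast_2)^{1/2}\asymp(\log T)^{1/2}T^{-b'}$, which violates this constraint as soon as $b'>1/2$; so the candidate you propose is simply not in $\mathcal{H}_T$ and cannot be fed into the infimum of Theorem \ref{th: adaptive}. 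The same objection applies to taking ``$h^\ast_1$ very small'' in the case $\beta_{k_2}=\beta_{k_3}$, where the constraint $(h_{k_1})^{1/2}(h_{k_2}h_{k_3})^{1/4}\ge c(\log T)^{2+a}/\sqrt{T}$ bounds $h_{k_1}$ from below in terms of $h_{k_2}$.

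The fix — and this is what the paper does — is to saturate the constraint rather than push the small bandwidths to zero: take $h_{k_1}=h_{k_2}=(\log T)^{3/2+a}/\sqrt{T}$ in the first case, and in the second case $h_{k_1}\asymp(\log T)^{4+2a}T^{-1}\,T^{\bar\beta_3/(\beta_{k_2}(2\bar\beta_3+d-2))}$. One must then add the (short but necessary) verification that these larger values still contribute a negligible bias, which uses that $\beta_{k_1},\beta_{k_2}>1>\frac{2\bar\beta_3}{2\bar\beta_3+d-2}$ in the first case, and that $\frac12(\frac{1}{\beta_{k_1}}+\frac{1}{\beta_{k_2}})<1$ together with $d>2$ in the second. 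With that correction, and the rounding $h_{k_j}\mapsto 1/\lfloor 1/h_{k_j}\rfloor$ to land on the grid of \eqref{eq: example HT}, your argument closes as in the paper.
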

Underlining once again that the exponential term above is negligible, we have that the risk estimates we get using the bandwidth provided by our selection procedure converges to zero at the same rate as in previous section, which we have proven being optimal in a minimax sense. \\
It is important to highlight, then, that our procedure automatically selects in a data-driven way, having or not the logarithm in the convergence rate. {\blue Changes occur with the pointwise adaptive procedure. In fact, as emphasized in Theorem \ref{th: adaptive optimal point} below, this results in the addition of a logarithm in both cases.}
{\blue \begin{theorem}
Suppose that assumptions A1 - A2 hold and let $\mathcal{H}_T^p$ be defined by \eqref{eq: example HT point}. Assume moreover that the smallest smoothness $\beta_{k_1}$ is such that $\beta_{k_1} > 2$. Then, we have
\begin{equation*}
\mathbb{E}[| \hat{\pi}_{\tilde{h}^p(x),T}(x) - \pi(x) |^2] \underset{\sim}{<}
\begin{cases}
(\frac{(\log T)^2}{ T})^ {\frac{2{\bar{\beta}_3}}{2{\bar{\beta}_3} + d - 2}} + T^{- c_1} \qquad \mbox{if } \beta_{k_2} < \beta_{k_3} \\
(\frac{\log T}{ T})^ {\frac{2{\bar{\beta}_3}}{2{\bar{\beta}_3} + d - 2}} + T^{- c_1} \qquad \mbox{if } \beta_{k_2} = \beta_{k_3},
\end{cases}
\end{equation*}
for $c_1\ge 1$.
\label{th: adaptive optimal point}
\end{theorem}}

}

\section{Lower bounds}{\label{S: Lower bounds}}
One may wonder if the convergence rates found by using kernel density estimators are optimal or if it is possible to improve them by using other density estimators. We aim at showing that the convergence rates found in Theorems \ref{th: upper bound d ge 3} and \ref{th: upper bound d=2} are optimal. We will focus first on the case $d \ge 3$. We will start computing a lower bound in a general case. After that, we will show it is possible to improve it, up to ask $\beta_2 < \beta_3$. \\
\\
We can write down the expression of the minimax risk for the estimation, at some point $x_0$, of an invariant density $\pi$ belonging to the anisotropic Holder class $\mathcal{H}_d (\beta, 2 \mathcal{L})$. Let $x_0 \in \mathbb{R}^d$ and $\Sigma (\beta, \mathcal{L},a_{\text{min}},b_0,a_0,a_1,b_1,\tilde{C},\tilde{\rho})$ as in Definition \ref{def: insieme sigma v2} here above. We define the minimax risk 
\begin{equation}
	\mathcal{R}_T (\beta, \mathcal{L},a_{\text{min}},b_0,a_0,a_1,b_1,\tilde{C},\tilde{\rho}) := \inf_{\tilde{\pi}_T} \sup_{(a,b) \in \Sigma (\beta, \mathcal{L},a_{\text{min}},b_0,a_0,a_1,b_1,\tilde{C},\tilde{\rho})} \mathbb{E}[(\tilde{\pi}_T (x_0) - \pi_{(a,b)} (x_0))^2],
	\label{eq: def minimax risk}
\end{equation}
where the infimum is taken on all possible estimators of the invariant density, {\modch based on $X_t$ for $t\in [0, T]$}. The following lower bound will be showed in Section \ref{s: proofs upper}.
\begin{theorem}
	Let $\beta=(\beta_1,\dots,\beta_d)$, $1<\beta_1\le \dots \le \beta_d$, $\beta_2 > 2$, $\mathcal{L}=(\mathcal{L}_1,\dots,\mathcal{L}_d)$, $\mathcal{L}_i>0$.
	%	Consider $0<a_\text{min}\le a_0$ and $b_0>0, ~ a_1>0, ~ b_1>0$. Then, there exist $\tilde{C}$, $\tilde{\rho}$, $c > 0$ and $T_0 > 0$ such that, for $T \ge T_0$,
	{\revar 	Consider $0<a_\text{min}\le a_0$, $a_1>0$, $b_0>0$, $b_1>0$, then, there exist $\tilde{C}$, $\tilde{\rho}$, $c > 0$ and $T_0 > 0$ such that, for $T \ge T_0$,}
	\begin{equation*}
		\mathcal{R}_T (\beta, \mathcal{L},a_{\text{min}},b_0,a_0,a_1,b_1,\tilde{C},\tilde{\rho}) \ge c\,  (\frac{1}{T})^{{ \frac{ 2\bar{\beta}_3}{ 2 \bar{\beta}_3 + d - 2 }}}.
	\end{equation*}
	\label{th: lower bound}
\end{theorem}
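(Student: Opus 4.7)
The plan is to apply Le Cam's two-hypothesis lemma. It suffices to exhibit two pairs of coefficients $(a, b_0), (a, b_1) \in \Sigma$ whose invariant densities $\pi_0, \pi_1$ satisfy $|\pi_1(x_0) - \pi_0(x_0)| \ge c_0 \,\phi_T$ with $\phi_T := T^{-\bar{\beta}_3/(2\bar{\beta}_3 + d-2)}$, while their path laws on $[0,T]$ satisfy $\mathrm{KL}(\mathbb{P}_1^T, \mathbb{P}_0^T) \le \kappa_0 < \infty$; the bound $\mathcal{R}_T \ge c\,\phi_T^2$ then follows from the standard two-point reduction. I would take $a \equiv I_d$ and the reference drift $b_0 = \tfrac{1}{2}\nabla \log \pi_0$, where $\pi_0$ is a smooth strictly positive density behaving like a Gaussian outside a large ball, so that $\pi_0 \in \mathcal{H}_d(\beta, \mathcal{L})$ with room to spare and $b_0$ satisfies \textbf{A1}--\textbf{A2} with fixed constants.

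The core of the argument is the construction of a localized perturbation $\psi(x) = \epsilon \prod_{i=1}^d \phi((x_i - x_0^i)/h_i)$, with $\phi$ smooth compactly supported and $\phi(0)=1$, using the anisotropic bandwidth choice
\begin{equation*}
h_1 = h_2 = h_3 = \epsilon^{1/\beta_3}, \qquad h_i = \epsilon^{1/\beta_i} \text{ for } i \ge 4,
\end{equation*}
and defining $\pi_1 = Z^{-1}(\pi_0 + \psi)$ together with $b_1 = \tfrac{1}{2} \nabla \log \pi_1$. The non-obvious point is that $h_1$ and $h_2$ are chosen \emph{larger} than their Hölder-saturating values $\epsilon^{1/\beta_1}, \epsilon^{1/\beta_2}$: this exploits the slack afforded by $\beta_1, \beta_2 \le \beta_3$ and is precisely what generates the two ``free'' directions reflected in the factor $d-2$ of the rate. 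With this choice one verifies $\pi_1 \in \mathcal{H}_d(\beta, 2\mathcal{L})$ (the direction-$i$ Hölder norm of $\psi$ is at most $C\, \epsilon\, h_i^{-\beta_i} \le C \epsilon^{1 - \beta_i/\beta_3}$, bounded for small $\epsilon$), and that $b_1$ lies in the class $\Sigma$: on the compact support of $\psi$ one needs $\Norm{\nabla^2 \psi}_\infty \le C\, \epsilon\, \max_i h_i^{-2} \le C \,\epsilon^{1-2/\beta_3}$ to be bounded, which uses the hypothesis $\beta_2 > 2$ crucially (forcing $\beta_3 > 2$); outside a fixed ball $b_1$ coincides with $b_0$ and inherits \textbf{A2}.

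For the KL computation I would use Girsanov with stationary start under $\pi_0$:
\begin{equation*}
\mathrm{KL}(\mathbb{P}_1^T, \mathbb{P}_0^T) \le \tfrac{T}{2} \int_{\mathbb{R}^d} |b_1(x) - b_0(x)|^2 \pi_0(x)\, dx \le C T \int_{\mathbb{R}^d} |\nabla \psi(x)|^2 \, dx,
\end{equation*}
where the second step uses $b_1 - b_0 = \tfrac{1}{2} \nabla \log(1 + \psi/\pi_0) \approx \nabla\psi/(2\pi_0)$ with $\pi_0$ bounded above and below on $\mathrm{supp}(\psi)$. A direct calculation yields $\int |\nabla \psi|^2 dx = C\, \epsilon^2 (\prod_i h_i)(\sum_i h_i^{-2})$; with the chosen bandwidths $\sum_i h_i^{-2}$ is dominated by the three equal terms $h_1^{-2} = h_2^{-2} = h_3^{-2} = \epsilon^{-2/\beta_3}$, and $\prod_i h_i = \epsilon^{2/\beta_3 + (d-2)/\bar{\beta}_3}$, so the product collapses to $C\, \epsilon^{(d-2)/\bar{\beta}_3}$ and the KL is at most $C T \epsilon^{2 + (d-2)/\bar{\beta}_3}$. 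Setting $\epsilon = c\,\phi_T$ with $c$ small gives $\mathrm{KL} \le \kappa_0$, while $|\pi_1(x_0) - \pi_0(x_0)| = \phi(0)^d \epsilon + O(\epsilon \prod_i h_i) \gtrsim \epsilon$, and Le Cam's lemma concludes. The main obstacle is exactly the identification of the correct non-obvious bandwidth allocation in directions $1,2$: a naive choice $h_i = \epsilon^{1/\beta_i}$ for all $i$ produces a spurious factor $\epsilon^{-2/\beta_1}$ coming from the smallest $h_i^{-2}$ and yields a strictly weaker rate; a secondary obstacle is the uniform-in-$\epsilon$ verification of the \textbf{A1}--\textbf{A2} constants for $b_1$, where the assumption $\beta_2 > 2$ plays its essential role.
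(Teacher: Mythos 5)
Your proposal is correct and follows essentially the same route as the paper's proof: a two-hypothesis reduction with $b=\tfrac12\nabla\log\pi$, a localized anisotropic bump at $x_0$, the key step of inflating the two smallest bandwidths so that $(\prod_i h_i)(\sum_i h_i^{-2})\asymp \epsilon^{(d-2)/\bar\beta_3}$, a Girsanov bound on the likelihood ratio, and the calibration $\epsilon\asymp T^{-\bar\beta_3/(2\bar\beta_3+d-2)}$. The only (immaterial) differences are that the paper takes $h_1=h_2=\epsilon^{1/\beta_2}$ rather than your $h_1=h_2=h_3=\epsilon^{1/\beta_3}$, and avoids your renormalization constant $Z$ by using a zero-mean bump kernel.
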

Theorem \ref{th: lower bound} implies that, on a class of diffusions $X$ whose invariant density belongs to $\mathcal{H}_d (\beta, 2 \mathcal{L})$, it is not possible to find an estimator with a rate of estimation better than $T^{-{ \frac{ \bar{\beta}_3}{ 2 \bar{\beta}_3 + d - 2 }}}$ .
{\revar Remark that $\tilde{C}$ and $\tilde{\rho}$ can not be chosen freely in the statement of Theorem \ref{th: lower bound}. Indeed, the theorem only ensures that the lower bound holds true for sufficiently small $\tilde{C}$ and large $\tilde{\rho}$. This is in contrast with the upper bound of Theorem \ref{th: upper bound d ge 3} which holds true for all $\tilde{C}$, $\tilde{\rho}$. Actually, it is impossible to get a lower bound for any $\tilde{C}$, $\tilde{\rho}$. Indeed, for fixed values of the $\mathcal{L}_i$'s, $b_0$ and $b_1$, the set $\Sigma (\beta, \mathcal{L},a_{\text{min}},b_0,a_0,a_1,b_1,\tilde{C},\tilde{\rho})$ can be empty if $\tilde{C}$ is too large and $\tilde{\rho}$ too small. In such case, the lower bound can not hold true.} \\
\\
Comparing the result here above with the second point of Theorem \ref{th: upper bound d ge 3} we observe that the convergence rate we found in the lower bound and in the upper bound are the same, when $\beta_2 = \beta_3$. When $\beta_2 < \beta_3$, instead, it is possible to improve the lower bound previously obtained, as gathered in the following theorem. Its proof can be found in Section \ref{s: proofs upper}.
\begin{theorem}
	Let $\beta=(\beta_1,\dots,\beta_d)$, $0<\beta_1 \le \dots \le \beta_d$,  $\beta_2 >2$, and $\beta_2 < \beta_3$. Moreover, $\mathcal{L}=(\mathcal{L}_1,\dots,\mathcal{L}_d)$, $\mathcal{L}_i>0$.
	%	Consider $0<a_\text{min}\le a_0$ and $b_0>0, ~ a_1>0, ~  b_1>0$.
	%Then, there exist $\tilde{C}$, $\tilde{\rho}$, $c > 0$ and $T_0 > 0$ such that, for $T \ge T_0$,
	{\revar 	Consider $0<a_\text{min}\le a_0$, $a_1>0$, $b_0>0$, $b_1>0$, then, there exist $\tilde{C}$, $\tilde{\rho}$, $c > 0$ and $T_0 > 0$ such that, for $T \ge T_0$,}
	\begin{equation}\label{E: lower bound log}
		\mathcal{R}_T (\beta, \mathcal{L},a_{\text{min}},b_0,a_0,a_1,b_1,\tilde{C},\tilde{\rho}) \ge c\, (\frac{\log T}{T})^{{ \frac{ 2\bar{\beta}_3}{ 2 \bar{\beta}_3 + d - 2 }}}.
	\end{equation}
	\label{th: lower bound avec log}
\end{theorem}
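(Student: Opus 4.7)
The plan is to strengthen the two-hypothesis Le Cam argument that underlies Theorem~\ref{th: lower bound} into a multi-hypothesis Fano-type lower bound with $M = M(T) \sim T^\kappa$ competing hypotheses. Fano's lemma yields a lower bound of order $\delta_T^2$ as soon as all pairwise Kullback--Leibler divergences are at most $\alpha \log M$ for some $\alpha < 1/8$; choosing $M$ polynomial in $T$ then introduces the extra $\log T$ factor that two-hypothesis Le Cam cannot produce. The strict inequality $\beta_2 < \beta_3$ is precisely what lets us fit that many test densities inside the anisotropic Hölder ball: it leaves enough slack to take $h_2 \ll h_3$, providing an extra direction of combinatorial freedom for indexing the hypotheses.

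I would start from a reference $(\bar a,\bar b)\in\Sigma$ whose invariant density $\pi_0$ is smooth and strictly positive on a fixed neighborhood of $x_0$ (for instance an Ornstein--Uhlenbeck choice $\bar a=I$, $\bar b(x)=-x/2$, giving a Gaussian $\pi_0$ lying well inside $\mathcal{H}_d(\beta,\mathcal{L})$). With bandwidths $h_i^{\beta_i}=H$ for $i\ge 3$ and small $h_1,h_2$ satisfying $h_2\ll h_3$, construct an elementary anisotropic bump $\varphi^0(x)=\epsilon\prod_i h_i^{\beta_i}\psi_i((x_i-x_0^i)/h_i)$ with smooth compactly supported $\psi_i$'s having enough vanishing moments. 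Then assemble a family $\{\varphi_\omega\}_{\omega\in\Omega}$ with $|\Omega|=M\sim T^\kappa$, each a suitable signed/translated combination of copies of $\varphi^0$, designed so that single-coordinate flips of $\omega$ produce a nonzero change at $x_0$ of $\pi_\omega:=\pi_0+\varphi_\omega$. Define the reversible drift $b_\omega=\bar b+\tfrac{1}{2}\bar a\bar a^\top\nabla\log(\pi_\omega/\pi_0)$ on a large compact set, smoothly extended outside so that $(\bar a,b_\omega)\in\Sigma$ with all constants uniform in $\omega$ (A2 is preserved by $\bar b$'s restoring force outside a compact set; A1 and $\pi_\omega\in\mathcal{H}_d(\beta,2\mathcal{L})$ follow from smallness of $\varphi_\omega$).

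Next, Girsanov's theorem combined with Proposition~\ref{P:mixing uniform} and the uniform lower bound of $\pi_0$ on the bump region yield
\[
KL(P_T^{(\omega)},P_T^{(\omega')}) \le \frac{T}{2}\int|b_\omega-b_{\omega'}|^2\pi_0\,dx \lesssim T\int|\nabla(\varphi_\omega-\varphi_{\omega'})|^2\,dx,
\]
uniformly over $(a,b)\in\Sigma$. A direct computation expresses the right-hand side in terms of $M$, $\epsilon$ and the $h_i$'s. Fano's inequality, applied to the family $\{\pi_\omega(x_0)\}$ with pairwise separation at least $2\delta_T$, then gives $\mathcal{R}_T\gtrsim\delta_T^2$. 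Finally, optimizing $M$, $H$, $h_1$, $h_2$ and $\epsilon$ so that the KL constraint saturates at $\alpha\log M\sim\log T$ and $\delta_T$ is maximized gives the advertised rate $\delta_T^2\asymp(\log T/T)^{2\bar\beta_3/(2\bar\beta_3+d-2)}$.

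The main obstacle is the combinatorial design of $\{\varphi_\omega\}$: since pointwise separation at the single point $x_0$ cannot be boosted by placing disjoint bumps \emph{away} from $x_0$, one must instead overlap many elementary bumps at $x_0$ using orthogonal or sign-differing shapes, for instance via a wavelet-type basis along the direction where $h_2$ is small. This is how the strict inequality $\beta_2<\beta_3$ is concretely exploited, delivering exponentially many distinguishable combinations with controlled $H^1$-norms. A second technical point is verifying A1--A2 for all $b_\omega$ with constants uniform in $\omega$ while keeping $\pi_\omega$ inside $\mathcal{H}_d(\beta,2\mathcal{L})$; both issues are handled by confining $\varphi_\omega$ to a small fixed region where $\pi_0$ is strictly positive, and relying on the reference drift $\bar b$ for the behavior at infinity.
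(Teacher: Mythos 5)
Your proposal has a genuine gap at its core: a Fano-type argument with $M\sim T^{\kappa}$ hypotheses cannot work for the \emph{pointwise} risk at a single point $x_0$. The quantity being estimated is the single real number $\pi(x_0)$, so the $M$ hypotheses reduce to $M$ real values $\pi_\omega(x_0)$ which must be pairwise $2\delta_T$-separated; this forces their range to be at least $2\delta_T(M-1)$. Since every perturbation must keep $\pi_\omega$ inside $\mathcal{H}_d(\beta,2\mathcal{L})$ and satisfy the Kullback--Leibler budget $T\int|b_\omega-b_{\omega'}|^2\pi_0\lesssim\log M$, the heights of the bumps at $x_0$ are confined to an interval of length comparable to $\delta_T$ itself (or, if you stagger heights arithmetically up to $M\delta_T$, the largest bump blows the KL budget by a factor $M^2$). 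Either way $M=O(1)$: for scalar functionals, multi-hypothesis Fano does not improve on the two-point Le Cam bound, and no choice of ``orthogonal or sign-differing shapes'' overlapping at $x_0$ circumvents this, because orthogonality of the shapes does not separate their point values. With your product bump $\varphi^0=\epsilon\prod_i h_i^{\beta_i}\psi_i((x_i-x_0^i)/h_i)$ the Girsanov condition $T\int|\nabla\varphi^0|^2\lesssim 1$ reproduces exactly the rate of Theorem \ref{th: lower bound}, with no logarithm.

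The paper obtains the logarithm by an entirely different mechanism, still within the two-hypothesis scheme. The bump $d_T$ added to $\pi^{(0)}$ is \emph{not} a product of one-dimensional kernels: in the plane of the two least-smooth coordinates it is radial with a truncated logarithmic profile $J_{r_{\text{min}},r_{\text{max}}}(r)\approx \ln(r_{\text{max}}/r)/\ln(r_{\text{max}}/r_{\text{min}})$ (Lemma \ref{L: existence J}), i.e.\ essentially a harmonic function on the annulus $r_{\text{min}}\le r\le r_{\text{max}}$. This exploits the two-dimensional logarithmic-capacity phenomenon: the Dirichlet energy $\int|\nabla d_T|^2$, which controls $\int|b^{(1)}-b^{(0)}|^2\pi^{(0)}$ in \eqref{E: majo L2 diff b log}, is smaller by a factor $1/\ln(r_{\text{max}}/r_{\text{min}})$ than that of a product bump of the same height at the origin. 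Taking $r_{\text{min}}\asymp M_T^{-1/\beta_2}$ and $r_{\text{max}}\asymp M_T^{-1/\beta_3}$, the hypothesis $\beta_2<\beta_3$ is precisely what makes $\ln(r_{\text{max}}/r_{\text{min}})\asymp\ln M_T\asymp\log T$, and this $1/\log T$ gain in the Girsanov condition \eqref{E: cond Girsanov log} is what allows $M_T=(T/\log T)^{\bar\beta_3/(2\bar\beta_3+d-2)}$. If you want to salvage your write-up, you should replace the Fano machinery by this log-profile two-point construction (or find another source of a $1/\log T$ gain in the KL divergence); the combinatorial freedom you attribute to $h_2\ll h_3$ is not where the logarithm comes from.
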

We will see that the condition $\beta_2 < \beta_3$ is crucial in order to recover a logarithm in the lower bound. Comparing the lower bounds in Theorems \ref{th: lower bound} and \ref{th: lower bound avec log} with the upper bounds in Theorem \ref{th: upper bound d ge 3} it follows that the kernel density estimator we proposed in \eqref{eq: def estimator} achieves the best possible convergence rate. \\
\\
It is possible to ensure an analogous lower bound in the bi-dimensional case, which ensures the optimality of the convergence rate found in Theorem \ref{th: upper bound d=2}.
\begin{theorem}
	Let $d=2$, $\beta=(\beta_1, \beta_2)$, $0<\beta_1\le \beta_2$, $\beta_2 > 2$, $\mathcal{L}=(\mathcal{L}_1,\mathcal{L}_2)$, $\mathcal{L}_i>0$.
	% 	Consider $0<a_\text{min}\le a_0$ and $b_0>0, ~ a_1>0, ~  b_1>0$. Then, there exist $\tilde{C}$, $\tilde{\rho}$, $c > 0$ and $T_0 > 0$ such that, for $T \ge T_0$,
	{\revar 	Consider $0<a_\text{min}\le a_0$, $a_1>0$, $b_0>0$, $b_1>0$, then, there exist $\tilde{C}$, $\tilde{\rho}$, $c > 0$ and $T_0 > 0$ such that, for $T \ge T_0$,}
	\begin{equation*}
		\mathcal{R}_T (\beta, \mathcal{L},a_{\text{min}},b_0, a_0,a_1,b_1,\tilde{C},\tilde{\rho}) \ge c\,  (\frac{\log T}{T}).
	\end{equation*}
	\label{th: lower bound d=2}
\end{theorem}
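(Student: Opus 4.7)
To prove Theorem~\ref{th: lower bound d=2}, I would mirror the multi-hypothesis strategy used to establish the logarithmic lower bound of Theorem~\ref{th: lower bound avec log}, specialized to the boundary case $d=2$ where the anisotropic exponent $2\bar\beta_3/(2\bar\beta_3+d-2)$ formally collapses to~$1$. The key point is that a plain two-point Le~Cam argument is inadequate: for a bump of amplitude $\eta$ and width $h$ at $x_0$, Girsanov gives $\mathrm{KL}(P_\eta^T\|P_0^T)\asymp T\eta^2$, \emph{scale-invariant} in the critical dimension~$2$ (this scale invariance of $\int|\nabla\phi|^2$ being precisely the origin of the logarithm on the upper-bound side, cf.\ Theorem~\ref{th: upper bound d=2}), so a two-point reduction yields only the parametric rate $1/T$. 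The extra $\log T$ must therefore be extracted by opposing $M\asymp T^c$ alternatives via a Fano/mixture argument.

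Construction: take as reference an Ornstein–Uhlenbeck SDE with $a\equiv I$ and $b_0(x)=-x/2$, whose bi-variate Gaussian stationary density $\pi_0$ is bounded above and below on a fixed neighbourhood $U$ of $x_0$. Fix $\psi\in C^\infty_c(\R^2)$ with $\psi(0)=1$ and $\int\psi=0$, supported in $[-1,1]^2$. For bandwidths $h=(h_1,h_2)$ and amplitude $\eta$, choose a grid $\{x_j\}_{j=1}^M\subset U$ with $x_1=x_0$ and pairwise distance $\ge 2(h_1\vee h_2)$, so that the anisotropically rescaled bumps $\phi_j(x)=\eta\,\psi((x-x_j)/h)$ have pairwise disjoint supports. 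Set $\pi_j=\pi_0+\phi_j$ and $b_j=\tfrac12\nabla\log\pi_j$ (equal to $b_0$ off $U$); then $(I,b_j)\in\Sigma(\beta,\mathcal L,\dots)$ with uniform constants, provided $\eta\le c\,(h_1^{\beta_1}\wedge h_2^{\beta_2})$ and $Mh_1h_2\le|U|$. By construction, $\pi_1(x_0)-\pi_j(x_0)=\eta$ for every $j\ge 2$, giving a pointwise gap~$\eta$ between the distinguished hypothesis $P_1$ and the distractors.

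The $\log T$ factor comes from a $\chi^2$-mixture estimate. By Girsanov, $\mathrm{KL}(P_j^T\|P_k^T)\le cT\eta^2$ uniformly; and because the perturbations $b_j-b_0$ have pairwise disjoint supports, the Girsanov exponentials $\mathcal E^{(j)}_T=dP_j/dP_0$ satisfy $E_0[\mathcal E^{(j)}_T\mathcal E^{(k)}_T]=1$ for $j\ne k$ (vanishing cross-variations) and $E_0[(\mathcal E^{(j)}_T)^2]\le e^{cT\eta^2}$. Hence $\chi^2(\bar P,P_0)\le (e^{cT\eta^2}-1)/M$ for the uniform mixture $\bar P=M^{-1}\sum_{j=1}^M P_j$; choosing $\eta^2=c\log T/T$ and $M=T^{c+1}$ makes this $\chi^2$ divergence $o(1)$. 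A testing inequality of Fano/Bretagnolle–Huber–Carol type, applied to $P_1$ versus the mixture $\bar P_{-1}=(M-1)^{-1}\sum_{j\ge 2}P_j$, then yields $\mathcal R_T\gtrsim c\eta^2 = c\log T/T$.

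The main obstacle is this last testing step: because $\mathrm{KL}(P_1,P_0)\sim\log T\to\infty$, the naive triangle inequality $d_{TV}(P_1,\bar P_{-1})\le d_{TV}(P_1,P_0)+d_{TV}(P_0,\bar P_{-1})$ is useless. One has to bound $d_{TV}(P_1,\bar P_{-1})$ directly via a careful $\chi^2$-expansion that again exploits the disjointness of supports — in particular, that the bump carried by $P_1$ (located at $x_0$) is spatially disjoint from each distractor bump. Making this rigorous, together with verifying that $(I,b_j)$ really has invariant density $\pi_j$ (possibly with a vanishing normalisation correction) and that A1--A2 hold uniformly in $j$, would form the bulk of the proof; these are the same technicalities encountered in the proof of Theorem~\ref{th: lower bound avec log}, specialized to $d=2$.
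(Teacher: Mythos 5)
Your reduction to a multiple-hypothesis (Fano/$\chi^2$-mixture) scheme does not work here, and the gap is not merely the "last testing step" you flag: it is structural. In your family only the distinguished hypothesis $P_1$ perturbs the density at $x_0$; every distractor satisfies $\pi_j(x_0)=\pi_0(x_0)$. A pointwise lower bound of order $\eta^2$ therefore requires $P_1$ to be statistically confusable with some law whose density at $x_0$ differs from $\pi_1(x_0)$ by $\eta$, i.e.\ with $P_0$ or with the mixture $\bar P_{-1}$. But with standard self-similar bumps in $d=2$ one has $\mathrm{KL}(P_1\|P_0)\asymp T\eta^2=c\log T\to\infty$, and $\log\mathcal{E}^{(1)}_T$ is approximately $N(\sigma^2/2,\sigma^2)$ under $P_1$ and $N(-\sigma^2/2,\sigma^2)$ under $P_0$ with $\sigma^2\asymp\log T$; since the drift perturbations have disjoint supports, the cross-variation vanishes and the same $N(-\sigma^2/2,\sigma^2)$ behaviour holds under every $P_j$, $j\ge2$. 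Hence the test $\{\mathcal{E}^{(1)}_T>1\}$ separates $P_1$ from $P_0$ and from every distractor with error probabilities $O(e^{-c\log T})$, so $d_{TV}(P_1,\bar P_{-1})\to1$ no matter how large $M$ is, and the estimator $\hat\pi(x_0)=\pi_0(x_0)+\eta\,\mathbf{1}_{\{\mathcal{E}^{(1)}_T>1\}}$ has risk $o(\log T/T)$ over your whole family. Your disjoint-support $\chi^2$ computation controls $d_{TV}(P_0,\bar P)$, which is the right object for sup-norm or adaptation lower bounds, but it cannot produce a pointwise lower bound at a fixed $x_0$.

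The paper's proof is in fact the two-point argument you dismiss, rescued by changing the \emph{shape} of the bump rather than the number of hypotheses. It sets $\pi^{(1)}=\pi^{(0)}+\frac{1}{M_T}J_{r_{\min},r_{\max}}(r)$, where $J$ is a smoothed truncated harmonic profile ($\approx\ln(r_{\max}/r)/\ln(r_{\max}/r_{\min})$, constant near $0$ with $J(0)\ge1$, cf.\ Lemma \ref{L: existence J}). In dimension $2$ this essentially minimizes the Dirichlet energy subject to unit height at the centre, giving $\int|b^{(1)}-b^{(0)}|^2\pi^{(0)}\lesssim M_T^{-2}/\ln(r_{\max}/r_{\min})$ instead of $M_T^{-2}$; the Girsanov condition $T\,M_T^{-2}/\ln(r_{\max}/r_{\min})=O(1)$, with $r_{\min},r_{\max}$ chosen so that $\ln(r_{\max}/r_{\min})\asymp\ln M_T$, then allows $M_T=(T/\log T)^{1/2}$ and yields the rate $M_T^{-2}=\log T/T$. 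Your correct observation that the Dirichlet energy of a product bump is scale-invariant in $d=2$ is precisely why this capacity-type bump is needed; the right conclusion is to optimize the bump profile, not to multiply the hypotheses.
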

We conclude that also in the two-dimensional case, the estimator proposed in \eqref{eq: def estimator} is minimax-optimal.

{\revarn 
	\section{Numerical simulations}\label{S:Numerical}
	In this section, we study the numerical performance of the Goldenshluger-Lepski procedure on simulated data. In particular, we discuss the effect of the choice of the constant $k$ appearing in \eqref{eq: penalty point} on the quality of estimation. For simplicity, the diffusion is chosen to follow a stationary Ornstein--Uhlenbeck model, in dimension $d=3$, solution to $dX_t=-\frac{X_t}{2}dt + dB_t$. In this situation, the density $\pi$ is the one of the
	%and we know that the stationary distribution is 
	Gaussian distribution $\mathcal{N}(0,\text{Id}_3)$.  
	%for which we have that the st 
	We focus on the pointwise estimation of $\pi(x)$. We use the family of estimators introduced in
	\eqref{eq: def F H p}, where $\mathbb{K}_{h}(x-y)=\prod_{m=1}^3K(\frac{x_m-y_m}{h_m})\frac{1}{h_m}$ is a product of a one dimensional Gaussian kernel $K$, and $(h_1,h_2,h_3) \in\mathcal{H}_T^p:=\{2\times10^{-0.2k},\quad k\in\{0,\dots11\}\}^3$. 
	The computation of the quantity $A^p(h,x)$ defined in \eqref{eq: def A(h) point} appears difficult in practice. Indeed, the formula involves the computation of the estimators $\hat{\pi}_{(h,\eta),T}$ for $(h,\eta)\in (\mathcal{H}_T^p)^2$ which yields, in our situation, to the computation of $\text{card}(\mathcal{H}_T^p)^2\simeq 10^6$ different estimators.  Thus, we slightly modify the definition \eqref{eq: def A(h) point} and use instead
	\begin{equation}\label{eq: A via sup}
		\widetilde{A}^p(h,x) = \sup_{\eta \in \mathcal{H}_T^p} \{
		 |  \hat{\pi}_{h \vee \eta,T}(x) - \hat{\pi}_{\eta,T}(x)|^2 - V^p(\eta)  
	%	~\mid ~\eta \in \mathcal{H}_T^p, V^p(\eta) \ge V^p(h)
		\}	
	\end{equation}
	 where $h \vee \eta=(h_1 \vee \eta_1,h_2 \vee \eta_2,h_3 \vee \eta_3)$. Since  $h \vee \eta \in \mathcal{H}_T^p$ for $(h,\eta) \in (\mathcal{H}_T^p)^2$, this modification reduces the computational task to the computation of the family of estimators $\hat{\pi}_{h,T}$ for $h \in \mathcal{H}_T^p$. Remark,  that since we are using a Gaussian kernel, we have $\mathbb{K}_{h}*\mathbb{K}_{h}=\mathbb{K}_{h+\eta}$, for which $\mathbb{K}_{h \vee \eta}$ can be used as proxy. This justifies the approximation of	${A}^p(h,x)$  by $	\widetilde{A}^p(h,x) $. Also, alternative definitions of the Goldenshluger-Lespki method directly relies on quantities analogous to \eqref{eq: A via sup} (e.g. see \cite{Lepski15}).	 
	 
	 The total time of observation is $T=10^5$, the process $X$ is simulated using an Euler scheme with step $10^{-5}$, while the continuous time estimators \eqref{eq: def estimator} are approximated by discrete versions using the same grid as the Euler scheme. We use $m=200$ replications and obtain a set of estimators $(\hat{\pi}_{h,T}^{(i)})_{h \in \mathcal{H}^p_T}$ for each iteration $i\in \{1,\dots,m\}$. From these $m$ independent realisations of the estimator, it is possible to compute the empirical variance and mean 
	 of $\hat{\pi}_{h,T}$ for each $h \in \mathcal{H}_T^p$. Using the true value of the density $\pi(x)$, we can determine the bandwidth $h\in \mathcal{H}^p_T$ minimizing the empirical $L^2$ risk of $\hat{\pi}_{h,T}$ over the $m$ Monte-Carlo trials. Let us denote by $h_\text{oracle}$ the value of the
	 bandwidth minimizing this empirical risk and by $\hat{\pi}_{\text{oracle},T}=\hat{\pi}_ {h_\text{oracle},T}$ the associated estimator. As the determination of this optimal $h_\text{oracle}$ uses the knowledge of $\pi(x)$, it is unfeasible in practice. However, this oracle estimator will serve as a basis of comparison for the Goldenshluger-Lepski estimator. In Table \ref{T:Oracle}, we show the empirical mean and standard deviation of the oracle estimator, together with the value $h_\text{oracle}$ written as $h_\text{oracle}=(2\times10^{-0.2k_1},2\times10^{-0.2k_2},2\times10^{-0.2k_3})$. The estimator works remarkably well. It is noticeable that when estimating $\pi(x)$ at the point $x=(0,0,0)$, around which the function $\pi$ is isotropic, the shape of the optimal kernel is isotropic as well, since $k_1=k_2=k_3$. On the other hand, for the estimation of $\pi(x)$ at $x=(0,0,1)$ the optimal $h_\text{oracle}$ is anisotropic.	 
	 
	 In the definition of the Goldenshluger-Lepski estimator, we need to specify the value of the constant $k$ appearing in \eqref{eq: penalty point}. Theoretically, it is sufficient to choose this constant above some threshold, which is related to the variance of the estimator. As a consequence, the exact value of this threshold is explicitly unknown. Since we are conducting a Monte Carlo experiment, it is possible to estimate the variance of the estimators $\hat{\pi}_{h,T}$ and thus to deduce useful values for $k$. Indeed, we choose some arbitrary $h^* = (2\times10^{-0.2k_1},2\times10^{-0.2k_2},2\times10^{-0.2k_3})$ with $k_1=k_2=k_3=4$ and compute the empirical variance of the estimator $\hat{\pi}_{h^*,T}$. Then, we define $k^*$ as the constant such that the penality term $V(h^*)$ defined in \eqref{eq: penalty} matches the empirical variance. This value $k^*$ gives a hint on the magnitude of the constant $k$ that could be chosen.	 
	  To determine precisely the dependence of the Goldenshluger-Lepski estimator $\hat{\pi}_{\hat{h}^p(x),T}$ on the constant $k$, we now let $k$ range in $(0.01\times k^*, 1.5\times k^*)$ and compute the empirical $L^2$ risk of the corresponding estimators. It appears that the  Goldenshluger-Lepski estimator works correctly on a large range of choices of $k$ in this interval. In Figures \ref{F: ratio}, we plot the ratio between the empirical $L^2$ risk of the Goldenshluger-Lepski estimator and the $L^2$ risk of the oracle estimator for different values of $k \in (0.01\times k^*, 1.5\times k^*)$. As expected, the oracle estimator exhibits a smaller risk than the  Goldenshluger-Lepski's estimator
	  for any values of $k$. However, we see that for $k \in   (0.1\times k^*, 0.75\times k^*)$, the $L^2$ risk of the  Goldenshluger-Lepski estimator remains less than twice that of the oracle estimator. Considering the strong performance of the oracle estimator, this indicates that the  Goldenshluger-Lepski method provides a good estimation of the parameter for a wide range of constants $k$.
	 
	  The procedure described here above to determine the value of $k^*$ is impossible in practice when only a single path of the process is observed, as the Monte Carlo estimation of the estimator's variance is unfeasible. However, a rough estimation of the variance of the estimator 
	  %the constant $k^*$ 
	  could be achieved, for instance, by splitting one observed path of $X$ on $[0,T]$ into $N$ disjoint pieces, $(X_{kT/N+s})_{0\le s \le T/N}$ for $k=0,\dots,N-1$, such that $T/N$ remains large. We compute on each piece of the trajectory an estimator $\hat{\pi}_{h^*,T/N}(x)$. Taking the empirical variance of these $N$ estimators gives us a proxy for the variance of $\hat{\pi}_{h^*,T/N}(x)$. We can derive a data-driven value of $k^*$ by matching this empirical variance with the penality term $V(h^*)$ defined in \eqref{eq: penalty}. Finally, we use the Goldenshluger-Lepski estimator with some constant $k$ chosen in $ (0.1\times k^*, 0.75\times k^*)$ as suggested by the Figures \ref{F: ratio}. We have tested this method, with $N=100$ and $k=k^*/4$, on $p=200$ replications of the trajectory. Results can be found in Table \ref{T:GL_Full_Adapt}, and show that this estimation procedure provides a sharp estimation of $\pi(x)$.

%	  of length $T/N$ , and compute the empirical variance of the $N$ estimators $\hat{h^*,T/N}$ computed on these different segments of the paths. 
	% 2\times10^{-0.2k_1},2\times10^{-0.2k_2},2\times10^{-0.2k_3 
	% In the numerical simulations, 
	 
	%  which is in practice unknown. Remark that the penalization term in the Goldenshluger-Lepski method is proportional to the variance of the estimator.
	 
	 %with this choice
	 \begin{table}[h]
	 	\caption{\label{T:Oracle} Oracle estimator} 	\centering
	 	\begin{tabular}{|c|c|c|}
	 		\hline
	 		& $x=(0,0,0)$  & $x=(0,0,1)$  \\
	 		\hline
	 	True value $\pi(x)$	& $6.349 \times 10^{-2}$  & $3.851 \times 10^{-2}$  \\
	 	\hline
	 	Mean $\hat{\pi}_{\text{oracle},T}$	& $6.287 \times 10^{-2}$  &  $3.829 \times 10^{-2}$\\
	 	\hline
	 	Std	$\hat{\pi}_{\text{oracle},T}$		& $1.10 \times 10^{-3}$ & $6.93 \times 10^{-4}$  \\
		\hline
	 	$(k_1,k_2,k_3)$ 		& $(7,7,7)$ & $(7,8,4)$  \\
	 		\hline
	 	\end{tabular}
	 \end{table}
%	  The choice of the constant $k$ appearing in \eqref{eq: penalty point}.
	%and \eqref{eq: def h tilde point}. 

\begin{figure}
	\centering
	\caption{\label{F: ratio}
		Ratio of the $L^2$ risk of $\hat{\pi}_{h^*,T}$ over the $L^2$ risk of $\hat{\pi}_{\text{oracle},T}$ as a function of $k/k^*$.}
	\begin{subfigure}[b]{0.45\textwidth}
		 \includegraphics[scale=0.45]{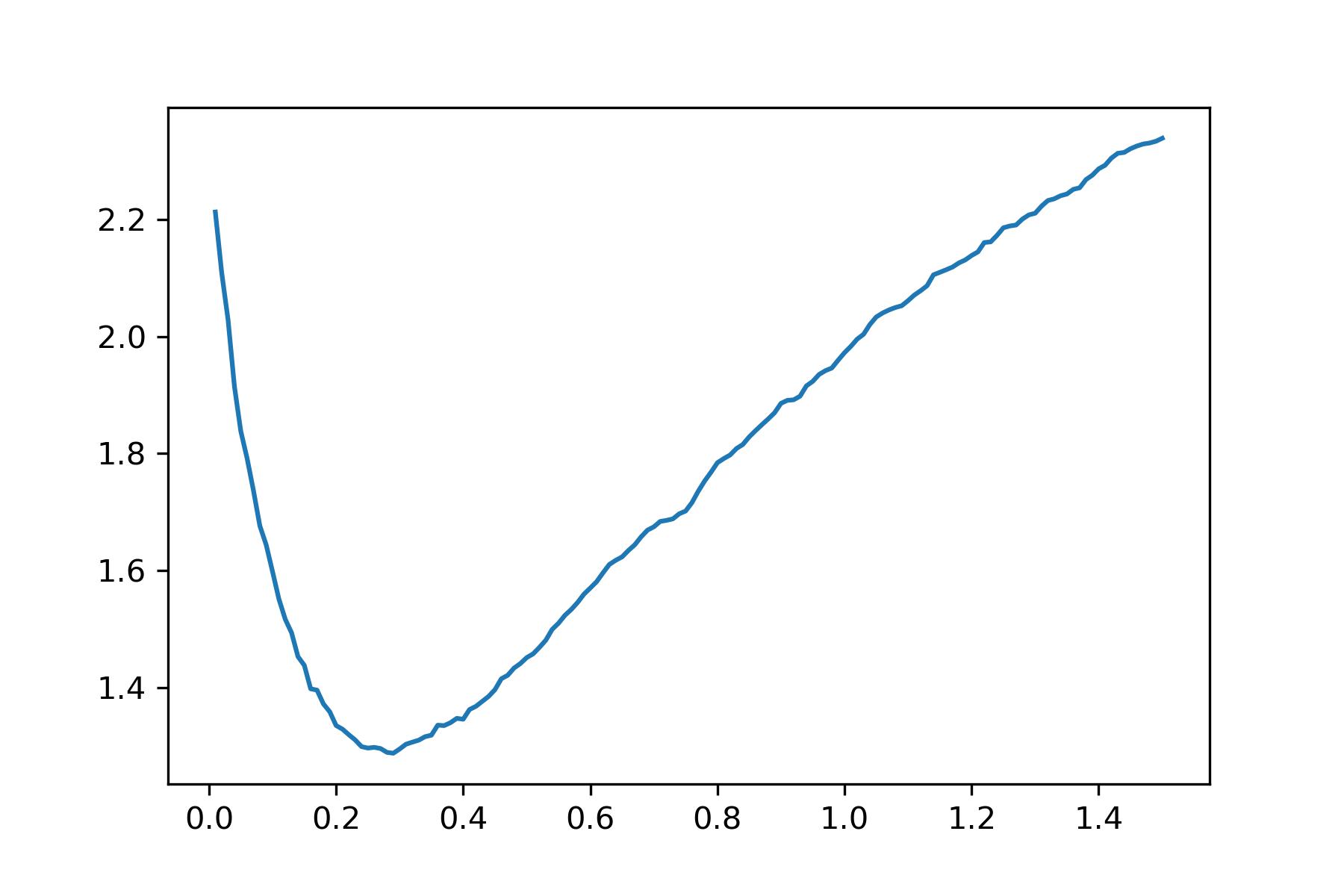}
	\caption{$x=(0,0,0)$}
	\end{subfigure}
	\centering
\begin{subfigure}[b]{0.45\textwidth}
	\includegraphics[scale=0.45]{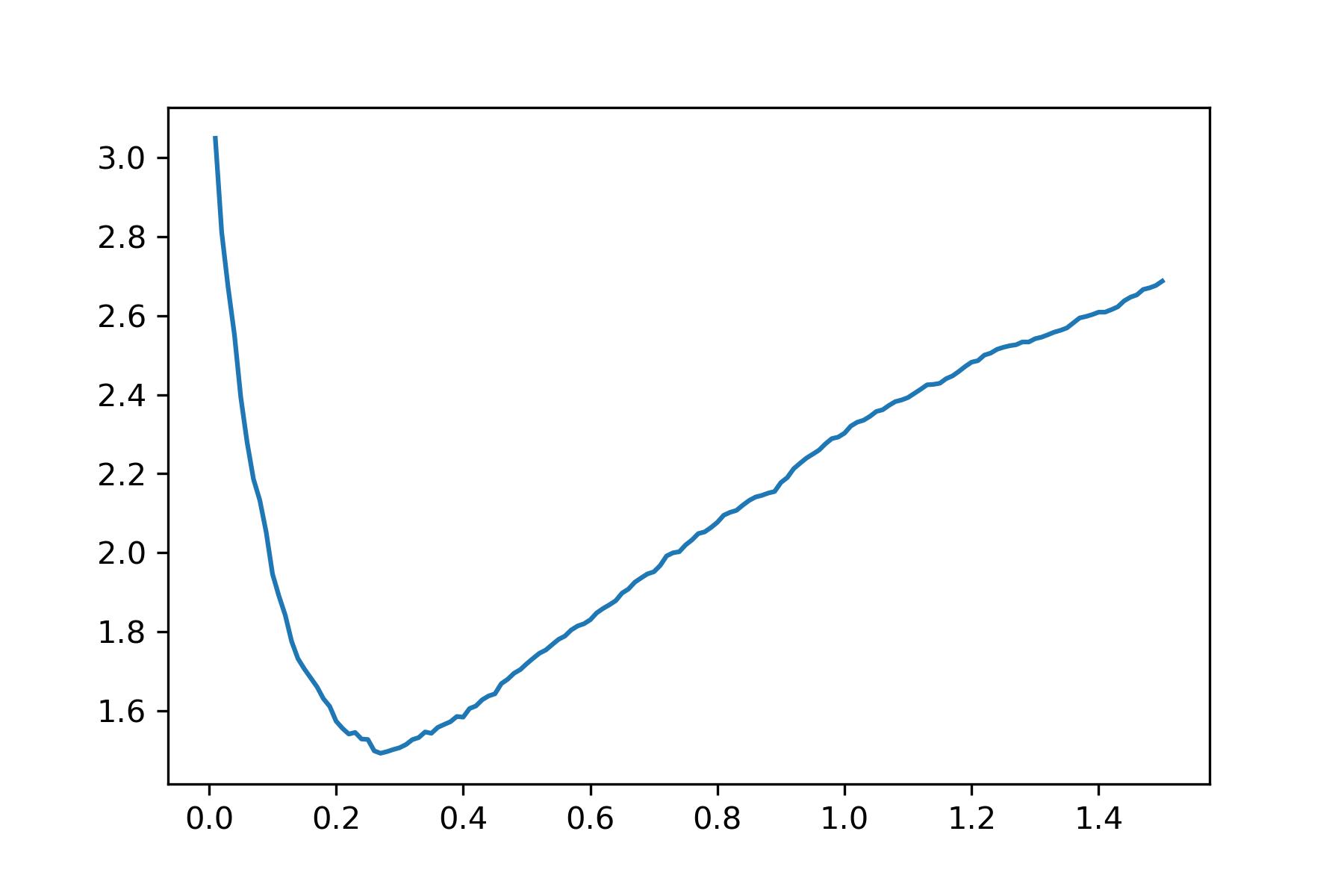}
	\caption{$x=(0,0,1)$}
\end{subfigure}

\end{figure}

\begin{table}[h]
	\caption{\label{T:GL_Full_Adapt} Goldenshluger-Lepski estimator with a data-driven choice of $k$} 	\centering
	\begin{tabular}{|c|c|c|}
		\hline
		& $x=(0,0,0)$  & $x=(0,0,1)$  \\
		\hline
		True value $\pi(x)$	& $6.349 \times 10^{-2}$  & $3.851 \times 10^{-2}$  \\
	\hline
	Mean $\hat{\pi}_{\hat{h}(x),T}$	& $ 6.229 \times 10^{-2}$  &  $3.776 \times 10^{-2}$\\
	\hline
	Std	$\hat{\pi}_{\hat{h}(x),T}$		& $1.25 \times 10^{-3}$ & $8.36 \times 10^{-4}$  \\
	\hline
	% 	$(k_1,k_2,k_3)$ 		& $(7,7,7)$ & $(7,8,4)$  \\
	%		\hline
	\end{tabular}
\end{table}
}

\section{Proofs upper bounds stated in Section \ref{S: Estimator}}{\label{s: proofs upper}}
This section is devoted to the proof of all the results stated in Section \ref{S: Estimator}, about the behaviour of the estimator proposed in \eqref{eq: def estimator}, assuming that a continuous record of the process $X$ is available. As Theorem \ref{th: upper bound d ge 3} is a consequence of Proposition \ref{prop: bound variance}, we start {\rev by} proving Proposition \ref{prop: bound variance}.
\subsection{Proof of Proposition \ref{prop: bound variance}}
\begin{proof}
In the sequel, the constant $c$ may change from line to line and it is independent of $T$. \\
From the definition \eqref{eq: def estimator} and the stationarity of the process we get
$$Var(\hat{\pi}_{h,T}(x)) = \frac{1}{T^2} \int_0^T \int_0^T k(t-s) dt \, ds,$$
where 
$$k(u) := Cov(\mathbb{K}_h(x - X_0), \mathbb{K}_h(x- X_u)).$$
We deduce that
$$Var(\hat{\pi}_{h,T}(x)) \le \frac{1}{T} \int_0^T | k(s)| ds. $$
We want to use different way to upper bound the variance, relying on the bound on the transition density gathered in Lemma \ref{lemma: bound transition density} and on the mixing properties of the process as in Lemma \ref{lemma: ergodicity}. Hence,
we split the time interval $[0,T]$ into 4 pieces: 
$$[0,T]= [0, \delta_1) \cup [\delta_1, \delta_2) \cup[\delta_2, D) \cup[D,T],$$
where $\delta_1$, $\delta_2$ and $D$ will be chosen later, to obtain an upper bound on the variance as sharp as possible. We will see that the bound on the variance will depend on $k_0$ only on the interval $[\delta_1, \delta_2)$. For this reason we start considering what happens on $[0, \delta_1)$ without taking into account the fact that $k_0$ can be larger or smaller than $3$. \\
$\bullet$ For $s \in [0, \delta_1)$, from Cauchy -Schwarz inequality and the stationarity of the process we get
$$| k(s)| \le Var(\mathbb{K}_h(x -X_0))^\frac{1}{2}Var(\mathbb{K}_h(x -X_s))^\frac{1}{2} = Var(\mathbb{K}_h(x -X_0)) \le \int_{\mathbb{R}^d} (\mathbb{K}_h (x -y))^2 \pi(y) dy.$$
As $\pi \in \mathcal{H}_d(\beta, \mathcal{L}) $, its infinitive norm is bounded. Using also the definition of $\mathbb{K}_h$ given in \eqref{eq: def estimator} it follows
\begin{equation}
| k(s)| \le \frac{c}{\prod_{l =1}^d h_l}
\label{eq: first bound k}
\end{equation}
which implies
\begin{equation}
\int_0^{\delta_1} | k(s)| {\rev ds} \le  \frac{c\, \delta_1}{\prod_{l =1}^d h_l}.
\label{eq: stima su 0 delta1}
\end{equation}
$\bullet$ For $s \in [\delta_1, \delta_2)$, taking $\delta_2 < 1$, we use the definition of transition density, for which 
$$| k(s)| \le \int_{\mathbb{R}^d} |\mathbb{K}_h (x -y)| \int_{\mathbb{R}^d} |\mathbb{K}_h (x -y')| p_{s}(y, y') dy'  \pi (y) dy.$$
We now act differently according on the value of $k_0$. If $k_0 =1$ and $\beta_2 < \beta_3$ or $k_0= 2$, then we introduce $q_s(y'_3 ... y'_d| y'_1, y'_2, y)$ as below:
$$q_s(y'_3 ... y'_d| y'_1, y'_2, y) := e^{- \lambda_0 \frac{|y_1 - y'_1|^2}{s}} \times e^{- \lambda_0 \frac{|y_2 - y'_2|^2}{s}} \times \frac{1}{\sqrt{s}} e^{- \lambda_0 \frac{|y_3 - y'_3|^2}{s}} \times ... \times \frac{1}{\sqrt{s}} e^{- \lambda_0 \frac{|y_d - y'_d|^2}{s}}{\modchi e^{c (|y|^2 + 1) s}}. $$
{\modchi We remark that $s < \delta_2 < 1$.}
From Lemma \ref{lemma: bound transition density} we know it is
$$p_{s}(y, y') \le \frac{c_0}{s} q_s(y'_3 ... y'_d| y'_1, y'_2, y).$$
Let us stress that
\begin{equation}
 \sup_{y'_1, y'_2 \in \mathbb{R}^{d}} \int_{\mathbb{R}^{d - 2}} q_s(y'_3 ... y'_d| y'_1, y'_2, y) dy'_3 ... dy'_d \le {\modchi c e^{c (|y|^2 + 1) s}}. 
\label{eq: qs bounded}
\end{equation}
Then,
\begin{equation}
|k(s)| \le \frac{c_0}{s} \int_{\mathbb{R}^d} |\mathbb{K}_h (x -y)| \int_{\mathbb{R}^d} |\mathbb{K}_h (x -y')| q_s(y'_3 ... y'_d| y'_1, y'_2, y) dy' \,  \pi (y) dy.
\label{eq: k1 start}
\end{equation}
Using the definition of $\mathbb{K}_h$ and \eqref{eq: qs bounded} we obtain
\begin{align*}
  & \int_{\mathbb{R}^d} |\mathbb{K}_h (x -y')| q_s(y'_3 ... y'_d| y'_1, y'_2, y) dy' \\
  & \le \frac{c}{\prod_{j \ge 3} h_j} \int_{\mathbb{R}} \frac{1}{h_1} K(\frac{y'_1 - x_1}{h_1}) \int_{\mathbb{R}} \frac{1}{h_2} K(\frac{y'_2 - x_2}{h_2}) (\int_{\mathbb{R}^{d - 2}} q_s(y'_3 ... y'_d| y'_1, y'_2, y) dy'_3 ... dy'_d )  dy'_2 \, dy'_1 \\
  & \le \frac{c}{\prod_{j \ge 3} h_j} \int_{\mathbb{R}} \frac{1}{h_1} K(\frac{y'_1 - x_1}{h_1}) \int_{\mathbb{R}} \frac{1}{h_2} K(\frac{y'_2 - x_2}{h_2}) {\modchi e^{c (|y|^2 + 1) s}} dy'_2 \, dy'_1 \\
  & \le \frac{c}{\prod_{j \ge 3} h_j}{\modchi e^{c (|y|^2 + 1) s}}.
\end{align*}
%It is worth underlining that, here above, it would have been possible to remove the contribution of any two bandwidths. We choose to remove $h_1$ and $h_2$ because they are associated, in the bias term, to the smallest values of the smoothness ($\beta_1$ and $\beta_2$) and so they provide the strongest constraints. \\
{\modchi Replacing this result in \eqref{eq: k1 start} we obtain 
$$|k(s)| \le \frac{c_0}{s} \int_{\mathbb{R}^d} |\mathbb{K}_h (x -y)|  e^{c (|y|^2 + 1) s} \pi (y) dy. $$
As the support of the kernel function is a compact set, the integral is bounded.}

It implies that, when $k_0 =1$ and $\beta_2 < \beta_3$ or $k_0 =2$, we get 
\begin{equation}
|k(s)| \le \frac{c}{\prod_{j \ge 3} h_j} \frac{1}{s}
\label{eq: bound k 1}
\end{equation}
and so
\begin{equation}
\int_{\delta_1}^{\delta_2} |k(s)| ds \le \int_{\delta_1}^{\delta_2} \frac{c}{\prod_{j \ge 3} h_j} \frac{1}{s} ds = c \frac{\log (\delta_2) - \log (\delta_1)}{\prod_{j \ge 3} h_j},
\label{eq: fine delta1 delta2}
\end{equation}
where the constant $c$ does not depend on the coefficient $(a, b ) \in \Sigma$. 
We now consider what happens on $[\delta_1, \delta_2)$ when $k_0 \ge 3$ or $k_0 =1$ and $\beta_2 = \beta_3$. In analogy to what done before we introduce $q_s(y'_{k_0 + 1} ... y'_d| y'_1, ... , y'_{k_0}, y)$ which is such that
$$q_s(y'_{k_0 + 1} ... y'_d| y'_1, ... , y'_{k_0}, y) := e^{- \lambda_0 \frac{|y_1 - y'_1|^2}{s}} \times ... \times  e^{- \lambda_0 \frac{|y_{k_0} - y'_{k_0}|^2}{s}} \times \frac{1}{\sqrt{s}} e^{- \lambda_0 \frac{|y_{k_0 + 1} - y'_{k_0 + 1}|^2}{s}} \times ... \times \frac{1}{\sqrt{s}} e^{- \lambda_0 \frac{|y_d - y'_d|^2}{s}}{\modchi e^{c (|y|^2 + 1) s}}. $$
Using again Lemma \ref{lemma: bound transition density} we can write 
$$p_{s}(y, y') \le c_0 s^{- \frac{k_0}{2}} q_s(y'_{k_0 + 1} ... y'_d| y'_1, ... , y'_{k_0}, y)$$
and as before we have
$$ \sup_{y'_1, ... , y'_{k_0} \in \mathbb{R}^{ {k_0}}} \int_{\mathbb{R}^{d - {k_0}}} q_s(y'_{k_0 + 1} ... y'_d| y'_1, ... , y'_{k_0}, y) dy'_{k_0 + 1} ... dy'_d \le c {\modchi e^{c (|y|^2 + 1) s}}. $$
Acting as in \eqref{eq: k1 start} and below it follows
\begin{align}{\label{eq: bound k 2}}
|k(s)| & \le c_0 s^{- \frac{k_0}{2}} \int_{\mathbb{R}^d} |\mathbb{K}_h (x -y)| \int_{\mathbb{R}^d} |\mathbb{K}_h (x -y')| q_s(y'_{k_0 + 1} ... y'_d| y'_1, ... , y'_{k_0}, y) dy' \,  \pi (y) dy \\
& \le c s^{- \frac{k_0}{2}} \frac{1}{\prod_{l \ge k_0 + 1} h_l}. \nonumber
\end{align}
Hence, when $k_0 \ge 3$, we have
\begin{equation}
\int_{\delta_1}^{\delta_2} |k(s)| ds \le \frac{c}{\prod_{l \ge k_0 + 1} h_l} \int_{\delta_1}^{\delta_2} s^{- \frac{k_0}{2}} ds \le  \frac{c}{\prod_{l \ge k_0 + 1} h_l} \delta_1^{1 - \frac{k_0}{2}}.
\label{eq: fine delta1 delta2 k0 large}
\end{equation}
We remark that, for $k_0 = 1$, the reasoning here above still applies. However, as 
$1 - \frac{k_0}{2} = \frac{1}{2}$ is now positive, we obtain
\begin{equation}
\int_{\delta_1}^{\delta_2} |k(s)| ds \le \frac{c}{\prod_{l \ge 2} h_l} \delta_2^{\frac{1}{2}}.
\label{eq: 19.5 in scan}
\end{equation}
As the bound no longer depends on $\delta_1$, we can choose to take $\delta_1 = 0$. \\
$\bullet$ For $s \in [\delta_2, D)$ we use the same estimation for any value of $k_0$, but for $k_0 = 1$, $\beta_2 = \beta_3$. We still consider the bound on the transition density gathered in Lemma \ref{lemma: bound transition density}. Such a bound is not uniform in $t$ big. However, for $t \ge 1$, it is {\modchi
\begin{align*}
p_t(y, y') & = \int_{\R^d} p_{\frac{1}{2}} (y,z) p_{t - \frac{1}{2}} (z, y') dz \\
& \le c \int_{\R^d} e^{- 2 \lambda_0 |y - z|^2}  e^{c (|y|^2 + 1)} p_{t - \frac{1}{2}} (z, y')  dz \\
& \le c  e^{c (|y|^2 + 1)} \int_{\R^d} p_{t - \frac{1}{2}} (z, y') dz \le c  e^{c (|y|^2 + 1)}.
\end{align*}}
We deduce, for all $t$, 
\begin{equation}
p_t(y, y') \le c_0 t^{- \frac{d}{2}} e^{- \lambda_0 \frac{|y-y'|^2}{t}} + {\modchi c e^{c (|y|^2 + 1)} }.
\label{eq: bound transition, t grande}
\end{equation}
It follows
\begin{align}{\label{eq: bound k delta1 delta2}}
| k(s)| & \le c \int_{\mathbb{R}^d} |\mathbb{K}_h (x -y)| \int_{\mathbb{R}^d} |\mathbb{K}_h (x -y')| (s^{- \frac{d}{2}} + 1){\modchi e^{c (|y|^2 + 1)} } dy'  \pi (y) dy \\
& \le c (s^{- \frac{d}{2}} + 1), \nonumber
\end{align}
{\modchi where we have also used that the support of the kernel function is a compact set}. 
We therefore get
\begin{equation}
\int_{\delta_2}^D |k(s)| ds \le c (\delta_2^{1 - \frac{d}{2}} + D).
\label{eq: fine delta2 D}
\end{equation}
When $k_0 = 1$ and $\beta_2 = \beta_3$, instead, we proceed analogously to how we did in the previous interval in order to obtain \eqref{eq: bound k 2}. We choose to remove, in particular, the contribution of the first three bandwidths. We obtain, $\forall s \in [\delta_2, D)$, 
\begin{equation}
|k(s)| \le c (s^{- \frac{3}{2}} \frac{1}{\prod_{l \ge 4} h_l} + 1),
\label{eq: bound delta2 D k0 =1 18.5}
\end{equation}
where the final constant comes from \eqref{eq: bound transition, t grande}, as now $s$ can be larger than $1$. It follows
\begin{equation}
\int_{\delta_2}^D |k(s)| ds \le c (\frac{1}{\prod_{l \ge 4} h_l} \frac{1}{\delta_2^{\frac{1}{2}}} + D).
\label{eq: 23.5 scan}
\end{equation}
$\bullet$ For $s \in [D,T]$ we exploit the mixing properties of the process, as stated in Proposition \ref{P:mixing uniform}. The following control on the covariance holds true:
$$|k(s)| \le c \left \| \mathbb{K}_h (x - \cdot ) \right \|_\infty^2 e^{- \rho s} \le c (\frac{1}{\prod_{j = 1}^d h_j})^2 e^{- \rho s}, $$
for $\rho$ and $c$ positive constants which are uniform over the set of coefficients $(a,b) \in \Sigma$. It entails 
\begin{equation}
\int_{D}^T |k(s)| ds \le c (\frac{1}{\prod_{j = 1}^d h_j})^2 e^{- \rho D}. 
\label{eq: fine D T}
\end{equation}
We now put all the pieces together. For $k_0 = 1$ and $\beta_2 < \beta_3$ or $k_0 =2$ we collect together \eqref{eq: stima su 0 delta1}, \eqref{eq: fine delta1 delta2}, \eqref{eq: fine delta2 D} and \eqref{eq: fine D T}. We deduce
\begin{equation}
Var(\hat{\pi}_{h,T}(x)) \le \frac{c}{T}( \frac{ \delta_1}{\prod_{l =1}^d h_l} + \frac{1}{\prod_{j \ge 3} h_j} (|\log(\delta_1)| + |\log (\delta_2)|)+ \delta_2^{1 - \frac{d}{2}} +  D + (\frac{1}{\prod_{j = 1}^d h_j})^2 e^{- \rho D}).
\label{eq: variance con parametri}
\end{equation}
We now want to choose $\delta_1$, $\delta_2$ and $D$ for which the estimation here above is as sharp as possible. To do that, we take 
$\delta_1 := h_1 h_2$, $\delta_2 := (\prod_{j \ge 3} h_j)^{\frac{2}{d-2}}$ and $D:= [\max (- \frac{2}{\rho} \log (\prod_{j = 1}^d h_j), 1) \land T]$. Replacing them in \eqref{eq: variance con parametri} we obtain
\begin{align*}
Var(\hat{\pi}_{h,T}(x)) & \le \frac{c}{T}(\frac{1}{\prod_{j \ge 3} h_j} + \frac{\sum_{j = 1}^d |\log(h_j)|}{\prod_{j \ge 3} h_j}  + \frac{1}{\prod_{j \ge 3} h_j} + \sum_{j = 1}^d |\log(h_j)| + 1 ) \\
& \le \frac{c}{T} \frac{\sum_{j = 1}^d |\log(h_j)|}{\prod_{j \ge 3} h_j}.
\end{align*}
If otherwise $k_0 \ge 3$, we consider \eqref{eq: fine delta1 delta2 k0 large} instead of \eqref{eq: fine delta1 delta2} which, together with \eqref{eq: stima su 0 delta1}, \eqref{eq: fine delta2 D} and \eqref{eq: fine D T} provides 
\begin{equation}
 Var(\hat{\pi}_{h,T}(x)) \le \frac{c}{T}( \frac{ \delta_1}{\prod_{l =1}^d h_l} + \frac{\delta_1^{1 - \frac{k_0}{2}}}{\prod_{j \ge k_0 + 1} h_j}+ \delta_2^{1 - \frac{d}{2}} +  D + (\frac{1}{\prod_{j = 1}^d h_j})^2 e^{- \rho D}).
\label{eq: variance con parametri k0 large}
\end{equation}
We observe that the balance between the first two terms is achieved for $\delta_1 := (\prod_{l = 1}^{k_0} h_l)^{\frac{2}{k_0}}$. Then, we can choose $\delta_2 =1$. We moreover take 
$D:= [\max (- \frac{2}{\rho} \log (\prod_{j = 1}^d h_j), 1) \land T]$, as before. It yields
$$Var(\hat{\pi}_{h,T}(x)) \le \frac{c}{T} \frac{(\prod_{l = 1}^{k_0} h_l)^{\frac{2}{k_0}}}{\prod_{l \ge 1} h_l} = \frac{1}{T} \frac{1}{(\prod_{l = 1}^{k_0} h_l)^{1 - \frac{2}{k_0}}(\prod_{l \ge k_0 +1} h_l)},$$
as we wanted. \\
To conclude, by \eqref{eq: 19.5 in scan} with $\delta_1 = 0$, \eqref{eq: 23.5 scan} and \eqref{eq: fine D T} we have proven that when $k_0 = 1$ and $\beta_2 = \beta_3$ we have 
\begin{align*}
Var(\hat{\pi}_{h,T}(x)) & \le \frac{c}{T}( \frac{ \delta_2^{\frac{1}{2}}}{\prod_{l \ge 2} h_l} + \frac{1}{\delta_2^{\frac{1}{2}} \, \prod_{l \ge 4} h_l}+  D + (\frac{1}{\prod_{j = 1}^d h_j})^2 e^{- \rho D}) \\
& \le \frac{c}{T} (\frac{\sqrt{h_2 h_3}}{\prod_{l \ge 2} h_l} + \frac{1}{\sqrt{h_2 h_3}\prod_{l \ge 4} h_l}) = \frac{c}{T}\frac{1}{\sqrt{h_2 h_3}\prod_{l \ge 4} h_l} , 
\end{align*}
where the last estimation follows by having chosen $\delta_2 := h_2 h_3$. The parameter $D$ is moreover chosen as above. Remarking that all the constants are uniform over the set of coefficient $(a,b) \in \Sigma$, the result is proven.
\end{proof}

\subsection{Proof of Theorem \ref{th: upper bound d ge 3}}
\begin{proof}
From the usual bias-variance decomposition it is
$$\mathbb{E}[|\hat{\pi}_{h,T}(x) - \pi (x)|^2]\, {\rev =} |\mathbb{E}[\hat{\pi}_{h,T}(x)] - \pi (x)|^2 + Var(\hat{\pi}_{h,T}(x)).$$
An upper bound for the variance is gathered in Proposition \ref{prop: bound variance}. Regarding the bias, a standard computation (see for example the proof of Proposition 1.2 of \cite{Ts} or Proposition 1 of \cite{Decomp}) provides 
\begin{equation}
|\mathbb{E}[\hat{\pi}_{h,T}(x)] - \pi (x)|^2 \le c  \sum_{j = 1}^d h_j^{\beta_j},
\label{eq: stima bias fine}
\end{equation}
with the constant $c$ that does not depend on $x$, nor on $(a,b) \in \Sigma$. \\
The result is then obtained by looking for the balance between the two terms. \\
We start considering the case where $k_0 =1$ and $\beta_2 < \beta_3$ or $k_0 = 2$. Thanks to Proposition \ref{prop: bound variance} we know the bound \eqref{eq: estim variance with log} holds true.
After simple computations (as in the proof of Theorem 1 of \cite{Lower bound}) it is easy to see that the balance is achieved by choosing the rate optimal bandwidth $h^*:= ((\frac{\log T}{T})^{a_1}, ..., (\frac{\log T}{T})^{a_d})$, with 
\begin{equation}
a_1 \ge \frac{ \barfix{\bar{\beta}_3}}{\beta_1 (2  \barfix{\bar{\beta}_3} + d -2 ) },  \qquad a_2 \ge \frac{\barfix{\bar{\beta}_3}}{\beta_2 (2  \barfix{\bar{\beta}_3} + d -2 ) }
\label{eq: scelta a1 a2}
\end{equation}
and
\begin{equation}
a_l =\frac{ \barfix{\bar{\beta}_3}}{\beta_l (2  \barfix{\bar{\beta}_3} + d -2 ) } \qquad \forall l \in \left \{ 3, ... , d \right \}.
\label{eq: scelta al}
\end{equation}
Replacing the value of $h^*$ in the upper bounds of the variance and of the bias we get 
$$\mathbb{E}[|\hat{\pi}_{h,T}(x) - \pi (x)|^2] \le c (\frac{\log T}{T})^{\frac{ 2 \barfix{\bar{\beta}_3}}{2  \barfix{\bar{\beta}_3} + d -2  }},$$
with the same constant $c$ for all $(a,b) \in \Sigma$. \\
When $k_0 = 1$ and $\beta_2 = \beta_3$ the bias-variance decomposition consists in 
$$\mathbb{E}[|\hat{\pi}_{h,T}(x) - \pi (x)|^2] \le c \sum_{j = 1}^d h_j^{\beta_j} + \frac{c}{T}  \frac{1}{\sqrt{h_2 h_3}\prod_{l \ge 4} h_l}. $$
As $\beta_2 = \beta_3$ we choose $h_2 = h_3$, such that the upper bound on the variance gathered in Proposition \ref{prop: bound variance} becomes simply $\frac{c}{T} \frac{1}{\prod_{l \ge 3} h_l}$.
Then, similar computations as above implies that the balance is achieved by choosing $h^*(T) := ((\frac{1}{T})^{a_1}, ... , (\frac{1}{T})^{a_d})$, with $a_1$ as in \eqref{eq: scelta a1 a2} and $a_2 = a_3$ with $a_j$ as in \eqref{eq: scelta al}, for $j \ge 3$. It yields
$$ \sup_{(a,b) \in \Sigma} \mathbb{E}[|\hat{\pi}_{h,T}(x) - \pi (x)|^2] \le c (\frac{1}{T})^{\frac{ 2 \barfix{\bar{\beta}_3}}{2  \barfix{\bar{\beta}_3} + d -2  }}.$$
Assuming now that $k_0 \ge 3$, we have 
$$\mathbb{E}[|\hat{\pi}_{h,T}(x) - \pi (x)|^2] \le c  \sum_{j = 1}^d h_j^{\beta_j} + \frac{c}{T} \frac{1}{h_1^{k_0 - 2}(\prod_{l \ge k_0 + 1}h_l)},$$
where we have already chosen to take $h_1 = h_2 = ... = h_{k_0}$. We now look for the rate optimal bandwidth by setting $h^*_l := (\frac{1}{T})^{a_l}$ and searching $a_1$, $a_{k_0 + 1}$, ... , $a_d$ such that the bias and variance are balanced. It consists in solving the following system:
\begin{equation}
\begin{cases}
a_1 \beta_1 = a_l \beta_l \qquad \forall l \in \{ k_0 + 1, ... , d \}\\
1 - a_1(k_0 -2) - \sum_{l \ge k_0 + 1} a_l = 2 a_1 \beta_1.
\end{cases}    
\end{equation}
As a consequence of the first $d-k_0$ equations, we can write 
\begin{equation}
a_l =\frac{\beta_1}{\beta_l} a_1, \qquad \forall l \in \left \{ k_0 + 1, ... , d \right \}.
\label{eq: al tramite a_d}
\end{equation}
Hence, the last equation becomes
\begin{align}{\label{eq: eq for choice a}}
2 \beta_1 a_1 &= 1 - a_1(k_0 -2) - \beta_1 a_1 \sum_{l \ge k_0 + 1} \frac{1}{\beta_l} \\
& = 1 - a_1(k_0 -2) - \beta_1 a_1 \frac{d-k_0}{ \barfix{\bar{\beta}_k}}, \nonumber
\end{align}
where $\bar{\beta}_k$ is the mean smoothness over $\beta_{k_0 + 1}$, ... , $\beta_d$ and it is such that $\frac{1}{ \barfix{\bar{\beta}_k}} = \frac{1}{d-k_0} \sum_{l \ge k_0 + 1} \frac{1}{\beta_l}$. We also observe that, as $\frac{1}{\barfix{\bar{\beta}_3}} = \frac{1}{d-2} \sum_{l \ge 3} \frac{1}{\beta_l}$, it is 
\begin{equation}
\frac{k_0 - 2}{\beta_1} + \frac{d-k_0}{\barfix{\bar{\beta}_k}} = \frac{d-2}{\barfix{\bar{\beta}_3}}.
\label{eq: equivalence beta}
\end{equation}
Hence, \eqref{eq: eq for choice a} can be seen as 
$$2 \beta_1 a_1 = 1 - a_1 \beta_1 \frac{d-2}{\barfix{\bar{\beta}_3}}, $$
which leads to the choice 
$$a_1 =\frac{ \barfix{\bar{\beta}_3}}{\beta_1 (2  \barfix{\bar{\beta}_3} + d -2 ) }.$$
Thanks to the first $d-k_0$ equations in the system it follows
$$a_l =\frac{ \barfix{\bar{\beta}_3}}{\beta_l (2  \barfix{\bar{\beta}_3} + d -2 ) } \qquad \forall l \in \left \{ k_0 + 1, ... , d \right \}.$$
Plugging the rate optimal bandwidth $h^*$ in the bound of the mean squared error and recalling that the constant $c$ does not depend on $(a,b) \in \Sigma$, we obtain 
$$\sup_{(a,b) \in \Sigma} \mathbb{E}[|\hat{\pi}_{h,T}(x) - \pi (x)|^2] \le c (\frac{1}{T})^{\frac{ 2 \barfix{\bar{\beta}_3}}{2  \barfix{\bar{\beta}_3} + d -2  }},$$
as we wanted. 
\end{proof}

\begin{remark}
As the situation is complicated, it is worth highlighting how the optimal bandwidths depend on the smoothness.
\begin{itemize}
    \item[$\bullet$] When $\beta_2 < \beta_3$, $h_1$ and $h_2$ are arbitrarily small while $h_3$, ... , $h_d$ are fixed. In this case there is the logarithm in the rate.
    \item[$\bullet$] When $\beta_2 = \beta_3$ we have two possibilities for the bandwidths, depending on $\beta_1$. In both cases we remove the logarithm in the rate. 
    \begin{itemize}
        \item If $\beta_1 < \beta_2$, then $h_1$ can be arbitrarily small while $h_2$, ... , $h_d$ are fixed.
        \item If $\beta_1 = \beta_2$, then all the bandwidths $h_1$, ... , $h_d$ are fixed.
    \end{itemize}
\end{itemize}
It is interesting to see that when we have two degrees of freedom on the bandwidths it is impossible to remove the logarithm in the convergence rate, while having one degree of freedom on the bandwidths is enough to modify the convergence rate: it is the same it would have been without any degree of freedom on the bandwidths.

\end{remark}

\subsection{Proof of Theorem \ref{th: upper bound d=2}}
\begin{proof}
The proof relies, as before, on the bias-variance decomposition. Concerning the bias, \eqref{eq: stima bias fine} still holds true. We need to provide an upper bound on the variance, based on Lemma \ref{lemma: bound transition density} and Proposition \ref{P:mixing uniform}, as before. The main difference compared to the proof of Proposition \ref{prop: bound variance} is that we now split the integral over $[0, T]$ in three pieces: 
$$[0, T] = [0, \delta) \cup [\delta, D) \cup [D, T),$$
where $\delta$ and $D$ will be chosen later. On the first and on the last piece we act as in the proof of Proposition \ref{prop: bound variance} and so \eqref{eq: stima su 0 delta1} and \eqref{eq: fine D T} keep holding. Regarding the interval $[\delta, D)$, we act on it as we did on $[\delta_2, D)$. From \eqref{eq: bound k delta1 delta2}, recalling that now $d=2$, we obtain 
$$\int_{\delta}^D |k(s)| ds \le c \int_{\delta}^D (s^{-1} + 1) ds \le c(|\log D| + |\log \delta| + D). $$
Putting all the pieces together we get
\begin{align*}
Var(\hat{\pi}_{h,T}(x)) & \le \frac{c}{T}( \frac{ \delta}{h_1 \, h_2} + |\log D| + |\log \delta| + D + (\frac{1}{h_1\, h_2})^2 e^{- \rho D}) \\
& \le \frac{c}{T}(1 + \sum_{j = 1}^2 |\log h_j| + \sum_{j = 1}^2 |\log (| \log h_j|)| ) \\
& \le \frac{c \sum_{j = 1}^2 |\log h_j|}{T},
\end{align*}
where we have opportunely chosen $\delta= h_1 h_2$ and $D:= [\max (- \frac{2}{\rho} \log (h_1 \, h_2), 1) \land T]$. It follows
$$\mathbb{E}[|\hat{\pi}_{h,T}(x) - \pi (x)|^2] \le c h_1^{2 \beta_1} + c h_2^{2 \beta_2} + \frac{c \sum_{j = 1}^2 |\log h_j|}{T}.$$
To conclude it is enough to observe that the optimal choice for the bandwidth consists in taking $h_l^* = (\frac{\log T}{T})^{a_l}$, with $a_l \ge \frac{1}{2 \beta_l}$ for $l=1, 2$, that provides the wanted convergence rate.
\end{proof}
{\rev
\section{Proof of the adaptive procedure stated in Section \ref{S: Estimator}}\label{S: proof adaptive}

\subsection{Proof of Theorem \ref{th: adaptive}}
The adaptive procedure in Theorem \ref{th: adaptive} heavily relies on a bound on the expectation of $A(h)$, as stated in following proposition.
\begin{proposition}
Suppose that assumptions A1 - A2 hold and that $d \ge 3$. Then, $\forall h \in \mathcal{H}_T$, we have the following bound
$$\mathbb{E}[A (h)] \le  c_1 B(h) + c_1 e^{ - c_2 (\log T)^2}.$$
\label{prop: A(h)}
\end{proposition}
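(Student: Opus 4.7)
The plan is the standard Goldenshluger--Lepski decomposition adapted to the continuous-time setting. I start from the triangle-type identity
\[
\hat{\pi}_{(h,\eta),T} - \hat{\pi}_{\eta,T} = \bigl[\hat{\pi}_{(h,\eta),T} - \pi_{(h,\eta)}\bigr] + \bigl[\pi_{(h,\eta)} - \pi_\eta\bigr] + \bigl[\pi_\eta - \hat{\pi}_{\eta,T}\bigr],
\]
and apply $(a+b+c)^2 \le 3(a^2+b^2+c^2)$ in $L^2(A)$. This splits each summand in the supremum defining $A(h)$ into one deterministic term (the middle one) and two stochastic fluctuation terms.

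For the deterministic term, I write $\pi_{(h,\eta)} - \pi_\eta = \mathbb{K}_\eta \ast (\pi_h - \pi)$. Young's inequality combined with $\|\mathbb{K}_\eta\|_1 = \|K\|_1^d$ (uniform in $\eta$ thanks to the change of variables $u \mapsto u/\eta_j$), and the fact that the compact support of $K$ together with $\eta_j \le 1$ guarantees that $(\mathbb{K}_\eta \ast g)(x)$ for $x \in A$ depends only on $g$ restricted to $\tilde{A}$ (this is exactly why $\tilde{A}$ was defined as the $2\sqrt{d}$-neighbourhood of $A$), yields $\|\pi_{(h,\eta)} - \pi_\eta\|_A^2 \le C\,B(h)$. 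This contributes the $c_1 B(h)$ piece in the statement.

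For the two stochastic terms, I split $V(\eta) = \alpha V(\eta) + (1-\alpha) V(\eta)$ and bound, for each $\eta \in \mathcal{H}_T$ separately,
\[
\mathbb{E}\bigl[\bigl(3\|\hat{\pi}_{\eta,T} - \pi_\eta\|_A^2 - \alpha V(\eta)\bigr)_+\bigr] \le c\, e^{-c_2 (\log T)^2},
\]
and the analogous estimate for $\hat{\pi}_{(h,\eta),T} - \pi_{(h,\eta)}$; note that convolving the kernel $\mathbb{K}_\eta$ with $\mathbb{K}_h$ does not inflate the variance bound beyond constants, so $V(\eta)$ controls both fluctuations. This bound rests on a Talagrand/Bernstein-type deviation inequality for the squared $L^2$-norm of the empirical process $t \mapsto \mathbb{K}_\eta(x - X_t) - \pi_\eta(x)$ of the $\beta$-mixing diffusion: the variance proxy is supplied by Proposition \ref{prop: bound variance} (which itself relies on the uniform mixing inequality of Proposition \ref{P:mixing uniform}), while the sup-norm comes from the compact support of $K$. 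The condition built into $\mathcal{H}_T$ that $\sqrt{T V(\eta)} \ge c (\log T)^{2+a}$ is exactly what makes the Bernstein variance exponent of order $(\log T)^2$, which is precisely where the $e^{-c_2 (\log T)^2}$ rate originates and why the exponent $a>1$ is needed in \eqref{eq: def mathcal H}.

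Finally, using $A(h) \le \sum_{\eta \in \mathcal{H}_T}$ of each summand and the polynomial growth $|\mathcal{H}_T| \le c T^c$, we have $|\mathcal{H}_T| \cdot e^{-c_2(\log T)^2} = e^{c\log T - c_2(\log T)^2} \le e^{-c_2'(\log T)^2}$ for $T$ large, which absorbs the sum and yields the claimed bound. The delicate step is the concentration inequality for the squared $L^2$-norm in step three: one cannot simply use a pointwise Bernstein bound because the norm involves a supremum (via duality) over an infinite-dimensional ball; the cleanest route is to exploit the explicit chaos structure given by the compactly supported kernel, combined with the exponential $\beta$-mixing of Lemma \ref{L:hyp sigma stronger}, to obtain a Talagrand-type inequality with the right variance factor dictated by Proposition \ref{prop: bound variance}.
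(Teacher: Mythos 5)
Your overall architecture (Goldenshluger--Lepski decomposition, Young's inequality for the deterministic middle term giving $c_1B(h)$, a concentration bound for the centred stochastic terms against $V(\eta)$, then a union bound over $\mathcal{H}_T$ absorbed by the polynomial cardinality) matches the paper's strategy, and your treatment of the bias term via $\pi_{(h,\eta)}-\pi_\eta=\mathbb{K}_\eta\ast(\pi_h-\pi)$ and the enlarged set $\tilde A$ is exactly right. The genuine gap is in the concentration step. You invoke a ``Talagrand/Bernstein-type deviation inequality for the squared $L^2$-norm of the empirical process \ldots of the $\beta$-mixing diffusion,'' but the Klein--Rio/Talagrand inequality used here (Lemma \ref{lemma: Talagrand in Klein Rio}) is stated for \emph{independent} random variables; there is no off-the-shelf version for $\beta$-mixing continuous-time processes that delivers the variance factor $\tilde H^2(\eta)/T$ of Proposition \ref{prop: bound variance}. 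The paper bridges this by Berbee's coupling (Viennet): the trajectory is cut into $2p_T$ blocks of length $q_T=(\log T)^2$, a coupled process $X^*$ with mutually independent blocks is constructed, and Talagrand is applied to the block averages $r_\eta(X^{*,j,k})$. This forces a \emph{five}-term decomposition rather than your three-term one: two additional terms $\|\hat\pi_{(h,\eta),T}-\hat\pi^*_{(h,\eta),T}\|_A^2$ and $\|\hat\pi^*_{\eta,T}-\hat\pi_{\eta,T}\|_A^2$ measure the coupling error, and it is precisely the choice $q_T=(\log T)^2$ (so that $p_T\beta(q_T)\lesssim T e^{-\rho(\log T)^2}$) that produces the $e^{-c_2(\log T)^2}$ remainder in the statement. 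Your proposal neither introduces the coupled process nor accounts for these two error terms, so as written the application of Talagrand is unjustified.

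A secondary imprecision: the exponent $e^{-c_2(\log T)^2}$ does not come from the Bernstein variance term alone, as you suggest. In the paper the Talagrand contributions are of order $e^{-c(\log T)^{a(d-2)}}$ and $e^{-c(\log T)^{a}}$ with $a>1$ (this is where the constraints $\tilde H(\eta)\ge(\log T)^{1+a(d-2)}$ and $(\prod_l\eta_l)^{1/2}\tilde H(\eta)\ge(\log T)^{2+a}/\sqrt T$ built into $\mathcal{H}_T$ enter), while the $(\log T)^2$ rate is dictated by the coupling term through the block length $q_T$. You correctly identified that the lower bound on $\sqrt{TV(\eta)}$ in \eqref{eq: def mathcal H} is what makes the deviation probabilities super-polynomially small, but the mechanism you describe conflates the two sources of exponential smallness. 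To repair the proof you would need to (i) insert the Berbee coupling with $q_T=(\log T)^2$, (ii) enlarge your decomposition to include the two coupling-error terms, and (iii) apply Klein--Rio to the independent block variables with the constants $M$, $H$, $v$ computed as in the paper.
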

The proof of the fact that Proposition \ref{prop: A(h)} implies Theorem \ref{th: adaptive} is classical  and it is therefore here omitted (see for example the proof of Theorem 1 in \cite{Chapitre 4}, relying on Proposition 5 therein). \\
The proof of Proposition \ref{prop: A(h)}, similarly as Proposition 5 in \cite{Chapitre 4}, relies on the use of Berbee's coupling method as in Viennet \cite{Viennet} and on a version of Talagrand inequality given in Klein and Rio \cite{KR}.
However, a different penalty function is here chosen, which results in some challenges when one wants to use Talagrand inequality. 

\subsubsection{Proof of Proposition \ref{prop: A(h)}}
\begin{proof}
The proof of Proposition \ref{prop: A(h)} is close in spirit to the proof of Proposition 5 of \cite{Chapitre 4}. We want to highlight the main differences appearing in our context. \\

\noindent As in the proof of Proposition 5 of \cite{Chapitre 4}, the key point consists in Talagrand inequality, which is stated on independent random variables. Therefore, we start by introducing some blocks which are mutually independent through Berbee's coupling method as done in Viennet \cite{Viennet}, Proposition 5.1 (p. 484). \\
Hence, we assume that $T = 2 p_T q_T$, with $p_T$ integer and $q_T$ a real to be chosen, and we split the initial process $X = (X_t)_{t \in [0,T]}$ in $2 p_T$ processes of length $q_T$. For each $j \in \left \{ 1, ... , p_T \right \} $ we introduce 
$X^{j,1} := (X_t)_{t \in [2(j - 1)q_T, (2j - 1)q_T]}$ and $X^{j,2} := (X_t)_{t \in [(2j-1) q_T, 2 j q_T]}$ as in Proposition 5 of \cite{Chapitre 4} and we construct the process $(X^*_t)_{t \in [0,T]}$ having the same distribution of $X^{j,1}$ and $X^{j,2}$ on the blocks but such that, for each $k \in \left \{ 1,2 \right \}$, $X^{* \,1,k}, ... , X^{* \,p_T,k}$ are independent (see points 1,2,3 in Proposition 5 of \cite{Chapitre 4} for details). \\
All the quantities computed using $X_t$ can be replaced by the same quantities computed through $X^*_t$, we add in this case an $*$ in the notation. In particular, we introduce in this way $\hat{\pi}^*_{\revar{h,T}}$ and $\hat{\pi}^*_{\revar{h,T}} = \frac{1}{2}(\hat{\pi}^{* (1)}_{\revar{h,T}} + \hat{\pi}^{*(2)}_{\revar{h,T}})$, to separate the part coming from $X^{* \,.,1}$ (super -index $(1)$) and those coming from $X^{* \,.,2}$ (super -index $(2)$), having $\hat{\pi}^{* (1)}_{\revar{h,T}} := \frac{1}{ p_T q_T} \sum_{j = 1}^{p_T} \int_{2(j - 1) q_T}^{(2 j - 1) q_T}\mathbb{K}_h (X^*_u - x) du $. \\
In a natural way we define moreover $\hat{\pi}^{*}_{{\revar (h, \eta),T}} := \mathbb{K}_\eta * \hat{\pi}^*_{\revar{h,T}}$, that can be written again as $\frac{1}{2}(\hat{\pi}^{* (1)}_{{\revar (h, \eta),T}} + \hat{\pi}^{*(2)}_{{\revar(h, \eta),T}})$, to separate the contribution of $X^{* \,.,1}$ and $X^{* \,.,2}$. \\

\noindent We recall that this notation is analogous to the one introduced in Proposition 5 of \cite{Chapitre 4}, where triangular inequality is used to obtain
\begin{align}{\label{eq: berbee}}
  A(h) &\le \sup_{\eta \in \mathcal{H}_T} [\left \| \hat{\pi}_{{\revar (h, \eta),T}} - \hat{\pi}_{{\revar (h, \eta),T}}^* \right \|_A^2  + (\left \| \hat{\pi}_{{\revar (h, \eta),T}}^* - \pi_{{\revar (h, \eta)}} \right \|_A^2 - \frac{V(\eta)}{2})_+ + \left \| \pi_{{\revar (h, \eta)}} - \pi_\eta \right \|_A^2 \nonumber \\
  & + (\left \| \pi_\eta - \hat{\pi}_{{\revar \eta,T}}^* \right \|_A^2 - \frac{V(\eta)}{2})_+ + \left \| \hat{\pi}_{{\revar \eta,T}}^* - \hat{\pi}_{{\revar \eta,T}} \right \|_A^2] \\  
  & = \sup_{\eta \in \mathcal{H}_T}[ \sum_{j = 1}^5 I_j^{h,\eta}]. \nonumber 
\end{align}
Then, the analysis of $I_1$, $I_3$ and $I_5$ is the same as in Proposition 5 of \cite{Chapitre 4} and provides, respectively
\begin{equation}
\sup_{\eta \in \mathcal{H}_T} I_3^{h, \eta} \le c B(h)
\label{eq: I3}
\end{equation}
and 
\begin{align*}
\mathbb{E}[ |\sup_{\eta \in \mathcal{H}_T} I_1^{h, \eta} |]+ \mathbb{E}[ |\sup_{\eta \in \mathcal{H}_T} I_5^{h, \eta} |] & \le \frac{c}{(\prod_{l=1}^d \eta_l)^2} \frac{T}{(\log T)^{2}} e^{- \gamma (\log T)^2},
\end{align*}
where we have also chosen $q_T$ as $(\log T)^2$. Then, the definition of the set of candidate bandwidths $\mathcal{H}_T$ as in \eqref{eq: def mathcal H} implies
\begin{equation}{\label{eq: I1 I5}}
\mathbb{E}[ |\sup_{\eta \in \mathcal{H}_T} I_1^{h, \eta} |]+ \mathbb{E}[ |\sup_{\eta \in \mathcal{H}_T} I_5^{h, \eta} |] \le c \frac{T^{1 + 2db}}{(\log T)^{2}} e^{- \gamma (\log T)^2}.
\end{equation}
On $I_2^{h, \eta}$ and $I_4^{h, \eta}$ we want to use Talagrand inequality as formulated in Lemma 2 in \cite{Main adapt}. It is a 
%{\revar straightforward}
 consequence of the Talagarand inequality given in Klein and Rio \cite{KR} (see also Lemma 5 in \cite{Chapitre 4}):
\begin{lemma}
Let $T_1, ... , T_{\revar p}$ be independent random variables with values in some Polish space $\mathcal{X}$, $\mathcal{R}$ a countable class of measurable functions from $\mathcal{X}$ {\revar into $\mathbb{R}$} %$[-1, 1]^p$ 
	and 
$v_p (r) := \frac{1}{p} \sum_{j = 1}^p [r (T_j) - \mathbb{E}[r(T_j)]]. $
Then,
\begin{equation}
\mathbb{E}[(\sup_{r \in \mathcal{R}} |v_p(r)|^2 - 2H^2)_+] \le c (\frac{v}{p} e^{- c \frac{p H^2}{v}} + \frac{M^2}{p^2} e^{- c \frac{p H}{M}}),
\label{eq: Talagrand in Klein Rio}
\end{equation}
with $c$ a universal constant and where
$$\sup_{r \in \mathcal{R}} \left \| r \right \|_\infty \le M, \quad \mathbb{E}[\sup_{r \in \mathcal{R}}|v_p (r)|] \le H, \quad \sup_{r \in \mathcal{R}} \frac{1}{p} \sum_{j = 1}^p Var(r (T_j)) \le v.$$
\label{lemma: Talagrand in Klein Rio}
\end{lemma}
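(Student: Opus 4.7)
The plan is to derive this moment inequality from the Talagrand-type concentration inequality for the supremum of an independent empirical process, in the sharpened form due to Klein and Rio. Writing $Z := \sup_{r \in \mathcal{R}} |v_p(r)|$, the Klein-Rio result provides universal constants $c_1, c_2 > 0$ such that, under our boundedness $\|r\|_\infty \le M$ and variance $\tfrac{1}{p} \sum_j \mathrm{Var}(r(T_j)) \le v$ hypotheses,
\[
\mathbb{P}\bigl(Z \ge \mathbb{E}[Z] + c_1 \sqrt{vx/p} + c_2 Mx/p\bigr) \le e^{-x}\qquad \text{for every } x > 0,
\]
equivalently, a Bernstein-type tail $\mathbb{P}(Z \ge \mathbb{E}[Z] + u) \le \exp\bigl(-c\min\{pu^2/v,\, pu/M\}\bigr)$. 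The independence of the $T_j$ is exactly what enables this inequality, while the countability of $\mathcal{R}$ handles measurability of $Z$.

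Given this tail bound, I would use the layer-cake representation
\[
\mathbb{E}[(Z^2 - 2H^2)_+] = \int_0^\infty \mathbb{P}\bigl(Z > \sqrt{2H^2 + s}\bigr)\, ds
\]
and exploit the hypothesis $H \ge \mathbb{E}[Z]$ to extract a large-deviation factor depending on $H$. Concretely, since $\sqrt{2H^2 + s} \ge \sqrt{2} H \ge \sqrt{2}\, \mathbb{E}[Z]$, the displacement of the threshold from the mean splits as
\[
\sqrt{2H^2 + s} - \mathbb{E}[Z] \ge (\sqrt{2} - 1) H + \bigl(\sqrt{2H^2 + s} - \sqrt{2} H\bigr),
\]
and an elementary calculation bounds the second bracket below by a constant times $\min\{\sqrt{s},\, s/H\}$. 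Substituting into the Bernstein tail and using $(a+b)^2 \ge a^2 + b^2$ for $a,b \ge 0$ yields, schematically,
\[
\mathbb{P}\bigl(Z > \sqrt{2H^2 + s}\bigr) \lesssim e^{-c p H^2/v}\, e^{-c p s/v} + e^{-c p H/M}\, e^{-c p \sqrt{s}/M}.
\]

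Integrating each term separately in $s$ then gives $\int_0^\infty e^{-c p s/v}\, ds \asymp v/p$ in the Gaussian regime, producing the contribution $\frac{v}{p} e^{-c p H^2/v}$, and $\int_0^\infty e^{-c p \sqrt{s}/M}\, ds \asymp M^2/p^2$ in the sub-exponential regime, producing the contribution $\frac{M^2}{p^2} e^{-c p H/M}$. Summing the two recovers the right-hand side of the lemma up to the absolute constants absorbed into $c$. The main technical obstacle in this program is the careful book-keeping of which of the two regimes $pu^2/v$ versus $pu/M$ dominates the Bernstein exponent as $s$ ranges over $[0, \infty)$, and the verification that the cross terms generated at the transition between regimes are indeed absorbed, up to constants, into the other regime's contribution; this is precisely the calculation carried out in Lemma~2 of \cite{Main adapt}, so in practice the cleanest route is simply to invoke that statement verbatim.
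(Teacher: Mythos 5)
The paper does not actually prove this lemma: it is quoted verbatim as Lemma~2 of \cite{Main adapt} (see also Lemma~5 of \cite{Chapitre 4}), itself recorded there as a consequence of the Klein--Rio concentration inequality \cite{KR}, so your closing remark that ``the cleanest route is simply to invoke that statement verbatim'' is exactly what the authors do. Your sketch of how the moment bound follows from Klein--Rio is the standard derivation and is essentially sound: tail bound for $Z=\sup_r|v_p(r)|$, layer-cake formula for $\mathbb{E}[(Z^2-2H^2)_+]$, separation of the Gaussian and sub-exponential regimes, and integration giving the prefactors $v/p$ and $M^2/p^2$. One small imprecision worth flagging: Klein--Rio does not give the clean Bernstein form $\mathbb{P}(Z\ge \mathbb{E}[Z]+u)\le \exp(-c\min\{pu^2/v,\,pu/M\})$ you write down, but rather $\mathbb{P}\bigl(Z\ge (1+\epsilon)\mathbb{E}[Z]+\sqrt{2vx/p}+c(\epsilon)Mx/p\bigr)\le e^{-x}$, with a multiplicative inflation of the mean; this inflation is precisely what the factor $2$ in $2H^2$ is designed to absorb, and your decomposition through $\sqrt{2H^2+s}\ge\sqrt{2}\,H\ge\sqrt{2}\,\mathbb{E}[Z]$ leaves exactly the slack $(\sqrt{2}-1)H$ needed to accommodate it, so the argument goes through after fixing $\epsilon$ appropriately. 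In short, your proposal is correct modulo routine constant bookkeeping, and it supplies a proof where the paper supplies only a citation.
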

\noindent
{\revar 
In order to use Lemma \ref{lemma: Talagrand in Klein Rio}, we write that
 $\left \| \pi_\eta  - \hat{\pi}^{* (1)}_{\eta,T} \right \|_A^2 = \sup_{r \in \widetilde{\mathcal{B}}(1)}  <   \pi_\eta-\hat{\pi}^{* (1)}_{\eta,T}  ,r>^2  $ where $\widetilde{\mathcal{B}}(1)$ is a countable dense set of the unit ball of $L^2(A)$. For $r \in \tilde{\mathcal{B}}(1)$, we set
 \begin{equation*}
 r_\eta(X^{*,j,1}):=\frac{1}{q_T}\int_{2(j-1)q_T}^{(2j-1)q_T} \mathbb{K}_\eta * r (X_s^{*,j,1}) ds
 \end{equation*}
which is such that $ <  \hat{\pi}^{* (1)}_{\eta,T} ,r> =\frac{1}{p}\sum_{j=1}^p r_\eta(X^{*,j,1})$, and 
$ <   \pi_\eta-\hat{\pi}^{* (1)}_{\eta,T}  ,r>=-v_p(r_\eta)$
where we use the notation $v_{p}$ of Lemma \ref{lemma: Talagrand in Klein Rio}.
The application of Lemma \ref{lemma: Talagrand in Klein Rio}, with the countable class $\mathcal{R}=\{r_\eta, r \in \tilde{\mathcal{B}}(1)\}$, now requires to compute the constants $M$, $H$ and $v$.}
To do so, one can closely follow the proof of Lemma 4 in \cite{Chapitre 4}. Then, it is straightforward to see that 
$$M= c(\prod_{l = 1}^d \eta_l)^{-\frac{1}{2}}.$$
Regarding the computation of $H$, it comes directly from the computation of the variance. It is easy to check that, replacing the bound on the variance given by Proposition 1 in \cite{Chapitre 4} by the one gathered in our Proposition \ref{prop: bound variance}, one obtains
\begin{align*}
	H^2 &= \frac{c}{T} \, \min \Big(\frac{\sum_{j = 1}^d |\log \eta_j|}{\prod_{l \neq k_1, k_2} \eta_l}, \, \frac{1}{\sqrt{\eta_{k_2} \eta_{k_3}}\prod_{l \neq k_1, k_2, k_3} \eta_l} \Big) \\
	& = : \frac{c}{T} \tilde{H}^2 (\eta).
\end{align*}
To use Talagrand inequality, we moreover need to compute $v$. Again, one can check the computations in Lemma 4 in \cite{Chapitre 4} to easily obtain 
$$v := \frac{c}{q_T} (1 + \log (\frac{1}{|\prod_{l = 1}^d \eta_l |})).$$
Then, we use Lemma \ref{lemma: Talagrand in Klein Rio} and it follows that there exists some $k_0 > 0$ such that, for any $\hat{k} \ge k_0$ we have
\begin{align*}
	&\mathbb{E}[\sup_{\eta \in \mathcal{H}_T} (\left \| \pi_\eta  - \hat{\pi}^{* (1)}_{{\revar \eta,T}} \right \|_A^2 - \frac{\hat{k}}{T} \tilde{H}^2)_+]  
	{\revar{} \le \sum_{\eta \in \mathcal{H}_T} \mathbb{E}[ (\left \| \pi_\eta  - \hat{\pi}^{* (1)}_{\eta,T} \right \|_A^2 - \frac{\hat{k}}{T} \tilde{H}^2)_+] 
	} 
	\\
	& \le c \sum_{\eta \in \mathcal{H}_T} \frac{(1 + \log (\frac{1}{|\prod_{l = 1}^d \eta_l|}))}{p_T q_T} e^{- c \frac{\frac{p_T}{T} \tilde{H}^2 (\eta)}{\frac{1}{q_T} (1 + \log (\frac{1}{|\prod_{l = 1}^d \eta_l |})) }} + \frac{(\prod_{l = 1}^d \eta_l)^{-1}}{p_T^2 } e^{- c \frac{p_T \frac{1}{\sqrt{T}} \tilde{H} (\eta)}{(\prod_{l = 1}^d \eta_l)^{- \frac{1}{2}}}} .
\end{align*}
We recall that $2 p_T q_T = T$, where $q_T$ is chosen as $ (\log T)^2 $; we
can therefore upper bound the right hand side of the equation here above with
\begin{align}{\label{eq: Talagrand}}
	& \sum_{\eta \in \mathcal{H}_T} \frac{(1 + \log (\frac{1}{|\prod_{l = 1}^d \eta_l|}))}{T} e^{- \frac{c \tilde{H}^2 (\eta) }{(1 + \log (\frac{1}{|\prod_{l = 1}^d \eta_l|}))}} + \frac{(\log T)^4}{(\prod_{l = 1}^d \eta_l) T^2} e^{- c \frac{\sqrt{T}}{(\log T)^2} (\prod_{l = 1}^d \eta_l)^{\frac{1}{2}} \tilde{H} (\eta)}.
\end{align}
Regarding the first term we remark that, from the definition of the set $\mathcal{H}_T$ given in \eqref{eq: def mathcal H} we have 
\begin{align}{\label{eq: bound Htilde}}
	\tilde{H} (\eta) &\ge \min (\log T (\log T)^{(d-2)(\frac{1}{d - 2} + a)}, (\log T)^{(d-2)(\frac{1}{d - 2} + a)} ) \\
	& = (\log T)^{1 + a(d-2)}. \nonumber
\end{align}
Moreover, 
\begin{align*}
	& (\prod_{l = 1}^d \eta_l)^{\frac{1}{2}} \tilde{H} (\eta) \\
	& = \min((\sum_{j=1}^d |\log \eta_j| \, \eta_{k_1} \eta_{k_2})^{\frac{1}{2}}, (\eta_{k_1})^{\frac{1}{2}} (\eta_{k_2} \eta_{k_3})^\frac{1}{4}).
\end{align*}
Furthermore, it is $(\eta_{k_1} \eta_{k_2})^{\frac{1}{2}} \le (\eta_{k_1})^{\frac{1}{2}} (\eta_{k_2} \eta_{k_3})^\frac{1}{4}$ for $\eta_{k_2} < \eta_{k_3}$, which implies that the minimum is achieved by the first in this context. Otherwise, for $\eta_{k_2} = \eta_{k_3}$,  because of the presence of the logarithm in the first term, the minimum is realised by the second. \\
Then, by the definition we have given of $\mathcal{H}_T$, $\forall h \in \mathcal{H}_T$ we have $(\prod_{l = 1}^d \eta_l)^{\frac{1}{2}} \tilde{H} (\eta) \ge \frac{(\log T)^{2 + a}}{\sqrt{T}}$.
Thus, \eqref{eq: Talagrand} is upper bounded by
\begin{align*}
	& \sum_{\eta \in \mathcal{H}_T} \frac{\log T}{T} e^{- c (\log T)^{a(d-2)}} + T^{d b - 2}(\log T)^{4} e^{- c (\log T)^{a}} \\
	& \le T^{c - 1} \log T e^{- c (\log T)^{a(d-2)}} + T^{d b - 2 + c}(\log T)^{4} e^{- c (\log T)^{a}},
\end{align*}
where in the last inequality we have used that, by the definition \eqref{eq: def mathcal H} of $\mathcal{H}_T$, its cardinality $|\mathcal{H}_T|$ has polynomial growth in $T$ {\revar at most as $T^c$}. Therefore, we have proven
\begin{equation}{\label{eq: Talagrand per I4}}
	\mathbb{E}[\sup_{\eta \in \mathcal{H}_T} (\left \| \pi_\eta  - \hat{\pi}^{* (1)}_{{\revar \eta,T}} \right \|_A^2 - \frac{\hat{k}}{T} \tilde{H}^2)_+]  \le  T^{c - 1} \log T e^{- c (\log T)^{a(d-2)}} + T^{d b - 2 + c}(\log T)^{4} e^{- c (\log T)^{a}};  
\end{equation}
from which it directly follows the wanted bound on $I_4^{h, \eta}$
{\revar since $a>1$ and $d\ge 3$}. Indeed, it is $\pi_\eta  - \hat{\pi}^{*}_\eta = \frac{1}{2}(2 \pi_{\eta} - \hat{\pi}^{* (1)}_{{\revar\eta,T}} - \hat{\pi}^{* (2)}_{{\revar\eta,T}}  )$. Hence, from triangular inequality and the definition of positive part function, we get
\begin{equation}{\label{eq: I4 split}}
I_4^{h, \eta} \le c(\left \| \pi_\eta - \hat{\pi}_{{\revar\eta,T}}^{* (1)} \right \|_A^2 - \frac{V(\eta)}{2})_+ + c (\left \| \pi_\eta - \hat{\pi}_{{\revar\eta,T}}^{* (2)} \right \|_A^2 - \frac{V(\eta)}{2})_+.
\end{equation}
From \eqref{eq: Talagrand per I4}, for a $k$ in the definition of $V(\eta)$ big enough, such that $\frac{k}{2} >  k_0$, we get
{\revar 
\begin{equation}
	\mathbb{E} [\sup_{\eta \in \mathcal{H}_T} I_4^{h, \eta}] \le c_1 T^{c_2} e^{- c_3(\log T)^{c_4} }
	\label{eq: I4}
\end{equation}
with $c_4>1$. To end the proof, it remains to show a similar upper bound with $I_2^{h,\eta}$. From Lemma 3 of \cite{Chapitre 4}} and the definition of kernel function we have 
$$\left \| \pi_{{\revar (h,\eta)}}  - \hat{\pi}^{* (1)}_{{\revar (h,\eta),T}} \right \|_A^2 = \left \| \mathbb{K}_h* ( \pi_\eta  - \hat{\pi}^{* (1)}_{{\revar\eta,T}}) \right \|_A^2 \le \left \| \mathbb{K}_h \right \|^2_{1, \mathbb{R}^d} \left \| \pi_\eta  - \hat{\pi}^{* (1)}_{{\revar\eta,T}} \right \|^2_{2, \tilde{A}} \le c \left \| \pi_\eta  - \hat{\pi}^{* (1)}_{{\revar\eta,T}} \right \|^2_{2, \tilde{A}}.$$
Such a remark, together with \eqref{eq: Talagrand per I4} yields
$$\mathbb{E}[\sup_{\eta \in \mathcal{H}_T} ( \left \| \pi_{{\revar(h,\eta)}}  - \hat{\pi}^{* (1)}_{{\revar(h,\eta),T}} \right \|_A^2 - \frac{\hat{k}}{T} \tilde{H}^2)_+]  \le  T^{c - 1} \log T e^{- c (\log T)^{a(d-2)}} + T^{d b - 2 + c}(\log T)^{4} e^{- c (\log T)^{a}},$$
which clearly implies
\begin{equation*}
	\mathbb{E} [\sup_{\eta \in \mathcal{H}_T} I_2^{h, \eta}] \le c_1 T^{c_2} e^{- c_3(\log T)^{c_4} }.
\end{equation*}
The proof is then concluded.
\end{proof}

{\blue
\subsection{Proof of Theorem \ref{th: adaptive point}}
\begin{proof}
The proof of the pointwise adaptive procedure closely follows the proof of Theorem \ref{th: adaptive} provided above. In particular, analogous to Proposition \ref{prop: A(h)}, the key point is to establish that, for any $h \in \mathcal{H}_T^p$,
\begin{equation}{\label{eq: prop A point}}
 \E[A^p(h,x)] \le c_1 B(h,x) + c_1 T^{- c_2}   
\end{equation}
for some $c_1 > 0$, $c_2 \ge 1$. 
To establish \eqref{eq: prop A point}, we can still employ Berbee's coupling method, which, as seen in Equation \eqref{eq: berbee}, leads us to
\begin{align*}
  A^p(h,x) &\le \sup_{\eta \in \mathcal{H}_T^p} [| \hat{\pi}_{{ (h, \eta),T}}(x) - \hat{\pi}_{{ (h, \eta),T}}^*(x) |^2  + (| \hat{\pi}_{{ (h, \eta),T}}^*(x) - \pi_{{ (h, \eta)}}(x) |^2 - \frac{V^p(\eta)}{2})_+ \nonumber \\
  &+ | \pi_{{ (h, \eta)}}(x) - \pi_\eta(x) |^2 + (| \pi_\eta(x) - \hat{\pi}_{{ \eta,T}}^*(x) |^2 - \frac{V^p(\eta)}{2})_+ + | \hat{\pi}_{{ \eta,T}}^*(x) - \hat{\pi}_{{ \eta,T}}(x) |^2] \\  & =: \sup_{\eta \in \mathcal{H}_T^p}[ \sum_{j = 1}^5 I_j^{p,h,\eta}(x)]. \nonumber 
\end{align*}
The analysis of $I_3^{p,h,\eta}(x)$, $I_1^{p,h,\eta}(x)$ and $I_5^{p,h,\eta}(x)$ is the same as in \eqref{eq: I3} and \eqref{eq: I1 I5}, respectively. It yields
\begin{align*}
\sup_{\eta \in \mathcal{H}_T^p} I_3^{p,h,\eta}(x) \le c B^p(h,x), \qquad \mathbb{E}[ |\sup_{\eta \in \mathcal{H}_T^p} I_1^{p, h, \eta}(x) |]+ \mathbb{E}[ |\sup_{\eta \in \mathcal{H}_T^p} I_5^{p,h, \eta}(x) |] \le c \frac{T^{1 + 2db}}{(\log T)^{2}} e^{- \gamma (\log T)^2}.
\end{align*}
On $I_2^{p,h,\eta}(x)$ and $I_4^{p,h,\eta}(x)$ we want to use Bernstein's inequality. Let us start by considering $I_4^{p,h,\eta}(x)$. Notice that, similarly as in \eqref{eq: I4 split} it is
\begin{equation*}
I_4^{p, h, \eta}(x) \le c(| \pi_\eta(x) - \hat{\pi}_{{\eta,T}}^{* (1)}(x) |^2 - \frac{V^p(\eta)}{2})_+ + c (| \pi_\eta(x) - \hat{\pi}_{{\eta,T}}^{* (2)}(x) |^2 - \frac{V^p(\eta)}{2})_+.
\end{equation*}
Then, 
\begin{align}{\label{eq: start bern}}
\E[\sup_{\eta \in \mathcal{H}_T^p}(| \pi_\eta(x) - \hat{\pi}_{{\eta,T}}^{* (1)}(x) |^2 - \frac{V^p(\eta)}{2})_+] & \le \sum_{\eta \in \mathcal{H}_T^p} \int_0^\infty \mathbb{P}  \Big( (| \pi_\eta(x) - \hat{\pi}_{{\eta,T}}^{* (1)}(x) |^2 - \frac{V^p(\eta)}{2})_+ \ge t  \Big) dt \\
 &= \sum_{\eta \in \mathcal{H}_T^p} \int_0^\infty \mathbb{P} \Big( | \pi_\eta(x) - \hat{\pi}_{{\eta,T}}^{* (1)}(x) | \ge \sqrt{\frac{V^p(\eta)}{2} + t} \Big) dt. \nonumber 
\end{align}
We recall Bernstein's inequality (see for example p.366 \cite{BirgeMassart}): let $T_1$, ... , $T_n$ independent variables and $S_{p}(T) := \sum_{i = 1}^p (T_i - \E[T_i])$ such that $Var(T_i) \le v$, $\left \| T_i \right \|_\infty \le M$. Then, for any $\gamma > 0$, 
\begin{align*}
\mathbb{P} \Big( |S_p(T) - \E[S_p(T)]| \ge p\gamma \Big) & \le 2 \exp(- \frac{p\frac{\gamma^2}{2}}{v + M \gamma}) \\
& \le 2 \max(\exp(\frac{- p \gamma^2}{4 v}), \exp( \frac{-p \gamma}{4 M})).
\end{align*}
Observe it is 
\begin{align*}
\pi_\eta(x) - \hat{\pi}_{{\eta,T}}^{* (1)}(x) &= \frac{1}{p_T q_T} \sum_{i =1}^{p_T} \int_{2(i-1)q_T}^{(2i-1)q_T} (\mathbb{K}_\eta(X_u^* - x) - \E[\mathbb{K}_\eta(X_u^* - x)]) du \\
& = : \frac{1}{p_T} \sum_{i = 1}^{p_T} (T_i - \E[T_i]).
\end{align*}
The computation of $v$, in the same way as $H$ appearing in Theorem \ref{th: adaptive}, follows directly from Proposition \ref{prop: bound variance}.
\begin{align*}
Var(T_i) & = Var(\frac{1}{q_T} \int_{2(i-1)q_T}^{(2i-1)q_T} (\mathbb{K}_\eta(X_u^* - x) - \E[\mathbb{K}_\eta(X_u^* - x)]) du)\\
& \le \frac{c}{q_T} \tilde{H}^2(\eta) =:v,
\end{align*}
where $\tilde{H}^2(\eta)$ is as in \eqref{eq: penalty} and \eqref{eq: penalty point}. \\
Regarding the computation of $M$, as $T_i = \frac{1}{q_T} \int_{2(i-1)q_T}^{(2i-1)q_T} \mathbb{K}_\eta(X_u^* - x) du$, it is $\left \| T_i \right \|_\infty \le c (\prod_{l = 1}^d \eta_l)^{-1} =: M$. It is easy to see that it is worse compared to the integrated $L^2$ norm studied in previous theorem, where $\left \| r \right \|_\infty$ was bounded by $c (\prod_{l = 1}^d \eta_l)^{-\frac{1}{2}}$. Bernstein's inequality gives, for any $t \ge 0$
\begin{align*}
\mathbb{P} \Big( | \pi_\eta(x) - \hat{\pi}_{{\eta,T}}^{* (1)}(x) | \ge \sqrt{\frac{V^p(\eta)}{2} + t} \Big) & \le 2 \max \Big(\exp(- \frac{p_T}{4v}(\frac{V^p(\eta)}{2} + t)),\\
& \exp(- \frac{p_T}{8M} \sqrt{\frac{V^p(\eta)}{2}}) \exp(- \frac{p_T}{8M}\sqrt{t}) \Big),
\end{align*}
as $\sqrt{\cdot}$ is a concave function. Then, using the definition of $V^p(\eta)$, $v$ and $M$ one has, for some $c_1$, $c_2$, $c_3$ and $c_4 > 0$ the following identities: $\frac{p_T}{4v} \frac{V^p(\eta)}{2} = c_1 {k_T}$, $\frac{p_T}{4v} = c_2  \frac{T}{\tilde{H}^2(\eta)}$, $\frac{p_T}{M} = \frac{c_3 T}{q_T} \prod_{l = 1}^d \eta_l$ and $\frac{p_T}{M} \sqrt{\frac{V^p(\eta)}{2}} = c_4 \frac{\sqrt{T}}{q_T} \prod_{l = 1}^d \eta_l \tilde{H}(\eta) \sqrt{k_T}$, 
where we have also used that $T = 2 p_T q_T$ and we recall that $k_T = k \log T$, $q_T = (\log T)^2$. It follows, using also \eqref{eq: start bern}, 
\begin{align*}
& \E[\sup_{\eta \in \mathcal{H}_T^p}(| \pi_\eta(x) - \hat{\pi}_{{\eta,T}}^{* (1)}(x) |^2 - \frac{V^p(\eta)}{2})_+] \le \sum_{\eta \in \mathcal{H}_T^p} \int_0^\infty 2 \max \Big(\exp(- c {k_T})\exp(- c \frac{T}{\tilde{H}^2(\eta)}t), \\
& \exp(- c \sqrt{k_T} \frac{\sqrt{T}}{(\log T)^2} \prod_{l = 1}^d \eta_l \tilde{H}(\eta) \alpha) \exp(- \frac{c T}{(\log T)^2} \prod_{l = 1}^d \eta_l(1 - \alpha) \sqrt{t}) \Big) dt \\
& \le \sum_{\eta \in \mathcal{H}_T^p} \exp(- c {k_T}) \int_0^\infty \exp(- c \frac{T}{\tilde{H}^2(\eta)}t)  dt \\
& + \sum_{\eta \in \mathcal{H}_T^p} \exp(- c \sqrt{k_T} \frac{\sqrt{T}}{(\log T)^2} \prod_{l = 1}^d \eta_l \tilde{H}(\eta) \alpha) \int_0^\infty \exp(- \frac{c T}{(\log T)^2} \prod_{l = 1}^d \eta_l(1 - \alpha) \sqrt{t}) dt \\
& \le c \sum_{\eta \in \mathcal{H}_T^p} \Big[\exp(- c {k_T}) \frac{\tilde{H}^2(\eta)}{T} + \exp(- c \sqrt{k_T} \frac{\sqrt{T}}{(\log T)^2} \prod_{l = 1}^d \eta_l \tilde{H}(\eta) \alpha) \frac{(\log T)^4}{(T \prod_{l = 1}^d \eta_l (1 - \alpha))^2}\Big].
\end{align*}
Recall that, according to the definition of $\mathcal{H}_T^p$ given in \eqref{eq: def mathcal Hp}, it is $\prod_{l = 1}^d \eta_l \tilde{H}(\eta ) \ge \frac{(\log T)^{2 + a}}{\sqrt{T}}$. Moreover, by the definition of $\tilde{H}$ and the fact that for any $\eta \in \mathcal{H}_T^p$ it is 
$(\frac{1}{T})^b \le \frac{1}{\eta_l} \le (\frac{1}{\log T})^{\frac{1}{d - 2} + a}$, one obtains $\tilde{H}(\eta) \le (\frac{1}{\log T})^{a(d - 2)}$. It follows 
\begin{align*}
& \E[\sup_{\eta \in \mathcal{H}_T^p}(| \pi_\eta(x) - \hat{\pi}_{{\eta,T}}^{* (1)}(x) |^2 - \frac{V^p(\eta)}{2})_+] \\ 
& \le c \sum_{\eta \in \mathcal{H}_T^p} \Big[\exp(- c {k_T}) \frac{1}{T(\log T)^{a (d-2)}} + \exp(- c \sqrt{k_T} (\log T)^a \alpha ) \frac{(\log T)^4}{T^2 (1 - \alpha)^2 (\log T)^{(\frac{1}{d - 2} + a)2d}}\Big] \\
& \le c_1 T^{c_2} (e^{- c {k} \log T} + e^{- c (\log T)^{a + \frac{1}{2}}}) \le c_1 T^{- c_3},
\end{align*}
where we have also used that $|\mathcal{H}_T^p|$ has polynomial growth in $T$ at most as $T^{c}$. Remark that the result above is in hold up to choose $k$ large enough to guarantee that $c {k} - c_2 \ge c_3 \ge 1$. It yields the wanted bound on $\E[\sup_{\eta \in \mathcal{H}_T^p} I_4^{p, h, \eta}(x)]$. To conclude, we study $I_2^{p, h, \eta}(x)$. Observe that such term can be treated exactly as the previous one. Indeed, instead of $T_i$ defined as above we have now to deal with $\tilde{T}_i := = \frac{1}{q_T} \int_{2(i-1)q_T}^{(2i-1)q_T} \mathbb{K}_h \ast \mathbb{K}_\eta(X_u^* - x) du $. Notice that 
\begin{align*}
\left \| \tilde{T}_i \right \|_\infty \le c \left \| \mathbb{K}_h \right \|_1 (\prod_{l = 1}^d \eta_l)^{- 1} = M.
\end{align*}
Moreover using Fubini Theorem and the fact that both $\left \| \mathbb{K}_h \right \|_1$ and $\left \| \mathbb{K}_\eta \right \|_1$ are bounded by a constant, it is easy to check the same for  $\left \| \mathbb{K}_h \ast \mathbb{K}_\eta \right \|_1$. Furthermore, using that $\mathbb{K}_h \ast \mathbb{K}_\eta$ has support in $[-(\eta + h), \eta + h]$ and the improved upper bound $\left \| \mathbb{K}_h \ast \mathbb{K}_\eta \right \|_\infty \le c (\prod_{l = 1}^d (h_l + \eta_l))^{-1}$, one can follow step by step the proof given in Proposition \ref{prop: bound variance} to verify that it holds true with $\mathbb{K}_h \ast \mathbb{K}_\eta$ instead of $\mathbb{K}_h$. It implies 
$$Var(\tilde{T}_i) \le \frac{c}{q_T} \tilde{H}^2(\eta)= v.$$
Then, applying Bernstein's inequality in the same manner as demonstrated in the proof concerning $I_4^{p, h, \eta}(x)$ directly yields
$$\E[\sup_{\eta \in \mathcal{H}_T^p} I_2^{p, h, \eta}(x)] \le c_1 T^{- c_2}$$
for some $c_2 \ge 1$, as we wanted. 
  
\end{proof}}

\subsection{Proof of Theorem \ref{th: adaptive optimal}}

\begin{proof}
We recall that, according to Theorem \ref{th: upper bound d ge 3}, the optimal choice for the bandwidth changes depending on how many $\beta$'s are coincident. This is equivalent to understand which term achieves the minimum in the definition of the penalty term. In particular, when the $\beta$'s are not ordered we have for any bandwidth $h$
\begin{align*}
Var(\hat{\pi}_h) & \le \frac{c}{T} \, \min \Big(\frac{\sum_{j = 1}^d |\log h_j|}{\prod_{l \neq k_1, k_2} h_l}, \, \frac{1}{\sqrt{h_{k_2} h_{k_3}}\prod_{l \neq k_1, k_2, k_3} h_l} \Big),
\end{align*}
with $h_{k_1} \le h_{h_2} \le ... \le h_{k_d}$ by construction. Then, if the minimum is realized by the first, it means that $h_{k_3} > h_{k_2}$. In this case the rate optimal choice for the bandwidth consists in taking
\begin{equation}{\label{eq: bandwidth 49.5}}
h_{k_j} := (\frac{\log T}{T})^{\frac{{\revar\bar{\beta}_3}}{\beta_{k_j} (2 {\revar \bar{\beta}_3} + d - 2)}}
\end{equation}
 for any $j \ge 3$ and $h_{k_1}$ and $h_{k_2}$ arbitrarily small. It provides the convergence rate $(\frac{\log T}{T})^{\frac{2{\revar \bar{\beta}_3}}{2{\revar \bar{\beta}_3} + d - 2}}$. \\
We want such a choice of bandwidth to be in \eqref{eq: example HT}. Hence, we start by checking the conditions in \eqref{eq: def mathcal H} are respected. One can observe that, for $h_{k_3} > h_{k_2}$, it is 
$$\min((\sum_{j=1}^d |\log h_j| \, h_{k_1} h_{k_2})^{\frac{1}{2}}, (h_{k_1})^{\frac{1}{2}} (h_{k_2} h_{k_3})^\frac{1}{4}) = (\sum_{j=1}^d |\log h_j| \, h_{k_1} h_{k_2})^{\frac{1}{2}}.$$
Then, we need to choose $h_{k_1}$ and $ h_{k_2}$ large enough to guarantee $(\sum_{j=1}^d |\log h_j| \, h_{k_1} h_{k_2})^{\frac{1}{2}} \ge \frac{c (\log T)^{2 + a}}{\sqrt{T}}$ for some $a > 0$ arbitrarily small. It leads us to the choice 
{\revar $h_{k_1} = h_{k_2} = \frac{(\log T)^{\frac{3}{2} + a}}{\sqrt{T}}$.}
%$h_{k_1} = h_{k_2} = : h_{k}$, with 
%$h_k = \frac{(\log T)^{\frac{3}{2} + a}}{\sqrt{T}}$. 
It is easy to check that such a choice still implies the convergence rate to be $(\frac{\log T}{T})^{\frac{2{\revar\bar{\beta}_3}}{2 {\revar\bar{\beta}_3} + d - 2}}$. Indeed, the bias term associated {\revar to $h_{k_1}$ and $h_{k_2}$ is}
\begin{align*}
{\revar h_{k_1}^{2 \beta_{k_1}} + h_{k_2}^{2 \beta_{k_2}}} & = (\frac{(\log T)^{\frac{3}{2} + a}}{\sqrt{T}})^{2 \beta_{k_1}} + (\frac{(\log T)^{\frac{3}{2} + a}}{\sqrt{T}})^{2 \beta_{k_2}} \\
& \le (\frac{\log T}{T})^{\frac{2{\revar\bar{\beta}_3}}{2{\revar\bar{\beta}_3} + d - 2}}, 
\end{align*}
being the last a consequence of the fact that both $\beta_{k_1}$ and $\beta_{k_2}$ are larger than $1$, which is larger than $\frac{2{\revar\bar{\beta}_3}}{2 {\revar\bar{\beta}_3} + d - 2}$. \\
Clearly the optimal choice of the bandwidth with components $h_{k_l}$ as above satisfies the conditions in \eqref{eq: def mathcal H}. 
\\
\\
When the minimum of the variance is achieved by $\frac{1}{\sqrt{h_{k_2} h_{k_3}}\prod_{l \neq k_1, k_2, k_3} h_l}$, instead, it implies that $h_{k_2} = h_{k_3}$. According to Theorem \ref{th: upper bound d ge 3},
the rate optimal choice for the bandwidth is similar as in previous case, consisting in 
\begin{equation}{\label{eq: choice h adaptive}}
h_{k_j} := (\frac{1}{T})^{\frac{{\revar\bar{\beta}_3}}{\beta_{k_j} (2 {\revar\bar{\beta}_3} + d - 2)}}, \qquad \forall j \ge 2.
\end{equation}
Now the only bandwidth allowed to be arbitrarily small is $h_{k_1}$. In this case the convergence rate is $(\frac{1}{T})^{\frac{2{\revar\bar{\beta}_3}}{2 {\revar\bar{\beta}_3} + d - 2}}$. \\
Again, we need to check that conditions in \eqref{eq: def mathcal H} are respected. One can observe that, as $h_{k_3} = h_{k_2}$ (which coincides with $\beta_{k_3} = \beta_{k_2}$ because of \eqref{eq: choice h adaptive}), it is 
\begin{align}{\label{eq: cond 61.5}}
\min((\sum_{j=1}^d |\log h_j| \, h_{k_1} h_{k_2})^{\frac{1}{2}}, (h_{k_1})^{\frac{1}{2}} (h_{k_2} h_{k_3})^\frac{1}{4}) & = (h_{k_1})^{\frac{1}{2}} (h_{k_2} h_{k_3})^\frac{1}{4} \\
& = (h_{k_1} h_{k_2})^{\frac{1}{2}}. \nonumber
\end{align}
Then, we need to choose $h_{k_1}$ and $ h_{k_2}$ such that $ (h_{k_1} h_{k_2})^{\frac{1}{2}} \ge \frac{ (\log T)^{2 + a}}{\sqrt{T}}$ for some $a > 0$ arbitrarily small which, according to \eqref{eq: choice h adaptive}, leads us to the condition
$$h_{k_1}^{\frac{1}{2}} \ge \frac{ (\log T)^{2 + a}}{\sqrt{T}} T^{\frac{{\revar\bar{\beta}_3}}{2\,\beta_{k_2} (2 {\revar\bar{\beta}_3} + d - 2)}}.$$ 
This is equivalent to the constraint 
$$h_{k_1} \ge \frac{ (\log T)^{4 + 2a}}{{T}} T^{\frac{{\revar\bar{\beta}_3}}{\beta_{k_2} (2 {\revar\bar{\beta}_3} + d - 2)}}.$$
We choose $h_{k_1}$ realising the equivalence above, which implies the convergence rate to be 
$(\frac{1}{T})^{\frac{2{\revar\bar{\beta}_3}}{2 {\revar\bar{\beta}_3} + d - 2}}$, as the bias term associated to $h_{k_1}$ is negligible. Indeed, it is 
\begin{align*}
h_{k_1}^{2 \beta_{k_1}}& = (\log T)^{2 \beta_{k_1} (4 + 2a)} (\frac{1}{T})^{
	{\revar 2 \beta_{k_1}(1 - \frac{{\revar\bar{\beta}_3}}{\beta_{k_2}(2{\revar\bar{\beta}_3} + d - 2)})}}.
\end{align*}
This is negligible compared to $(\frac{1}{T})^{\frac{2{\revar\bar{\beta}_3}}{2{\revar\bar{\beta}_3} + d - 2}}$ using that $d \ge 3$ and $\beta_{k_1} \ge1$.
%for $d > 2 - 2 {\revar\bar{\beta}_3}(1 - \frac{1}{2}(\frac{1}{\beta_{k_1}} + \frac{1}{\beta_{k_2}}))$. It is the case as 
%$\frac{1}{2}(\frac{1}{\beta_{k_1}} + \frac{1}{\beta_{k_2}})$, which is the harmonic mean between the two smallest regularities, is smaller than $1$ and $d$ is larger than $2$. 
We deduce the optimal choice of the bandwidth with components $h_{k_l}$ as above satisfies the conditions in \eqref{eq: def mathcal H}. \\
\\
Then, in order to say that the chosen bandwidths belong to the set of candidate bandwidths $\mathcal{H}_T$ proposed in \eqref{eq: example HT} we should say they have the particular form $h_l = \frac{1}{z_l}$ for {\revar $z_l \in  \{1,... ,  \lfloor T \rfloor  \}$,}
% for some $c>0$, 
that in general is not the case. However, we can replace $h_{k_j}$ in \eqref{eq: bandwidth 49.5} with 
$\tilde{h}_{k_j}(T) := \frac{1}{\lfloor T^{\frac{{\revar\bar{\beta}_3}}{\beta_{k_j} (2 {\revar\bar{\beta}_3} + d - 2)}}\rfloor }$, which is asymptotically equivalent and which leads to the same convergence rate. Moreover, $\tilde{h}_{k_j}(T)$ has the wanted form, which entails $\tilde{h}_{k_j}$ belongs to $\mathcal{H}_T$, defined as in \eqref{eq: example HT}. \\ 
We act similarly with $h_{k_1}$ and $h_{k_2}$ (or only on $h_{k_1}$, when $h_{k_2} = h_{k_3}$), to replace the rate optimal choice $h(T)$ with $\tilde{h}(T)$, which belongs to the set of candidate bandwidth $\mathcal{H}_T$, defined as in \eqref{eq: example HT}. From Theorem \ref{th: upper bound d ge 3} it follows that the $inf$ for $h \in \mathcal{H}_T$ of $B (h) + V (h)$ is clearly realized by $\tilde{h}_T$.
Thus, the bound stated in Theorem \ref{th: adaptive} is actually
\begin{equation*}
\mathbb{E}[\left \| \hat{\pi}_{\tilde{h},T} - \pi \right \|^2_A] \underset{\sim}{<}
\begin{cases}
(\frac{\log T}{ T})^ {\frac{2{\revar\bar{\beta}_3}}{2{\revar\bar{\beta}_3} + d - 2}} + e^{ - c_1 (\log T)^{c_2}} \qquad \mbox{if } \beta_{k_2} < \beta_{k_3} \\
(\frac{1}{ T})^ {\frac{2{\revar\bar{\beta}_3}}{2{\revar\bar{\beta}_3} + d - 2}} + e^{ - c_1 (\log T)^{c_2}} \qquad \mbox{if } \beta_{k_2} = \beta_{k_3},
\end{cases}
\end{equation*}
as we wanted.
\end{proof}

}

{\blue
\subsection{Proof of Theorem \ref{th: adaptive optimal point}}
\begin{proof}
The key point in this theorem consists in looking for the optimal choice of the bandwidth, that realizes the trade-off between the bias term $\sum_{j = 1}^d h_{k_j}^{\beta_{k_j}}$ and the variance \\
$\frac{\log T}{T} \, \min \Big(\frac{\sum_{j = 1}^d |\log h_j|}{\prod_{l \neq k_1, k_2} h_l}, \, \frac{1}{\sqrt{h_{k_2} h_{k_3}}\prod_{l \neq k_1, k_2, k_3} h_l} \Big)$.
Such balance will be different depending on the fact that the min in the variance is achieved by the first term or the second. \\
\\
$\bullet$ Case 1. \\
If $\min \Big(\frac{\sum_{j = 1}^d |\log h_j|}{\prod_{l \neq k_1, k_2} h_l}, \, \frac{1}{\sqrt{h_{k_2} h_{k_3}}\prod_{l \neq k_1, k_2, k_3} h_l} \Big) = \frac{\sum_{j = 1}^d |\log h_j|}{\prod_{l \neq k_1, k_2} h_l}$ then, as already said in previous theorem, it means that $h_{k_3} > h_{k_2}$. For $l \neq k_1, k_2$ let us introduce $h_l(T) := (\log T)^{a_l} (\frac{1}{T})^{b_l}$. We now look for an optimal choice for $a_l$ and $b_l$, for which the following identities are true:
\begin{align*}
(\log T)^{2 - \sum_{l \neq k_1, k_2} a_l}(\frac{1}{T})^{1 - \sum_{l \neq k_1, k_2} b_l}= (\log T)^{2 a_{k_3} \beta_{k_3}}(\frac{1}{T})^{2 b_{k_3} \beta_{k_3}} = ... = (\log T)^{2 a_{k_d} \beta_{k_d}}(\frac{1}{T})^{2 b_{k_d} \beta_{k_d}}. 
\end{align*}
It implies we want $a_{k_3}$, $b_{k_3}$, ... , $a_{k_d}$, $b_{k_d}$ satisfying the following system: 
\begin{equation*}
    \begin{cases}
     2 a_{k_3} \beta_{k_3} = ... = 2 a_{k_d} \beta_{k_d} = 2 - \sum_{l \neq k_1, k_2} a_l \\
 2 b_{k_3} \beta_{k_3} = ... = 2 b_{k_d} \beta_{k_d} = 1 - \sum_{l \neq k_1, k_2} b_l. 
    \end{cases}
\end{equation*}
It leads to $a_l = \frac{2 \bar{\beta}_3}{\beta_l(2 \bar{\beta}_3 + d - 2 )}$ and $b_l = \frac{ \bar{\beta}_3}{\beta_l(2 \bar{\beta}_3 + d - 2 )}$ for any $l \in \{ k_3, ... , k_d \}$. Then, the rate optimal choice for the bandwidth consists in taking 
\begin{equation}{\label{eq: hkj 1}}
h_{k_j} := (\frac{\log^2T}{T})^{\frac{ \bar{\beta}_3}{\beta_{k_j}(2 \bar{\beta}_3 + d - 2 )}}
\end{equation}
for any $j \ge 3$ and $h_{k_1}$, $h_{k_2}$ arbitrarily small. It yields the convergence rate $(\frac{\log^2T}{T})^{\frac{ 2\bar{\beta}_3}{2 \bar{\beta}_3 + d - 2 }}$. Now we want to check, similarly as in the proof of Theorem \ref{th: adaptive optimal}, that such bandwidth belongs to $\mathcal{H}_t^p$ as defined in \eqref{eq: example HT point}. We therefore want to ensure that the conditions in \eqref{eq: def mathcal Hp} are respected. It implies we have to choose $h_{k_1}$ and $h_{k_2}$ large enough to ensure that 
$$(\prod_{l = 1}^d h_l)^\frac{1}{2} \min((\sum_{j=1}^d |\log h_j| \, h_{k_1} h_{k_2})^{\frac{1}{2}}, (h_{k_1})^{\frac{1}{2}} (h_{k_2} h_{k_3})^\frac{1}{4}) \ge \frac{c (\log T)^{2 + a}}{\sqrt{T}} $$
for some $a$ arbitrarily small.
Remark that, as we are conducting our analysis in the case where $h_{k_3} > h_{k_2}$, it is
$$\min((\sum_{j=1}^d |\log h_j| \, h_{k_1} h_{k_2})^{\frac{1}{2}}, (h_{k_1})^{\frac{1}{2}} (h_{k_2} h_{k_3})^\frac{1}{4}) = (\sum_{j=1}^d |\log h_j| \, h_{k_1} h_{k_2})^{\frac{1}{2}}.$$ 
Then, we have to guarantee that $(\prod_{l \neq k_1, k_2} h_l)^\frac{1}{2} h_{k_1} h_{k_2} (\sum_{j=1}^d |\log h_j|)^\frac{1}{2} \ge \frac{c (\log T)^{2 + a}}{\sqrt{T}}$. According to the choice of $h_{k_j}$ for $j \ge 3$ gathered in \eqref{eq: hkj 1}, it holds true if and only if 
$[(\frac{\log^2T}{T})^{\frac{ d - 2}{2 \bar{\beta}_3 + d - 2 }} \log T]^\frac{1}{2} h_{k_1} h_{k_2} \ge \frac{c (\log T)^{2 + a}}{\sqrt{T}} $. It leads us to the choice 
\begin{equation}{\label{eq: hk1 hk2 2}}
h_{k_1} = h_{k_2} = (\log T)^{\frac{ \bar{\beta}_3 }{2 \bar{\beta}_3 + d - 2 } + 1 + a} (\frac{1}{T})^{\frac{ \bar{\beta}_3 }{2(2 \bar{\beta}_3 + d - 2) }}.
\end{equation}
Observe such choice will still imply the convergence rate to be $(\frac{\log^2T}{T})^{\frac{ 2 \bar{\beta}_3}{2 \bar{\beta}_3 + d - 2 }}$ as we have assumed $\beta_{k_2} \ge \beta_{k_1} > 2$ and so it is 
\begin{align*}
h_{k_1}^{2 \beta_{k_1}} + h_{k_2}^{2 \beta_{k_2}} & = (\log T)^{(\frac{ \bar{\beta}_3 }{2 \bar{\beta}_3 + d - 2 } + 1 + a) 2 \beta_{k_1}} (\frac{1}{T})^{\frac{ \bar{\beta}_3 \beta_{k_1} }{2 \bar{\beta}_3 + d - 2}} + (\log T)^{(\frac{ \bar{\beta}_3 }{2 \bar{\beta}_3 + d - 2 } + 1 + a) 2 \beta_{k_2}} (\frac{1}{T})^{\frac{ \bar{\beta}_3 \beta_{k_2} }{2 \bar{\beta}_3 + d - 2}} \\
& \le (\frac{\log^2T}{T})^{\frac{ 2 \bar{\beta}_3}{2 \bar{\beta}_3 + d - 2 }}.
\end{align*}
It implies the rate optimal choice as in \eqref{eq: hkj 1}, \eqref{eq: hk1 hk2 2} satisfies the constraints in \eqref{eq: def mathcal Hp}. \\
\\
$\bullet$ Case 2. \\
Assume now that $\min \Big(\frac{\sum_{j = 1}^d |\log h_j|}{\prod_{l \neq k_1, k_2} h_l}, \, \frac{1}{\sqrt{h_{k_2} h_{k_3}}\prod_{l \neq k_1, k_2, k_3} h_l} \Big) = \frac{1}{\sqrt{h_{k_2} h_{k_3}}\prod_{l \neq k_1, k_2, k_3} h_l}$, which means that $h_{k_2} = h_{k_3}$. Acting as in Case 1 it is easy to check that the rate optimal choice for the bandwidth consists in taking 
\begin{equation}{\label{eq: hkj 3}}
h_{k_j} := (\frac{\log T}{T})^{\frac{ \bar{\beta}_3}{\beta_{k_j}(2 \bar{\beta}_3 + d - 2 )}}
\end{equation}
for $j \ge 2$, leading to the convergence rate $(\frac{\log T}{T})^{\frac{ 2 \bar{\beta}_3}{2 \bar{\beta}_3 + d - 2}}$. \\
Because of \eqref{eq: cond 61.5}, we have to choose $h_{k_1}$ such that
$(\prod_{l = 1}^d h_l \, h_{k_1} \, h_{k_2} )^\frac{1}{2} \ge \frac{(\log T)^{2 + a}}{\sqrt{T}}$. Replacing for $j \ge 2$ $h_{k_j}$ as in \eqref{eq: hkj 3}, the constraint here above becomes 
$$(\frac{\log T}{T})^{\frac{1}{2} \frac{d - 2}{2 \bar{\beta}_3 + d - 2 }} (\frac{\log T}{T})^{\frac{\bar{\beta}_3}{ \beta_{k_2}(2 \bar{\beta}_3 + d - 2) }} h_{k_1} \ge \frac{(\log T)^{2 + a}}{\sqrt{T}} .$$ It leads us to the choice 
\begin{equation}{\label{eq: hk1 4}}
h_{k_1} = (\log T)^{2 + a - \frac{1}{2 \bar{\beta}_3 + d - 2}(\frac{d - 2}{2} + \frac{2 \bar{\beta}_3}{ \beta_{k_2}})} (\frac{1}{T})^{\frac{\bar{\beta}_3}{2 \bar{\beta}_3 + d - 2}(1 - \frac{1}{\beta_{k_2}})}.
\end{equation}
Then $h_{k_1}^{2 \beta_{k_1}}$ is negligible compared to $(\frac{\log T}{T})^{\frac{2 \bar{\beta}_3}{2 \bar{\beta}_3 + d - 2}}$ for $\beta_{k_1}(1 - \frac{1}{\beta_{k_2}}) > 1$, which holds true as $\beta_{k_2} \ge \beta_{k_1} > 2$ by hypothesis. Then, the rate optimal choice for the bandwidth as in \eqref{eq: hkj 3}, \eqref{eq: hk1 4} satisfies the constraints in \eqref{eq: def mathcal Hp}. \\
\\
The concluding argument is then the same as in the proof of Theorem \ref{th: adaptive optimal}. Indeed, to say that the bandwidths belong to \eqref{eq: example HT point}, they should have the form $h_l = \frac{1}{z_l}$ for some $z_l \in \{1, ... , \lfloor T \rfloor \}$, that is not true in general. However, up to replace $h_{k_j}$ with $\tilde{h}_{k_j}$, where the integer part of $T$ has come into play, we obtain asymptotically equivalent bandwidth achieving the same convergence rate and belonging to the set of candidate bandwidth $\mathcal{H}_T^p$. The proof is then concluded.
\end{proof}}

\section{Proof of the lower bounds stated in Section \ref{S: Lower bounds}}{\label{s: proof lower}}
This section is devoted to the proof of the lower bounds, as stated in Section \ref{S: Lower bounds}.

\subsection{Proof of Theorem \ref{th: lower bound}}
The proof of Theorem \ref{th: lower bound} is based on the two hypothesis method, as explained for example in Section 2.3 of Tsybakov \cite{Ts} and follows the standard scheme provided in Section 6 of \cite{Lower bound}. We start by making explicit link between the drift and the stationary measure. Then, we provide two priors depending on some calibration parameters and, to conclude, we find some conditions on the calibration such that it is possible to prove a lower bound for the minimax risk introduced in \eqref{eq: def minimax risk}.

{\modarn First, remark that {\rev it} is possible to restrict the class
	of coefficients $\Sigma(\beta, \mathcal{L},a_{\text{min}},b_0, a_0,a_1,b_1,\tilde{C},\tilde{\rho})$ to a class where the drift coefficient is bounded, by setting
	$$
	\tilde{\Sigma}=\tilde{\Sigma}(\beta, \mathcal{L},a_{\text{min}},b_0, a_0,a_1,b_1,\tilde{C},\tilde{\rho})
	:=\{(a,b) \in {\Sigma}(\beta, \mathcal{L},a_{\text{min}},b_0, a_0,a_1,b_1,\tilde{C},\tilde{\rho}) \mid \norm{b}_\infty \le b_0 \}, 
	$$
	and observing
	that it is sufficient to lower bound the minimax risk \eqref{eq: def minimax risk} with the class $\tilde{\Sigma}$
	%(\beta, \mathcal{L},a_{\text{min}},b_0, a_0,a_1,b_1,\tilde{C},\tilde{\rho})$ 
	to get the result on the larger class ${\Sigma}(\beta, \mathcal{L},a_{\text{min}},b_0, a_0,a_1,b_1,\tilde{C},\tilde{\rho})$.
	Now, it is possible to check that the  class of coefficients $\tilde{\Sigma}$ is invariant by translation in the following way.
For $h\in\mathbb{R}^d$, if $(a,b) \in \tilde{\Sigma}(\beta, \mathcal{L},a_{\text{min}},b_0, a_0,a_1,b_1,\tilde{C},\tilde{\rho})$, then
$(a(\cdot+h),b(\cdot+h))\in\tilde{\Sigma}(\beta, \mathcal{L},a_{\text{min}},b_0, a_0,a_1,b_1,\tilde{C/2},\tilde{\rho}')$ where
$\tilde{\rho}'=	\max(\tilde{\rho}+|h|, 2 |h|[\frac{\tilde{C}+b_0}{\tilde{C}}])$. As a consequence, it is sufficient to prove the theorem in the case where $x_0=(0,\dots,0)$, and the general case can be deduced by translation.
A second remark, is that {\rev one can replace} any $X$ solution of  \eqref{eq: model} with the process $t\mapsto a_{\text{min}}^{-1}X_t$. {\rev Then,} it is possible to assume without loss of generality that $a_{\text{min}}=1$ and $a_0 \ge 1$, in the statement of Theorem \ref{th: lower bound}.}

In the proof, we will lower bound the risk on the subclass of model \eqref{eq: model} given by the following simpler stochastic differential equation, for which the diffusion coefficient $a=\mathbb{I}_{d\times d}$ is constant :
\begin{equation}
dX_t=b(X_t)dt + dW_t.
\label{eq: model lower bound}
\end{equation}
and where $b$ is any drift function such that $(\mathbb{I}_{d \times d},b) \in {\modarn \tilde{\Sigma}}(\beta, \mathcal{L},1,b_0,a_0,a_1,b_1,\tilde{C},\tilde{\rho})$
{\revar for some fixed constants $b_0>0$, $a_0>1$, $a_1>0$, $b_1>0$ and with $\tilde{C}$, $\tilde{\rho}>0$ that will be specified later.}
%, and any $a_0 \ge 1$, $a_1>0$.}

\begin{proof}
$\bullet$ Explicit link between the drift and the stationary measure. \\
We first of all need to introduce $A$, the generator of the diffusion $X$ solution of \eqref{eq: model lower bound}:
\begin{equation}
A f (x) := \frac{1}{2} \sum_{i,j = 1}^d \frac{\partial^2}{\partial x_i \partial x_j } f(x) + \sum_{i = 1}^d b^i(x) \frac{\partial }{\partial x_i } f(x).
\label{eq: definition generator}
\end{equation}
We now introduce a class of {\rev functions} that will be useful in the sequel:
\begin{equation*}
\begin{split}
\mathcal{C} := & \left \{ f : \mathbb{R}^d \rightarrow \mathbb{R}, \,  f \in C^2(\mathbb{R}^d) \mbox{ such that } \forall i \in \left \{ 1, ..., d \right \} \, \lim_{x_i \rightarrow \pm \infty} f(x) = 0, \right.\\
&  \left. \lim_{x_i \rightarrow \pm \infty} \frac{\partial}{ \partial x_i} f(x) = 0 \mbox{  and } \int_{\mathbb{R}^d} f(x) dx < \infty  \right \}.
\end{split}
\end{equation*}
We denote furthermore as $A^*$ the adjoint operator of $A$ on ${\modar \mathbf{L}^2(\mathbb{R}^d,dx)}$ which is such that, for $f, g \in \mathcal{C}$,
$$\int_{\mathbb{R}^d} A f (x) g(x) dx = \int_{\mathbb{R}^d} f(x) A^* g (x) dx.$$
The form of $A^*$ is known (see for example Lemma 2 in \cite{Lower bound} remarking that, in our case, the discrete part of the generator $A_d$ is zero and so we do not have its adjoint in $A^*$):
$$ A^*g (x) = \frac{1}{2} \sum_{i = 1}^d \frac{\partial^2}{\partial x_i^2 } g (x) - (\sum_{i = 1}^d \frac{\partial \,  b^i}{\partial x_i } g (x) + b^i \frac{\partial \, g}{\partial x_i }  (x)).$$
If $g : \mathbb{R}^d \rightarrow \mathbb{R}$ is a probability density of class $\mathcal{C}^2$, solution of $A^* g = 0$, then it can be checked by Ito's formula it is an invariant density for the process we are considering. When the stationary distribution $\pi$ is unique, therefore, it can be computed as solution of the equation $A^* \pi = 0$. As proposed for example in \cite{Lower bound} and in \cite{DGY}, we
consider $\pi$ as fixed and $b$ as the unknown variable. We therefore want to compute a function $b = b_g$ solution to $A^* g = 0$.
For $g \in \mathcal{C}$ and $g > 0$, we introduce for all $x \in \mathbb{R}^d$ and for all $i \in \left \{1, ... , d \right \}$,
\begin{equation}
b^i_g (x) = \frac{1}{g (x)} \frac{1}{2}  \frac{\partial g}{\partial x_i} (x)
\label{eq: def b sol A*}
\end{equation}
and $b_g(x) = (b^1_g (x), ... , b^d_g(x))$. It is enough to remark that, for $b^i_g (x)$ defined as above it is
\begin{equation}
\frac{\partial \,  b_g^i}{\partial x_i } (x) = \frac{1}{g (x)} \frac{1}{2} \frac{\partial^2 g}{\partial x_i^2} (x) - \frac{b^i_g (x)}{g(x)} \frac{\partial \,  g}{\partial x_i } (x)
\label{eq: deriv b}
\end{equation}
to see that the function $b_g$ here above introduced is actually solution of $A^* g (x) = 0$, for any $x \in \R^d$. We know that $\pi$ is solution to $A^* \pi (x) = 0$ for $b = b_\pi$ and so it is a stationary measure for the process $X$ whose drift is $b_\pi$. However, if $b_\pi$ satisfies A1-A2 then, from Lemma \ref{lemma: ergodicity}, we know there exists a Lyapounov function and that the stationary measure of the equation with drift coefficient $b_\pi$ is unique. It follows it is equal to $\pi$.

Hence, we need $b_\pi$ to be a function satisfying A1-A2. We introduce some assumptions on $\pi$ for which the associated drift $b_\pi$ has the wanted properties.\\
\\
\textbf{A3:} Let $\pi : \mathbb{R}^d \rightarrow \mathbb{R}$ a probability density with regularity $\mathcal{C}^2$ such that, for any $x = (x_1, ... , x_d) \in \mathbb{R}^d$, $\pi (x) = c_n \prod_{j = 1}^d \pi_j(x_j) > 0$, where $c_n$ is a normalization constant. We suppose moreover that the following holds true for each $j \in \left \{ 1 , ... , d \right \}$:
\begin{enumerate}
	\item $\lim_{y \rightarrow \pm \infty} \pi_j (y) = 0$ and $\lim_{y \rightarrow \pm \infty} \pi'_j (y) = 0$.
	\item There exists $\tilde{\epsilon} > 0$ and ${\revar R_0} > 0$ such that, for any $y$ : $|y| > \frac{{\revar R_0}}{\sqrt{d}}$, 
	$$\frac{\pi'_j(y)}{\pi_j(y)} \le - \tilde{\epsilon}  \sgn(y). $$
	\item There exists a constant $c_1$ such that, for any $y \in \mathbb{R}$,
	$$|\frac{\pi'_j(y)}{\pi_j(y)}| \le c_1. $$
	\item There exists a constant $c_2$ such that, for any $x \in \R^d$, 
	$$|\frac{\partial^2 \pi}{\partial x_i^2} (x)| \le c_2 \pi(x).$$
\end{enumerate}
The properties listed here above have been introduced in order to make the associated drift function satisfying A1-A2, {\modarn and $b$ bounded}, so that, up to know that $\pi_{(a,b)}\in \mathcal{H}_d (\beta, 2 \mathcal{L})$, it would follow $(I_{d \times d}, b) \in {\modarn \tilde{\Sigma}}$. It is easy to see from the definition of $b_{\pi}$ given in \eqref{eq: def b sol A*} that, having $\pi$ in a multiplicative form, we get
$$|b^i_{\pi} (x)| = |\frac{1}{2} \frac{\pi'_i(x_i)}{\pi_i (x_i)}| \le \frac{c_1}{2} =: \tilde{b}_0, $$
where we have used the third point of A3. The drift function is also clearly {\modarn bounded}. Moreover from \eqref{eq: deriv b}, the third and the fourth points of A3 and the just proven boundedness of $b_{\pi}$ we have 
$$|\frac{\partial b^i_{\pi}}{\partial x_i}(x)| \le \frac{c_2}{2} + c_1 \tilde{b}_0 =: \tilde{b}_1.$$
In order to show that also A2 holds true we need to investigate the behaviour of $x_i b^i_\pi (x)$. From the second point of A3, which holds true for any $x_i$ such that $|x_i| > \frac{{\revar R_0}}{\sqrt{d}}$, it is 
$$x_i b^i_\pi (x) = \frac{x_i}{2} \frac{\pi'_i(x_i)}{\pi_i (x_i)} \le - \frac{x_i}{2} \tilde{\epsilon} \sgn(x_i) = - \frac{\tilde{\epsilon}}{2} |x_i|.$$
Using also the boundedness of $b^i_\pi$ showed before, it follows
$$x \cdot b_\pi(x) = \sum_{i = 1}^d x_i b^i_\pi (x) = \sum_{x_i : |x_i| > \frac{{\revar R_0}}{\sqrt{d}}}^d x_i b^i_\pi (x) + \sum_{x_i : |x_i| \le \frac{{\revar R_0}}{\sqrt{d}}}^d x_i b^i_\pi (x) $$
$$\le - {\revar\tilde{\epsilon}} \sum_{x_i : |x_i| > \frac{{\revar R_0}}{\sqrt{d}}}^d | x_i | + {\revar   d \times \frac{{\revar R_0}}{\sqrt{d}} \times \tilde{b}_0 }\le
{\revar -\frac{\tilde{\epsilon}}{\sqrt{d}} |x| + {R_0} \sqrt{d} \tilde{b}_0}
%- c_1 |x| + c_2, 
$$
where the last inequality is a consequence of the fact that, for $|x| > {\revar R_0}$, there has to be at least a component $x_i$ such that $|x_i| > \frac{R}{\sqrt{d}}$. Hence, we can use the sup norm and compare it with the euclidean one. %Moreover, as $|x|$ is lower bounded by $R$, it exists a constant $C_1 > 0$ such that
{\revar Now, if $|x| \ge \tilde{\rho}:= 2 R_0 d \tilde{b}_0 / \tilde{\epsilon}$, we have
	\begin{equation*}
		x \cdot b_\pi(x) \le 	-\frac{\tilde{\epsilon}}{\sqrt{d}} |x| + {R_0} \sqrt{d} \tilde{b}_0 \le
		-\frac{\tilde{\epsilon}}{2\sqrt{d}} |x|. 
	\end{equation*}
}	
%$$- c_1 |x| + c_2 \le -C_1 |x|.$$
The proposed drift $b_\pi$ is therefore a bounded lipschitz function that satisfies A2, up to know that on the linked invariant density the properties gathered in A3 hold true. In the next step we propose two priors with the prescribed properties. \\
\\
$\bullet$ Construction of the priors. \\
We want to provide two drift functions belonging to {\revar $\tilde\Sigma(\beta, \mathcal{L},1,b_0,a_0,a_1,b_1,\tilde{C},\tilde{\rho})$ where $b_0>0$, $a_0>1$, $a_1>0$, $b_1>0$, and $\tilde{C},\tilde{\rho}$ will be specified later.} 
%and, 
To do it, we introduce two probability densities defined on the purpose to make A3 hold true. We set $\pi^{(0)}(x) := c_\eta \prod_{k = 1}^d \pi_{k,0} (x_k),$ where $c_\eta$ is the constant that makes $\pi^{(0)}$ a probability measure. For any $y \in \mathbb{R}$ we define $\pi_{k,0} (y): = f(\eta |y|)$, where
\begin{equation*}
f (x) := \begin{cases} e^{- |x| } \qquad & \mbox{if } |x| \ge 1  \\
\in[1, e^{-1}] \qquad & \mbox{if } \frac{1}{2} < |x| < 1 \\
1 \qquad & \mbox{if } |x| \le \frac{1}{2}
\end{cases}
\end{equation*}
 and $\eta$ is a constant in $(0, \frac{1}{2})$ which plays the same role as $\tilde{\epsilon}$ did in A3, as it can be chosen as small as we want. In particular we choose $\eta$ small enough to get $\pi^{(0)} \in \mathcal{H}_d (\beta, \mathcal{L})$.
Moreover, we assume $f$ to be a $C^\infty$ function that satisfies $|f^{(k)}(x)| \le 2 e^{- |x|}$ for $k=1, 2$. \\
It is easy to see that $\pi_0$ satisfies A3, as it is clearly positive and in a multiplicative form. The first point holds true by construction. We observe then that, for $y$ such that $|y| > \frac{1}{\eta }$, we have $\pi_{j,0}'(y)= - \eta \sgn(y) \pi_{j,0}(y)$ for any $j \in \{ 1, ... , d \}$. Therefore, point 2 of A3 is satisfied for $y$ such that $|y| > \frac{{\revar R_0}}{\sqrt{d}}$, up to take {\revar $R_0:= \frac{\sqrt{d}}{\eta}$ and $\tilde{\epsilon}=\eta$.}\\
Regarding point 3 of A3, it clearly holds true for $y$ such that $|y| > \frac{1}{\eta }$ and $|y| < \frac{1}{2 \eta }$ for what said before and as the derivative is zero, respectively. When $\frac{1}{2 \eta } \le |y| \le  \frac{1}{\eta}$, point 3 of A3 is satisfied thanks to the condition $|f'(x)| \le 2 e^{- |x|}$
{\revar with the possible choice $c_1=2e \eta$.} An analogous reasoning can be applied to ensure the validity of the fourth point of A3 {\revar with $c_2=2e \eta^2$.} \\
\\
{\revar Choosing $\eta>0$ small enough, $\pi^{(0)}$ belongs to $\mathcal{H}_d (\beta,  \mathcal{L})\subset\mathcal{H}_d (\beta, 2 \mathcal{L})$ and is satisfying A3 with arbitrarily small constants $\tilde{\epsilon}$, $c_1$, $c_2$. Thus, we can choose $\eta$ such that the associated coefficients 
	satisfy 
	%are such that 
	$(I_{d \times d}, b^{(0)}) \in {\tilde{\Sigma}}(\beta, \mathcal{L}, 1,\Tilde{b}_0, a_0, a_1, \tilde{b}_1, {\revar \tilde{C}, \tilde{\rho})}$ with $\tilde{b}_0 < b_0$, $\tilde{b}_1<b_1$ and
	$\tilde{C}=\eta/(2\sqrt{d})$, $\tilde{\rho}= \frac{2 R_0 d \tilde{b}_0}{\tilde{\epsilon}}=\frac{2 d^{3/2} \tilde{b}_0}{\eta^2}$ 
	are some constants depending on $\eta$.
	We deduce that  $ (I_{d \times d}, b^{(0)}) \in  \tilde{\Sigma}=\tilde{\Sigma}(\beta, \mathcal{L}, 1,b_0, a_0, a_1, b_1, \tilde{C}, \tilde{\rho})$.}
	 To provide the second hypothesis, we introduce the probability measure $\pi^{(1)}$. We are given it as $\pi^{(0)}$ to which we add a bump: let $K: \mathbb{R} \rightarrow \mathbb{R}$ be a $C^\infty$ function with support on $[-1, 1]$ and such that
\begin{equation}
K(0)= 1, \qquad \int_{-1}^1 K(z) dz = 0.
\label{eq: proprieta K}
\end{equation}
We set
\begin{equation}
\pi^{(1)} (x) := \pi^{(0)} (x) + \frac{1}{M_T} \prod_{l = 1}^d K(\frac{x_l - x_0^l}{h_l(T)}),
\label{eq: def pi1}
\end{equation}
where $x_0 = (x_0^1, ... , x_0^d) \in \mathbb{R}^d$ is the point in which we are evaluating the minimax risk, as defined in \eqref{eq: def minimax risk}, $M_T$ and $h_l(T)$ will be calibrated later and satisfy $M_T \rightarrow \infty$ and, $\forall l \in \left \{ 1, ... , d \right \}$, $h_l(T) \rightarrow 0$ as $T \rightarrow \infty$. From the properties of the kernel function given in \eqref{eq: proprieta K} we obtain
$$\int_{\mathbb{R}^d} \pi^{(1)}(x) dx = \int_{\mathbb{R}^d} \pi^{(0)}(x) dx = 1.$$
Moreover, as $\pi^{(0)} > 0$, $K$ has {\rev compact support} and $\frac{1}{M_T} \rightarrow 0$, for T big enough we can say that $\pi^{(1)} > 0$ as well. The key point consists in calibrating $M_T$ and $h_l(T)$ such that both the densities $\pi^{(0)}$ and $\pi^{(1)}$ belong to the anisotropic Holder class $\mathcal{H}_d (\beta, 2 \mathcal{L})$ (according with Definition \ref{def: insieme sigma v2}) and the laws $\mathbb{P}^{(0)}$ and $\mathbb{P}^{(1)}$ are close. \\
To do that, we need to evaluate the difference between the two proposed drifts. We introduce the following set of $\mathbb{R}^d$:
$$K_T := [x_0^1 - h_1 (T), x_0^1 + h_1 (T)] \times ... \times [x_0^d - h_d (T), x_0^d + h_d (T)].$$
Then, for $T$ large enough, 
\begin{enumerate}
    \item For any $x \in K_T^c$ and $\forall i \in \left \{ 1, ... , d \right \}$: $b^{(1)}_i(x) = b^{(0)}_i(x)$.
    \item For any $x \in K_T$ and $\forall i \in \left \{ 1, ... , d \right \}$: $|b^{(1)}_i(x) - b^{(0)}_i (x)| \le \frac{c}{M_T}\sum_{j = 1}^d \frac{1}{h_j(T)}$, where $c$ is a constant independent of $T$. 
\end{enumerate}
Before proceeding with the proof of these two points we introduce some notations: 
{\change
\begin{equation}\label{E: def bump sans log}
I^i[\pi ] (x) := \frac{1}{2} \frac{\partial \pi}{ \partial x_i} (x), \qquad  d_T (x) := \pi^{(1)} (x) - \pi^{(0)} (x) = \frac{1}{M_T} \prod_{l = 1}^d K(\frac{x_l - x_0^l}{h_l(T)}).
\end{equation}
}
As the support of $K$ is in $[-1, 1]$, for any $x\in K_T^c$ both $d_T(x)$ and its derivatives are $0$ and, in particular, $\pi^{(0)}(x) = \pi^{(1)}(x)$. We can therefore write, using the linearity of the operator $I^i$, 
$$b^{(1)}_i (x) =  \frac{1}{\pi^{(0)} (x)}\tilde{I}^i[\pi^{(0)} ] (x) + \frac{1}{\pi^{(0)} (x)}\tilde{I}^i[d_T ] (x) = b^{(0)}_i (x) + \frac{1}{\pi^{(0)} (x)}\tilde{I}^i[d_T ] (x)= b^{(0)}_i (x).$$
Regarding the second point here above, we observe that on $K_T$ we have
$$b^{(1)}_i - b^{(0)}_i = (\frac{1}{\pi^{(1)}} - \frac{1}{\pi^{(0)}}) \tilde{I}^i[\pi^{(0)}] + \frac{1}{\pi^{(1)}} \tilde{I}^i[d_T] = \frac{\pi^{(0)} - \pi^{(1)}}{\pi^{(1)}} \frac{1}{\pi^{(0)}} \tilde{I}^i[\pi^{(0)}] + \frac{1}{\pi^{(1)}} \tilde{I}^i[d_T] = \frac{d_T}{\pi^{(1)}} b^{(0)}_i + \frac{1}{\pi^{(1)}} \tilde{I}^i[d_T]. $$
 For how we have defined $\pi^{(1)} = \pi^{(0)} + d_T$, we see first of all it is lower bounded away from $0$. Moreover we know that $\pi^{(0)}$ satisfies Assumption A3 and so $b^{(0)}_i$ is bounded. Furthermore, we have the following controls on $d_T$:
 $$\left \| d_T \right \|_\infty \le \frac{c}{M_T}, \qquad \left \| \frac{\partial d_T}{\partial x_j} \right \|_\infty \le \frac{c}{M_T} \frac{1}{h_j (T)}.$$ It follows that, for any $x \in K_T$, 
$$|b^{(1)}_i - b^{(0)}_i| \le \frac{c}{M_T}(1 + \sum_{j= 1}^d \frac{1}{h_j (T)}) \le \frac{c}{M_T} \sum_{j= 1}^d \frac{1}{h_j (T)}, $$
where the last inequality is a consequence of the fact that, $\forall j \in \left \{ 1, ... , d \right \}$, $h_j(T) \rightarrow 0$ for $T \rightarrow \infty$ and so, if compared with the second term in the equation here above, all the other terms are negligible.\\
\\
Then, it is possible to show that also $(I_{d \times d}, b^{(1)})$ belongs to ${\modarn \tilde{\Sigma}}$, up to calibrate properly $M_T$ and $h_i(T)$, for $i \in \left \{ 1, ... , d \right \}$. 
We recall that we already know that $b^{(0)}$ satisfies A1-A2. Due to points 1 and 2 above also $b^{(1)}$ is bounded, up to ask that $\sum_{j = 1}^d \frac{1}{h_j(T)} = o(M_T)$. 
Remark also that we have $<b^{(1)}(x),x>\le - \tilde{C} |x|$ for $|x| \ge \tilde{\rho}$ for $T$ large enough, using that $b^{(1)}$ and $b^{(0)}$ coincide on $K_T^c$. Moreover, after some computations we have 
\begin{align*}
& \norm{\nabla b^{(1)}-\nabla b^{(0)}}_\infty \le \frac{c}{M_T} \Big[ \norm{d_T}_\infty+
\sum_{y \in \{x_1,\dots,x_d\}} \norm{d_T}_\infty\norm{\frac{\partial d_T}{\partial y}}_\infty 
\\
& +\frac{1}{M_T}
\sum_{y,y' \in \{x_1,\dots,x_d\}}
\norm{\frac{\partial d_T}{\partial y}}_\infty
\norm{\frac{\partial d_T}{\partial y'}}_\infty+
\sum_{y,y' \in \{x_1,\dots,x_d\}}
\norm{\frac{\partial^2 d_T}{\partial y \partial y'}}_\infty
\Big] \\
& \le \frac{1}{M_T} \sum_{i,j = 1}^d \frac{c}{h_i h_j}
\le \frac{1}{M_T} \frac{c}{h_1^2}.
\end{align*}
Hence, to get that $\norm{\nabla b^{(1)}} \le b_1$ it is sufficient that
\begin{equation}
\frac{c}{M_T} \frac{1}{h_1^2} \rightarrow 0
\label{eq: cond beta>2}
\end{equation}
for $T$ going to $\infty$.
Furthermore, requiring that
\begin{equation}
\frac{1}{M_T} \le \epsilon h_i(T)^{\beta_i} \qquad \forall i \in \left \{ 1, ... , d \right \},
\label{eq: cond Holder}
\end{equation}
it is easy to derive the Holder regularity of $\pi^{(1)}$ starting from the regularity of $\pi^{(0)}$, as proved for example in Lemma 3 of \cite{Lower bound}. It follows that, under condition \eqref{eq: cond Holder}, (which implies also $\sum_{j = 1}^d \frac{1}{h_j(T)} = o(M_T)$, as $\beta_j > 1$ for any $j$) and \eqref{eq: cond beta>2}, both $(I_{d \times d}, b^{(0)})$ and $(I_{d \times d},b^{(1)})$ belong to
${\revar \tilde{\Sigma}}$. \\
\\
$\bullet$ Choice of the calibration \\
{\modch Before we keep proceeding, we introduce some notations. We denote as $\mathbb{P}_0$ (respectively $\mathbb{P}_1$) the law of a stationary solution $(X_t)_{t \ge 0}$ of \eqref{eq: model lower bound} whose drift coefficient is $b^{(0)}$ (respectively $b^{(1)}$). Moreover we will note $\mathbb{P}_0^{(T)}$ the law of $(X_t)_{t \in [0, T]}$, solution of the same stochastic differential equation as here above. The corresponding expectation will be denoted as $\mathbb{E}_{(I_{d \times d}, b^{(0)})}^{(T)}$ (respectively $\mathbb{E}_{(I_{d \times d}, b^{(1)})}^{(T)}$).} \\
To find a lower bound for the risk we will need to use that there exist $C$ and $\lambda > 0$ such that, for all $T$ large enough,
\begin{equation}
\mathbb{P}^{(T)}_{0} ( Z^{(T)} \ge \frac{1}{\lambda}) \ge C,
\label{eq: Girsanov}
\end{equation}
where we have introduced the notation $Z^{(T)} := \frac{d \mathbb{P}^{(T)}_{1}}{d \mathbb{P}^{(T)}_{0}}$.
To ensure its validity it is enough to remark that the proof of Lemma 4 in \cite{Lower bound} is the same even in absence of jumps. Hence, we know \eqref{eq: Girsanov} holds true if 
$$ \sup_{T \ge 0} T \int_{\mathbb{R}^d} |b^{(1)} (x) - b^{(0)}(x)|^2 \pi^{(0)}(x) \, dx < \infty.$$
From points 1 and 2 above, it is equivalent to ask 
\begin{equation}
\sup_{T \ge 0} T \frac{c}{M_T^2} (\sum_{j = 1}^d \frac{1}{h_j^2(T)})  |K_T| = \sup_{T \ge 0} T \frac{c}{M_T^2} (\prod_{l = 1}^d h_l(T)) (\sum_{j = 1}^d \frac{1}{h_j^2(T)}) < \infty.
\label{eq: calib finale}
\end{equation}
Then, as $(I_{d \times d}, b^{(0)})$ and $(I_{d \times d},b^{(1)})$ belong to $ \Sigma$, we have 
\begin{align*}
R(\tilde{\pi}_T (x_0)) & \ge \frac{1}{2} \mathbb{E}_{(I_{d \times d}, b^{(1)})}^{(T)}[(\tilde{\pi}_T (x_0) - \pi^{(1)} (x_0))^2] + \frac{1}{2} \mathbb{E}_{(I_{d \times d}, b^{(0)})}^{(T)}[(\tilde{\pi}_T (x_0) - \pi^{(0)} (x_0))^2] \\
& \ge  \frac{1}{2} \mathbb{E}_{(I_{d \times d}, b^{(0)})}^{(T)}[(\tilde{\pi}_T (x_0) - \pi^{(1)} (x_0))^2 Z^{(T)}] + \frac{1}{2} \mathbb{E}_{(I_{d \times d}, b^{(0)})}^{(T)}[(\tilde{\pi}_T (x_0) - \pi^{(0)} (x_0))^2] \\
& \ge \frac{1}{2 \lambda} \mathbb{E}_{(I_{d \times d}, b^{(0)})}^{(T)}[(\tilde{\pi}_T (x_0) - \pi^{(1)} (x_0))^2 1_{\left \{ Z^{(T)} \ge \frac{1}{\lambda} \right \}}] + \frac{1}{2} \mathbb{E}_{(I_{d \times d}, b^{(0)})}^{(T)}[(\tilde{\pi}_T (x_0) - \pi^{(0)} (x_0))^2 1_{\left \{ Z^{(T)} \ge \frac{1}{\lambda} \right \}}] \\
&= \frac{1}{2 \lambda} \mathbb{E}_{(I_{d \times d}, b^{(0)})}^{(T)}\big[[(\tilde{\pi}_T (x_0) - \pi^{(1)} (x_0))^2+ (\tilde{\pi}_T (x_0) - \pi^{(0)} (x_0))^2] 1_{\left \{ Z^{(T)} \ge \frac{1}{\lambda} \right \}}\big],
\end{align*}
for all $\lambda> 1$. We remark it is 
$$(\tilde{\pi}_T (x_0) - \pi^{(1)} (x_0))^2+ (\tilde{\pi}_T (x_0) - \pi^{(0)} (x_0))^2 \ge (\frac{\pi^{(1)} (x_0) - \pi^{(0)}(x_0)}{2})^2$$
and so we obtain 
\begin{equation}
R(\tilde{\pi}_T (x_0)) \ge \frac{1}{8 \lambda}(\pi^{(1)} (x_0) - \pi^{(0)} (x_0))^2 \mathbb{P}_{0}^{(T)}(Z^{(T)} \ge \frac{1}{\lambda}) \ge \frac{c}{M_T^2},
\label{E: lower bound risk in proof wo log}
\end{equation}
where we have used \eqref{eq: Girsanov} and that, by construction,
$\pi^{(1)} (x_0) - \pi^{(0)} (x_0) = \frac{1}{M_T} \prod_{l = 1}^d K(0) = \frac{1}{M_T}$.
Hence, we have to find the largest choice for $\frac{1}{M_T^2}$, subject to the constraints \eqref{eq: cond Holder} and \eqref{eq: calib finale}. 
To do that, we suppose at the beginning to saturate \eqref{eq: cond Holder} for any $j \in \left \{ 1, ... , d \right \}$. From the order of $\beta$ we obtain 
\begin{equation}
h_1 (T) = h_2 (T)^{\frac{\beta_2}{\beta_1}} \le h_2(T) \le ... \le h_d(T). 
\label{eq: order h}
\end{equation}
We plug it in \eqref{eq: calib finale} and we observe that the biggest term in the sum is $ \frac{\prod_{l \neq 1} h_l(T)}{h_1(T)}$.
In order to make it as small as possible, we decide to increment $h_1(T)$ up to get $h_1(T) = h_2(T)$, remarking that it is not an improvement to take $h_1(T)$ also bigger than $h_2(T)$ because otherwise $ \frac{\prod_{l \neq 2} h_l(T)}{h_2(T)}$ would be the biggest term, and it would be larger than $ \frac{\prod_{l \neq 1} h_l(T)}{h_1(T)}$ for $h_1(T) = h_2(T)$. Therefore, we take $h_1(T) = h_2(T)$ and $h_l(T)= (\frac{1}{M_T})^{\frac{1}{\beta_l}}$ for $l \ge 2$. With this choice \eqref{eq: cond beta>2} is always satisfied as $\beta_2 > 2$. Moreover, we have
$$\prod_{l \ge 3} h_l(T) = (\frac{1}{M_T})^{\sum_{l \ge 3} \frac{1}{\beta_l}} =(\frac{1}{M_T})^{\frac{d-2}{ \barfix{\bar{\beta}_3}}}.$$
And so condition \eqref{eq: calib finale} turns out being
$$\sup_{T} T \frac{1}{M_T^2} \prod_{l \ge 3} h_l(T) = \sup_{T} T \frac{1}{M_T^2} (\frac{1}{M_T})^{\frac{d-2}{ \barfix{\bar{\beta}_3}}} \le c. $$
It leads us to the choice $M_T = T^{\frac{ \barfix{\bar{\beta}_3} }{2  \barfix{\bar{\beta}_3}  + d - 2  }}.$
It implies
$$R(\tilde{\pi}_T (x_0)) \ge  (\frac{1}{T})^{\frac{2 \,  \barfix{\bar{\beta}_3}}{2 \barfix{\bar{\beta}_3}  + d - 2  }},$$
as we wanted.

\end{proof}

 \subsection{Proof of Theorem \ref{th: lower bound avec log}}
  \begin{proof} 
 
% First, remark that if a drift $b$ belongs to the class of coefficients $\Sigma(\beta, \mathcal{L},a_{\text{min}},a_0,a_1,b_0,b_1,\tilde{C},\tilde{\rho})$ then it is possible to check that the translated drift $x \mapsto b(x+h)$ belongs  to $\Sigma(\beta, \mathcal{L},a_{\text{min}},a_0,a_1,b_0,b_1,\tilde{C}/2,\tilde{\rho}')$ with  $\tilde{\rho}'=\max(\tilde{\rho}+|h|, 2 |h| \frac{\tilde{C}+b_0}{\tilde{C}})$. As a consequence, it is sufficient to prove the theorem in the case where $x_0=(0,\dots,0)$, and the general case can be deduced by translation.
 % For simplicity, we assume that $x^*=(0,\dots,0)$ and we recall that $\beta_1\le\beta_2<\beta_3\le \dots \le \beta_d$.  

  Following the same ideas as in the proof of Theorem \ref{th: lower bound} we construct a prior supported by two different models corresponding to two drift functions $b^{(0)}$ and $b^{(1)}$, and {\modar with} associated stationary {\modar probabilities} $\pi^{(0)}$ and $\pi^{(1)}$. {\change However, the shape of the bump used in the construction of $\pi^{(1)}$, given below by  \eqref{E: def bump log}, is different from the one used in the proof of Theorem \ref{th: lower bound} and defined in \eqref{E: def bump sans log}}.
 
 In the construction of the prior, the first two components are treated differently than the other ones. For this reason, we introduce cylindrical coordinates $(r,\theta,x_3,\dots,x_d)$ that translates into the Cartesian coordinates 
 $(r \cos(\theta),r \sin(\theta),x_3,\dots,x_d)$. 
 %For simplicity, we assume the diffusion matrix $a=Id$. 
 We recall 
  the expression of the Laplacian in cylindrical coordinates
 $\Delta \pi = \frac{1}{r} \frac{\partial}{ \partial r}\left( r \frac{\partial \pi}{\partial r} \right)
 + \frac{1}{r^2} \frac{\partial^2 \pi}{\partial \theta^2} + \sum_{k=3}^d
 \frac{\partial^2 \pi}{\partial x_k^2}$ and of the divergence operator
 $ \nabla \cdot \xi = \frac{1}{r} \frac{\partial(r \xi_r)}{\partial r} + 
 \frac{1}{r} \frac{\partial \xi_\theta}{\partial \theta} + \sum_{k=3}^d \frac{\partial \xi_k}{ \partial x_k}$
  where $\xi$ is the vector field $\xi=\xi_r \vec{e}_r + \xi_\theta \vec{e}_\theta + \sum_{k=3}^d \xi_k \vec{e}_k$ with
 $\vec{e}_r=(\cos(\theta), \sin(\theta),0,\dots,0)^T$, $\vec{e}_\theta=(-\sin(\theta), \cos(\theta),0,\dots,0)^T$, and $(\vec{e}_k)_{k=1,\dots,d}$ is the canonical Cartesian basis of $\mathbb{R}^d$.
 For $\pi$ smooth stationary probability of the diffusion $ dX_t=b(X_t)dt + d W_t$ the condition $A^*_b \pi=0$ can be written as 
 {\change 
 \begin{equation}\label{E: grad bpi eg delta pi}
 \frac{1}{2}\Delta \pi - \nabla \cdot (\pi b)=0
 \end{equation}
 }
 which yields
 in cylindrical coordinates to
 \begin{equation} \label{E:Astar pi zero cyl}
 	\frac{1}{r} \frac{\partial}{\partial r} \left( r \frac{\partial \pi}{\partial r} \right)
 	+ \frac{1}{r^2} \frac{\partial^2 \pi}{\partial \theta^2}
 	+ \sum_{k=3}^d \frac{\partial^2 \pi}{\partial x_k^2}
= 	\frac{1}{r} \frac{\partial \left(r \pi b_r \right)}{\partial r} +
\frac{1}{r}\frac{\partial (\pi b_\theta)}{\partial \theta}  + \sum_{k=3}^d
 \frac{\partial (\pi b_k)}{\partial x_k}. 
\end{equation}
 Given a stationary probability $\pi=\pi(r,x_3,\dots,x_d)>0$ independent of $\theta$, we see that the drift $b$ solution of \eqref{E:Astar pi zero cyl} is given by 
 $b=b_r \vec{e}_r+ b_\theta \vec{e}_\theta+ \sum_{k=3}^d b_k \vec{e}_k $ with
\begin{equation}\label{E: lien b pi cyl}
	 %\begin{align*}
	 \begin{split}
 	b_r=\frac{1}{\pi} \frac{\partial \pi}{\partial r}, \quad b_\theta=0,\\
 	b_k=\frac{1}{\pi} \frac{\partial \pi}{\partial x_k}, ~ \forall k \ge 3.
 	\end{split}
 %\end{align*}
 \end{equation}
$\bullet$ Construction of the {\modar priors.} As in the proof of Theorem \ref{th: lower bound}, the prior is based on two points $(\pi^{\modar (0)},\pi^{(T)})$. First, we define $\pi^{\modar (0)}$. Let $\psi: [0,\infty) \to [0,\infty)$ be a smooth function, vanishing on $[0,1/2]$ and satisfying $\psi(x)=x$ for $x \ge 1$. We let
$$
\pi^{(0)}(r,x_3,\dots,x_d)=c_\eta e^{-\eta \psi(r)} \prod_{k=3}^d e^{-\eta \psi(|x_k|)},
$$ 
 where $\eta>0$ and $c_\eta$ is such that $\int_{[0,\infty)\times[0,2\pi)\times\mathbb{R}^{d-2}} \pi^{(0)}(r,x_3,\dots,x_d) r ~drd\theta dx_3\dots dx_d=1$. Remark that for $\eta \to0$, we have $c_\eta=O(\eta^d)$. Using that $\psi$ has bounded derivatives and vanishes near $0$ one can check that $(x_1,\dots,x_d) \mapsto \pi^{(0)}(\sqrt{x_1^2+x_2^2},x_3,\dots,x_d)$ is a smooth function and
 that $\norm{\frac{\partial^l}{\partial x_k^l} \pi^{(0)}(\sqrt{x_1^2+x_2^2},x_3,\dots,x_d) }_\infty =O(c_\eta)=O(\eta^d)$ for all $1\le k \le d$ and $l\ge 0$. Hence, we can choose $\eta$ small enough such that $\pi^{(0)} \in \mathcal{H}_d(\beta,\mathcal{L})$. The drift function $b^{(0)}$ associated to $\pi^{(0)}$ given by \eqref{E: lien b pi cyl} is such that $b^{(0)}_r(r,x_3,\dots,x_d)=-\eta\psi'(r)=-\eta$ if $r>1$ and
 $b^{(0)}_k(r,x_3,\dots,x_d)= -\eta \sgn(x_k)$ for $|x_k|>1$. By computation analogous to the ones before, there exists $\tilde{C}>0$ and $\tilde{\rho}>0$ such that $<b^{(0)}(x),x>\le - \tilde{C} |x|$ for $x \ge \tilde{\rho}$. Moreover, $\norm{b^{(0)}}_{\infty}+\norm{\nabla b^{(0)}}_{\infty}=O(\eta)$. Hence, if $\eta$ is chosen small enough, $( \mathbb{I}_{d \times d},b^{(0)}) \in {\modarn\tilde\Sigma} (\beta, \mathcal{L},1,b_0,a_0,a_1,b_1/2,\tilde{C},\tilde{\rho})\subset {\modarn\tilde\Sigma} (\beta, \mathcal{L},1,b_0,a_0,a_1,b_1,\tilde{C},\tilde{\rho})$.

 The construction of {\modarn $\pi^{(1)}$} is more elaborate. We add a bump centered at $0$ to $\pi^{(0)}$. Let $K:\mathbb{R}\to\mathbb{R}$ be a smooth function with support on $[-1,1]$ and satisfying \eqref{eq: proprieta K}.   We set $\pi^{(1)}=\pi^{(0)}+\frac{1}{M_T} d_T$ where 
 \begin{equation} \label{E: def bump log}
 	d_T(r,x_3,\dots,x_d)= J_{r_\text{min}(T),r_\text{max}(T)}(r) \times \prod_{i=3}^d K(\frac{x_i}{h_i(T)}),
  \end{equation}
 where $0<r_\text{min}(T)<r_\text{max}(T)/4 < r_\text{max}(T) \le h_3(T) \le \dots  \le h_d(T)$ will be calibrated  later and goes to zero with a rate polynomial in $1/T$. 
 In the following we will suppress the dependence {\modarn on} $T$ of $r_\text{min}, r_\text{max}, (h_j)_{3 \le j \le d}$ in order to lighten the notations.  
 The function $J_{r_\text{min},r_\text{max}} : [0,\infty) \to \mathbb{R}$ is a smooth function with support on $[0,r_\text{max}]$ satisfying the following properties:
 %that we specify now. 
 \begin{align}  \label{E: cond J 0}
 	&J_{r_\text{min},r_\text{max}}
 	\text{is constant on $[0,r_\text{min}/4]$ and decreasing on $[r_\text{min}/4,r_{\text{max}}]$ with  $J_{r_\text{min},r_\text{max}}(0)\ge 1$}
 	\\ 	
 	\label{E: cond J 1}
 	&%J_{r_\text{min},r_\text{max}} \text{ is decreasing },~ 
 	%J_{r_\text{min},r_\text{max}}(0)\ge 1, ~
 	0\le J_{r_\text{min},r_\text{max}}(r) \le c \left( \frac{\ln(r_\text{max}/r)+1}{\ln(r_\text{max}/r_\text{min})}\wedge 1 	\right) ,\\
 	&\abs{\frac{\partial J_{r_\text{min},r_\text{max}}(r)}{\partial r}}\le 
 	\frac{c}{(\ln(r_\text{max}/r_\text{min}))(r \wedge r_\text{min})} ,% \frac{1}{} 
 	\label{E: cond J 2}
 	\\
 	&  \abs{\frac{\partial^k J_{r_\text{min},r_\text{max}}(r)}{\partial r^k}}\le 
 	\frac{c(k)}{(\ln(r_\text{max}/r_\text{min}))r_\text{min}^k}, ~ \forall k \ge 1
 	{\modar ,}
 	\label{E: cond J 3}
 	% \frac{1}{} 
 	%&\forall r \ge 0,  0\le J(r) \le 2,\quad J(r)=0 
 	%\text{ for $r \ge r_\text{max} (\frac{r_\text{min}}{r_\text{max}})^{1/4}$},\\	 	
 	% &	\abs{\frac{\partial}{\partial r}J(r)}\le 
 \end{align}
 {\modar where the constants $c$ and $c(k)$ are independent of $r_{\text{min}}$, $r_{\text{max}}$.}
 {\change
 The existence of such function $J_{r_\text{min},r_\text{max}}$ is postponed to Lemma \ref{L: existence J},  and we give in Remark \ref{rem : bosse log} some hints underneath its construction}. 
 Remark that $\pi^{(1)}$ is a probability measure for $T$ large enough, as the mean of $d_T$ is zero and ${\modar \pi^{(1)}}$ is positive if $T$ is large enough, as we assume $1/M_T \to 0$.
 
 We denote by $b^{(1)}$ the drift function associated to $\pi^{(1)}$ through the relations 
 \eqref{E: lien b pi cyl}.
  We now prove the following lemma.
 \begin{lemma} We have,
 	\begin{equation}\label{E: majo L2 diff b log}
 	\int_{\mathbb{R}^d} \abs{b^{(1)}(\sqrt{x_1^2+x_2^2},x_3,\dots,x_d)-b^{(0)}(\sqrt{x_1^2+x_2^2},x_3,\dots,x_d)}^2 dx_1\dots dx_d\le c
 	\frac{\prod_{i=3}^d h_i}{M_T^2\ln(\frac{r_\text{max}}{r_\text{min}})},
 	\end{equation}
 \begin{align}\label{E: majo Linfint diff b log}
 	\norm{b^{(1)}-b^{(0)}}_\infty & \le \frac{c}{M_T} \left[ \frac{1}{r_\text{min} \ln(\frac{r_\text{max}}{r_\text{max}})} + \frac{1}{h_3} \right],
 	\\ \label{E: majo Linfint diff grad b log}
 	\norm{\nabla b^{(1)}-\nabla b^{(0)}}_\infty &\le \frac{c}{M_T} \left[ \frac{1}{r_\text{min}^2 \ln(\frac{r_\text{max}}{r_\text{max}})} + \frac{1}{h_3^2} \right].
 \end{align}
 \label{L: maj norme L2 diff b log}
 \end{lemma}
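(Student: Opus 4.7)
\textbf{Proof plan for Lemma \ref{L: maj norme L2 diff b log}.}
The strategy is to express $b^{(1)}-b^{(0)}$ explicitly in terms of $d_T$ via the formulas \eqref{E: lien b pi cyl}, then exploit the logarithmic weight carried by the bump $J_{r_{\text{min}},r_{\text{max}}}$ to control the weighted integral of $\abs{\partial_r J}^2$. Since $\pi^{(1)}=\pi^{(0)}+d_T/M_T$, a direct algebraic manipulation gives, for any of the coordinate directions (radial or cartesian with $k\ge 3$),
\begin{equation*}
b^{(1)}_{\sharp}-b^{(0)}_{\sharp}=\frac{1}{M_T\,\pi^{(1)}}\,\partial_{\sharp}d_T\;-\;\frac{d_T}{M_T\,\pi^{(1)}}\,b^{(0)}_{\sharp},
\end{equation*}
with $\sharp\in\{r,x_3,\dots,x_d\}$ (and the $\theta$-component vanishes since both $\pi^{(i)}$ are independent of $\theta$). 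Because $\pi^{(0)}$ is bounded from below on the compact support of $d_T$ and $1/M_T\to 0$, one has $\pi^{(1)}\ge \pi^{(0)}/2\ge c>0$ on this support, so in all estimates below $1/\pi^{(1)}$ can be replaced by a constant. The second (lower-order) term is uniformly $O(1/M_T)$, and its contributions will in each case be negligible compared to the first term.

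For the $L^2$-bound \eqref{E: majo L2 diff b log} I would first change variables to cylindrical coordinates and split $|b^{(1)}-b^{(0)}|^2$ into the radial and the transverse ($x_k$, $k\ge 3$) components. The radial contribution reduces to
\begin{equation*}
\frac{c}{M_T^2}\int_0^{r_{\text{max}}}\!\!\Big|\partial_r J_{r_{\text{min}},r_{\text{max}}}(r)\Big|^2 r\,dr\;\times\;\prod_{i\ge 3}\int_{\mathbb{R}}K^2(x_i/h_i)\,dx_i,
\end{equation*}
and using \eqref{E: cond J 2} the $r$-integral is bounded, upon splitting at $r_{\text{min}}$, by a constant times $\ln(r_{\text{max}}/r_{\text{min}})^{-2}\big(1+\ln(r_{\text{max}}/r_{\text{min}})\big)=O(\ln(r_{\text{max}}/r_{\text{min}})^{-1})$; combined with $\int K^2(x_i/h_i)\,dx_i = h_i\|K\|_2^2$ this yields exactly the target bound $c\prod_{i\ge3}h_i/(M_T^2\ln(r_{\text{max}}/r_{\text{min}}))$. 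The transverse contributions (for each $k\ge 3$) produce an extra factor $r_{\text{max}}^2/(h_k^2\ln(r_{\text{max}}/r_{\text{min}}))$, which is $\le 1$ since $r_{\text{max}}\le h_3\le h_k$, and so they are absorbed into the radial one. The lower-order term gives $O(r_{\text{max}}^2\prod_{i\ge 3}h_i/M_T^2)$, again negligible.

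For the sup-norm estimates \eqref{E: majo Linfint diff b log}--\eqref{E: majo Linfint diff grad b log} I would combine the pointwise inequalities \eqref{E: cond J 2}--\eqref{E: cond J 3} for $J$ with the trivial bounds $\|\partial_{x_k}^\ell K(\cdot/h_k)\|_\infty\le c/h_k^\ell$, together with the uniform lower bound on $\pi^{(1)}$ and the smoothness of $b^{(0)}$. The first inequality follows directly, observing that the contribution from the $\partial_r d_T$-term yields the $1/(r_{\text{min}}\ln(r_{\text{max}}/r_{\text{min}}))$ factor while the transverse ones yield $1/h_3$; the lower-order term is dominated by these. For the gradient bound one differentiates once more; the new radial derivatives are controlled by \eqref{E: cond J 3} with $k=2$, giving $1/r_{\text{min}}^2\ln(r_{\text{max}}/r_{\text{min}})$, the transverse ones give $1/h_3^2$, and cross-terms are of lower order and absorbed.

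The main technical obstacle is the careful handling of the weighted radial integral, where the singular behavior of $\partial_r J$ near $r_{\text{min}}$ is precisely compensated by the explicit $\ln(r_{\text{max}}/r_{\text{min}})$ denominator in \eqref{E: cond J 2}: this is what produces only a single $\ln(r_{\text{max}}/r_{\text{min}})^{-1}$ in the final $L^2$-bound, rather than $\ln(r_{\text{max}}/r_{\text{min}})^{-2}$ (which would hold before the $r\,dr$ weighting), and is the mechanism through which the extra $\log T$ in the lower bound \eqref{E: lower bound log} will eventually be recovered. Once the radial estimate is settled, all the other terms are routine checks of dominance.
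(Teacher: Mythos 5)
Your proposal is correct and follows essentially the same route as the paper's proof: the same decomposition of $b^{(1)}-b^{(0)}$ into the $d_T b^{(0)}/(M_T\pi^{(1)})$ term plus the radial and transverse derivative terms, the same splitting of the weighted radial integral of $\abs{\partial_r J}^2$ at $r_{\text{min}}$ using \eqref{E: cond J 2} to produce the single factor $\ln(r_{\text{max}}/r_{\text{min}})^{-1}$, the same absorption of the transverse terms via $r_{\text{max}}\le h_k$, and the same pointwise use of \eqref{E: cond J 0}--\eqref{E: cond J 3} for the sup-norm bounds. The only minor difference is that the paper also exploits the logarithmic decay \eqref{E: cond J 1} of $J$ itself to bound $\int_0^{r_{\text{max}}}|J|^2r\,dr$ by $cr_{\text{max}}^2/\ln(r_{\text{max}}/r_{\text{min}})$, whereas you bound the zeroth-order term more crudely; this is still sufficient given the polynomial decay of $r_{\text{max}}$.
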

 \begin{proof} 
 	 	$\bullet$ We start with the proof of \eqref{E: majo L2 diff b log}.  {\modar Recalling $\pi^{(1)}=\pi^{(0)}+\frac{d_T}{M_T}$ and using \eqref{E: lien b pi cyl}, we deduce
 		\begin{equation}\label{E:diff_b1_b0}
 			b^{(1)}-b^{(0)}= \frac{d_T}{M_T \pi^{(1)}}
 			b^{(0)}	+
 			\frac{1}{M_T \pi^{(1)}} \sum_{k=3}^d \frac{\partial d_T}{\partial x_k} \vec{e}_k+ \frac{1}{M_T \pi^{(1)}}	\frac{\partial d_T}{\partial r}\vec{e}_r.
 		\end{equation}
 	} 	
 	In the evaluation of the $\mathbf{L}^2$ norm of $b^{(1)}-b^{(0)}$, we start by the  contribution of the radial component.
 	%Recalling $\pi^{(1)}=\pi^{(0)}+\frac{d_T}{M_T}$ and \eqref{E: lien b pi cyl}, 
 	{\modar From \eqref{E:diff_b1_b0}, we have
 		\begin{equation*} 
 			b^{(1)}_r-b^{(0)}_r 
 			%b^{(0)}_r \left(\frac{\pi^{(1)}-\pi^{(0)}}{\pi^{(1)}} \right)
 			%+ \frac{\partial d_T}{\partial r} \frac{1}{M_T \pi^{(1)}}
 			=	
 			b^{(0)}_r \left(\frac{d_T}{M_T \pi^{(1)}}  \right)
 			+ \frac{1}{M_T \pi^{(1)}} \frac{\partial d_T}{\partial r} .
 	\end{equation*}}
Remarking that $\pi^{(1)}$ and $\pi^{(0)}$ coincides out of the compact set $K_T=\{r \le r_{\text{max}}\} \times [0,2\pi] \times \prod_{i=3}^d[-h_i,h_i]$, we deduce that $b^{(1)}$ and $b^{(0)}$ are equal on $K_T^c$.
Thus,
\begin{align}\nonumber
	\norm{b_r^{(1)}-b_r^{(0)}}_2^2&=\int_{K_T} |b_r^{(1)}(r,x_3,\dots,x_d)-b_r^{(0)}(r,x_3,\dots,x_d)|^2 rdr d\theta dx_3\dots dx_d
	\\ \nonumber 
	&\le \frac{c}{M_T^2}\int_{K_T} \left[ \abs{d_T(r,x_3,\dots,x_d)}^2+ \abs{\frac{\partial d_T(r,x_3,\dots,x_d)}{\partial r}}^2 \right] rdr d\theta dx_3\dots dx_d
	\\  \label{E: decoupe norme b radial}
	&=:\frac{c}{M_T^2}(I_1(T)+I_2(T)),
\end{align}
where {\modar we} used that $\pi^{(1)}$ is lower bounded independently of $T$ on the compact set $K_T$, for $T$ large enough, and the boundedness of $b^{(0)}$.
Using the definition of $d_T$ given by \eqref{E: def bump log}, and \eqref{E: cond J 1}, we have
\begin{align}\nonumber
	I_1(T) &= \int_0^{r_{\text{max}}} |J_{r_\text{min},r_\text{max}}(r)|^2 rdr \int_{\prod_{i=3}^d[-h_i,h_i]} 
	\prod_{i=3}^d K(\frac{x_i}{h_i})^2
	dx_3\dots dx_d
	\\ \nonumber
	&\le c \norm{K}_\infty^{2(d-3)} \int_0^{r_{\text{max}}}\left( \frac{\ln(r_\text{max}/r)+1}{\ln(r_\text{max}/r_\text{min})}\wedge 1 
	\right)^2 rdr \left( \prod_{i=3}^d h_i \right) 
	\\ \nonumber
	&\le c \norm{K}_\infty^{2(d-3)}\left(\prod_{i=3}^d h_i\right) \left[ \int_0^{r_{\text{max}}/\sqrt{\ln(r_\text{max}/r_\text{min})}}%{\frac{r_{\text{max}}}{\sqrt{\ln(r_\text{max}/r_\text{min})}}} 
	rdr
	+
	\int_{r_{\text{max}}/ \sqrt{\ln(r_\text{max}/r_\text{min})}}^{r_\text{max}} 
	\left(\frac{\ln(r_\text{max}/r )+1}{\ln(r_\text{max}/r_\text{min})}\right)^2
	rdr
	\right]
	\\ \nonumber
	&
	\le c \norm{K}_\infty^{2(d-3)}\left(\prod_{i=3}^d h_i\right)\left[
	\frac{r_\text{max}^2}{\ln(r_\text{max}/r_\text{min})}
	+
	\int_{r_{\text{max}}/ \sqrt{\ln(r_\text{max}/r_\text{min})}}^{r_\text{max}} 
	\left(\frac{\ln(\sqrt{\ln(r_\text{max}/r_\text{min})})+1}{\ln(r_\text{max}/r_\text{min})}\right)^2
	rdr
	%\right]
	\right]
	\\
	& \nonumber
	\le c \norm{K}_\infty^{2(d-3)}\left(\prod_{i=3}^d h_i\right)
	\left[
	\frac{r_\text{max}^2}{\ln(r_\text{max}/r_\text{min})}
	+
	\modar{ \int_{r_{\text{max}}/ \sqrt{\ln(r_\text{max}/r_\text{min})}}^{r_\text{max}} 
		\frac{c}{\ln(r_\text{max}/r_\text{min})}
		rdr}
	\right]
	\\
	& \label{E: controle I1 preuve borne inf log}
	\le c \norm{K}_\infty^{2(d-3)}\left(\prod_{i=3}^d h_i\right)
	\frac{r_\text{max}^2}{\ln(r_\text{max}/r_\text{min})}.
\end{align}
Using now \eqref{E: cond J 2},
\begin{align} \nonumber
	I_2(T) &= \int_0^{r_{\text{max}}} |\frac{\partial J_{r_\text{min},r_\text{max}}(r)}{\partial r}|^2 rdr \int_{\prod_{i=3}^d[-h_i,h_i]} 
	\prod_{i=3}^d K(\frac{x_l}{h_l(T)})^2
	dx_3\dots dx_d
	\\ \nonumber
	&\le c \norm{K}_\infty^{2(d-3)} \int_0^{r_{\text{max}}} \left( \frac{1}{(\ln(r_\text{max}/r_\text{min}))(r \wedge r_\text{min})} \right)^2 r dr \left(\prod_{i=3}^d h_i\right)
	\\ \nonumber
	&
	\le c \norm{K}_\infty^{2(d-3)} \left(\prod_{i=3}^d h_i\right)\frac{1}{(\ln(r_\text{max}/r_\text{min}))^2} \left[  
	\int_0^{r_{\text{min}}}\frac{1}{r_\text{min}^2} r dr
	+
%	\frac{1}{(\ln(r_\text{max}/r_\text{min}))^2}
	\int_{r_\text{min}}^{r_{\text{max}}}\frac{r}{r^2}dr
	\right]
	\\ \nonumber
	&
	\le c\norm{K}_\infty^{2(d-3)}\left(\prod_{i=3}^d h_i\right) \frac{1}{(\ln(r_\text{max}/r_\text{min}))^2} \left[  
	1+	\ln( \frac{r_{\text{max}}}{r_\text{min}})
	\right]
	\\&  \label{E: controle I2 preuve borne inf log}
	 \le c \norm{K}_\infty^{2(d-3)}\left(\prod_{i=3}^d h_i\right)\frac{1}{(\ln(r_\text{max}/r_\text{min}))}. 
\end{align}
 Collecting \eqref{E: decoupe norme b radial}, \eqref{E: controle I1 preuve borne inf log} and \eqref{E: controle I2 preuve borne inf log}, we deduce
 \begin{equation} \label{E: norme L2 radial bT b0}
 	\norm{b_r^{(1)}-b^{(0)}_r}_{2}^2  \le   c
 	\frac{\prod_{i=3}^d h_i}{M_T^2\ln(\frac{r_\text{max}}{r_\text{min}})}.
 \end{equation}
 	
We now compute the contribution of $b_k$ for $k \ge 3$ in the $\mathbf{L}^2$ norms of $ b^{{\modar(1)}}-b^{{\modar(0)}}$. {\modar Using \eqref{E:diff_b1_b0}, we have}
%By computations analogous to the ones for the radial component, we have
\begin{align}\nonumber
		\norm{b_k^{(1)}-b_k^{(0)}}_2^2 &\le \frac{c}{M_T^2}\int_{K_T} \left[ \abs{d_T(r,x_3,\dots,x_d)}^2+ \abs{\frac{\partial d_T(r,x_3,\dots,x_d)}{\partial x_k}}^2 \right] rdr d\theta dx_3\dots dx_d
		\\ \label{E: decoupe norme b k}
		&{\modarn =:}\frac{c}{M_T^2} (I_1(T)+I_3(T)).
\end{align}
 We have to upper bound $I_3(T)$ which is the new term. From the definition of $d_T$, we have 
 \begin{align*}
I_3 
&= \int_0^{r_{\text{max}}} |J_{r_\text{min},r_\text{max}}(r)|^2 rdr \int_{\prod_{i=3}^d[-h_i,h_i]} 
\left( 	\prod_{\stackrel{i=3}{i\neq k}}^d K(\frac{x_i}{h_i})^2 \right) |K'(\frac{x_k}{h_k})|^2 \frac{1}{h_k^2}
 	dx_3\dots dx_d
 \\ & \le \norm{K}_\infty^{2(d-4)}\norm{K'}_\infty^{2} \int_0^{r_{\text{max}}}\left( \frac{\ln(r_\text{max}/r)+1}{\ln(r_\text{max}/r_\text{min})}\wedge 1 
 \right)^2 rdr \left(\frac{ \prod_{i=3}^d h_i}{h_k^2} \right) 
 \end{align*}
 where we used \eqref{E: cond J 2}. Now by exactly the same computation yielding to {\modar \eqref{E: controle I1 preuve borne inf log},} we have,
 \begin{equation*}
I_3 \le c \norm{K}_\infty^{2(d-4)}\norm{K'}_\infty^{2} \frac{\prod_{i=3}^d h_i}{h_k^2}
\frac{r_\text{max}^2}{\ln(r_\text{max}/r_\text{min})}.
\end{equation*}
 Using that $r_\text{max}\le h_k$, we deduce
 \begin{equation} \label{E: controle I3 preuve borne inf log}
 I_3 \le c  \norm{K}_\infty^{2(d-4)}\norm{K'}_\infty^{2}  \frac{\prod_{i=3}^d h_i}{\ln(r_\text{max}/r_\text{min})}.
 \end{equation}
 Collecting \eqref{E: decoupe norme b k}, \eqref{E: controle I1 preuve borne inf log} and \eqref{E: controle I3 preuve borne inf log} we get
 \begin{equation} \label{E: norme L2 k bT b0}
 	\norm{b_k^{(1)}-b_k^{(0)}}_2^2 \le c \frac{\prod_{i=3}^d h_i}{M_T^2\ln(\frac{r_\text{max}}{r_\text{min}})}.
 \end{equation}
 Eventually, the upper bound \eqref{E: majo L2 diff b log} is a consequence of \eqref{E: norme L2 radial bT b0}, \eqref{E: norme L2 k bT b0} and $b_\theta^{(1)}=b_\theta^{(0)}=0$. 	
 
$\bullet$ We now prove \eqref{E: majo Linfint diff b log}.  Using again $\pi^{(1)}=\pi^{(0)}+\frac{d_T}{M_T}$ and \eqref{E: lien b pi cyl}, we have
$$
\norm{b^{(1)}-b^{(0)}}_\infty \le \frac{c}{M_T} \left[ \norm{d_T}_\infty+
 \norm{\frac{\partial d_T}{\partial r}}_\infty + \sum_{k=3}^d \norm{\frac{\partial d_T}{\partial x_k}}_\infty
\right].
$$
By the definition \eqref{E: def bump log} of $d_T$ with \eqref{E: cond J 0}, we have $\norm{d_T}_\infty \le c$ and $\norm{\frac{\partial d_T}{\partial x_k}}_\infty \le \frac{c}{h_k}$ for $k \ge 3$. Using \eqref{E: cond J 3}, we have $\norm{\frac{\partial d_T}{\partial r}}_\infty \le \frac{c}{r_\text{min} \ln(\frac{r_\text{max}}{r_\text{min}})}$. The upper bound \eqref{E: majo Linfint diff b log} follows.

$\bullet$ Finally, we  prove \eqref{E: majo Linfint diff grad b log}.  By \eqref{E: lien b pi cyl} and $\pi^{(1)}=\pi^{(0)}+\frac{d_T}{M_T}$, we have after some computations
\begin{multline*}
\norm{\nabla b^{(1)}-\nabla b^{(0)}}_\infty \le \frac{c}{M_T} \Big[ \norm{d_T}_\infty+
\sum_{y \in \{r,x_3,\dots,x_d\}} \norm{d_T}_\infty\norm{\frac{\partial d_T}{\partial y}}_\infty + 
\\
\frac{1}{M_T}
\sum_{y,y' \in \{r,x_3,\dots,x_d\}}
\norm{\frac{\partial d_T}{\partial y}}_\infty
\norm{\frac{\partial d_T}{\partial y'}}_\infty+
\sum_{y,y' \in \{r,x_3,\dots,x_d\}}
\norm{\frac{\partial^2 d_T}{\partial y \partial y'}}_\infty
\Big].
\end{multline*}
The upperbound \eqref{E: majo Linfint diff grad b log} follows from
$\norm{\frac{\partial^2 d_T}{\partial x_i \partial x_j}}_\infty \le c/(h_ih_j)$, 
$\norm{\frac{\partial^2 d_T}{\partial x_i \partial r}}_\infty \le c/(h_i r_\text{min} \ln(\frac{r_\text{max}}{r_\text{min}}))$ and
$\norm{\frac{\partial^2 d_T}{\partial r^2}}_\infty \le c/( r_\text{min}^2 \ln(\frac{r_\text{max}}{r_\text{min}}))$.
\end{proof}

 {\modar We can now discuss the conditions ensuring that $(\mathbb{I}_{d \times d}, b^{(1)} ) \in \Sigma(\beta, \mathcal{L},1,a_0,a_1,b_0,b_1,\tilde{C},\tilde{\rho})$.}
 Using that $J_{r_\text{min},r_\text{max}}$ is constant in a neighbourhood of zero, we deduce that $(x_1,\dots,x_d) \mapsto \pi^{(1)}(\sqrt{x_1^2+x_2^2},x_3,\dots,x_d)$ is a smooth function and by \eqref{E: def bump log} with \eqref{E: cond J 3} we have
 \begin{align*}
 	&\abs{	\frac{\partial^k }{\partial x_1^k} \pi^{(1)}(\sqrt{x_1^2+x_2^2},x_3,\dots,x_d)}
 	+
 	\abs{	\frac{\partial^k }{\partial x_2^k} \pi^{(1)}(\sqrt{x_1^2+x_2^2},x_3,\dots,x_d)}
 	\le \frac{c(k)}{M_T \ln(\frac{r_\text{max}}{r_\text{min}}) r_\text{min}^k},
 	\\
 	& \abs{	\frac{\partial^k }{\partial x_l^k} \pi^{(1)}(\sqrt{x_1^2+x_2^2},x_3,\dots,x_d)}
 	\le \frac{c(k)}{M_T h_l^k}, \quad \forall k \ge 1, \forall l \in \{3,\dots,d\}.
 \end{align*} 
 Then, the following condition is sufficient to ensure that $\pi^{(1)} \in \mathcal{H}(\beta,2\mathcal{L})$:
 \begin{equation} \label{E: cond regularite pi log}
 	\frac{1}{M_T}\le \varepsilon_0 r_\text{min}^{\beta_l} \ln(r_\text{max}/r_\text{min}), \text{for $l=1,2$ and,}\quad 	
 	\frac{1}{M_T}\le \varepsilon_0 h_l^{\beta_l}, ~ \forall l \in \{3,\dots,d\},
 \end{equation}
 for some constant $\varepsilon_0>0$ small enough. Remark also that we have $<b^{(1)}(x),x>\le - \tilde{C} |x|$ for $|x| \ge \tilde{\rho}$ for $T$ large enough, using that $b^{(1)}$ and $b^{(0)}$ coincide on $K_T^c$. Moreover, to get that for $T$ large enough,
 $\norm{b^{{\modar (1)}}}_\infty \le b_0$ and $\norm{\nabla b^{(1)}} \le b_1$ it is sufficient that
 \begin{equation}
 \label{E: cond b1 dans classe}
 \frac{c}{M_T} \left[ \frac{1}{r_\text{min}^2 \ln(\frac{r_\text{max}}{r_\text{max}})} + \frac{1}{h_3^2} \right] \xrightarrow{ T \to \infty} 0.
 \end{equation}
  Thus, we have $(\mathbb{I}_{d \times d}, b^{(1)} ) \in \Sigma(\beta, \mathcal{L},1,b_0,a_0,a_1,b_1,\tilde{C},\tilde{\rho})$ as soon as \eqref{E: cond regularite pi log} and  \eqref{E: cond b1 dans classe} are valid.  
 
 $\bullet$ Choice of the calibration and proof of \eqref{E: lower bound log}.
 
 Repeating the proof of \eqref{E: lower bound risk in proof wo log} we have 
 
\begin{equation*}
		\mathcal{R}_T (\beta, \mathcal{L},1,b_0,a_0,a_1,b_1,\tilde{C},\tilde{\rho}) \gtrsim\frac{1}{M_T^2},
 \end{equation*}
 as soon as $\sup_{T \ge 0} T \int_{\mathbb{R}^d}  |b^{(1)} (x) - b^{(0)}(x)|^2 \pi^{\modar(0)}(x) \, dx < \infty$. From \eqref{E: majo L2 diff b log} this condition is implied by
 \begin{equation} \label{E: cond Girsanov log}
 	\sup_{T\ge0} T \frac{\prod_{i=3}^d h_i}{M_T^2\ln(\frac{r_\text{max}}{r_\text{min}})} < \infty.
 \end{equation}
 Now, we search $1/M_T \to 0$ tending to zero as slow as possible subject to the existence of $0<r_\text{min}<r_\text{max}/4 < r_\text{max} \le h_3 \le \dots  \le h_d$ satisfying \eqref{E: cond regularite pi log}--\eqref{E: cond b1 dans classe} and \eqref{E: cond Girsanov log}. For $l \ge 3$, we set $h_l=(\frac{1}{M_T\varepsilon_0})^{1/\beta_l}$, $r_\text{max}=h_3$ and %for any arbitrary $\gamma \in (\beta_2,\beta_3)$ we set
  $r_\text{min}=(\frac{1}{M_T\varepsilon_0})^{1/\beta_2}$. We have $\log(r_\text{max}/r_\text{min})\sim_{T \to \infty} (\frac{1}{\beta_2}-\frac{1}{\beta_3}) \ln (M_T) \to \infty$ using  $\beta_2<\beta_3$. Since $\beta_1\le\beta_2$, we deduce that the conditions $ \frac{1}{M_T}\le \varepsilon_0 r_\text{min}^{\beta_l} \ln(r_\text{max}/r_\text{min})$, for $l=1,2$ hold true when $T$ is large enough. Hence, \eqref{E: cond regularite pi log} is satisfied, and with these choices \eqref{E: cond b1 dans classe} becomes a consequence of $\beta_3>\beta_2 > 2$.
 Replacing $h_i$, $r_\text{min}$, $r_\text{max}$ by their expression in function of $M_T$, the condition \eqref{E: cond Girsanov log} writes,
 \begin{equation*}
 	\sup_{T\ge0} T \frac{1}{ M_T^{2 + (d-2)/{\overline{\beta}_3}}\ln(M_T)} < \infty,
 \end{equation*}
 and is satisfied for the choice $M_T=\left( T/\ln(T) \right)^{\frac{\overline{\beta}_3}{2\overline{\beta}_3+(d-2)}}$. This yields to
 \begin{equation*}
 	\mathcal{R}_T (\beta, \mathcal{L},1,b_0,a_0,a_1,b_1,\tilde{C},\tilde{\rho})
 	\gtrsim\left( T/\ln(T) \right)^{-\frac{2\overline{\beta}_3}{2\overline{\beta}_3+(d-2)}}.
 \end{equation*}

To complete the proof Theorem \ref{th: lower bound avec log}, it remains to show the existence of the function $J_{r_\text{min},r_\text{max}}$. 
This is done in the next lemma.
 \end{proof}% following lemma.
\begin{lemma}There exists a smooth function $J_{r_\text{min},r_\text{max}}: \mathbb{R} \to [0,\infty)$ with  compact support on $[0,r_{\max}]$ such that \eqref{E: cond J 0}--\eqref{E: cond J 3} hold true.	
	\label{L: existence J}
\end{lemma}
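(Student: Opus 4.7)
The key observation is that conditions \eqref{E: cond J 1} and \eqref{E: cond J 2} are compatible precisely because the antiderivative of $1/r$ is $\ln r$: a function with derivative of order $1/(r \ln(r_\text{max}/r_\text{min}))$ on $[r_\text{min}, r_\text{max}]$ automatically has size of order $\ln(r_\text{max}/r)/\ln(r_\text{max}/r_\text{min})$ there. This suggests constructing $J_{r_\text{min},r_\text{max}}$ directly as an antiderivative of a mollified version of $-1/(Cr)$.

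Concretely, I would first fix a smooth plateau function $\alpha \colon \mathbb{R} \to [0,1]$ with $\alpha \equiv 0$ outside $[r_\text{min}/4,\,r_\text{max}]$, $\alpha \equiv 1$ on $[r_\text{min}/2,\,r_\text{max}/2]$, $\alpha$ nondecreasing on $[r_\text{min}/4, r_\text{min}/2]$ and nonincreasing on $[r_\text{max}/2, r_\text{max}]$, and satisfying the derivative bounds $|\alpha^{(j)}(s)| \le c(j)\,r_\text{min}^{-j}$ on the left ramp and $|\alpha^{(j)}(s)| \le c(j)\,r_\text{max}^{-j}$ on the right ramp. Such an $\alpha$ is obtained by rescaling two fixed model bump functions, so $c(j)$ is a universal constant. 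Set $\chi(s) := \alpha(s)/s$, let $C := \int_0^\infty \chi(s)\,ds$, and define
\begin{equation*}
J_{r_\text{min},r_\text{max}}(r) \;:=\; \frac{1}{C}\int_r^{\infty} \chi(s)\,ds.
\end{equation*}
By comparison with $1/s$ on $[r_\text{min}/2,\,r_\text{max}/2]$ and on $[r_\text{min}/4,\,r_\text{max}]$, one has $\ln(r_\text{max}/r_\text{min}) \le C \le \ln(r_\text{max}/r_\text{min}) + \ln 8$, so $C \asymp \ln(r_\text{max}/r_\text{min})$.

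The verification is then routine. Condition \eqref{E: cond J 0} is immediate: $J(0)=1 \ge 1$; $\chi$ vanishes on $[0, r_\text{min}/4]$ so $J$ is constant there; and $\chi$ is strictly positive on $(r_\text{min}/4, r_\text{max})$, so $J'=-\chi/C<0$ there. For \eqref{E: cond J 1}, for $r \ge r_\text{min}/4$ one bounds $J(r) \le C^{-1}\int_{r}^{r_\text{max}} s^{-1}\,ds = \ln(r_\text{max}/r)/C$, while for $r \le r_\text{min}/4$ one uses $J(r)=J(0)=1$ and notes that $(\ln(r_\text{max}/r)+1)/\ln(r_\text{max}/r_\text{min}) \ge 1$ there. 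For \eqref{E: cond J 2}, $|J'(r)| = \alpha(r)/(Cr)$ is bounded by $1/(Cr)$ when $r \ge r_\text{min}/2$ and by $4/(Cr_\text{min})$ when $r < r_\text{min}/2$, giving the required $c/(\ln(r_\text{max}/r_\text{min})(r \wedge r_\text{min}))$.

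The only slightly delicate point is \eqref{E: cond J 3}, since $J^{(k)} = -\chi^{(k-1)}/C$ and $\chi(s)=\alpha(s)/s$ would blow up near $0$; however $\alpha$ vanishes together with all derivatives on $[0,r_\text{min}/4]$, so one only needs to estimate $\chi^{(k-1)}$ on $[r_\text{min}/4,r_\text{max}]$. Using Leibniz,
\begin{equation*}
\chi^{(k-1)}(s) \;=\; \sum_{j=0}^{k-1} \binom{k-1}{j} \alpha^{(j)}(s)\,\frac{(-1)^{k-1-j}(k-1-j)!}{s^{k-j}},
\end{equation*}
and one splits into three regions: on $[r_\text{min}/4,r_\text{min}/2]$ the bound $|\alpha^{(j)}| \le c(j) r_\text{min}^{-j}$ combined with $s^{-(k-j)} \le 4^{k-j} r_\text{min}^{-(k-j)}$ gives $|\chi^{(k-1)}(s)| \le c(k) r_\text{min}^{-k}$; on $[r_\text{min}/2,r_\text{max}/2]$ only the $j=0$ term survives ($\alpha\equiv 1$) and it gives $(k-1)!/s^k \le c(k) r_\text{min}^{-k}$; on $[r_\text{max}/2,r_\text{max}]$ the bound $|\alpha^{(j)}| \le c(j) r_\text{max}^{-j}$ gives $|\chi^{(k-1)}(s)| \le c(k) r_\text{max}^{-k} \le c(k) r_\text{min}^{-k}$. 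Dividing by $C \asymp \ln(r_\text{max}/r_\text{min})$ yields \eqref{E: cond J 3}. No genuine obstacle arises; the only care required is to ensure the constants depend only on $k$ and not on $r_\text{min}, r_\text{max}$, which is guaranteed by the rescaling construction of $\alpha$.
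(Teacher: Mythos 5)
Your construction is essentially the same as the paper's: the paper's piecewise $J$ is exactly $\frac{1}{\ln(r_\text{max}/r_\text{min})}\int_r^{r_\text{max}}\frac{\tilde\alpha(s)}{s}\,ds$ for a plateau cutoff $\tilde\alpha$ built from $\varphi(s/r_\text{min})$ and $\varphi(1-s/r_\text{max})$, which is your $\frac{1}{C}\int_r^\infty \frac{\alpha(s)}{s}\,ds$ up to the choice of normalization (the paper keeps the prefactor $1/\ln(r_\text{max}/r_\text{min})$ and checks $J(0)\ge 1$, while you normalize so that $J(0)=1$ and then use $C\asymp \ln(r_\text{max}/r_\text{min})$). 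The verification of \eqref{E: cond J 0}--\eqref{E: cond J 3} is correct and matches the paper's argument.
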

\begin{proof}
	We let $\varphi : [0,\infty) \to [0,1]$ be a smooth function with $\varphi{\scriptscriptstyle{\vert [0,1/4]}}=0$ and $\varphi{\scriptscriptstyle{\vert [1/2,\infty)}}=1$.
	We define piecewise the function $J_{r_\text{min},r_\text{max}}$ as follows,
	\begin{align*}
		&J_{r_\text{min},r_\text{max}}(r)=0, \quad \text{ for $r \ge r_\text{max}$,}
		\\
		&J_{r_\text{min},r_\text{max}}(r)=
	\frac{1}{ \ln(r_\text{max} /r_\text{min})}
		\int_r^{r_\text{max}} 
		\varphi(1-{\modarn s}/r_{\text{max}}) {\modarn \frac{ds}{s},} \quad \text{ for $r_\text{max}/2 \le r \le r_\text{max}$,}
		\\&
		J_{r_\text{min},r_\text{max}}(r)=J_{r_\text{min},r_\text{max}}(r_\text{max}/2)+
			\frac{1}{ \ln(r_\text{max} /r_\text{min})}
		\int_r^{r_\text{max}/2}{\modarn \frac{ds}{s},}  \quad \text{ for $r_\text{min}/2 \le r \le r_\text{max}/2$,}		
		\\&
		J_{r_\text{min},r_\text{max}}(r)=J_{r_\text{min},r_\text{max}}(r_\text{min}/2)+
		\frac{1}{ \ln(r_\text{max} /r_\text{min})}
		\int_r^{r_\text{min}/2} \varphi({\modarn s}/r_\text{min}) {\modarn \frac{ds}{s},} \quad \text{ for $ 0\le r \le r_\text{min}/2$.}		
%		\quad \quad \quad \quad \quad \quad =\int_{1/2}^{1} 
%		\varphi(1-s) \frac{ds}{s} + \ln()  , 
	\end{align*}
Using the definition of $\varphi$, one can check that the derivative of the function $J$ is smooth and that $J$ is decreasing with $J'(r)=0$ for $|r|\le r_{\text{min}}/4$. 
By simple computation we have,
\begin{align} \label{E: J explicite morceau1}
&J_{r_\text{min},r_\text{max}}(r)=
\frac{\Phi_1(r/r_\text{max})}{ \ln(r_\text{max} /r_\text{min})} , \quad \text{ for $r_\text{max}/2 \le r \le r_\text{max}$,}
\\&\label{E: J explicite morceau2}
J_{r_\text{min},r_\text{max}}(r)=\frac{\Phi_1(1/2) +
	\ln(\frac{r_{\max}}{2r})}{\ln(r_\text{max} /r_\text{min})},
\quad \text{ for $r_\text{min}/2 \le r \le r_\text{max}/2$,}			
\\&\label{E: J explicite morceau3}
J_{r_\text{min},r_\text{max}}(r)=1+
\frac{\Phi_1(1/2) + \Phi_2(r/r_\text{min})}{ \ln(r_\text{max} /r_\text{min})}, \quad \text{ for $ 0\le r \le r_\text{min}/2$,}		
\end{align}
where $\Phi_1(u)=\int_u^1 \varphi(1-s) \frac{ds}{s}$ and 
$\Phi_2(u)=\int_u^{1/2} \varphi(s) \frac{ds}{s}$ are smooth functions on $[0,1]$.
%
%
%For $r \in [r_\text{min}/2 ,r_\text{max}/2]$, by simple computation we have
%\begin{equation*}
%	J_{r_\text{min},r_\text{max}}(r)=\frac{\left[\int_{1/2}^1 \varphi(1-s) \frac{ds}{s} +
%	\ln(\frac{r_{\max}}{2r})	\right]}{\ln(r_\text{max} /r_\text{min})}.
%\end{equation*} 

Thus, using monotonocity of $J_{r_\text{min},r_\text{max}}$ and $\varphi \ge 0$, we have $J_{r_\text{min},r_\text{max}}(0) \ge J_{r_\text{min},r_\text{max}}(r_\text{min}/2)\ge 1$. We deduce that \eqref{E: cond J 0} is true.

Using \eqref{E: J explicite morceau1}--\eqref{E: J explicite morceau3} and that $\Phi_1$ and $\Phi_2$ are bounded on $[0,1]$, we deduce {\modar \eqref{E: cond J 1}.}  The condition \eqref{E: cond J 2} follows again from  \eqref{E: J explicite morceau1}--\eqref{E: J explicite morceau3} and the boundedness of $\Phi_1'$ and $\Phi_2'$. The condition \eqref{E: cond J 3} is shown by differentiating $k$ times the representations \eqref{E: J explicite morceau1}--\eqref{E: J explicite morceau3} and using $r_\text{min}\le r_\text{max}$.
%The function $J$ is smooth and decreasing as its derivative is negative and smooth. 
\end{proof}
{\change 
\begin{remark}\label{rem : bosse log}
The idea underneath the construction of the bump $(r,\theta) \mapsto J_{r_\text{min},r_\text{max}}(r)$ in Lemma \ref{L: existence J} is the following. Having in mind \eqref{E: grad bpi eg delta pi}, 
%The function $(r,\theta) \mapsto J_{r_\text{min},r_\text{max}}(r)$ constructed in Lemma \ref{L: existence J} is 
we have constructed a smooth bump, 
with radius $r_\text{max}$, height greater than $1$, and solution of $\Delta J_{r_\text{min},r_\text{max}} =0$ on the torus $r\in [r_\text{min}/2,r_\text{max}/2]$.
\end{remark}
}
\subsection{Proof of Theorem \ref{th: lower bound d=2}}
\begin{proof}
The proof of Theorem \ref{th: lower bound d=2} heavily relies on the proof of Theorem \ref{th: lower bound avec log}. 
As done above, we prove the theorem in the case where $x_0 = (0, 0)$ and $a_{min} = 1$. We will lower bound the risk on the subclass of model \eqref{eq: model} given by
$$dX_t=b(X_t)dt + dW_t,$$
 where $b$ is any drift function such that $(\mathbb{I}_{d \times d},b) \in {\modarn\tilde\Sigma}(\beta, \mathcal{L},1,b_0,a_0,a_1,b_1,\tilde{C},\tilde{\rho})$.
Following the proof of Theorem \ref{th: lower bound avec log} we introduce cylindrical coordinates $(r, \theta)$ that translates into the Cartesian coordinates $(r \cos(\theta),r \sin(\theta))$. 
In two dimensions \eqref{E:Astar pi zero cyl} is written without the sum over $k$ and the drift $b$ solution of \eqref{E:Astar pi zero cyl} is given by 
$b=b_r \vec{e}_r+ b_\theta \vec{e}_\theta$ with $\vec{e}_r=(\cos(\theta), \sin(\theta))^T$, $\vec{e}_\theta=(-\sin(\theta), \cos(\theta),)^T$ and  
$$	b_r=\frac{1}{\pi} \frac{\partial \pi}{\partial r}, \quad b_\theta=0. $$

$\bullet$ Construction of the priors.  Let $\psi: [0,\infty) \to [0,\infty)$ be a smooth function, vanishing on $[0,1/2]$ and satisfying $\psi(x)=x$ for $x \ge 1$ as in the proof of Theorem \ref{th: lower bound avec log}. We let
$$\pi^{(0)}(r)=c_\eta e^{-\eta \psi(r)},
$$ 
 where $\eta>0$ and $c_\eta$ is such that $\int_{[0,\infty)\times[0,2\pi)} \pi^{(0)}(r) r ~drd\theta=1$.
 Again, for $\eta$ small enough we have $\pi^{(0)} \in \mathcal{H}_2(\beta,\mathcal{L})$. Moreover, acting as in the proof of Theorem \ref{th: lower bound avec log}, it is easy to see that $( \mathbb{I}_{d \times d},b^{(0)}) \in {\modarn\tilde\Sigma} (\beta, \mathcal{L},1,b_0,a_0,a_1,b_1/2,\tilde{C},\tilde{\rho})\subset {\modarn\tilde\Sigma} (\beta, \mathcal{L},1,b_0,a_0,a_1,b_1,\tilde{C},\tilde{\rho})$.

The construction of $\pi^{(1)}$ is more elaborate, as before we add a bump centered at $0$ to $\pi^{(0)}$. We set 
$$\pi^{(1)}=\pi^{(0)}+\frac{1}{M_T} J_{r_\text{min}(T),r_\text{max}(T)}(r),$$
where $J_{r_\text{min}(T),r_\text{max}(T)}$ is the function introduced in the proof of Theorem \ref{th: lower bound avec log} which satisfies \eqref{E: cond J 0}, \eqref{E: cond J 1}, \eqref{E: cond J 2} and \eqref{E: cond J 3} and whose existence has been proven in Lemma \ref{L: existence J}.
We recall that $0<r_\text{min}(T)<r_\text{max}(T)/4 < r_\text{max}(T)$ will be calibrated later and go to zero with a rate polynomial in $1/T$. We denote by $b^{(1)}$ the drift function associated to $\pi^{(1)}$. \\
Following the proof of Lemma \ref{L: maj norme L2 diff b log} in the bi-dimensional context it is easy to see that the following bounds hold true.
	\begin{equation*}
 	\int_{\mathbb{R}^d} \abs{b^{(1)}(\sqrt{x_1^2+x_2^2})-b^{(0)}(\sqrt{x_1^2+x_2^2})}^2 dx_1 dx_2\le c
 	\frac{1}{M_T^2\ln(\frac{r_\text{max}}{r_\text{min}})},
 	\end{equation*}
 \begin{align*}
 	\norm{b^{(1)}-b^{(0)}}_\infty & \le \frac{c}{M_T} \frac{1}{r_\text{min} \ln(\frac{r_\text{max}}{r_\text{max}})} ,
 	\\ 
 	\norm{\nabla b^{(1)}-\nabla b^{(0)}}_\infty &\le \frac{c}{M_T} \frac{1}{r_\text{min}^2 \ln(\frac{r_\text{max}}{r_\text{max}})}.
 \end{align*}
Moreover we still have, for any $k \ge 1$, 
 \begin{align*}
 	&\abs{	\frac{\partial^k }{\partial x_1^k} \pi^{(1)}(\sqrt{x_1^2+x_2^2})}	+
 	\abs{	\frac{\partial^k }{\partial x_2^k} \pi^{(1)}(\sqrt{x_1^2+x_2^2})}
 	\le \frac{c(k)}{M_T \ln(\frac{r_\text{max}}{r_\text{min}}) r_\text{min}^k}.
 \end{align*} 
 Then, the following condition is sufficient to ensure that $\pi^{(1)} \in \mathcal{H}_2(\beta,2\mathcal{L})$:
 \begin{equation} \label{E: cond regularite pi log d = 2}
 	\frac{1}{M_T}\le \varepsilon_0 r_\text{min}^{\beta_l} \ln(r_\text{max}/r_\text{min}), \text{for $l=1,2$ and }
 \end{equation}
 for some constant $\varepsilon_0>0$ small enough. As before, if \eqref{E: cond regularite pi log d = 2} holds true it is sufficient that 
 \begin{equation}
 \frac{c}{M_T} \frac{1}{r_\text{min}^2 \ln(\frac{r_\text{max}}{r_\text{max}})}  \xrightarrow{ T \to \infty} 0
 \label{E: cond b1 dans classe d=2}
 \end{equation}
 to obtain $(\mathbb{I}_{d \times d}, b^{(1)} ) \in \Sigma(\beta, \mathcal{L},1,b_0,a_0,a_1,b_1,\tilde{C},\tilde{\rho})$.  
 
 $\bullet$ Choice of the calibration and proof of \eqref{E: lower bound log}.
 
 Repeating the proof of \eqref{E: lower bound risk in proof wo log} we clearly have 
 $$	\mathcal{R}_T (\beta, \mathcal{L},1,b_0,a_0,a_1,b_1,\tilde{C},\tilde{\rho}) \gtrsim\frac{1}{M_T^2},$$
 as soon as $\sup_{T \ge 0} T \int_{\mathbb{R}^2}  |b^{(1)} (x) - b^{(0)}(x)|^2 \pi^{\modar (0)}(x) \, dx < \infty$, which is implied by
 \begin{equation} \label{E: cond Girsanov log d=2}
 	\sup_{T\ge0} T \frac{1}{M_T^2\ln(\frac{r_\text{max}}{r_\text{min}})} < \infty.
 \end{equation}
We look for $1/M_T$ tending to zero as slow as possible subject to the existence of $0<r_\text{min}<r_\text{max}/4 < r_\text{max} $ satisfying \eqref{E: cond regularite pi log d = 2}--\eqref{E: cond b1 dans classe d=2} and \eqref{E: cond Girsanov log d=2}. We set
$r_\text{min}=(\frac{1}{M_T\varepsilon_0})^{1/\beta_2}$ and  $r_\text{max}=(\frac{1}{M_T\varepsilon_0})^{1/\beta_2 - \gamma}$ for any arbitrary $\gamma> 0$.
It follows $\log(r_\text{max}/r_\text{min})\sim_{T \to \infty} (\frac{1}{\beta_2}-(\frac{1}{\beta_2}- \gamma)) \ln (M_T) \to \infty$ as $\gamma > 0$. Since $\beta_1\le\beta_2$, we clearly have from the definition of $r_\text{min}$ and $r_\text{max}$ that the conditions $ \frac{1}{M_T}\le \varepsilon_0 r_\text{min}^{\beta_l} \ln(r_\text{max}/r_\text{min})$, for $l=1,2$ hold true when $T$ is large enough. Hence, \eqref{E: cond regularite pi log d = 2} is satisfied. The same can be said about \eqref{E: cond b1 dans classe d=2}, recalling we have assumed $\beta_2 > 2$.
 Replacing $r_\text{min}$ and $r_\text{max}$ by their expression in function of $M_T$, condition \eqref{E: cond Girsanov log d=2} writes
 \begin{equation*}
 	\sup_{T\ge0} T \frac{1}{ M_T^{2}\ln(M_T)} < \infty,
 \end{equation*}
 and is satisfied for the choice $M_T=(\frac{T}{\log T})^{\frac{1}{2}}$. It provides
 \begin{equation*}
 	\mathcal{R}_T (\beta, \mathcal{L},1,b_0,a_0,a_1,b_1,\tilde{C},\tilde{\rho})
 	\gtrsim \frac{\log T}{T},
 \end{equation*}
as we wanted.

\end{proof}
\section*{Acknowledgement}
The authors are very grateful to the referees for their careful reading, their suggestions and remarks which improved the paper.

\appendix

\section{Appendix: proof of technical results}{\label{s: technical}}
This section is devoted to the proof of the results which are more technical and for which some preliminaries are needed.

{\modchi \subsection{Proof of Corollary \ref{lemma: bound transition density}}
\begin{proof}
We start proving the upper bound. Clearly we have {\modarn $|x-y|^2 \le 2|\theta_{t,s}(x)-y|^2+2|\theta_{t,s}(x)-x|^2$ and thus
$$- |\theta_{t,s}(x) - y|^2 \le - \frac{1}{2}|x - y|^2 + |\theta_{t,s}(x) - x|^2.$$}
Then, the proof consists in finding a good bound for $|\theta_{t,s}(x) - x|$, together with the straightforward application of Lemma \ref{l: bound with flow}. We start by noticing that, when $|\theta_{t,s}(x)| > 1$, $|\theta_{t,s}(x)|^2$ is a decreasing function in $t$. Indeed, because of the definition of the flow, its derivative with respect to time is
$${\modarn 2 <\theta_{t,s}(x), \dot{\theta}_{t,s}(x)>} = 2 < \theta_{t,s}(x) , b(\theta_{t,s}(x)) > \, < 0, $$
where the last is a consequence of A2. We remark that, if $| \theta_{t,s}(x)| > 1$, it is not possible to have $|x| \le 1$, as in this case the flow could increase only until it reaches the value 1. Hence, for $| \theta_{t,s}(x)| > 1$ we have also $|x| > 1$ and it is $|\theta_{t,s}(x)| \le
{\modarn |\theta_{s,s}(x)| = |x| }$ for any $t \ge 0$ {\modarn since $t\mapsto |\theta_{t,s}(x)|$ is decreasing until it possibly reaches the value $1$.}
If instead $|\theta_{t,s}(x)| \le 1$, then
we have 
\begin{equation}
|\theta_{t,s}(x) - x| \le 1 + |x|. 
\label{eq: theta smaller than 1}
\end{equation}
The definition of the flow together with the mean value theorem provides 
\begin{align*}
|\theta_{t,s}(x) - x| &= |\int_{s}^t b(\theta_{r,s}(x)) dr| \\
& \le \int_s^t |b(0)| dr + \int_s^t \left \| b' \right \|_\infty |\theta_{r,s}(x)| dr \\
& \le b_0|t -s| + b_1 |t -s| (|x| + 1).
\end{align*}
It concludes the proof of the upper bound on the transition density. \\
On the other side we have 
{\modarn
$$-|\theta_{t,s}(x) - y|^2 \ge - 2|x - y|^2  -2|\theta_{t,s}(x) - x|^2.$$}
Hence, we need to find a lower bound for {\modarn $-|\theta_{t,s}(x) - x|^2$.} What we have already showed, together with the application of Lemma \ref{l: bound with flow}, concludes the proof.
\end{proof}

}

\subsection{Proof of Lemma \ref{L:hyp sigma stronger}}\label{Ss:proof_unif_mixing}

Recalling Lemma \ref{lemma: ergodicity}, the main novelty is to prove that the control \eqref{E: resu mixing reg non reg} is uniform on the class of coefficients $(a,b) \in \Sigma$.
To simplify the proof, we first strengthen the hypothesis on the coefficients of the diffusion by assuming that they are infinitely differentiable with bounded derivatives or any order, together with the condition $(a,b) \in \Sigma$. 
%We will see in Lemma \ref{L:hyp sigma stronger} below how to remove the smoothness condition on the coefficients.

The proof of \eqref{E: resu mixing reg non reg} relies on the theory of Lyapunov-Poincaré {\modar inequalities} introduced in \cite{Bak_et_al08}. We recall some definitions related to {\modar these} functional {\modar inequalities}. First, if $f \in\mathcal{C}^2(\mathbb{R}^d,\mathbb{R})$ we define $\Gamma(f)=A(f^2)-2fA(f)$ where $A$ is the generator of the diffusion, and we have $\Gamma(f)=(\nabla f)^T \tilde{a} \nabla f=|a^T \nabla f|^2$.
{\modarn We let $P^{*,\pi}_t$ be the adjoint in $\mathbf{L}^2({\modar \R^d}, \pi)$ of the operator $P_t$, which is given by the expression
	\begin{equation}\label{E:expression Pstar}
		P^{*,\pi}_t(f)(x)=\frac{1}{\pi(x)}\int_{\mathbb{R}^d} f(y) \pi(y) p_t(y,x) dy,
	\end{equation}
	and $A^{*,\pi}$ the generator of the semi group $(P^{*,\pi}_t)_{t \ge 0}$ (details can be found e.g.\ in \cite{Hairer_SPDE_Lecture09}). If $f$ is $\mathcal{C}^2$ then
	\begin{equation}\label{E:expr_A_star_pi}
		A^{*,\pi} f = \frac{1}{2} \sum_{1\le i,j \le d}\tilde{a}_{i,j}\frac{\partial^2f}{\partial x_i \partial x_j}
		+ \sum_{i=1}^d b^*_i \frac{\partial f}{\partial x_i}  
	\end{equation}
	with $b^*_i=-b_i+\sum_{j=1}^d \frac{\partial \tilde{a}_{j,i} }{\partial x_j} +
	\frac{1}{\pi}\sum_{i=1}^d (\frac{\partial \pi}{\partial x_j} )\tilde{a}_{j,i}$
	as soon as the expression in the right hand side of \eqref{E:expr_A_star_pi} belongs to $\mathbf{L}^2(\pi)$. 	
}
\begin{definition}{(Lyapunov-Poincaré Inequality \cite{Bak_et_al08})} 
	\label{def: LPI}
	Let $W$ be a $\mathcal{C}^2(\mathbb{R}^d,\mathbb{R})$ 
	function such that $W \ge 1$, $W \in \mathbf{L}^2(\pi)$, 
	$A W \in \mathbf{L}^2(\pi)$. The probability $\pi$ satisfies a 
	$W$-Lyapunov-Poincaré inequality if there exists $C_{\text{LP}}>0$ such that for all 
	$f\in\mathcal{C}^2(\mathbb{R}^d,\mathbb{R})$, 
	{\modarn  bounded function
		% with  $f:\mathbb{R}^d \to \mathbb{R}$ bounded function, 
		in the domain of $A^{*,\pi}$, 
		with $\Gamma(f)W \in \mathbf{L}^1(\pi)$ and }
	% $f \in Dom
	%$f$ bounded and $\nabla f$ bounded, and 
	$\int_{\mathbb{R}^d}f(x)\pi(x)dx=0$ we have
	\begin{equation} \label{E:WPI def}
		\int_{\mathbb{R}^d} f^2(x)W(x) \pi(x) dx
		\le C_{\text{LP}} \int_{\mathbb{R}^d}
		[W(x)\Gamma(f)(x)-f(x)^2 AW(x)] \pi(x) dx.
	\end{equation} 
\end{definition}
%The definition is slightly different with the one given in \cite{Bak_et_al08}.
{\revar We can now state a result of \cite{Bak_et_al08} which shows that a W-Lyapounov-Poincaré inequality entails a mixing property of the semi-group. The mixing rate depends explicitly on the constant $C_{LP}$ and consequently controlling $C_{LP}$ is the central point to get the uniform mixing in Lemma \ref{L:hyp sigma stronger}.}    For the sake of completeness, we  give a {\modar detailed} proof adapted to our context of this result.
\begin{lemma}[Bakry et al. \cite{Bak_et_al08}]\label{L:lemme de Bakry}
	Assume that the $W$-Lyapunov-Poincaré inequality holds true with some constant $C_{\text{LP}}>0$.
	%and that $W$ has at most exponential growth near infinity.  
	Then,
	%there exists a constant $C$ such that 
	for any $\varphi: \mathbb{R}^d \to \mathbb{R}$ bounded function with
	$\int_{\mathbb{R}^d} \varphi(x) \pi(x)dx=0$, we have
	\begin{equation}\label{E:conclusion_LPI}
		\int_{\mathbb{R}^d} [P_t(\varphi)(x)]^2 \pi(x) dx \le  e^{-t/C_{LP}} \int_{\mathbb{R}^d} \varphi^2(x) W(x)
		\pi(x) dx,
	\end{equation}
	where $(P_t)_t$ is the semi-group associated to the process $X$.
\end{lemma}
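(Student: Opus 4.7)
The plan is to study the weighted quantity $v(t):=\int_{\mathbb{R}^d}(P_t\varphi)^2 W\pi\,dx$, show that it decays exponentially at rate $1/C_\text{LP}$, and then deduce \eqref{E:conclusion_LPI} from the pointwise inequality $W\ge 1$, which gives $\int(P_t\varphi)^2\pi\le v(t)$. A preliminary observation is that $f:=P_t\varphi$ has mean zero under $\pi$ for all $t\ge0$: since $\pi$ is invariant, $\int P_t\varphi\,\pi=\int \varphi\,\pi=0$, so $P_t\varphi$ is an admissible test function in the $W$-Lyapunov--Poincaré inequality \eqref{E:WPI def} at every time.

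The heart of the argument is a single integration-by-parts identity. Starting from the diffusion relation $A(fg)=fAg+gAf+\Gamma(f,g)$, integrating against $W\pi$ and using the defining property $\int (Ag)\,W\pi=\int g\,A^{*,\pi}W\,\pi$ of the adjoint given by \eqref{E:expression Pstar}, one obtains with $f=g=P_t\varphi$
$$2\int fAf\,W\pi\,dx=\int f^2\,A^{*,\pi}W\,\pi\,dx-\int W\,\Gamma(f)\,\pi\,dx.$$
Since $v'(t)=2\int (P_t\varphi)A(P_t\varphi)W\pi\,dx$, this provides a workable expression for $v'(t)$. Plugging $f=P_t\varphi$ into \eqref{E:WPI def} and combining with the expression for $v'(t)$ yields, in the reversible case where $A^{*,\pi}W=AW$, the differential inequality
$$v(t)\le C_\text{LP}\Big[\int W\Gamma(P_t\varphi)\,\pi\,dx-\int (P_t\varphi)^2 AW\,\pi\,dx\Big]=-C_\text{LP}\,v'(t).$$
A classical Gronwall step then gives $v(t)\le v(0)\,e^{-t/C_\text{LP}}=e^{-t/C_\text{LP}}\int\varphi^2 W\pi\,dx$, and $u(t):=\int (P_t\varphi)^2\pi\,dx\le v(t)$ is precisely \eqref{E:conclusion_LPI}.

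The main obstacle will lie in the non-reversible case, where the identity for $v'(t)$ features $A^{*,\pi}W$ rather than the $AW$ appearing in the LP inequality. The remedy is to work with the symmetric part of the generator, for which $\Gamma$ is unchanged (it depends only on $\tilde a$) and the LPI may be reformulated with an invariant form; alternatively, one can enforce the LPI in its symmetrized form from the outset. Beyond this, the remaining difficulties are regularity checks---that $P_t\varphi$ is smooth enough to belong to the domain of $A$, that $\Gamma(P_t\varphi)\,W\in\mathbf{L}^1(\pi)$, and that $(P_t\varphi)^2 AW\in\mathbf{L}^1(\pi)$ for every $t\ge 0$---all of which follow from standard parabolic regularity under the reinforced smoothness assumptions on $(a,b)$ adopted at the beginning of the subsection, together with the integrability conditions on $W$ and $AW$ built into Definition~\ref{def: LPI}.
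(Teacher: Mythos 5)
Your plan works only in the reversible case, and the setting here is genuinely non-reversible, so the flagged ``obstacle'' is in fact a gap in the main case rather than a side issue. Differentiating $v(t)=\int (P_t\varphi)^2W\pi$ and integrating by parts in $\mathbf{L}^2(\pi)$ gives, as you note,
$v'(t)=\int (P_t\varphi)^2\,A^{*,\pi}W\,\pi-\int W\,\Gamma(P_t\varphi)\,\pi$, whereas the $W$-Lyapunov--Poincar\'e inequality \eqref{E:WPI def} controls $\int f^2W\pi$ by $\int W\Gamma(f)\pi-\int f^2\,AW\,\pi$. Since $b$ is not a gradient of $\log\pi$ (and $a$ is general), $A^{*,\pi}W\neq AW$, and the two expressions do not combine into $v\le -C_{\text{LP}}\,v'$. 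Your proposed remedies do not close this: replacing $A$ by its symmetric part changes the semigroup in the conclusion (which must be the one generated by the full $A$), and ``enforcing the LPI in its symmetrized form from the outset'' would change the hypothesis of the lemma, which is used later exactly as stated with $AW$ for the Lyapunov function constructed from $V=e^{\varepsilon_0\chi}$.

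The paper's resolution is structurally different and is the essential point of the Bakry--Cattiaux--Guillin argument: one runs the Gronwall scheme on the \emph{adjoint} semigroup, setting $I_t=\int (P_t^{*,\pi}\varphi)^2W\pi$. Then $\partial_t I_t$ involves $A^{*,\pi}\bigl((P_t^{*,\pi}\varphi)^2\bigr)$, and integrating $A^{*,\pi}(g)$ against $W\pi$ returns $\int g\,AW\,\pi$ (because $(A^{*,\pi})^{*,\pi}=A$), which is precisely the term appearing in \eqref{E:WPI def}; note also that Definition~\ref{def: LPI} asks the test function to lie in the domain of $A^{*,\pi}$, which is tailored to $f=P_t^{*,\pi}\varphi$, not $P_t\varphi$. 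The carr\'e du champ is unaffected since $A$ and $A^{*,\pi}$ share the same second-order part. One then concludes for $P_t^{*,\pi}$ and transfers the estimate to $P_t$ by duality in $\mathbf{L}^2(\pi)$. Your regularity remarks (domination to justify differentiating under the integral, $\Gamma(P_t^{*,\pi}\varphi)W\in\mathbf{L}^1(\pi)$ via a truncation of $x\mapsto x^2$) are in the right spirit, but without the switch to the adjoint semigroup the key differential inequality does not hold.
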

\begin{proof}
	$\bullet$ First, we establish some properties on the stationary density $\pi$. %that will be useful in the sequel.
	{\modarn Using Theorem 1.2 in \cite{MenPes}, we know that
		\begin{equation}\label{E:control_MPZ}
			\abs{\frac{\partial^{j} }{\partial y^j} p_t(x,y)} \le \frac{c}{t^{(d+j)/2}} e^{-\frac{\abs{\theta_{t,0}(x)-y}^2}{ct}}
		\end{equation}		
		for $j\in\{0,1,2\}$.
		% \in \{(0,0),(0,1),(0,2)\}$. 
	}
	%	Using Friedman \cite{Friedman_64}, we know that,
	%	Using Section A.2.2. in \cite{Azencott_84}, we know that 
	%
	%	\begin{equation}\label{E:control_Friedman}
		%	\abs{\frac{\partial^{i+j} }{\partial x^iy^j} p_t(x,y)} \le \frac{c}{t^{(d+i+j)/2}} e^{-\frac{\abs{x-y}^2}{ct}}
		%\end{equation}
		%for $i+j\le 2$, 
		% for all $T>0$, $t\in(0,T)$ for some $c>0$ depending on $T$.
		% and some $c$ depending on the coefficients of the stochastic differential equation.
		From the invariance of $\pi$, $\pi(y)=\int_{\mathbb{R}^d} \pi(x) p_{t}(x,y)dx$, we deduce that {\modar $\pi \in \mathcal{C}^2(\mathbb{R}^d,\mathbb{R})$, and that  $\pi$ is bounded, with bounded derivatives.}
		We also deduce from Lemma \ref{l: bound with flow} that $\pi(x)>0$ for all $x \in \mathbb{R}^d$.
		
		%	From {\modar Corollary \ref{lemma: bound transition density},}
		%coro \ref{lemma: bound transition density},
		%	we know that for $T>0$, and $t\in(0,T)$, we have {\modar $p_t(x,y)\ge \frac{c_T}{t^{d/2}} e^{-\frac{\abs{x-y}^2}{ct}}
			%		e^{-\frac{|x|^2+1}{ct}} $}, where $c_T$ depends on $T$, and $c$ depends only on the coefficients of the stochastic differential equation. Hence, we deduce from the invariance of $\pi$ again that
		%	$$
		%	\pi(y)=\int_{\mathbb{R}^d} \pi(x) p_{T}(x,y)dx \ge \int_{\mathbb{R}^d} 
		%	\frac{c_T}{T^{d/2}} e^{-\frac{\abs{x-y}^2}{cT}} 
		%	{\modar e^{-\frac{|x|^2+1}{cT}}}	
		%	\pi(x) dx \ge \frac{c_T}{T^{d/2}} \int_{\{|x|\le 1\}} e^{-\frac{2|x|^2+1}{cT}}\pi(x) dx 
		%	e^{-\frac{2\abs{y}^2}{cT}}.
		%	$$
		%	As $T$ can be chosen arbitrarily large and using $\pi>0$, we deduce the lower bound 
		%	\begin{equation}\label{E:mino pi proof lemma}
			%	\forall \varepsilon>0, \exists C_\varepsilon>0, \forall y \in \mathbb{R}^d,
			%	\pi(y) \ge C_\varepsilon e^{-\varepsilon|y|^2}.
			%	\end{equation}
		$\bullet$  We now prove  \eqref{E:conclusion_LPI}. Remark that by a density argument we can assume that
		$\varphi$ is $\mathcal{C}^2$ and compactly supported.
		%	%		First, remark that by density we can prove \eqref{E:conclusion_LPI} for $\mathcal{C}^2$ and compactly supported functions $\varphi$.  
		%We let $P^{*,\pi}_t$ be the adjoint in $\mathbf{L}^2({\modar \R^d}, \pi)$ of the operator $P_t$, which is given by the expression
		%\begin{equation}\label{E:expression Pstar}
		%P^{*,\pi}_t(f)(x)=\frac{1}{\pi(x)}\int_{\mathbb{R}^d} f(y) \pi(y) p_t(y,x) dy,
		%\end{equation}
		%and $A^{*,\pi}$ the generator of the semi group $(P^{*,\pi}_t)_{t \ge 0}$ (details can be found e.g.\ in \cite{Hairer_SPDE_Lecture09}). If $f$ is $\mathcal{C}^2$ then
		%\begin{equation}\label{E:expr_A_star_pi}
		%A^{*,\pi} f = \frac{1}{2} \sum_{1\le i,j \le d}\tilde{a}_{i,j}\frac{\partial^2f}{\partial x_i \partial x_j}
		%+ \sum_{i=1}^d b^*_i \frac{\partial f}{\partial x_i}  
		%\end{equation}
		%with $b^*_i=-b_i+\sum_{j=1}^d \frac{\partial \tilde{a}_{j,i} }{\partial x_j} +
		%\frac{1}{\pi}\sum_{i=1}^d (\frac{\partial \pi}{\partial x_j} )\tilde{a}_{j,i}$
		%as soon as the expression in the right hand side of \eqref{E:expr_A_star_pi} belongs to $\mathbf{L}^2(\pi)$. 	
		We know that if $f$ is in the domain of $A^{*,\pi}$ then $t\mapsto P_t^{*,\pi}(f)$ is differentiable in $\mathbf{L}^2(\pi)$ and
		$\frac{\partial}{\partial t} P_t^{*,\pi}(f)= P_t^{*,\pi} A^{*,\pi}(f)=A^{*,\pi}P_t^{*,\pi}(f)$.

As in \cite{Bak_et_al08}, we define $I_t=\int_{\mathbb{R}^d} (P_t^{*,\pi}\varphi(x))^2 W(x) \pi(x) dx$. By formal differentiation, we get for $t>0$,
\begin{equation} \label{E:diff It}
\frac{\partial I_t}{\partial t} =2 
\int_{\mathbb{R}^d} P_t^{*,\pi} \varphi(x) A^{*,\pi} P_t^{*,\pi}\varphi(x) W(x) \pi(x) dx.
\end{equation}
This differentiation can be justified by proving that $\frac{\partial}{\partial t} (P_t^{*,\pi}(\varphi))^2=2 P_t^{*,\pi} \varphi \times A^{*,\pi} P_t^{*,\pi}\varphi$ is dominated by a function in $\mathbf{L}^1(W\pi)$.
Using \eqref{E:expression Pstar} and that $\pi$ is invariant we get $|P_t^{*,\pi}(\varphi)|\le \norm{\varphi}_\infty$. Also,
$ |A^{*,\pi} P_t^{*,\pi}\varphi|=|P_t^{*,\pi}  A^{*,\pi} \varphi|$ is bounded by $\norm{A^{*,\pi} \varphi}_\infty<\infty$, using \eqref{E:expr_A_star_pi} and the fact that $\varphi$ is compactly supported. Since $W \in \mathbf{L}^1(\pi)$, we deduce \eqref{E:diff It}. Then, using that $A^{*,\pi}$ is a differential operator by \eqref{E:expr_A_star_pi}, we have 
$A^{*,\pi}(  (P_t^{*,\pi}(\varphi))^2 )=2   P_t^{*,\pi}(\varphi) A^{*,\pi}(P_t^{*,\pi}(\varphi)  ) + \Gamma(P_t^{*,\pi}(\varphi))$.
Hence,
$$
\frac{\partial I_t}{\partial t} = 
\int_{\mathbb{R}^d} [ A^{*,\pi}(  (P_t^{*,\pi}\varphi)^2 )(x)-\Gamma(P_t^{*,\pi}(\varphi) )  ]W(x) \pi(x) dx.
$$
We now prove that $\Gamma(P_t^{*,\pi}\varphi) W \in \mathbf{L}^1(\pi)$. Let $\chi_M$ a sequence of $\mathcal{C}^2(\mathbb{R},\mathbb{R})$ functions bounded with bounded derivative and approximating $x\mapsto x^2$  in the sense that  $|\chi_M(x)| \le x^2$, $|\chi_M'(x)| \le 2|x|$, $\chi_M''(x)\ge 0$, $\chi_M''(x)  \xrightarrow{n\to\infty} 2$ for all $x \in \mathbb{R}$. We have,
\begin{equation*}
	A^{*,\pi}(\chi_M(P_t^{*,\pi}\varphi))=
	\chi_M'(P_t^{*,\pi}\varphi) A^{*,\pi}(P_t^{*,\pi}\varphi)+
	\frac{1}{2}\chi_M{''}(P_t^{*,\pi}\varphi) \Gamma(P_t^{*,\pi}\varphi)	
\end{equation*}
Multiplying by $W$ and integrating with respect to $\pi$, it yields to
\begin{multline*}
\int_{\mathbb{R}^d}
\frac{1}{2}\chi_M{''}(P_t^{*,\pi}\varphi(x)) 
\Gamma(P_t^{*,\pi}\varphi)(x)W(x)\pi(x)dx\le
\abs{
\int_{\mathbb{R}^d}	
	A^{*,\pi}(\chi_M(P_t^{*,\pi}\varphi))(x)W(x)\pi(x)dx}\\
+\abs{\int_{\mathbb{R}^d}	
	\chi_M'(P_t^{*,\pi}\varphi(x)) A^{*,\pi}(P_t^{*,\pi}\varphi)(x)W(x) \pi(x)
 dx}\\
= \abs{
	\int_{\mathbb{R}^d}	
	\chi_M(P_t^{*,\pi}\varphi)(x)AW(x)\pi(x)dx}
+\abs{\int_{\mathbb{R}^d}	
	\chi_M'(P_t^{*,\pi}\varphi(x)) A^{*,\pi}(P_t^{*,\pi}\varphi)(x)W(x) \pi(x)
	dx}
\\ \le
\norm{(P_t^{*,\pi}\varphi)^2}_{\mathbf{L^2(\pi)}}
\norm{AW}_{\mathbf{L^2(\pi)}}
+
\norm{P_t^{*,\pi}\varphi}_{\mathbf{L^4(\pi)}}
\norm{A^{*,\pi}(P_t^{*,\pi}\varphi)}_{\mathbf{L^4(\pi)}}
\norm{W}_{\mathbf{L^2(\pi)}}.
\end{multline*}
Letting $M\to\infty$ and using Fatou's lemma, we deduce $\int_{\mathbb{R}^d}
\Gamma(P_t^{*,\pi}\varphi)(x)W(x)\pi(x)dx<\infty$.
Hence, we can write
\begin{align*}
\frac{\partial I_t}{\partial t} &= 
\int_{\mathbb{R}^d}  A^{*,\pi}(  (P_t^{*,\pi}\varphi)^2 )(x) W(x) \pi(x) dx
-
\int_{\mathbb{R}^d} \Gamma(P_t^{*,\pi}(\varphi) ) W(x) \pi(x) dx
\\
&=
\int_{\mathbb{R}^d}    (P_t^{*,\pi}\varphi)^2 (x) {\modar AW}(x) \pi(x) dx
-
\int_{\mathbb{R}^d} \Gamma(P_t^{*,\pi}(\varphi) ) W(x) \pi(x) dx
\\
&
=
\int_{\mathbb{R}^d}  [(P_t^{*,\pi}\varphi)^2 (x) AW(x)- \Gamma(P_t^{*,\pi}(\varphi) ) W(x)] \pi(x)dx.
\end{align*}
Now we use the Lyapunov-Poincaré inequality with $f=P_t^{*,\pi}(\varphi)$ and get
$\frac{\partial I_t}{\partial t}  \le -(1/C_{LP}) I_t$. From Gronwall's lemma it yields 
$I_t \le e^{-t/C_{LP}} I_0$ and since $W\ge1$,
$$
\int_{\mathbb{R}^d}(P_t^{*,\pi}(\varphi)(x))^2 \pi(x) dx\le I_t \le e^{-t/C_{LP}} I_0= e^{-t/C_{LP}}\int_{\mathbb{R}^d} \varphi(x)^2 W(x)\pi(x)dx.
$$
By duality between $P_t$ and $P_t^{*,\pi}$ in $\mathbf{L}^2(\pi)$, we deduce \eqref{E:conclusion_LPI}.
\end{proof}

\begin{lemma}\label{L: maj cov}
	Assume that $a$ and $b$ are $\mathcal{C}^2$ with bounded derivatives and $(a,b) \in \Sigma$. Then, for $\varphi$ bounded with $\int_{\mathbb{R}^d} \varphi(x) \pi(x) dx=0$, and all $t>0$,
	\begin{equation} \label{E:eq control L2 Pt}
		\int_{\mathbb{R}^d} [P_t(\varphi)(x)]^2 \pi(x) dx \le c e^{-t/c} \norm{\varphi}_\infty^2,
	\end{equation}
%	$$
%	\abs{E_\pi[\varphi(X_0)\varphi(X_t)]} \le c e^{-t/c} \norm{\varphi}_\infty^2,
%	$$
	where the constant $c>0$ is uniform over the class $\Sigma$. 
\end{lemma}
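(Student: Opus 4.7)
The plan is to apply Lemma~\ref{L:lemme de Bakry}, so the central task is to construct a Lyapunov function $W$ with $W \ge 1$ such that $\pi$ satisfies the $W$-Lyapunov-Poincar\'e inequality \eqref{E:WPI def} with a constant $C_{\text{LP}}$ depending only on $(a_{\min}, a_0, a_1, b_0, b_1, \tilde{C}, \tilde{\rho}, d)$, not on the specific pair $(a,b) \in \Sigma$. This would immediately give an exponential decay rate $1/C_{\text{LP}}$ uniform over $\Sigma$, and the $\mathbf{L}^\infty$ norm on the right-hand side of \eqref{E:eq control L2 Pt} will then come from bounding $\int \varphi^2 W \pi \le \|\varphi\|_\infty^2 \int W \pi$.

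First, I would take $W(x) = \exp(\alpha \sqrt{1+|x|^2})$ for a small $\alpha>0$ to be fixed depending only on the constants of the class. A direct computation gives
\[
\frac{AW}{W}(x) = \alpha \frac{\langle b(x), x\rangle}{\sqrt{1+|x|^2}} + \frac{\alpha^2}{2} \frac{\langle \tilde{a}(x)x,x\rangle}{1+|x|^2} + R(x),
\]
where $|R(x)|$ is controlled uniformly by $a_0$ and $d$ through $\tfrac{1}{\sqrt{1+|x|^2}}$. Using A2 on the first term and A1 on the second, and choosing $\alpha$ small enough (depending only on $\tilde{C}$, $a_0$, $d$), I obtain a Lyapunov drift condition of the form $AW \le -\lambda W + K \mathbf{1}_{B(0,R_0)}$ with $\lambda>0$, $K>0$, and $R_0$ all uniform on $\Sigma$.

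Next, to deduce the Lyapunov-Poincar\'e inequality itself, I would combine this drift condition with a local Poincar\'e inequality on the ball $B(0,R_0)$. The latter holds with a constant depending only on $a_{\min}$, $a_0$, $R_0$ and on uniform upper and lower bounds of $\pi$ on $B(0,R_0)$; these last bounds follow from the Gaussian two-sided estimate \eqref{E:equation borne sup inf gaussienne} on the transition density, whose constants are uniform over $\Sigma$, together with the integral representation $\pi(y) = \int p_1(x,y) \pi(x) dx$ and $\int W d\pi < \infty$. The classical Lyapunov-to-Poincar\'e machinery of \cite{Bak_et_al08} then provides a constant $C_{\text{LP}}$ uniform over $\Sigma$.

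Finally, applying Lemma \ref{L:lemme de Bakry} with the mean-zero function $\varphi$ yields
\[
\int [P_t\varphi]^2 \, d\pi \le e^{-t/C_{\text{LP}}} \,\|\varphi\|_\infty^2 \int W \, d\pi,
\]
and it remains to check $\int W \, d\pi \le C$ uniformly. Integrating the drift condition against $\pi$ and using invariance $\int AW \, d\pi = 0$ (justified by the Gaussian upper bound \eqref{E:equation borne sup inf gaussienne}, ensuring $W \in \mathbf{L}^1(\pi)$) gives $\lambda \int W d\pi \le K$. The main obstacle is to control the \emph{local} Poincar\'e constant on $B(0,R_0)$ uniformly in $(a,b) \in \Sigma$; this is where the uniform ellipticity $a_{\min}$, the uniform bounds on the derivatives of $a$ and $b$, and the uniform two-sided Gaussian estimates on the transition density all come together to guarantee that the constants appearing in the standard Poincar\'e inequality on a compact elliptic domain can be taken uniform on the class.
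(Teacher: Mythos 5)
Your proposal is correct and follows essentially the same route as the paper's proof: an exponential Lyapunov function (the paper uses $e^{\varepsilon_0\chi(x)}$ with a smooth cutoff $\chi$ in place of your $e^{\alpha\sqrt{1+|x|^2}}$) giving a drift condition $AV\le-\alpha V+\beta\mathbf{1}_{\{|x|\le R\}}$ with constants uniform over $\Sigma$, a local Poincar\'e inequality on a ball whose constant is controlled through two-sided bounds on $\pi$ deduced from the uniform Gaussian heat-kernel estimates, Proposition 3.6 of \cite{Bak_et_al08} to assemble the $W$-Lyapunov--Poincar\'e inequality with uniform $C_{\text{LP}}$, and finally Lemma \ref{L:lemme de Bakry} together with the integrated drift condition to bound $\int W\,d\pi$. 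The only details the paper adds that you elide are the verification that $\pi(B(0,R'))>1/2$ (via Markov's inequality from $\int V\,d\pi\le\beta/\alpha$) and that $W,AW\in\mathbf{L}^2(\pi)$ as required by the definition of the inequality, both of which follow from the same estimates you already invoke.
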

\begin{proof}
The main idea of the proof is that it is possible to show that for all $(a,b) \in \Sigma$, the stationary probability  $\pi$ satisfies a $W$-Lyapunov-Poincaré inequality with the same function $W$ and	same constant $C_{LP}$.
Following \cite{Bak_et_al08}, the existence of $W$-Lyapunov-Poincaré inequality is related to the existence of classical Lyapunov functions.   

$\bullet$ First, we construct a Lyapunov function $V$ independent of $(a,b)\in \Sigma$. 
We let $\chi\in \mathcal{C}^{\infty}(\mathbb{R}^d,\mathbb{R})$ such that $0\le\chi(x)\le|x|$ and $\chi(x)=|x|$ for $|x| \ge 1$ and we set
%We let $V$ be a $\mathcal{C}^\infty(\mathbb{R}^d)$ function such as $V\ge 1$ and for $|x| \ge 1$, 
$V(x)=e^{\varepsilon_0 \chi(x)}$ for some $\varepsilon_0>0$ which will be calibrated later.
We have for $|x| \ge 1$, $\nabla V(x)=\varepsilon_0\frac{x}{|x|} V(x)$ and thus,
\begin{align*}
AV(x)=\frac{1}{2}\sum_{1\le i,j \le d} \tilde{a}_{i,j}(x) \frac{\partial^2 V}{\partial x_i \partial x_j}(x)+\epsilon_0 V(x) <\frac{x}{|x|},b(x)>.
%\sum_{i=1}^d b_i(x)  \frac{\partial V}{\partial x_i}(x)
%& \\
\end{align*}
Using that for  $|x| \ge 1$,  $|\frac{\partial^2 V}{\partial x_i \partial x_j}|=|e^{\varepsilon_0 |x|} 
\{\varepsilon_0^2\frac{x_ix_j}{|x|^2} -\varepsilon_0\frac{x_ix_j}{|x|^3} + \frac{\varepsilon_0}{|x|}
1_{\{i=j\}}  \}|\le e^{\varepsilon_0|x|} \{\varepsilon_0^2+\frac{2\varepsilon_0}{|x|}\}$, and that $<x,b(x)>\le 
-{\modar \tilde{C}}|x|$ for $|x| \ge {\modar \tilde{\rho}}$ by Assumption {\bf A2}, we get,
$$
AV(x) \le -{\modar \tilde{C}}\varepsilon_0 V(x)+\frac{\varepsilon_0}{2} V(x) \sum_{i,j} |\tilde{a}_{i,j}(x)| \{\varepsilon_0+\frac{2}{|x|}\},
$$
for all $|x| \ge {\modar \tilde{\rho}} \vee 1$. Using $|\tilde{a}_{i,j}(x)|=|(aa^T)_{i,j}(x)|\le |a(x)|^2 \le a_0^2$ by 
Assumption {\bf A1}, we deduce
$$
AV(x) \le -{\modar \tilde{C}}\varepsilon_0 V(x) [1- \frac{d^2 a_0^2}{2 {\modar \tilde{C}} }
\{\varepsilon_0+\frac{2}{|x|}\} ] .
$$
Now, we set {\modarn $\varepsilon_0$ as any constant with $0<\varepsilon_0 < \frac{{\modarn \tilde{C}}}{2d^2 a_0^2}$,} and deduce for $|x| \ge{\modar \tilde{\rho}} \vee 1 \vee \frac{4 d^2 a_0^2}{{\modar \tilde{C}}}  $:
$$
AV(x) \le -\frac{{\modarn \tilde{C}}\varepsilon_0}{2} V(x) .
$$
We then define $\alpha={\modar \tilde{C}}\varepsilon_0/2$, $R={\modar \tilde{\rho}} \vee 1 \vee \frac{4 d^2 a_0^2}{{\modar \tilde{C}}} $ and $\beta= 
e^{\varepsilon_0 R}[ \frac{1}{2}d^2 a_0^2 \varepsilon_0^2 \norm{\nabla\chi}_\infty^2 +  \frac{1}{2} d^2 a_0^2 
\varepsilon_0 \times
\sup_{1\le i,j\le d}
\norm{\partial_{x_i,x_j}^2\chi}_\infty + {\modarn (b_0+b_1R)}  \varepsilon_0 \norm{\nabla\chi}_\infty + \alpha ]$ 
{\modarn where the notations 
	$b_0$, $b_1$ are introduced in Assumption {\bf A1}. Using that for $|x|\le R$, $|b(x)|\le b_0+b_1R$, we } can check that $|AV(x)+\alpha V(x)| \le \beta$ for $|x| \le R$, and in turn $V$ is a Lyapunov function :
%Now, we set $\varepsilon_0=\frac{{\modar \tilde{C}}}{2d^2 a_0^2}$, and deduce for $|x| \ge{\modar \tilde{\rho}} \vee 1 \vee \frac{4 d^2 a_0^2}{{\modar \tilde{C}}}  $:
%$$
%AV(x) \le -\frac{\tilde{C}_b\varepsilon_0}{2} V(x) .
%$$
%We then define $\alpha={\modar \tilde{C}}\varepsilon_0/2$, $R={\modar \tilde{\rho}} \vee 1 \vee \frac{4 d^2 a_0^2}{{\modar \tilde{C}}} $ and $\beta= 
%e^{\varepsilon_0 R}[ \frac{1}{2}d^2 a_0^2 \varepsilon_0^2 \norm{\nabla\chi}_\infty^2 +  \frac{1}{2} d^2 a_0^2 
%\varepsilon_0 \times
%\sup_{1\le i,j\le d}
%\norm{\partial_{x_i,x_j}^2\chi}_\infty + b_0  \varepsilon_0 \norm{\nabla\chi}_\infty + \alpha ]$ where the notation $b_0$ is introduced in Assumption {\bf A1}. We can check that $|AV(x)+\alpha V(x)| \le \beta$ for $|x| \le R$, and in turn $V$ is a Lyapunov function :
\begin{equation} \label{E: vrai Lyapunov}
AV(x) \le -\alpha  V(x)+ \beta 1_{\{|x| \le R\}}, \quad \forall x \in \mathbb{R}^d.
\end{equation}

$\bullet$ We now prove that \eqref{E:WPI def} holds true with the same function $W$ and constant $C_{LP}$ for all coefficients $(a,b) \in \Sigma$. Using Proposition 3.6 in \cite{Bak_et_al08} with
\eqref{E: vrai Lyapunov}, it is sufficient to find $R'>0$ large enough such that
\begin{align} \label{E: construct W cond1}
	& \{V \le 2\beta/\alpha\} \subset B(0,R')
	\\\label{E: construct W cond2}
	&\pi(B(0,R')) > 1/2
\end{align}
and the local Poincaré inequality is valid on $B(0,R')$ with some constant $\kappa_{R'}$:
for all $f \in \mathcal{C}^1$ with bounded derivative,
\begin{equation}\label{E: construct W cond3}
	\int_{B(0,R')} f^2(x)\pi(x)dx
	\le \kappa_{R'}\int_{\mathbb{R}^d} \Gamma(f) \pi(x)dx + \frac{1}{\pi(B(0,R'))}
	\left(\int_{B(0,R')} f(x) \pi(x)dx \right)^2.
\end{equation}
If the conditions \eqref{E: construct W cond1}--\eqref{E: construct W cond3} are valid, then by Proposition 3.6 in \cite{Bak_et_al08} we {\modar deduce the Lyapunov-Poincar\'e Inequality}
\eqref{E:WPI def} with $W=V+(\beta \kappa_{R'}-1)_+$ and 
%$1/C_{LP}=2\alpha(1-\pi(B(0,R')^c)/\pi(B(0,R')) / (1+(\beta \kappa_{R'}-1)_+ )$.
$1/C_{LP}=2\alpha(1-\frac{\pi(B(0,R')^c)}{\pi(B(0,R'))})\times(1+(\beta \kappa_{R'}-1)_+ )^{-1}$.

To get \eqref{E: construct W cond1}, using the expression of $V$ it is sufficient to take $R'\ge (\frac{1}{\varepsilon_0} \ln (2\beta/\alpha) )\vee 1$. Considering \eqref{E: construct W cond2}, we take the expectation with respect to $\pi$ in 
\eqref{E: vrai Lyapunov} and obtain 
$$
0=\int_{\mathbb{R}^d} A V(x) \pi(x) dx \le -\alpha \int_{\mathbb{R}^d} V(x) \pi(x) dx
+ \beta \int_{\mathbb{R}^d} \pi(x) dx,
$$
{\modar and thus
	\begin{equation} \label{E: borne unif L1 Lyapunov}
		\int_{\mathbb{R}^d} V(x) \pi(x) dx \le \frac{\beta}{\alpha}.
	\end{equation}
	We deduce 
{\modarn 
	\begin{equation}\label{E: moment expo pi}
		\int_{|x| \ge 1} e^{\varepsilon_0|x|} \pi(x) dx \le \int_{\mathbb{R}^d} V(x) \pi(x) dx\le \frac{\beta}{\alpha}.		
\end{equation}}
	%We deduce $\int_{|x| \ge 1} e^{\varepsilon_0|x|} \pi(x) dx \le \int_{\mathbb{R}^d} V(x) \pi(x) dx\le \frac{\beta}{\alpha}$.
	 Using the Markov inequality, this yields, for any $R' \ge 1$, $\pi(\{x \mid |x|\ge R'\}) \le e^{-\varepsilon_0 R'} \frac{\beta}{\alpha}$.}
%From the Markov inequality,
{\modar It entails that $R'>\frac{1}{\varepsilon_0} \ln (2\beta/\alpha) $ is sufficient for the condition \eqref{E: construct W cond2} to hold true.} 
%We deduce that as soon as $R'>\frac{1}{\varepsilon_0} \ln (2\beta/\alpha) $,
%the condition \eqref{E: construct W cond2} holds true. 
We set $R'=(\frac{1}{\varepsilon_0} \ln (2\beta/\alpha)) \vee 1 $, and now we have to check  the condition \eqref{E: construct W cond3}. From the Poincaré inequality on the ball $B(0,R')$ endowed with the 
Lebesgue measure (see e.g. Theorem 4.9 in \cite{Evans_Gariepy_book}) and the Proposition 4.2.7 in \cite{Bakry_et_al_book}, we know that if 
\begin{equation}\label{E:borne pi preuve mixing}
	1/c_\pi \le \pi(x) \le c_\pi, \quad \forall x \in B(0,R'), 
\end{equation}
for some $c_\pi>0$ %and for all $x \in B(0,R')$ 
then,
$$
	\int_{B(0,R')} f^2(x)\pi(x)dx
\le C c_\pi^3 R' \int_{B(0,R')} |\nabla f|^2 \pi(x)dx + \frac{1}{\pi(B(0,R'))}
\left(\int_{B(0,R')} f(x) \pi(x) dx\right)^2,
$$
where $C$ is some universal constant. As the matrix $\tilde{a}$ is lower bounded by Assumption {\bf A1}, we have $  |\nabla f|^2 \le {a_{min}^{-1} \Gamma(f)}$, and we deduce that \eqref{E: construct W cond3} holds true with
$\kappa_R'=C R' c_\pi^3$. Consequently, by Proposition 3.6 in \cite{Bak_et_al08},
the Lyapunov-Poincaré inequality holds true. Moreover, the constant $C_{LP}$ in \eqref{E:WPI def} is independent of $(a,b) \in \Sigma$, as soon as
we can find a constant $c_\pi$ in \eqref{E:borne pi preuve mixing} independent of $(a,b)$. 
{\modarn Using the invariance of $\pi$ and
	\eqref{eq: bound Pesce} with $t=1$ and $s=0$, gives for $x \in B(0,R')$,
	\begin{align}\nonumber
		C_0^{-1} \int_{B(0,R')} \pi(y) e^{- \lambda_0^{-1}|\theta_1(x)-y|^2} dy &\le \pi(x)\le C_0 \int_{\mathbb{R}^d} \pi(y)  dy 
		\\ \nonumber
		{\modch C_0^{-1}} \int_{B(0,R')} \pi(y) e^{- 4\lambda_0^{-1} R'^2} dy &\le \pi(x)\le C_0 \int_{\mathbb{R}^d} \pi(y) dy
		\\ \label{E:mino majo pi}
		{\modch \frac{C_0^{-1}}{2}} e^{- 4\lambda_0^{-1} R'^2} &\le \pi(x)\le C_0 ,
	\end{align}
	where in the second line we used $|\theta_1(x)| \le |x|$ for $|x|\ge 1$, and in the 
	last line we used \eqref{E: construct W cond2}. 
	
	Hence, we have proved the W-Lyapunov-Poincaré inequality \eqref{E:WPI def} with the same function $W$ and constant $C_{LP}$ for all $(a,b) \in \Sigma$. To see that we are in the scope of the Definition \ref{def: LPI} we need to check that $W$ and $AW$ belong to $\mathbf{L}^2(\pi)$. From
	\eqref{E: moment expo pi} we know that the stationary measure integrates the exponential function $x\mapsto e^{\epsilon_0 |x|}$ for any constant
	$\varepsilon_0 \in (0,\frac{\tilde{C}}{2d^2a_0^2})$. As the constant $\varepsilon_0$ can be chosen arbitrarily small, it suffices to choose
	$\varepsilon_0 < \frac{\tilde{C}}{6d^2a_0^2}$ to deduce that $W\in \mathbf{L}^2(\pi)$. As $b$ has at most linear growth, we also deduce $AW \in \mathbf{L}^2(\pi)$.
}

$\bullet$ Eventually, we deduce \eqref{E:eq control L2 Pt} by applying Lemma \ref{L:lemme de Bakry}, {\modar together with the upper bound 
	\begin{equation*}
		\int_{\mathbb{R}^d} W(x) \pi(x) dx \le 
		\int_{\mathbb{R}^d} V(x) \pi(x) dx  + (\beta \kappa_{R'}-1)_+\le 
		\beta/\alpha+(\beta \kappa_{R'}-1)_+
	\end{equation*}
	where in the last inequality we used \eqref{E: borne unif L1 Lyapunov}.}
\end{proof}
%
%\begin{lemma} \label{L:hyp sigma stronger}
%	Let $K$ be a compact subset of $\mathbb{R}^d$. There exists a constant $c$ such that for all measurable function $\varphi : K \to \mathbb{R}$ bounded we have
%	\begin{equation}\label{E: resu mixing reg non reg}
%\abs{	E_{\pi}[\varphi(X_t)\varphi(X_0)] - \pi(\varphi)^2} \le C e^{-t/C} \norm{\varphi}_\infty^2.
%	\end{equation}
%\end{lemma}
{\revar 
	\begin{remark}
		Remark that the theory of W-Lyapounov-Poincaré inequality could be applied to more general processes than continuous diffusions. Indeed, following the proof of Lemma \ref{L: maj cov} we see that the two main ingredients for getting Lyapounov-Poincaré inequality are the existence of a classical Lyapounov function, and some lower bound on compact sets for the {\em carré du champ} of some operator $\Gamma(f)$ by $c |\nabla f|^2$. Both ingredients are possible to get, for instance, in the context of jump-diffusion processes. Adaptation of Lemma \ref{L:lemme de Bakry} seems also possible to the situation of jump-diffusion processes, taking into account that, in such a context, the {\em carré du champ} of the adjoint operator $A^{*,\pi}$ is not necessary equal to the {\em carré du champ} of the operator $A$.
	\end{remark}
		%	 a crucial step is the lower bound control of the {\em carré du champ} $\Gamma(f)$ by $c |\nabla f|$, which is also possible in the context of jump-diffusion processes.
	
	We now prove Lemma \ref{L:hyp sigma stronger} as a consequence of Lemma \ref{L: maj cov}.}
\begin{proof}
	{\modar First, remark that by a density argument,} we can assume in the proof that $\varphi$ is a smooth function supported on $K$.
	{\modar The inequality \eqref{E: resu mixing reg non reg} is a consequence of \eqref{E:eq control L2 Pt} in Lemma \ref{L:hyp sigma stronger}, but the latter requires that the coefficients of the S.D.E. are of class $\mathcal{C}^2$. Hence, an approximation of the initial S.D.E. by one with smoother coefficients is required.}
	For $(a,b)$ in $\Sigma=\Sigma(a_0,a_1,b_0,b_1,a_\text{min},\tilde{C}_{\text{b}},\tilde{\rho}_{\text{b}})$, we introduce {\modar the following} smooth approximations of $a$ and $b$. Let $\eta$ be a smooth function supported on the unit ball of $\mathbb{R}^d$ with $\int_\mathbb{R^d}\eta(x)dx=1$, and we set
	$a_n=a\star \eta_n$, $b_n=b\star \eta_n$, where $\star$ is the convolution operator and  $\eta_n(\cdot)=n\eta(n\cdot)$. As $(a,b) \in \Sigma$, we deduce that 
	\begin{equation}
	\label{E:approx_coeff}
	\norm{a-a_n}_\infty+\norm{b-b_n}_\infty \le \frac{c}{n}.
\end{equation} 
%As $(a,b) \in \Sigma$, we deduce that $\norm{a-a_n}_\infty+\norm{b-b_n}_\infty \le \frac{c}{n}$,
Moreover, for all $n$ large enough, $(a_n,b_n) \in \Sigma'=\Sigma(2a_0,2a_1,2b_0,2b_1,a_\text{min}/2,\tilde{C}_{\text{b}}/2,2\tilde{\rho'}_{\text{b}})$.
% for some
%$ a'_0>a_0,a'_1>a_1,b'_0>b_0,b'_1>b_1,a'_\text{min}<a_\text{min},\tilde{C}'_{\text{b}}>\tilde{C}_{\text{b}},\tilde{\rho'}_{\text{b}}>\tilde{\rho}_{\text{b}}$.

We denote by  $X^{n}$ the solution of the S.D.E. \eqref{eq: model} with the coefficients $(a_n,b_n)$  in place of $(a,b)$, and by $\pi_n$ the unique stationary distribution of $X^n$.
{\modar In the sequel of the proof, we also emphasize the dependence on the initial condition of the process \eqref{eq: model}  by denoting as $\E_{x}$ (resp. $\E_\pi$) the expectation computed when the process starts with the initial condition $X_0=x$ (resp.  $X_0\overset{\text{law}}{=}\pi$).}
%We also introduce the notation $X^{x,n}$ for the solution of the S.D.E. \eqref{eq: model} with the coefficients $a_n$ and $b_n$ and starting with $X^{x,n}_0=x$ and $X^{x}$ for the solution of the S.D.E. based on the coefficients $a$ and $b$ starting from $x$.
% and starting with $X^{x,n}_0=x$ and $X^{x}$ the solution of the S.D.E. base on the coefficients
Since $a_n$ and $b_n$ are smooth coefficients, by Lemma \ref{L: maj cov}, we have
\begin{equation}\label{E:control mixing sur Xn}
\int_{\mathbb{R}^d} \left[ {\modar \E_x[\varphi(X_t^{n})]} - \pi_n(\varphi) \right]^2 \pi_n(x) dx
	 \le ce^{-t/c}\norm{\varphi}_\infty^2
%\abs{E_{\pi^n}\left[ \varphi(X_t^n)\varphi(X_0^n)\right] - \pi^n(\varphi)^2} \le ce^{-t/c}
\end{equation}
	where the constant $c=c_{\Sigma'}$  is independent of $n$ and $(a,b) \in \Sigma$.
 Using classical estimates for solutions of stochastic differential equations with \eqref{E:approx_coeff}, it is possible to show
\begin{equation}\label{E:control X Xn}
\sup_{x \in \mathbb{R}^d}
{\modar \E_x\left[\abs{X^{n}_t-X_t}\right]} \le \frac{c' e^{c't}}{n}, \forall t>0, \forall n\ge 1,
\end{equation}
and with some constant $c'>0$.

Now, we write
\begin{align*}
{\modar \E_{\pi}\left[\varphi(X_t)\varphi(X_0) \right]}&=\int_{\mathbb{R}^d} 
	P_t(\varphi)(x) \varphi(x) \pi(x)dx	
	\\
	&= \int_{\mathbb{R}^d} {\modar E_x[ \varphi(X^{n}_t)]} \varphi(x) \pi(x)dx	+
	O( \frac{c'e^{c't}}{n} \norm{\varphi}_\infty \norm{\varphi'}_\infty ),
\end{align*}
where we used \eqref{E:control X Xn}. 
On the compact $K$ we can find a constant $c_K$ such that
$c_K^{-1}\le \pi \le c_K$,  $c_K^{-1}\le \pi_n \le c_K$ (in the same way as we obtained \eqref{E:mino majo pi}), using \eqref{E:control mixing sur Xn}, we deduce 
{\modar
	\begin{equation} \label{E: ergo L2 vers pi_n}
		\int_K  \abs{ {\modar \E_x[ \varphi( X_t^{n} ) ]} - \pi_n(\varphi)}^2 \pi(x) dx \le c_K^2 c e^{-t/c} \norm{\varphi}_\infty^2.
\end{equation} }
%and that on the compact $K$ we can find a constant $c_K$ such that
% $c_K^{-1}\le \pi \le c_K$,  $c_K^{-1}\le \pi_n \le c_K$ (in the same way as we obtained \eqref{E:mino majo pi}),
% we deduce,
{\modar It follows}
\begin{align}
	\nonumber
	{\modar \E_{\pi}\left[\varphi(X_t)\varphi(X_0) \right]}&=
	\int_{K}  \pi_n(\varphi) \varphi(x) \pi(x)dx	
	\\ & \nonumber
	+ O \left( \int_K  | {\modar \E_x[ \varphi(X^{n}_t)]}  - \pi_n(\varphi)| \varphi(x)\pi(x)dx\right)
	+	O( \frac{c'e^{c't}}{n} \norm{\varphi}_\infty \norm{\varphi'}_\infty )
	\\ \label{E:covariance varphi smooth}
	&= \pi_n(\varphi) \pi(\varphi)
	+ O( c_K \norm{\varphi}_\infty^2 \sqrt{c} e^{-t/(2c)})
	+	O( \frac{c'e^{c't}}{n} \norm{\varphi}_\infty \norm{\varphi'}_\infty ){\modar ,}
\end{align}
{\modar where in the last line we used \eqref{E: ergo L2 vers pi_n}.}
It remains to find some upper bound on $\abs{\pi_n(\varphi)-\pi(\varphi)}$. We write for $t>0$
$$
\pi_n(\varphi)-\pi(\varphi)=\pi_n(\varphi)-{\modar \E_x(\varphi(X_t^{n}))} + {\modar \E_x(\varphi(X_t^{n}))}-P_t(\varphi)(x)+
P_t(\varphi)(x)-\pi(\varphi)
$$
and integrate on the compact set $K$ with respect to the Lebesgue  measure  to find
\begin{multline*}
	\text{vol}(K) | \pi_n(\varphi)-\pi(\varphi)| 
	\le c_K \norm{
		\pi_n(\varphi)- {\modar 
			\E_x(\varphi(X_t^{n}))}}_{\mathbf{L}^1(\pi_n)} 
		+
	\\
	\int_K \abs{ {\modar \E_x(\varphi(X_t^{n}))}-P_t(\varphi)(x)} dx +
	c_K \norm{P_t(\varphi)(x)-\pi(\varphi)}_{\mathbf{L}^1(\pi)},
\end{multline*}
where we used that on $K$, $\pi$ and $\pi_n$ are lower bounded by $1/c_K$.
From \eqref{E:control X Xn}, we deduce
\begin{multline*}
	\text{vol}(K) | \pi_n(\varphi)-\pi(\varphi)| 
	\le c_K \norm{\pi_n(\varphi)-{\modar \E_x(\varphi(X_t^{n}))}}_{\mathbf{L}^1(\pi_n)}+
	\\
	\text{vol}(K) \frac{c' e^{c't}}{n} +
	c_K \norm{P_t(\varphi)(x)-\pi(\varphi)}_{\mathbf{L}^1(\pi)},
\end{multline*}
In the last equation, we specify $t=\sqrt{\log(n)}$. The first term on the right hand side goes to zero by \eqref{E:control mixing sur Xn}, the second one goes to zero {\modar immediately,} while the last one goes to zero by the mixing property of the process $X$ (see Lemma \ref{lemma: ergodicity}). We deduce that
\begin{equation} \label{E: control pi pi_n}
	| \pi_n(\varphi)-\pi(\varphi)| \le \varepsilon_n(\varphi,K,a,b),
\end{equation}
for some sequence $\varepsilon_n(\varphi,K,a,b) \xrightarrow{n \to \infty} 0$ (let us stress that
this convergence is not uniform with respect {\modar to} $(a,b) \in \Sigma$, $K$, or the function $\varphi$).

{\modar Gathering} \eqref{E:covariance varphi smooth} {\modar and} \eqref{E: control pi pi_n}, we have
	\begin{equation*}
\abs{{\modar \E_{\pi}}\left[\varphi(X_t)\varphi(X_0) \right]
	- \pi(\varphi)^2} \le
	c_K^2 \norm{\varphi}_\infty^2 \sqrt{c} e^{-ct/2}+
	\frac{c'e^{c't}}{n} \norm{\varphi}_\infty \norm{\varphi'}_\infty
	+\norm{\varphi}_\infty \varepsilon_n(\varphi,K,a,b)	
	\end{equation*}
	where the constant $c$ does not depend on $n$.
	Letting $n \to\infty$, we deduce \eqref{E: resu mixing reg non reg}.
	
\end{proof}


\begin{thebibliography}{4}

\bibitem{Strauch new} Aeckerle-Willems, C., \& Strauch, C. (2021). Concentration of scalar ergodic diffusions and some statistical implications. In Annales de l'Institut Henri Poincaré, Probabilités et Statistiques (Vol. 57, No. 4, pp. 1857-1887). Institut Henri Poincaré.

\bibitem{Alt20} Altmeyer, R. and Reiss, M. (2020). Nonparametric estimation for linear SPDEs from local measurements. Annals of Applied Probabilty to appear.

\bibitem{Lower bound} Amorino, C. (2021). Rate of estimation for the stationary distribution of jump-processes over anisotropic Holder classes. Electronic Journal of Statistics, 15(2), 5067-5116.

%\bibitem{Hawkes} Amorino, C., Dion, C., Gloter, A., \& Lemler, S. (2020). On the nonparametric inference of coefficients of self-exciting jump-diffusion. arXiv preprint arXiv:2011.12387.

%\bibitem{SJS} Amorino, C., \& Gloter, A. (2020). Contrast function estimation for the drift parameter of ergodic jump diffusion process. Scandinavian Journal of Statistics, 47(2), 279-346.

%\bibitem{Unbiased} Amorino, C., \& Gloter, A. (2020). Unbiased truncated quadratic variation for volatility estimation in jump diffusion processes. Stochastic Processes and their Applications, 130(10), 5888-5939.

\bibitem{Chapitre 4} Amorino, C., \& Gloter, A. (2021). Invariant density adaptive estimation for ergodic jump–diffusion processes over anisotropic classes. Journal of Statistical Planning and Inference, 213, 106-129.

\bibitem{Optimal} Amorino, C., \& Nualart, E. (2021). Optimal convergence rates for the invariant density estimation of jump-diffusion processes. ESAIM: PS 26, 126–151


\bibitem{Azencott_84} Azencott, R. (1984) Densite des diffusions en temps petit: Developpements asymptotiques. In
J. Azema and M. Yor (eds), Seminaire de Probabilites XVIII, Lecture Notes in Math. 1059.
Berlin: Springer-Verlag.

\bibitem{17 Iacus} Bailey, N.T.J. (1957) The Mathematical Theory of Epidemics, Griffin, London.

\bibitem{Bac13} Bacry, E., Delattre, S., Hoffmann, M., Muzy, J.-F. (2013). Some limit theorems for Hawkes processes and application to financial statistics. Stochastic Processes and their Applications 123, 2475-2499.

\bibitem{Bak_et_al08} Bakry, D., Cattiaux, P., Guillin, A. (2008). Rate of convergence for ergodic continuous Markov processes: Lyapounov versus Poincaré. Journal of Functional Analysis, 254, 727--759.

\bibitem{Bakry_et_al_book} Bakry, D., Gentil, I., \& Ledoux, M. (2014). Analysis and geometry of Markov diffusion operators (Vol. 103). Cham: Springer.

\bibitem{20 Iacus} Banks, H.T. (1975) Modeling and Control in the Biological Sciences, Lecture Notes in Biomathematics, 6, Springer-Verlag, Berlin.

\bibitem{Banon} Banon, G. (1978). Nonparametric identification for diffusion processes, SIAM J. Control Optim. 16, 380–395.

%\bibitem{BanNgu} Banon, G.,  Nguyen, H.T. (1981). Recursive estimation in diffusion model, SIAM J. Control Optim. 19, 676–685.

\bibitem{26 Iacus} Bergstrom, A.R. (1990) Continuous Time Econometric Modeling, Oxford University Press, Oxford.

\bibitem{BirgeMassart} Birg\'e, L., \& Massart, P. (1998). Minimum contrast estimators on sieves: exponential bounds and rates of convergence. Bernoulli, 329-375.

\bibitem{Bos98} Bosq, D. (1998). Nonparametric Statistics for Stochastic Processes. (Second edition), Lecture Notes
Statist., 110, New York: Springer-Verlag.

\bibitem{Bosq9} Bosq, D. (1998) Minimax rates of density estimators for continuous time processes, Sankhya. Ser. A 60, 18–28.

\bibitem{BroLow} Brown, L. D., \& Low, M. G. (1996). A constrained risk inequality with applications to nonparametric functional estimation. The annals of Statistics, 24(6), 2524-2535.

\bibitem{Cia17} Cialenco, I. (2018). Statistical Inference for SPDEs: an Overview. Statistical Inference for Stochastic Processes, 21, 309-329.

\bibitem{55 Iacus} Cobb, L. (1981) Stochastic differential equations for the social sciences, in Mathematical Frontiers of the Social and Policy Sciences.

%\bibitem{Com20} Comte, F. and Genon-Catalot, V. (2020). Non parametric drift estimation for i.i.d. paths of stochastic differential equations. The Annals of Statistics 48, 3336-3365.

\bibitem{Decomp} Comte, F., Lacour, C. (2013). Anisotropic adaptive kernel deconvolution. Annales de l'IHP Probabilit\'es et statistiques, 49(2), 569-609.

\bibitem{Com07} Comte, F., Genon-Catalot, V. and Rozenholc, Y. (2007). Penalized nonparametric mean square estimation
of the coefficients of diffusion processes. Bernoulli, 13, 514-543.

\bibitem{Main adapt} Comte, F., Prieur, C., Samson, A. (2017). Adaptive estimation for stochastic damping Hamiltonian systems under partial observation. Stochastic processes and their applications, 127(11), 3689-3718.

%\bibitem{Dal05} Dalalyan, A. (2005). Sharp adaptive estimation of the drift function for ergodic diffusions. The Annals of Statistics, 33, 2507-2528.

\bibitem{RD} Dalalyan, A., \& Reiß, M. (2007). Asymptotic statistical equivalence for ergodic diffusions: the multidimensional case. Probability theory and related fields, 137(1), 25-47.

\bibitem{Del80} Delecroix, M. (1980). Sur l’estimation des densit\'es d’un processus stationnaire \'a temps continu.
Publications de l’ISUP, XXV, 1-2, 17-39.

%\bibitem{Hof14} Delattre, S., Fournier, N., \& Hoffmann, M. (2014). High dimensional Hawkes processes. arXiv preprint arXiv:1403.5764.

\bibitem{Del18} Delattre M., Genon-Catalot V. and Lar\'edo, C. (2018). Parametric inference for discrete observations of diffusion processes with mixed effects. Stochastic processes and their Applications, 128, 1929-1957.

\bibitem{DGY} Delattre, S., Gloter, A., Yoshida, N. (2020). Rate of Estimation for the Stationary Distribution of Stochastic Damping Hamiltonian Systems with Continuous Observations. arXiv preprint arXiv:2001.10423.

\bibitem{Dio19} Dion, C., Lemler, S. (2019). Nonparametric drift estimation for diffusions with jumps driven by a Hawkes process. Statistical Inference for Stochastic Processes, 1-27.

\bibitem{69 Iacus} Ditlevsen, P.D., Ditlevsen, S., Andersen, K.K. (2002) The fast climate fluctuations during the stadial and interstadial climate states, Ann. Glaciol., 35, 457–462.

\bibitem{Evans_Gariepy_book} Evans, Lawrence C. ;  Gariepy, Ronald F.  Measure theory and fine properties of functions. Revised edition. Textbooks in Mathematics. CRC Press, Boca Raton, FL,  2015. xiv+299 pp. ISBN: 978-1-4822-4238-6

\bibitem{Friedman_64} Friedman A., (1964). Partial differential equations of parabolic type. Prentice-Hall Inc., Englewood Cliffs, N.J.

\bibitem{Gen90} Genon-Catalot, V. (1990). Maximum contrast estimation for diffusion processes from discrete observations,
Statistics, 21, 99-116.

%\bibitem{Gen14} Genon-Catalot V. and Larédo, C. (2014). Asymptotic equivalence of nonparametric diffusion and Euler scheme experiments, The Annals of Statistics, 42, 1145-1165.

%\bibitem{Glo09} Gloter, A. and Sørensen, M. (2009). Estimation for stochastic differential equations with a small diffusion coefficient. Stoch. Proc. Appl. 119, 679-699.

\bibitem{Gobet LAMN} Gobet, E. (2001). Local asymptotic mixed normality property for elliptic diffusion: a Malliavin calculus approach. Bernoulli, 7(6), 899-912.

\bibitem{Gob04} Gobet, E., Hoffmann, M. and Reiss, M. (2004). Nonparametric estimation of scalar diffusions based on low
frequency data. The Annals of Statistics, 32, 2223-2253.

\bibitem{GL} Goldenshluger, A., Lepski, O. (2011). Bandwidth selection in kernel density estimation: oracle
inequalities and adaptive minimax optimality. The Annals of Statistics, 39(3), 1608-1632.

\bibitem{Hairer_SPDE_Lecture09} Hairer, M. (2009). An Introduction to Stochastic PDEs. arXiv: Probability.

%\bibitem{102 Iacus} Hanes, D.P., Schall, J.D. (1988) Neural control of voluntary movement initiation, Science, 274, 427–430.

\bibitem{Has80} Has’minskii, R. Z. (1980). Stability of differential equations. Germantown, MD: Sijthoff and Noordhoff.

\bibitem{Hof99} Hoffmann, M. (1999). Adaptive estimation in diffusion processes. Stoch. Proc. and Appl. 79, 135-163.

\bibitem{109 Iacus} Holden, A.V. (1976) Models for Stochastic Activity of Neurones, Springer-Verlag, New York.

%\bibitem{110 Iacus} Holland, C.J. (1976) On a formula in diffusion processes in population genetics, Proc. Am. Math. Soc., 54(1), 316–318.

\bibitem{111 Iacus} Holmes, E.E. (2004) Beyond theory to application and evaluation: diffusion approximations for population viability analysis, Ecol. Appl., 14(4), 1272–1293.

\bibitem{Hop14}Höpfner, R., (2014). Asymptotic Statistics with a view to stochastic processes. Walter de Gruyter,
Berlin/Boston.

\bibitem{115 Iacus} Hull, J. (2000) Options, Futures and Other Derivatives, Prentice-Hall, Englewood Cliffs, NJ.

\bibitem{Iac10} Iacus, S. M., (2010). Simulation and inference for stochastic differential equations. With R examples. Springer.

%\bibitem{132 Iacus} Karlin, S., Tavare, S. (1983) A class of diffusion processes with killing arising in population genetics, SIAM J. Appl. Math.,43(1), 31–41.

\bibitem{Kes12} Kessler, M., Lindner, A. and Sørensen, M., Editors (2012). Statistical methods for stochastic differential
equations. CRC press. Taylor \& Francis Group. Boca Raton.



\bibitem{KR} Klein, T., Rio, E. (2005). Concentration around the mean for maxima of empirical processes. The Annals of Probability, 33(3), 1060-1077.

\bibitem{Klu} Klutchnikoff, N. (2014). Pointwise adaptive estimation of a multivariate function. Mathematical Methods of Statistics, 23(2), 132-150.

%\bibitem{Kut84} Kutoyants, Y.A., (1984). Parameter estimation for stochastic processes. Berlin: Heldermann.

\bibitem{Kut_1998} Kutoyants, Y. A. (1998). Efficient density estimation for ergodic diffusion processes. Statistical Inference for Stochastic Processes, 1(2), 131-155.

\bibitem{Kut_2004} Kutoyants, Y. A. (2004). On invariant density estimation for ergodic diffusion processes. SORT: statistics and operations research transactions, 28(2), 0111-124.

\bibitem{Kut04} Kutoyants, Y.A., (2004). Statistical inference for ergodic diffusion processes. Springer, London.

\bibitem{147 Iacus} Kushner, H.J. (1967) Stochastic Stability and Control, Academic Press, New York.

\bibitem{Lacour et al} Lacour, C., Massart, P., Rivoirard, V. (2017). Estimator selection: a new method with applications to kernel density estimation. Sankhya A, 79(2), 298-335.

\bibitem{LamPag02} Lamberton, D., Pages, G., (2002). Recursive computation of the invariant distribution of a diffusion. Bernoulli 8(3), pp.367-405.

\bibitem{155 Iacus} Lange, K. (2002) Mathematical and Statistical Methods for Genetic Analysis, Springer-Verlag, New York.

\bibitem{Lar90} Lar\'edo, C. (1990). A sufficient condition for asymptotic sufficiency of incomplete observations of a diffusion
process. The Annals of Statistics 18, 1158-1171.

\bibitem{Lepski15} Lepski, O. Adaptive estimation over anisotropic functional classes via oracle approach. Annals of Statistics, 2015, 43 (3), pp.1178 - 1242.

\bibitem{Lep99} Lepski, O. V., Levit, B. Y. (1999). Adaptive non-parametric estimation of smooth multivariate functions.

\bibitem{Man10} Mancini, C., \& Renò, R. (2011). Threshold estimation of Markov models with jumps and interest rate modeling. Journal of Econometrics, 160(1), 77-92.

\bibitem{Nic} Marie, N., \& Rosier, A. (2021). Nadaraya-Watson Estimator for IID Paths of Diffusion Processes. arXiv preprint arXiv:2105.06884.

\bibitem{Mas07} Masuda, H. (2007). Ergodicity and exponential beta-mixing for multidimensional diffusions with jumps.
Stoch. Proc. and Appl., 117, 35-56.

\bibitem{Mas19} Masuda, H. (2019). Non-Gaussian quasi-likelihood estimation of SDE driven by locally stable L\'evy process.
Stoch. Proc. Appl. 129, 1013-1059.

\bibitem{MenPes} Menozzi, S., Pesce, A., \& Zhang, X. (2021). Density and gradient estimates for non degenerate Brownian SDEs with unbounded measurable drift. Journal of Differential Equations, 272, 330-369.

\bibitem{Ngu79} Nguyen, H. T. (1979). Density estimation in a continuous-time Markov processes. Ann. Statist. 7, 341-348.

\bibitem{Pan08} Panloup, F. (2008). Recursive computation of the invariant measure of a stochastic differential equation driven by a L\'evy process. The Annals of Applied Probability 18(2), 379-426.

\bibitem{176 Iacus} Papanicolaou, G. (1995) Diffusions in random media, in Surveys in Applied Mathematics, Keller, J.B., McLaughin, D., Papanicolaou , G., eds, Plenum Press, New York, 205–255.

\bibitem{Par_Ver} Pardoux, E., \& Veretennikov, Y. (2001). On the Poisson equation and diffusion approximation. I. The Annals of Probability, 29(3), 1061-1085.

\bibitem{Pic10} Piccini, U., De Gaetano, A. and Ditlevsen, S. (2010). Stochastic differential mixed-effects models. Scand. J.
Statist. 37, 67-90.

\bibitem{194 Iacus} Ricciardi, L.M. (1977) Diffusion Processes and Related Topics in Biology, Lecture Notes in Biomathematics, Springer, New York.

\bibitem{Sch} Schmisser, E. (2013). Penalized nonparametric drift estimation for a multidimensional diffusion process. Statistics, 47(1), 61-84.

\bibitem{Sch19} Schmisser, E. (2014). Non-parametric adaptive estimation of the drift for a jump diffusion process. Stoch.
Proc. Appl., 124, 883-914.

\bibitem{202 Iacus} Schuecker, P., B¨ohringer, H., Arzner, K., Reiprich, T.H. (2001) Cosmic mass functions from Gaussian stochastic diffusion processes, Astron. Astrophys., 370, 715–728.

%\bibitem{Sor03} Sørensen, M. and Uchida, M. (2003). Small diffusion asymptotics for discretely sampled stochastic differential equations. Bernoulli 9, 1051-1069.

\bibitem{Strauch} Strauch, C. (2018). Adaptive invariant density estimation for ergodic diffusions over anisotropic classes. The Annals of Statistics, 46(6B), 3451-3480.

\bibitem{Ts} Tsybakov, A. B. (2008). Introduction to nonparametric estimation. Springer Science \& Business Media.

\bibitem{221 Iacus} Tuerlink, F., Maris, E., Ratcliff, R., De Boeck, P. (2001) A comparison of four methods for simulating the diffusion process, Behav. Res. Methods Instrum. Comput., 33(4), 443–456.

\bibitem{Veretennikov} Veretennikov, A. Y. (1988). Bounds for the mixing rate in the theory of stochastic equations. Theory of Probability \& Its Applications, 32(2), 273-281.

\bibitem{Yos92} Yoshida, N. (1992). Estimation for diffusion processes from discrete observation. J. Multivariate Analysis,
41, 220-242.

%\bibitem{Yos92_2} Yoshida, N. (1992). Asymptotic expansions of maximum likelihood estimators for small diffusions via the theory of Malliavin-Watanabe. Probability Theory and Related Fields, 92, 275-311.

\bibitem{Zan01} Van Zanten, H. (2001). Rates of convergence and asymptotic normality of kernel estimators for ergodic
diffusion processes. Nonparametric Statist. 13 (6), 833-850.


\bibitem{Viennet} Viennet, G. (1997). Inequalities for absolutely regular sequences: application to density estimation. Probability theory and related fields, 107(4), 467-492.

\end{thebibliography}
\end{document}